\newtheorem{prop}{Proposition}[section]
\newtheorem{thm}[prop]{Theorem}
\newtheorem{lemm}[prop]{Lemma}
\newtheorem{coro}[prop]{Corollary}
\newtheorem*{claim*}{Claim}
\newtheorem*{lemm*}{Lemma}
\newtheorem*{thm*}{Theorem}
\theoremstyle{definition}
\newtheorem{defi}[prop]{Definition}
\newtheorem{rmk}[prop]{Remark}
\newcommand{\CC}{\mathbb{C}}
\newcommand{\NN}{\mathbb{N}}
\newcommand{\RR}{\mathbb{R}}
\newcommand{\ZZ}{\mathbb{Z}}
\newcommand{\cA}{\mathcal A}
\newcommand{\cB}{\mathcal B}
\newcommand{\cC}{\mathcal C}
\newcommand{\cE}{\mathcal E}
\newcommand{\cF}{\mathcal F}
\newcommand{\cH}{\mathcal H}
\newcommand{\cK}{\mathcal K}
\newcommand{\cL}{\mathcal L}
\newcommand{\cM}{\mathcal M}
\newcommand{\cN}{\mathcal N}
\newcommand{\cP}{\mathcal P}
\newcommand{\cS}{\mathcal S}
\newcommand{\cU}{\mathcal U}
\newcommand{\cV}{\mathcal V}
\newcommand{\cW}{\mathcal W}
\newcommand{\cX}{\mathcal X}
\newcommand{\cY}{\mathcal Y}
\newcommand{\cZ}{\mathcal Z}
\def\bB{\mathbf{B}}
\def\bT{\mathbf{T}}
\def\bt{\mathbf{t}}
\def\bn{\mathbf{n}}
\def\bN{\mathbf{N}}
\DeclareMathOperator{\Span}{span}
\DeclareMathOperator{\Div}{div}
\DeclareMathOperator{\loc}{loc}
\DeclareMathOperator{\osc}{osc}
\DeclareMathOperator{\re}{Re}
\DeclareMathOperator{\dist}{dist}
\DeclareMathOperator{\proj}{proj}
\DeclareMathOperator{\Ind}{Ind}
\DeclareMathOperator{\vol}{\text{vol}}
\newcommand{\ep}{\varepsilon}
\newcommand{\bep}{\overline{\varepsilon}}
\newcommand{\blambda}{\overline{\lambda}}
\newcommand{\bangle}[1]{\big\langle #1 \big\rangle}
\newcommand{\pa}[2]{\frac{\partial #1}{\partial #2}}
\newcommand{\paop}[1]{\pa{}{#1}}
\newcommand{\rom}[1]{\expandafter\romannumeral #1}
\newcommand{\Rom}[1]{\uppercase\expandafter{\romannumeral #1}}
\setlist[enumerate]{leftmargin = 2em}
\setlist[itemize]{leftmargin = 2em}
\title[Existence of capillary CMC disks]{Existence of constant mean curvature disks in $\RR^3$ with capillary boundary condition}
\author{Da Rong Cheng}
\address{Department of Mathematics, University of Miami, Coral Gables, FL 33155}
\email{darong.cheng@miami.edu}
\begin{document}

\begin{abstract} 
We extend Struwe's result~\cite{Struwe1988} on the existence of free boundary constant mean curvature disks to almost every prescribed boundary contact angle in $(0, \pi)$. Specifically, let $\Sigma$ be a surface in $\RR^3$ diffeomorphic to the sphere, and let $\Sigma'$ be a convex surface enclosing $\Sigma$. Given $\tau \in (-1, 1)$ and a constant $H \geq 0$ below the infimum of the mean curvature of $\Sigma'$, we show that for almost every $r \in (0, 1)$, in the region enclosed by $\Sigma'$ there exists a branched immersed disk with constant mean curvature $rH$ whose boundary meets $\Sigma$ at an angle with cosine value $r\tau$. Moreover, the constant mean curvature disks we construct have index at most $1$. 
\end{abstract}

\maketitle 


\section{Introduction}\label{sec:intro}
\subsection{Statement}\label{subsec:theorem-statement}
Let $\Sigma$ be a surface in $\RR^3$ which is the image of the unit sphere $S^2$ under a smooth diffeomorphism from $\RR^3$ to itself, and suppose $\Sigma'$ is a closed convex surface. We denote by $\Omega$ and $\Omega'$ the bounded open sets enclosed by $\Sigma$ and $\Sigma'$, respectively, and assume that $\Sigma \subset \overline{\Omega'}$. By the convexity of $\Sigma'$, its mean curvature with respect to the unit normal pointing into $\Omega'$, denoted $H_{\Sigma'}$, is everywhere positive. For all $H \in \RR$ satisfying
\begin{equation}\label{eq:H-requirement}
0 \leq H < \inf_{y \in \Sigma'} H_{\Sigma'}(y) =: \underline{H'},
\end{equation}
Struwe showed in his seminal work~\cite{Struwe1988} that for almost every $r \in (0, 1)$, there exists a branched immersed disk with constant mean curvature $rH$ whose boundary meets $\Sigma$ orthogonally. In~\cite{Cheng22} we improved Struwe's existence theorem to include a Morse index upper bound. In this paper we continue this line of work and construct constant mean curvature (CMC) disks whose boundaries meet $\Sigma$ at prescribed constant angles that are not necessarily $\frac{\pi}{2}$. Such surfaces are usually referred to as capillary CMC surfaces.

For the construction we adopt the mapping approach. That is, we obtain the desired surfaces as images of mappings into $\RR^3$ that satisfy a specific system of PDEs and boundary conditions. To state the boundary value problem of interest, let $\bB$ denote the open unit disk in $\RR^2$ and suppose $H \in [0, \underline{H'})$ and $\tau \in (-1, 1)$, with $\tau$ being the cosine value of the contact angle. Then what we seek is a non-constant, weakly conformal branched immersion $u: (\overline{\bB}, \partial \bB) \to (\RR^3, \Sigma)$ such that
\begin{equation}\label{eq:H-cmc-bvp}
\left\{
\begin{array}{l}
\Delta u = H \cdot u_{x^1} \times u_{x^2}, \text{ in } \bB\\
u_r + \tau \bN(u) \times u_\theta \perp T_u\Sigma, \text{ on }\partial \bB,
\end{array}
\right.
\end{equation}
where $\bN:\Sigma \to \RR^3$ denotes the unit normal to $\Sigma$ pointing out of $\Omega$, and we write $\bN(u)$ to mean $\bN \circ u$. Indeed, if $u$ is a weakly conformal solution to~\eqref{eq:H-cmc-bvp}, then at points where it is an immersion, the PDE implies that the mean curvature vector of the image surface is $H$ times a unit normal, while taking the cross product of the boundary condition with $u_\theta$ on the right shows that $\bN(u)$ and $u_r \times u_\theta$, the latter being normal to the image of $u$, make an angle with cosine value $\tau$. Our result is the following.
\begin{thm}\label{thm:main-1} Given $H \in [0, \underline{H'})$ and $\tau \in (-1, 1)$, for almost every $r \in (0, 1)$ there exists a smooth, non-constant, weakly conformal branched immersion $u: (\overline{\bB}, \partial \bB) \to (\overline{\Omega'}, \Sigma)$ that solves~\eqref{eq:H-cmc-bvp} with $rH$ and $r\tau$ in place of $H$ and $\tau$, and has index at most one.
\end{thm}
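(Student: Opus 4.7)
The proof follows the monotonicity min--max strategy of Struwe~\cite{Struwe1988}, adapted to handle the capillary boundary condition, and combined with the second-variation analysis of~\cite{Cheng22} for the index bound.

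The first step will be to set up the variational framework. On the space
$$\cH := \bigl\{u \in H^1(\bB, \overline{\Omega'}) : u(\partial\bB) \subset \Sigma\bigr\}$$
I plan to introduce an energy functional of the form
$$E_{H,\tau}(u) := \frac{1}{2}\int_\bB |\nabla u|^2\, dx + 2H\cdot V(u) - 2\tau\cdot W(u),$$
where $V(u)$ is the standard CMC volume functional, whose first variation produces the term $u_{x^1}\times u_{x^2}$ in the interior, and $W(u)$ is a boundary functional depending only on $u|_{\partial\bB}$, built from a primitive (in a tubular neighborhood of $\Sigma$) of the area form on $\Sigma$; its first variation contributes the capillary correction $\tau\,\bN(u)\times u_\theta$ on $\partial\bB$. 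The Euler--Lagrange equations of $E_{H,\tau}$ are then precisely~\eqref{eq:H-cmc-bvp}. The hypotheses $H < \underline{H'}$ and $|\tau| < 1$ should be exactly what is needed to dominate $V$ and $W$ by the Dirichlet term and to secure weak lower semicontinuity and coercivity modulo conformal reparametrization.

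Next, I would deploy Struwe's monotonicity trick. For each $r \in (0,1)$ set $E_r := E_{rH, r\tau}$ and define a mountain-pass value
$$\beta(r) := \inf_{\gamma \in \Gamma}\, \sup_{t \in [0,1]} E_r(\gamma(t)),$$
with $\Gamma$ a class of continuous paths in $\cH$ joining two low-energy configurations separated by a uniform positive energy barrier (e.g.\ constant maps and a large-energy configuration whose $V$ or $W$ contribution pushes the energy below zero). A direct scaling computation shows that $r \mapsto \beta(r)$ has a prescribed monotonicity after dividing by a suitable power of $r$, hence is differentiable at almost every $r \in (0,1)$ by Lebesgue's theorem. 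At each such $r$, the standard Struwe argument produces a bounded Palais--Smale sequence $(u_n) \subset \cH$ for $E_r$ at level $\beta(r)$.

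The main obstacle will be promoting this Palais--Smale sequence to a genuine critical point, which requires ruling out energy loss through bubbling. Weak $H^1$ limits give a distributional solution of the rescaled system, so one must exclude both interior bubbles (CMC spheres of curvature $rH$ in $\RR^3$) and, more delicately, boundary bubbles (capillary spherical caps meeting a tangent plane to $\Sigma$ at cosine angle $r\tau$). Isoperimetric-type energy quantization yields a positive lower bound on the energy of each such bubble, and the Lebesgue derivative estimate on $\beta'(r)$ at an admissible $r$ then restricts the total energy available for bubble formation, forcing all bubbles to be trivial and yielding strong $H^1$ convergence $u_n \to u_r$. The boundary bubble analysis is the principal new difficulty, because the blow-up profiles are now capillary spherical caps rather than orthogonal half-spheres as in~\cite{Struwe1988}, and their classification and energy gap depend nontrivially on $r\tau$. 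Once strong convergence is established, standard boundary regularity upgrades $u_r$ to a smooth map, weak conformality follows from the Hopf differential argument (with the real boundary condition coming from the capillary equation on a disk forcing the differential to vanish), branched immersion structure is automatic, and $E_r(u_r) = \beta(r) > 0$ gives non-constancy. Finally, the one-parameter nature of the min-max together with the second-variation arguments from~\cite{Cheng22} provides the Morse index bound of at most one.
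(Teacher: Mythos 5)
Your proposal captures the right high-level skeleton (variational formulation, Struwe monotonicity trick, Palais--Smale analysis with bubbling, Hopf differential for conformality, second-variation index bound), but it has two gaps that would sink the argument as written, and one of them goes in the opposite direction from what the paper actually does.

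First, the functional $E_{H,\tau}$ you write down is not single-valued. The boundary term $W(u)$ is supposed to come from a primitive of the area form on $\Sigma$, but $\Sigma$ is diffeomorphic to $S^2$, so the area form represents a nonzero class in $H^2(\Sigma)$ and has no primitive on $\Sigma$ or on any tubular neighborhood of it. Likewise the CMC volume term is only defined modulo $|\Omega|$ once $u$ maps $\partial\bB$ into $\Sigma$. The paper deals with this by working with a path-dependent quantity $V_h(\gamma)$ and showing that changing the path shifts the value by an integer multiple of $\int_\Omega h$ (Lemma~\ref{lemm:volume-properties}(b)); min-max is then done over the class $\cP$ of non-contractible paths, and a reduced local functional $E^\cA_{\ep,p,f,\tau}$ is used for differentiation. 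Without something of this sort your level $\beta(r)$ is not a well-defined real number and the monotonicity argument has nothing to act on.

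Second, your plan is to use the Lebesgue derivative bound on $\beta'(r)$ to ``force all bubbles to be trivial and yield strong $H^1$ convergence,'' which is precisely what cannot be done here. The energy budget from the monotonicity trick gives a uniform $D_{\ep,p}$-bound, not a bound below the quantization threshold, so boundary concentration genuinely may occur. The paper does not exclude the bubble; in Case 2 of the proof of Theorem~\ref{thm:main-1} the bubble is the solution. When $\cS\neq\emptyset$ one shows (Proposition~\ref{prop:scale-comparison}) that concentration must happen at the boundary at a scale much larger than the regularization scale $\ep_j$, rescales to obtain a non-constant finite-energy solution $w$ of the half-space problem~\eqref{eq:half-space-bvp}, and then needs a removable singularity theorem at infinity (Corollary~\ref{coro:removal-singularity}, based on Proposition~\ref{prop:decay-ODE} and the reflection-free Poincar\'e inequality Lemma~\ref{lemm:poincare-q}) to conclude $w\circ G$ extends smoothly across the puncture and is the desired capillary disk. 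This removable-singularity step is singled out in the paper as the principal new technical difficulty over the free-boundary case precisely because the capillary boundary condition is not amenable to reflection; your proposal has no substitute for it. Relatedly, you work directly on $H^1$ with no regularization, but the Dirichlet-type energy on $H^1$ does not satisfy Palais--Smale nor yield regular critical points, and the paper relies on the Sacks--Uhlenbeck $W^{1,p}$-perturbation $E_{\ep,p,f,\tau}$ (and the oblique-derivative $W^{2,p}$ theory in Appendix~\ref{sec:linear}) to produce smooth min-max critical points before sending $\ep\to 0$.
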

\begin{rmk}\label{rmk:main-index}
The meaning of the index in the last assertion is explained below. See also Section~\ref{subsec:index-notation}, particularly Remark~\ref{rmk:index-statement}, for its precise definition.
\end{rmk}
\begin{rmk}\label{rmk:tau}
Since the case $\tau = 0$ is already contained in~\cite{Struwe1988},~\cite{Fraser2000} and~\cite{Cheng22}, we restrict our attention to the case $\tau \neq 0$ when giving the proof of Theorem~\ref{thm:main-1} in Section~\ref{sec:proof}. Also note that in the case $H = 0$, we obtain capillary \textit{minimal} disks from Theorem~\ref{thm:main-1}. Finally, finding conditions on $\Sigma$ under which existence holds for \textit{all} $r \in (0, 1)$ is something we plan to take up in a future work.
\end{rmk}
Below we make some further remarks on Theorem~\ref{thm:main-1} before clarifying the meaning of the index at the end of the statement. The purpose of requiring that $H < \underline{H'}$ is, as before, to prevent the scenario where the CMC surface we obtain is a sphere rather than a disk. Also, by Lemma~\ref{lemm:weakly-conformal} below and Corollary~\ref{coro:branch} from Section~\ref{subsec:branch-points}, for any non-constant solution $u$ of~\eqref{eq:H-cmc-bvp} we have that (i) $u$ is weakly conformal and (ii) $u$ fails to be an immersion only at isolated points, across which the Gauss map extends smoothly. In this paper, we refer to maps satisfying (ii) as branched immersions. Proving Theorem~\ref{thm:main-1} is thus reduced to finding a non-constant smooth map from $(\overline{\bB}, \partial \bB)$ to $(\overline{\Omega'}, \Sigma)$ that solves~\eqref{eq:H-cmc-bvp} and satisfies the asserted index bound. 

To explain which index is being referred to, we digress briefly to discuss the variational aspect of~\eqref{eq:H-cmc-bvp}. To set up notation we extend $\bN$ first to a tubular neighborhood of $\Sigma$ via the nearest point projection and then to $\RR^3$ via cutting off, obtaining a vector field $X$ on $\RR^3$. For a map $u$ from $(\overline{\bB},\partial \bB)$ to $(\RR^3, \Sigma)$ and a path $\gamma$ of such maps that starts at a constant map and ends at $u$, we define 
\[
E_{f, \tau}(u, \gamma) = D_\tau(u) + V_f(\gamma) +  \tau V_{\Div X}(\gamma),
\]
where
\begin{equation}\label{eq:D-tau-definition}
D_\tau(u) = \int_{\bB} \big[\frac{|\nabla u|^2}{2} - \tau X(u) \cdot u_{x^1} \times u_{x^2} \big] dx,
\end{equation}
and $f:\RR^3 \to [0, H]$ is a compactly supported smooth function which is identically $H$ on a neighborhood of $\overline{\Omega'}$. The quantity $V_h(\gamma)$, with $h$ a given smooth function on $\RR^3$, is roughly speaking the integral of $h$ over the region swept out by the path $\gamma$, which for sufficiently smooth paths is given by
\[
V_{h}(\gamma) = \int_{[0, 1] \times \bB} \gamma^*(h\vol_{\RR^3}).
\]
Here we are viewing $\gamma$ as a map from $([0, 1] \times \overline{\bB}, [0, 1] \times \partial \bB)$ into $(\RR^3, \Sigma)$. We refer to Section~\ref{subsec:perturbed} below for more details on $V_h$. Among its properties is that switching to a different path $\widetilde{\gamma}$ leading from a constant map to $u$ changes the value of $V_h$ by an integer multiple of $\int_{\Omega}h$, with the integer depending only on $\gamma$ and $\widetilde{\gamma}$, and not on $h$. Consequently the derivative of $E_{f, \tau}$ depends only on $u$, and a direct computation shows that critical points of $E_{f, \tau}$ which map into $\overline{\Omega'}$ are, at least formally, solutions to the boundary value problem~\eqref{eq:H-cmc-bvp}. 

We may now elaborate on the meaning of the index in Theorem~\ref{thm:main-1}. Upon replacing $\int_{\bB} \frac{|\nabla u|^2}{2} dx$ in the definition of $E_{f, \tau}$ by the mapping area and using Stokes' theorem to get
\[
\tau V_{\Div X}(\gamma) -  \int_{\bB}\tau X(u) \cdot u_{x^1} \times u_{x^2} = \tau \cdot \text{(area of the part of $\Sigma$ swept out by $\gamma|_{[0, 1] \times \partial \bB}$)},
\]
we obtain the well-known capillary functional, whose second variation, initially defined for vector fields along $u$ which are tangent to $\Sigma$ on $\partial \bB$, is known to reduce to a bilinear form defined for real-valued functions. (See Section~\ref{subsec:index-notation}.) It is the index of this latter bilinear form that is referred to in Theorem~\ref{thm:main-1}. 

To end this part of the introduction, we show that smooth solutions of~\eqref{eq:H-cmc-bvp} are weakly conformal. The method of proof is the classical Hopf differential argument.
\begin{lemm}\label{lemm:weakly-conformal}
Suppose $u: (\overline{\bB}, \partial \bB) \to (\RR^3, \Sigma)$ is a smooth solution of~\eqref{eq:H-cmc-bvp} with the constant $H$ replaced by $f\circ u$, where $f: \RR^3 \to \RR$ is any given smooth function. Then $u$ is weakly conformal.
\end{lemm}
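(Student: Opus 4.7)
My approach is the classical Hopf differential argument, adapted to the capillary boundary condition. Working in complex notation with $z = x^1 + ix^2$, I will consider the function
\[
\phi(z) := 4\, u_z \cdot u_z = |u_{x^1}|^2 - |u_{x^2}|^2 - 2i\, u_{x^1}\cdot u_{x^2},
\]
where the dot product is the complex-bilinear extension of the Euclidean inner product. Showing that $u$ is weakly conformal amounts to showing $\phi \equiv 0$ on $\bB$. The plan is to prove (i) $\phi$ is holomorphic on $\bB$, (ii) $z^2\phi(z)$ has vanishing imaginary part on $\partial\bB$, and then (iii) extract $\phi \equiv 0$ by elementary complex analysis.

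For (i), differentiate: $\partial_{\bar z}\phi = 8\,u_{z\bar z}\cdot u_z = 2\,\Delta u\cdot u_z$. The PDE forces $\Delta u = (f\circ u)\,u_{x^1}\times u_{x^2}$, which is orthogonal to both $u_{x^1}$ and $u_{x^2}$, hence orthogonal to $u_z = \tfrac{1}{2}(u_{x^1}-iu_{x^2})$ with respect to the complex-bilinear pairing. Therefore $\partial_{\bar z}\phi \equiv 0$.

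For (ii), I will pass to polar coordinates $z = re^{i\theta}$. A short computation using $\partial_z = \tfrac{e^{-i\theta}}{2}(\partial_r - \tfrac{i}{r}\partial_\theta)$ yields
\[
z^2\phi(z) = r^2|u_r|^2 - |u_\theta|^2 - 2ir\, u_r\cdot u_\theta,
\]
so the claim reduces to $u_r\cdot u_\theta = 0$ on $\partial \bB$. This is where the boundary condition enters, and this is the one place where the capillary term $\tau\bN(u)\times u_\theta$ must be handled — but it handles itself: since $u|_{\partial\bB}$ takes values in $\Sigma$, the vector $u_\theta$ lies in $T_u\Sigma$, so taking the inner product of the boundary equation with $u_\theta$ gives
\[
u_r \cdot u_\theta + \tau\,(\bN(u)\times u_\theta)\cdot u_\theta = 0.
\]
The second term vanishes by the triple-product identity (two equal entries), leaving $u_r \cdot u_\theta = 0$ as desired. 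Thus the $\tau$-dependence makes no difference here compared with the free-boundary case in Struwe's setup.

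For (iii), the function $g(z) := z^2\phi(z)$ is holomorphic on $\bB$ and, by (ii), smooth up to $\partial\bB$ with $\Imag g \equiv 0$ there. Then $\Imag g$ is harmonic on $\bB$ and vanishes on $\partial\bB$, hence vanishes identically; the Cauchy--Riemann equations then force $\Rea g$ to be constant, so $g$ is a real constant, and evaluating at $z=0$ shows $g\equiv 0$. Therefore $\phi\equiv 0$ on $\bB$, completing the proof. There is no serious obstacle: the only subtlety was verifying that the $\tau$-term drops out of the boundary manipulation, which it does by a single application of the triple-product identity.
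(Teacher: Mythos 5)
Your proof is correct and uses the same Hopf differential argument as the paper: you show $u_z\cdot u_z$ is holomorphic from the PDE, extract $u_r\cdot u_\theta=0$ on $\partial\bB$ by pairing the boundary condition with $u_\theta\in T_u\Sigma$ and noting the triple product kills the $\tau$-term, and then conclude $z^2 u_z\cdot u_z\equiv 0$ from the resulting boundary-value property. The only cosmetic difference is the final step (you spell out the harmonic-extension/Cauchy--Riemann argument; the paper states the constancy and then uses integrability of $|u_z|^2$), but the substance is identical.
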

\begin{proof}
Let $\varphi(z) = z^2 \bangle{u_z, u_z}$. By assumption we have $\Delta u = f(u) u_{x^1} \times u_{x^2}$, so $\bangle{\Delta u, u_z} = 0$ and hence $\varphi$ is holomorphic on $\bB$. By the boundary condition in~\eqref{eq:H-cmc-bvp} and the fact that $u(\partial \bB) \subset \Sigma$, we see that $\bangle{u_r, u_\theta} = 0$, so $\varphi$ takes real values on $\partial \bB$. Consequently $\varphi$ is a constant, which must be zero since $|u_z|^2$ is integrable on $\bB$. Thus $\varphi$ vanishes identically, and we are done.
\end{proof}
\subsection{Context}
The mapping approach to producing CMC surfaces has a long history. Among the early works are those by Heinz~\cite{Heinz1954}, Wente~\cite{Wente1969} and Hildebrandt~\cite{Hildebrandt1970} on the Plateau problem for CMC disks in $\RR^3$, followed by the proof of the Rellich conjecture by Struwe~\cite{Struwe1985,Struwe1986} and independently Brezis-Coron~\cite{BrezisCoron1984}. In~\cite{Struwe1984}, Struwe applied the perturbation method of Sacks-Uhlenbeck~\cite{Sacks-Uhlenbeck1981} to obtain free boundary ($\tau = 0$) minimal ($H = 0$) disks in $\RR^3$. Later, using the parabolic version of~\eqref{eq:H-cmc-bvp} to regularize sweepouts instead of perturbing the relevant functional, Struwe~\cite{Struwe1988} constructed free boundary CMC disks in $\RR^3$ for \textit{almost every} $H$ up to the mean curvature of the smallest round sphere enclosing $\Sigma$. Building upon joint work with Zhou on CMC spheres~\cite{cz-cmc}, in~\cite{Cheng22} we returned to the Sacks-Uhlenbeck method and improved Struwe's result in~\cite{Struwe1988} to include an index bound, which allowed us to remove the ``almost every'' from his theorem under suitable convexity assumptions on $\Sigma$. Theorem~\ref{thm:main-1} continues this line of work by addressing the case of capillary CMC disks. 

In a similar development, but in the realm of geometric measure theory, Li-Zhou-Zhu~\cite{Li-Zhou-Zhu2021} proved the existence of capillary CMC surfaces for any prescribed $H \in \RR$ and $\tau \in (-1, 1)$ and for generic choices of metrics in a compact 3-manifold $M$ with boundary, pushing further the Almgren-Pitts theory for closed CMC hypersurfaces established by Zhou and Zhu in~\cite{ZhouZhu-cmc,ZhouZhu-pmc}. The case where $H = 0$ and $M$ is a convex domain in $\RR^3$ is also independently obtained by De Masi and De Philippis~\cite{DMDP2021}.

We have limited the above brief summary to just the results that we find most closely related to this paper. For more on the heat flow method developed by Struwe, see for instance his fundamental work on the harmonic map flow~\cite{Struwe1985-heat} and his survey article~\cite{Struwe1988survey}. For more detailed surveys of previous existence results on boundary value problems specifically for \textit{minimal} surfaces, see for instance the introduction of~\cite{Li-Zhou2021} or~\cite{Lin-Sun-Zhou2020}. For previous existence results on \textit{closed} CMC surfaces, we refer to the account in~\cite{ZhouZhu-cmc}. 

\subsection{Outline}\label{subsec:outline}
Besides an increased level of technicality due to additional terms associated with $\tau$, the one place where the argument in this paper differs most from the free boundary case is the removal of isolated boundary singularities for solutions of~\eqref{eq:H-cmc-bvp} with finite Dirichlet integral (Section~\ref{subsec:remove}), which requires a slightly different strategy primarily because the boundary condition in~\eqref{eq:H-cmc-bvp} is not immediately amenable to reflection techniques. The pivotal estimate is Proposition~\ref{prop:decay-ODE}, which is inspired by precedents in other contexts such as~\cite{Parker1996},~\cite{Lin-Wang1998},~\cite{ColdingMinicozzi2008} and~\cite{Lin-Sun-Zhou2020}. One central ingredient in the proof of Proposition~\ref{prop:decay-ODE} is Lemma~\ref{lemm:poincare-q}, which takes the place of the Wirtinger inequalities used in the works just cited, and which we manage to establish without reflecting the solution. 

An outline of the proof of Theorem~\ref{thm:main-1} follows. As already noted, the system~\eqref{eq:H-cmc-bvp} is the Euler-Lagrange equation of the functional $E_{f, \tau}$. To find a critical point of the latter, we adopt the approach in~\cite{Cheng22}, which extends the method used in~\cite{cz-cmc} and draws upon earlier works by Struwe~\cite{Struwe1984,Struwe1988}, Fraser~\cite{Fraser2000}, and of course Sacks-Uhlenbeck~\cite{Sacks-Uhlenbeck1981}. We first consider a family of perturbed functionals defined by
\begin{equation}\label{eq:perturbed-intro}
E_{\ep, p, f, \tau}(u, \gamma) = D_{\ep, p, \tau}(u) + V_f(\gamma) + \tau V_{\Div X}(\gamma),
\end{equation}
where 
\[
D_{\ep, p, \tau}(u) = D_\tau(u)  + \frac{\ep^{p-2}}{p}\int_{\bB} \big(1 + |\nabla u|^2 - 2\tau X(u) \cdot u_{x^1} \times u_{x^2}\big)^{\frac{p}{2}}\ dx,
\]
and we abbreviate $D_{\ep, p, 0}$ as $D_{\ep, p}$. With the help of standard estimates for oblique derivative problems involving the Laplacian, we manage to show as in~\cite{Cheng22} that, provided $p > 2$ is sufficiently close to $2$ depending only on $\Sigma$ and $\tau$, critical points of $E_{\ep, p, f, \tau}$ are smooth up to the boundary and satisfy gradient estimates which are uniform in $\ep$ in the absence of energy concentration. (See Section~\ref{subsec:smoothness} and Section~\ref{subsec:a-priori}.) We eventually fix such a $p$, while $\ep$ is to be sent to zero. In addition to the gradient estimates, by the choice of $f$, the convexity of $\Sigma'$ and the maximum principle, we show that the images of critical points are contained in a bounded neighborhood of $\overline{\Omega'}$. (See Proposition~\ref{prop:maximum-principle}(a).)

We then consider the collection $\cP$ of ``non-contractible'' paths $\gamma: ([0, 1] \times \overline{\bB}, [0, 1] \times \partial \bB) \to (\RR^3, \Sigma)$ that start and end at constant maps, and set
\[
\omega_{\ep, p, f, \tau} = \inf_{\gamma \in \cP}\big[ \sup_{t \in [0, 1]}E_{\ep, p, f, \tau}(\gamma(t), \gamma|_{[0, t]}) \big].
\]
As in~\cite{cz-cmc,Cheng22}, to show that $\omega_{\ep, p, f, \tau}$ is a critical value we need to produce a minimizing sequence of paths in $\cP$ whose near-maximal slices satisfy certain uniform $D_{\ep, p}$-bounds. Such paths are again found by applying Struwe's monotonicity trick, the crucial observation this time being that the function
\begin{equation}\label{eq:mono-object}
r \mapsto \frac{E_{\ep, p, rf, r\tau}(u, \gamma)}{r} = \frac{D_{\ep, p, r\tau}(u)}{r} + V_f(\gamma) + \tau V_{ \Div X}(\gamma)
\end{equation}
is non-increasing. Verifying this monotonicity requires more computation than in the free boundary case, since now $D_{\ep, p, r\tau}$ depends on $r$. (See Lemma~\ref{lemm:D-difference}(a).) Combining the monotonicity trick with essentially the same deformation procedure used in~\cite{cz-cmc} and~\cite{Cheng22} to avoid high-index critical points, and applying a standard pseudo-gradient flow argument, we get that for almost every $r \in (0, 1)$, there exist a sequence $(\ep_j)$ converging to zero, a constant $C_0$ independent of $j$, and, for each $j$, a non-constant critical point $u_j$ of $E_{\ep_j, p, rf, r\tau}$ so that $D_{\ep_j, p}(u_j) \leq C_0$ and that the index of the second variation, denoted $(\delta^2 E_{\ep_j, p, rf, r\tau})_{u_j}$, is at most $1$. (See Proposition~\ref{prop:perturbed-existence}.)

Next we want to pass to the limit as $j \to \infty$, and we only explain the case where energy concentration occurs, since that is the more difficult out of the two cases. At a point of energy concentration, we rescale $u_j$ suitably to obtain a non-constant, finite-energy solution $v:(\overline{\RR^2_+}, \partial \RR^2_+) \to (\RR^3, \Sigma)$ to a boundary value problem of the form~\eqref{eq:H-cmc-bvp} but posed on the upper half plane. As in~\cite{Cheng22}, in this process we have to ensure that no perturbation terms are left in the limit, and that the limiting solution is indeed defined on $\RR^2_+$ as opposed to $\RR^2$. (See Proposition~\ref{prop:scale-comparison}.) We then invoke the removable singularity theorem already mentioned (Corollary~\ref{coro:removal-singularity}), so that pulling $v$ back by a conformal map and applying the maximum principle a second time (Proposition~\ref{prop:maximum-principle}(b)) yields a non-constant smooth solution $\widetilde{v}: (\overline{\bB}, \partial \bB) \to (\overline{\Omega'}, \Sigma)$ of~\eqref{eq:H-cmc-bvp} with $H, \tau$ replaced by $rH, r\tau$. 

To finish, we use the standard logarithmic cutoff trick to pass the index upper bound on $(\delta^2 E_{\ep_j, p, rf, r\tau})_{u_j}$ to the second variation of the unperturbed functional $E_{rf, r\tau}$ at $\widetilde{v}$. The index comparison results in Section~\ref{sec:index-comparison} then allows us to further transfer the index bound to the desired bilinear form. We remark that similar index comparisons have been carried out previously in various different contexts. The earliest example is perhaps found in the work of Ejiri-Micallef~\cite{Ejiri-Micallef2008}. See~\cite{Lima2022,cz-cmc,Cheng22} for a few other examples.  
\subsection{Organization}\label{subsec:organization}
In Section~\ref{sec:variational} we set up notation, define the perturbed functional, and establish the smoothness of its critical points. Section~\ref{sec:PDE} is devoted to various a priori estimates, as well as issues related to branch points and removable singularities. In Section~\ref{sec:index-comparison} we establish the index comparison results mentioned above. In Section~\ref{sec:perturbed-existence} we construct critical points of the perturbed functional with index at most $1$. In Section~\ref{sec:proof} we put everything together and prove Theorem~\ref{thm:main-1}. Appendix~\ref{sec:differentiability} mainly concerns the monotonicity of~\eqref{eq:mono-object}. Appendix~\ref{sec:structure} establishes the form of the Euler-Lagrange equation of $E_{\ep, p, f, \tau}$ in suitable local coordinates. Appendix~\ref{sec:second-variation} contains two lengthy calculations involving the capillary functional, one standard and the other due to Ros-Souam~\cite{RosSouam1997}, reproduced in our notation for the reader's convenience. Appendix~\ref{sec:linear} collects some standard $W^{2, p}$ estimates for a certain type of oblique derivative problems on the upper half plane and in the unit disk. 
\section{The perturbed functional}\label{sec:variational}
\subsection{Notation}\label{subsec:function-spaces}
With $\Sigma, \Sigma'$ and $\Omega, \Omega'$ as defined in the introduction, we let $\bN: \Sigma\to \RR^3$ be the unit normal to $\Sigma$ pointing out of $\Omega$. Also, we fix a pre-compact tubular neighborhood $\cW$ of $\Sigma$ on which the nearest point projection $\Pi$ has bounded derivatives of all orders, and write $\bN(y)$ to mean $\bN(\Pi(y))$ when $y \in \cW$. In addition, we adopt the list of notation found at the start of~\cite[Section 2.1]{Cheng22}, so that for instance $\RR^2_+$ denotes the open upper half plane, $\bB$ the open unit disk in $\RR^2$, and $\bB^+$ and $\bT$ the intersection of $\bB$ with $\RR^2_+$ and $\partial \RR^2_+$, respectively. Finally, having fixed $\Sigma, \Sigma'$ and $\cW$, in all the estimates throughout this paper, we do not explicitly mention their influence on the constants and smallness thresholds.

For $p \in (2, 3]$ to be determined later, we define
\[
\cM_p = \{u \in W^{1, p}(\bB; \RR^3)\ |\ u(x) \in \Sigma \text{ for all }x \in \partial \bB\}.
\]
The space $\cM_p$ is a smooth, closed submanifold of the Banach space $W^{1, p}(\bB; \RR^3)$. Given $u \in \cM_p$, the tangent space to $\cM_p$ at $u$ is given by
\[
T_u\cM_p = \{\varphi \in W^{1, p}(\bB; \RR^3)\ |\ \varphi(x) \in T_{u(x)} \Sigma \text{ for all }x \in \partial \bB\},
\]
which is a closed subspace of $W^{1, p}(\bB; \RR^3)$ with a closed complement. Restricting the $W^{1, p}$-norm to each $T_u \cM_p$ yields a Finsler structure on $\cM_p$. As explained for example in~\cite{ACS2018}, an atlas of coordinate charts for $\cM_p$ can be obtained as follows. Taking the product metric on $\cW$ and interpolating it with the standard metric $g_{\RR^3}$ via a cutoff function, we obtain a smooth Riemannian metric $\overline{g}$ on $\RR^3$ which agrees with $g_{\RR^3}$ away from $\Sigma$, with respect to which $\Sigma$ is totally geodesic. Letting $\exp^{\overline{g}}$ denote the exponential map of $\overline{g}$, then for each $u \in \cM_p$, 
\[
\Theta_u: \varphi \mapsto \exp^{\overline{g}}_{u}(\varphi)
\]
maps a small enough ball around the origin in $T_u \cM_p$ diffeomorphically onto a neighborhood of $u$ in $\cM_p$. 

\begin{rmk}\label{rmk:h-geodesics}
We mention another use of the auxiliary metric $\overline{g}$. Since $\overline{g}$ agrees with $g_{\RR^3}$ away from $\Sigma$, there exists a constant $\delta_0 > 0$ and a smooth function
\[
c: [0, 1] \times \{(y, y') \in \RR^3 \times \RR^3\ |\ |y - y'| < \delta_0\} \to \RR^3
\]
with the following properties: 
\begin{enumerate}
\item[(c1)] $c$ has bounded derivatives of all orders on $[0, 1] \times \{(y, y') \in \RR^3 \times \RR^3\ |\ |y - y'| < \delta_0\} $.
\vskip 1mm
\item[(c2)] $c(\cdot, y, y')$ is the unique minimizing $\overline{g}$-geodesic from $y$ to $y'$.
\vskip 1mm
\item[(c3)] $c(s, y, y') \in \Sigma$ for all $s \in [0, 1]$ and $y, y' \in \Sigma$.
\end{enumerate}
Thanks to the above properties, given $u, v \in \cM_p$ with $\|u - v\|_{\infty} < \delta_0$, if we define $l_{u, v}:[0, 1] \to \cM_p$ by 
\[
[l_{u, v}(s)](x) = c(s, u(x), v(x)),
\]
then $l_{u, v}$ is a $C^{1}$-path. Moreover, for some universal constant $C$, we have the following two estimates for all $u, v$ as above. First, 
\begin{equation}\label{eq:speed-pointwise}
|l_{u, v}'(s)(x)| \leq C|u(x) - v(x)|, \text{ for all }s \in [0, 1] \text{ and } x \in \bB.
\end{equation}
Secondly,
\begin{equation}\label{eq:gradient-pointwise}
|\nabla (l_{u, v}(s))(x)| \leq C(|\nabla u(x)| + |\nabla v(x)|),  \text{ for all }s \in [0, 1] \text{ and almost every } x \in \bB.
\end{equation}
\end{rmk}
We now turn to a discussion of the coordinate charts for $\Sigma$ to be used when deriving boundary estimates. Specifically, given $\delta \in (0, 1)$, with the help of local isothermal coordinates on $\Sigma$, we may find a finite open covering $\cF$ of $\Sigma$ along with some $\rho > 0$ such that, first of all, every $U \in \cF$ is contained in $\cW$. Secondly, for all $y \in \Sigma$, some $U \in \cF$ contains $B_{\rho}(y)$. Finally, and most importantly, for each $U \in \cF$ there exists a diffeomorphism $\Phi = \Phi_U$ which maps it onto an open set $V \subset \RR^3$ and has the properties listed below:
\begin{enumerate}
\item[(i)] $\Phi$ locally flattens $\Sigma$ in the sense that 
\[
\Phi(U \cap \Omega) = V \cap \{y^3 > 0\},\ \ \Phi(U \cap \Sigma) = V \cap \{y^3 = 0\}.
\]
Moreover, we have 
\[
(d\Phi)_{y}(\bN(y)) = -\paop{y^3} \text{ for all }y \in U.
\] 
\vskip 1mm
\item[(ii)] Letting $\Psi$ denote the inverse of $\Phi$, then $(d\Psi)_y\big(  \paop{y^1} \big)$ and $(d\Psi)_y\big(  \paop{y^2} \big)$ are both orthogonal to $\bN(\Psi(y))$ for all $y \in V$, and additionally are orthogonal and of equal length when $y \in V \cap \{y^3 = 0\}$.
\vskip 1mm
\item[(iii)] There exists some $A \in SO(3)$ such that 
\begin{equation}\label{eq:isometry-deviation}
\big|(d\Phi)_{y} - A \big|  + \big|[(d\Phi)_y]^{-1} - A^{-1} \big| \leq \delta, \text{ for all }y \in U.
\end{equation}
Here we have operator norms on the left hand side. Thus, for example, it follows that
\[
\big||(d\Psi)_y(\xi)| - |\xi|\big| \leq \delta |\xi|, \text{ for all }\xi \in \RR^3 \text{ and }y \in V.
\]
\vskip 1mm
\item[(iv)] $\Phi$ and $\Psi$ have bounded derivatives of all orders on $U$ and $V$ respectively. 
\end{enumerate}
\vskip 2mm
Whenever such a chart is introduced, we adopt the following set of notation:
\begin{enumerate}
\vskip 1mm
\item[(n1)] $g_{ij} = \pa{\Psi}{y^i} \cdot \pa{\Psi}{y^j}$. Note from (i) and (ii) that for all $y \in V$, we have 
\[
g_{13}(y) = g_{23}(y) = 0,\ \ g_{33}(y) = 1,
\]
and on $V \cap \{y^3 = 0\}$ we have additionally that
\[
g_{22}(y) = g_{11}(y),\ \ g_{12}(y) = 0.
\]
Also, we use $\Gamma_{jk}^i$ to denote the Christoffel symbols of $g$.
\vskip 1mm
\item[(n2)] $P_{ijk} = \pa{\Psi}{y^i} \cdot \pa{\Psi}{y^j} \times \pa{\Psi}{y^k}$. Also, we let $P^{i}_{jk} = g^{il}P_{ljk}$.
\vskip 1mm
\item[(n3)] $J^i_{j} = -g^{il}P_{lj3} = -P^{i}_{j3}$. Note that for any $i, j \in \{1, 2, 3\}$ we have 
\[
J^{i}_3(y) = J^{3}_i(y) = 0 \text{ for all }y \in V,
\] 
and 
\[
J^{i}_{j}(y) = -J^{j}_{i}(y) \text{ for all }y \in V \cap \{y^3 = 0\}.
\]
\vskip 1mm
\item[(n4)] For a function $h: \RR^3 \to \RR$, we write $\widehat{h}(y)$ to mean $h(\Psi(y))$. For a vector field $X: \RR^3 \to \RR^3$, we let $\widehat{X}(y) = [(d\Psi)_{y}]^{-1}\big(X(\Psi(y))\big)$.
\vskip 1mm
\end{enumerate}
By (iii), there exist dimensional constants $C$ such that for $y \in V$ and $\xi, \eta, \zeta \in \RR^3$ we have
\[
\begin{split}
(1 - C\delta)|\xi|^2 \leq\ & g_{ij}(y)\xi^i \xi^j \leq (1 + C\delta)|\xi|^2,\\
P_{ijk}(y)\xi^i \eta^j \zeta^k \leq\ & (1 + C\delta)|\xi||\eta||\zeta|,
\end{split}
\]
Consequently, for all $\tau \in (-1, 1)$, there exists a choice of $\delta \in (0, 1)$ so that for all $U \in \cF$, with $V = \Phi_U(U)$, the following three estimates hold. First of all,
\begin{equation}\label{eq:norm-close}
\frac{1 - |\tau|}{2}|\xi|^2 \leq g_{ij}(y) \xi^i_\alpha \xi^j_\beta \delta_{\alpha\beta} - \tau P_{ijk}(y)\eta^k \xi^i_\alpha \xi^j_\beta \epsilon_{\alpha\beta} \leq 2|\xi|^2
\end{equation}
for all $y \in V$, $\xi = (\xi^i_{\alpha})_{1 \leq \alpha \leq 2, 1 \leq i \leq 3} \in \RR^{3 \times 2}$, and $\eta \in \RR^3$ such that $|\eta| \leq 1 + \delta$. Secondly, 
\begin{equation}\label{eq:J-close}
 |J(y)\xi| \leq \frac{1 + |\tau|}{2|\tau|}|\xi|, \text{ for all }y \in V,\ \ \xi \in \RR^3,
\end{equation}
where we are viewing $J(y)$ as a linear map from $\RR^3$ to $\RR^3$. Thirdly, 
\begin{equation}\label{eq:metric-close}
\frac{2 + 2|\tau|}{3 + |\tau|}|\xi|^2 \leq g_{ij}(y)\xi^i \xi^j \leq \frac{3 + |\tau|}{2 + 2|\tau|}|\xi|^2, \text{ for all }y \in V,\ \ \xi \in \RR^3.
\end{equation}
\begin{rmk}\label{rmk:rho-tau}
Below, whenever $\tau \in (-1, 1)$ is given, we fix such a $\delta \in (0, 1)$ and write $\rho_\tau$ for the corresponding $\rho$.
\end{rmk}
Besides locally flattening $\Sigma$, we will also have occasion to do the same with the boundary of $\bB$. When the boundary point of interest is $e_1 = (1, 0)$, thought of as $1 \in \CC$, we let $F$ be the conformal map from $(\RR^2_+, \partial \RR^2_+)$ onto $(\bB, \partial \bB \setminus \{-e_1\})$ that sends $2\sqrt{-1}$ to $0$ and sends $0$ to $1$. That is, take
\[
F(z) = \frac{2\sqrt{-1} - z}{2\sqrt{-1} + z}.
\]
Then there exists $r_{\bB} \in (0, \frac{1}{4})$ such that
\begin{equation}\label{eq:F-radius-comparable}
\bB_{\frac{3r}{4}}(e_1) \cap \bB \subset F(\bB_{r}^+) \subset \bB_{\frac{5r}{4}}(e_1) \cap \bB, \text{ for all }r \in (0, r_{\bB}],
\end{equation}
and that, letting $\lambda$ be denote the conformal factor so that $F^*g_{\RR^2} = \lambda^2 g_{\RR^2}$, we have
\begin{equation}\label{eq:lambda-bound}
\frac{3}{4} \leq \lambda(x) \leq \frac{5}{4}, \text{ for all }x \in \bB^+_{r_{\bB}}.
\end{equation}
To flatten $\partial \bB$ near other boundary points than $e_1$, we simply compose $F$ with rotations. 
\subsection{Defining the perturbed functional}\label{subsec:perturbed}
As seen in~\eqref{eq:perturbed-intro}, the perturbed functional consists of energy and volume terms. We start with an explanation of the latter. Let $h$ be a smooth, bounded, real-valued function on $\RR^3$ such that $\int_{\Omega}h > 0$. Below we briefly summarize the properties of the functional $V_h$, discussed in more detail in~\cite{Cheng22}. Given a continuous path $\gamma \in C^{0}([0, 1]; \cM_p)$, there is an induced continuous map
\[
G_\gamma: ([0, 1] \times \overline{\bB}, [0, 1] \times \partial \bB) \to (\RR^3, \Sigma).
\]
Since $p > 2$, when $\gamma$ happens to be a piecewise $C^1$ path, we may define
\begin{equation}\label{eq:volume-definition}
V_{h}(\gamma) = \int_{[0, 1] \times \bB} G_{\gamma}^*(h \cdot \vol_{\RR^3}).
\end{equation}
It is shown in~\cite[Lemma 2.1]{Cheng22} that, with $\delta_0$ as in Remark~\ref{rmk:h-geodesics}, if $\gamma, \widetilde{\gamma}:[0, 1] \to \cM_p$ are two piecewise $C^1$ paths such that $\gamma(0) = \widetilde{\gamma}(0)$, $\gamma(1) = \widetilde{\gamma}(1)$ and $\|\gamma(t) - \widetilde{\gamma}(t)\|_{\infty} < \delta_0$ for all $t \in [0, 1]$, then 
\[
V_h(\gamma) = V_h(\widetilde{\gamma}).
\]
Hence we may extend $V_h$ to a well-defined functional on continuous paths by approximation.
We refer to~\cite[Section 2.2]{Cheng22} for details on this extension, and for the proof of the following properties of the $V_h$ functional thus extended.
\begin{lemm}\label{lemm:volume-properties}
The functional $V_{h}$ has the following properties.
\begin{enumerate}
\item[(a)] Let $\gamma, \widetilde{\gamma} \in C^0([0, 1]; \cM_p)$ be two homotopic paths. Then $V_{h}(\gamma) = V_{h}(\widetilde{\gamma})$.
\vskip 1mm
\item[(b)] Given $\gamma, \widetilde{\gamma} \in C^0([0, 1]; \cM_p)$ such that $\gamma(0)$ and $\widetilde{\gamma}(0)$ are constant maps while $\gamma(1) = \widetilde{\gamma}(1)$, there exists $k \in \ZZ$ depending only on $\gamma$ and $\widetilde{\gamma}$ such that 
\[
V_{h}(\gamma) - V_{h}(\widetilde{\gamma}) = k \int_{\Omega}h,
\]
for any bounded smooth function $h:\RR^3 \to \RR$ with $\int_{\Omega}h > 0$.
\end{enumerate}
\end{lemm}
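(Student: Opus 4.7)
The plan is to reduce both parts to piecewise smooth paths via the approximation scheme defining $V_h$ on continuous paths together with the $h$-geodesic interpolation of Remark~\ref{rmk:h-geodesics}, and then to invoke Stokes' theorem. The key observation throughout is that the top-form $\omega := h\,\vol_{\RR^3}$ on $\RR^3$ is automatically closed, so its pullback by any smooth map is automatically closed on the domain.

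For part (a), given a piecewise smooth homotopy $H: [0,1]^2 \times \overline\bB \to \RR^3$ from $\gamma$ to $\widetilde\gamma$ that fixes endpoints and sends $[0,1]^2 \times \partial\bB$ into $\Sigma$, I would apply Stokes on the 4-dimensional region $[0,1]^2 \times \overline\bB$, yielding $\int_{\partial([0,1]^2 \times \overline\bB)} H^*\omega = 0$. The two faces $\{0,1\} \times [0,1] \times \overline\bB$ contribute $\pm V_h(\gamma)$ and $\pm V_h(\widetilde\gamma)$ with opposite signs; the faces $[0,1] \times \{0,1\} \times \overline\bB$ have $H$ independent of the first variable (endpoints being fixed), so $H^*\omega$ there is of rank at most two and integrates to zero; and the face $[0,1]^2 \times \partial\bB$ maps into the 2-dimensional submanifold $\Sigma$, killing the pulled-back 3-form. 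Hence $V_h(\gamma) = V_h(\widetilde\gamma)$.

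For part (b), I would first reduce to paths sharing the same starting constant map. Since $\Sigma$ is connected, a smooth path $\alpha$ in $\Sigma$ from $y_1$ to $y_2$, where $\gamma(0)$ and $\widetilde\gamma(0)$ are the constant maps with respective values $y_1, y_2 \in \Sigma$, induces a path $c$ of constant maps in $\cM_p$ with $V_h(c) = 0$, because $G_c(s,x) = \alpha(s)$ is independent of $x$ and so the pullback of any 3-form vanishes. Additivity of $V_h$ under concatenation, which is immediate from~\eqref{eq:volume-definition} for piecewise smooth paths and extends to continuous paths by the same approximation, then lets me replace $\widetilde\gamma$ by $c \cdot \widetilde\gamma$ without changing $V_h$, reducing to $\gamma(0) = \widetilde\gamma(0)$. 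Forming the loop $\beta := \gamma \cdot \widetilde\gamma^{-1}$ based at this common constant map with value $y_0 \in \Sigma$, one has $V_h(\beta) = V_h(\gamma) - V_h(\widetilde\gamma)$.

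To evaluate $V_h(\beta)$, I would exploit that $\RR^3$ is contractible and choose a 2-form $\eta$ on $\RR^3$ with $d\eta = \omega$. Stokes on the 3-dimensional cylinder $[0,1] \times \overline\bB$ gives
$$V_h(\beta) = \int_{[0,1] \times \overline\bB} G_\beta^* d\eta = \int_{\partial([0,1] \times \overline\bB)} G_\beta^*\eta.$$
The two caps $\{0,1\} \times \overline\bB$ map to the single point $y_0$ and contribute zero; on the remaining face $[0,1] \times \partial\bB$ the map takes values in $\Sigma$ and collapses both boundary circles $\{0,1\} \times \partial\bB$ to $y_0$, so this restriction factors through a 2-sphere $S^2$ as a continuous map $\tilde G: S^2 \to \Sigma$. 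Therefore $V_h(\beta) = (\deg \tilde G) \int_\Sigma \eta = k \int_\Omega h$ by a final Stokes on $\Omega$, where $k := \deg \tilde G \in \ZZ$ depends only on the homotopy class of $\tilde G$, hence only on $\beta$ and ultimately only on $\gamma$ and $\widetilde\gamma$. The principal technical obstacle I foresee is the rigorous lifting of these piecewise smooth Stokes arguments to the continuous setting, which is precisely the substance of the approximation scheme in~\cite[Section 2.2]{Cheng22}; once that infrastructure is granted, the computations above are routine.
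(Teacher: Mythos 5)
Your argument is correct and is, in essence, the standard Stokes--plus--degree proof that underlies this statement; the paper itself defers to~\cite{Cheng22} (Corollary~2.2 and Lemma~2.3), where the same circle of ideas is used, so there is no competing proof in the present paper to contrast against. A few remarks on where your write-up differs in emphasis from how the cited argument is typically organized. For part~(a) you apply Stokes directly on the $4$-dimensional cylinder $[0,1]^2\times\overline\bB$; the version in~\cite{Cheng22} instead first proves the local invariance statement quoted just above Lemma~\ref{lemm:volume-properties} (two $C^1$-close piecewise-smooth paths with the same endpoints have equal $V_h$, using the interpolation $l_{u,v}$ of Remark~\ref{rmk:h-geodesics} to fill in a canonical thin homotopy) and then subdivides an arbitrary homotopy into small steps. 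The subdivision route is what makes the passage from piecewise-smooth to merely continuous homotopies painless, which is precisely the ``principal technical obstacle'' you flag at the end; your global-Stokes version would need a smoothing-of-homotopies argument to close that gap. Two smaller points worth tightening: in part~(a) the hypothesis must be read as ``homotopic rel endpoints'' (you assume this implicitly; a free homotopy would make the claim false, since $\cM_p$ is path-connected), and in part~(b) your reduction to a common starting point should make explicit that $V_h$ changes sign under reversal of a path, which you use in passing to get $V_h(\beta)=V_h(\gamma)-V_h(\widetilde\gamma)$. With those caveats, the computation $V_h(\beta)=\deg(\widetilde G)\int_\Sigma\eta=k\int_\Omega h$, with $k$ a homotopy-theoretic degree and hence independent of $h$, is exactly the right mechanism.
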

\begin{proof}
See~\cite{Cheng22}, Corollary 2.2 and Lemma 2.3, respectively, for parts (a) and (b).
\end{proof}
\begin{rmk}\label{rmk:volume-l}
In the notation of Remark~\ref{rmk:h-geodesics}, for all $u, v \in \cM_p$ such that $\|u - v\|_{\infty} < \delta_0$ we have by the definition~\eqref{eq:volume-definition} and the estimates~\eqref{eq:speed-pointwise} and~\eqref{eq:gradient-pointwise} that
\[
V_h(l_{u, v}) \leq  C \|h\|_{\infty} \|u - v\|_{\infty} \cdot (\|\nabla u\|_{2}^2 + \|\nabla v\|_{2}^2),
\]
where $C$ depends only on the constants in~\eqref{eq:speed-pointwise} and~\eqref{eq:gradient-pointwise}.
\end{rmk}
Moving on to the term $D_{\ep, p, \tau}$ in the perturbed functional, we let $X: \RR^3 \to \RR^3$ be a smooth, compactly supported vector field such that
\begin{equation}\label{eq:X-requirement-1}
X(y) = \bN(\Pi(y)) \text{ for all }y \text{ in a neighborhood of } \Sigma,
\end{equation}
and that
\begin{equation}\label{eq:X-requirement-2}
|X(y)| \leq 1 \text{ for all }y \in \RR^3.
\end{equation}
Given $\tau \in (-1, 1), \ep \in (0, 1]$ and $p \in (2, 3]$, for $y \in \RR^3$ and $\xi  = (\xi^i_{\alpha})_{1 \leq \alpha \leq 2, 1 \leq i \leq 3} \in \RR^{3 \times 2}$, we let
\[
\begin{split}
F_\tau(y, \xi) =\ & \frac{|\xi|^2}{2} - \frac{\tau}{2}  \mu_{ijk} X^k(y)  \xi^{i}_{\alpha} \xi^j_{\beta}\epsilon_{\alpha\beta},\\
G_\tau(y, \xi) =\ & (1 + 2F_\tau(y, \xi))^{\frac{p}{2}},
\end{split}
\]
where $\mu_{ijk}$ and $\epsilon_{\alpha\beta}$ are alternating and satisfy $\mu_{123} = 1 = \epsilon_{12}$. Then, for $u \in \cM_p$, we define
\[
D_{\ep, p, \tau}(u) =  \int_{\bB} [F_\tau(u, \nabla u) + \frac{\ep^{p-2}}{p}G_\tau(u, \nabla u) ]\ dx,
\]
and write $D_{\ep, p}$ for $D_{\ep, p, 0}$. That is, we set
\[
D_{\ep, p}(u) = \int_{\bB} [\frac{|\nabla u|^2}{2} + \frac{\ep^{p-2}}{p}(1 + |\nabla u|^2)^{\frac{p}{2}}] dx.
\]
Since $|X| \leq 1$ everywhere, from the definition of $F_\tau$ we have
\begin{equation}\label{eq:F-estimates}
\frac{1 - |\tau|}{2}|\xi|^2 \leq F_\tau(y, \xi) \leq \frac{1 + |\tau|}{2}|\xi|^2 \leq |\xi|^2
\end{equation}
for all $y \in \RR^3$ and $\xi \in \RR^{3 \times 2}$. As $p \in (2, 3]$, it follows that
\begin{equation}\label{eq:F-p-estimates}
(1 - |\tau|)^2 \leq  \frac{F_\tau(y, \xi) + \frac{\ep^{p-2}}{p}G_\tau(y, \xi)}{\frac{|\xi|^2}{2} + \frac{\ep^{p-2}}{p}(1 + |\xi|^2)^{\frac{p}{2}}}  \leq (1 + |\tau|)^2 \leq 4,
\end{equation}
again for all $y \in \RR^3$ and $\xi \in \RR^{3 \times 2}$, and, consequently,
\begin{equation}\label{eq:D-p-estimates}
(1- |\tau|)^2\cdot D_{\ep, p}(u) \leq D_{\ep, p, \tau}(u) \leq 4\cdot D_{\ep, p}(u).
\end{equation}
The lemma below collects two more estimates for $D_{\ep, p, \tau}$. Part (b) is used in the proof of Proposition~\ref{prop:mountain-pass}, which is the basic tool with which we extract critical points of the perturbed functional from suitable sequences of paths in $\cM_p$, while part (a) leads to the monotonicity property (see Section~\ref{subsec:extract}) that allows us to produce such paths.
\begin{lemm}\label{lemm:D-difference}
The functional $D_{\ep, p, \tau}$ has the following properties.
\begin{enumerate}
\item[(a)] For all $\tau_0 \in (0, 1)$, there exists a constant $b = b(\tau_0) > 0$ such that for all $\tau_1, \tau_2 \in [-\tau_0, \tau_0]$ satisfying $\tau_1 < \tau_2$ and $\tau_1\cdot \tau_2 > 0$, we have
\begin{equation}\label{eq:D-tau-difference}
\frac{b(\tau_0)}{\tau_0^2} \cdot D_{\ep, p}(u) \leq \frac{1}{\tau_2 - \tau_1}\Big(\frac{D_{\ep, p, \tau_1}(u)}{\tau_1} - \frac{D_{\ep, p, \tau_2}(u)}{\tau_2}\Big).
\end{equation}
\vskip 1mm
\item[(b)] For all $u, v \in \cM_p$ we have
\begin{equation}\label{eq:D-difference}
\big| D_{\ep, p, \tau}(u) - D_{\ep, p, \tau}(v) \big| \leq C [1 + D_{\ep, p}(u) + D_{\ep, p}(v)] \cdot \|u - v\|_{1, p},
\end{equation}
where $C$ depends only on $p$ and the $C^1$ norm of $X$.
\end{enumerate}
\end{lemm}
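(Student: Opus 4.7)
My plan is to derive both estimates from pointwise calculus on the integrands $F_\tau(y,\xi)$ and $G_\tau(y,\xi)$, then integrate over $\bB$. The structural insight for part (a) is that $F_\tau$ is affine in $\tau$ and quadratic in $\xi$, while $G_\tau = (1 + 2F_\tau)^{p/2}$, so all relevant $\tau$-derivatives can be computed in closed form. For part (b), the derivatives of $F_\tau$ and $G_\tau$ with respect to $(y,\xi)$ admit bounds polynomial in $|\xi|$ with exponents matching the $L^p$ and $L^{p/(p-1)}$ scales.

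For part (a), a direct computation gives $\partial_\tau(F_\tau/\tau) = -|\xi|^2/(2\tau^2)$, and using $\tau\,\partial_\tau F_\tau = F_\tau - |\xi|^2/2$,
\[
\partial_\tau\bigl(G_\tau/\tau\bigr) = \frac{(1 + 2F_\tau)^{p/2 - 1}\bigl[(p-2)F_\tau - p|\xi|^2/2 - 1\bigr]}{\tau^2}.
\]
The bracketed quantity is at most $-[(2 - (p-2)\tau_0)|\xi|^2/2 + 1]$ by the upper bound in~\eqref{eq:F-estimates}; since $p \in (2, 3]$ and $\tau_0 < 1$, one has $(p-2)\tau_0 < 1$, making the bracket uniformly negative on $[-\tau_0,\tau_0]$. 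Moreover, $1 + 2F_\tau \geq (1 - \tau_0)(1 + |\xi|^2)$ gives a matching lower bound on $(1 + 2F_\tau)^{p/2 - 1}$. Combining these yields a pointwise inequality
\[
-\partial_\tau\Bigl[F_\tau/\tau + \tfrac{\ep^{p-2}}{p}\, G_\tau/\tau\Bigr] \geq \frac{b(\tau_0)}{\tau^2}\Bigl[\tfrac{|\xi|^2}{2} + \tfrac{\ep^{p-2}}{p}(1 + |\xi|^2)^{p/2}\Bigr]
\]
with $b(\tau_0) > 0$. Integrating over $\bB$ and applying the mean value theorem to $\tau \mapsto D_{\ep, p, \tau}(u)/\tau$ on $[\tau_1, \tau_2]$ (on which $\tau_1 \tau_2 > 0$ keeps the intermediate point $\tau^*$ in a half-line with $|\tau^*| \leq \tau_0$) then yields the claimed inequality.

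For part (b), I would apply the fundamental theorem of calculus along the segment $s \mapsto (v + s(u-v), \nabla v + s(\nabla u - \nabla v))$, treating $F_\tau$ and $G_\tau$ as smooth functions on $\RR^3 \times \RR^{3 \times 2}$. One has $|\partial_y F_\tau(y,\xi)| \leq C\|X\|_{C^1}|\xi|^2$ and $|\partial_\xi F_\tau(y,\xi)| \leq C|\xi|$ from the explicit form of $F_\tau$, while $G_\tau = (1 + 2F_\tau)^{p/2}$ together with $1 + 2F_\tau \leq 2(1 + |\xi|^2)$ gives
\[
|\partial_y G_\tau| \leq C_p \|X\|_{C^1}(1 + |\xi|^2)^{p/2}, \qquad |\partial_\xi G_\tau| \leq C_p (1 + |\xi|^2)^{(p-1)/2}.
\]
Integrating the resulting pointwise bounds, I would apply H\"older's inequality with conjugate exponents $p$ and $p' = p/(p-1)$ to the $\nabla u - \nabla v$ terms, and the Sobolev embedding $W^{1,p}(\bB) \hookrightarrow C^0(\overline{\bB})$ (valid for $p > 2$) to replace $\|u - v\|_\infty$ by $\|u-v\|_{1,p}$. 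The $\ep$-dependence is controlled by rewriting $\ep^{p-2}\bigl(\int (1 + |\nabla u|^2 + |\nabla v|^2)^{p/2}\bigr)^{(p-1)/p} = \ep^{(p-2)/p}\bigl(\ep^{p-2}\int\cdots\bigr)^{(p-1)/p}$, which is bounded by a constant times $1 + D_{\ep,p}(u) + D_{\ep,p}(v)$ since $\ep \leq 1$.

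I expect the main difficulty to lie in the positivity of $b(\tau_0)$ in part (a). Two potential sources of degeneration must be ruled out: the bracket $(p-2)F_\tau - p|\xi|^2/2 - 1$ losing its uniform negativity (prevented by $p \leq 3$ and $\tau_0 < 1$, which together give $(p-2)\tau_0 < 1$), and the prefactor $(1 + 2F_\tau)^{p/2-1}$ losing its $(1+|\xi|^2)^{p/2-1}$ scaling (which forces the factor $(1-\tau_0)^{p/2-1}$ in the final $b(\tau_0)$). Apart from these, the argument is mostly routine bookkeeping, with part (b) requiring additional care to keep the constant $C$ independent of $\ep$.
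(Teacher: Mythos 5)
Your proposal is correct and follows essentially the same route as the paper's Appendix~\ref{sec:differentiability}: you compute the same $\tau$-derivative of $(F_\tau + \frac{\ep^{p-2}}{p}G_\tau)/\tau$, bound it below using the two-sided estimates~\eqref{eq:F-estimates} on $F_\tau$ to extract the factor $(1+|\xi|^2)^{p/2}$, and then pass from the pointwise inequality to~\eqref{eq:D-tau-difference}. The only cosmetic difference is that you invoke the mean value theorem for $\tau \mapsto D_{\ep,p,\tau}(u)/\tau$ where the paper integrates the derivative bound from $\tau_1$ to $\tau_2$ and uses $\frac{1}{\tau_1}-\frac{1}{\tau_2}=\frac{\tau_2-\tau_1}{\tau_1\tau_2}$; both are equally valid and give the same constant $b(\tau_0)=\frac{1-\tau_0}{2}$. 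Part (b) likewise matches the paper's argument (fundamental theorem of calculus along a segment, derivative bounds polynomial in $|\xi|$, H\"older and Sobolev embedding, and absorbing the $\ep^{p-2}$ factor into $D_{\ep,p}$).
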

\begin{proof}
See Appendix~\ref{sec:differentiability}.
\end{proof}

We introduce one last notation before giving the definition of the perturbed functional. Given $u \in \cM_p$, we let
\[
\cE(u) = \{\gamma \in C^0([0, 1]; \cM_p)\ |\ \gamma(0) = \text{constant},\ \gamma(1) = u\}.
\]
That is, $\cE(u)$ is the space of continuous paths in $\cM_p$ leading from a constant map to $u$. It is shown in~\cite[Section 2.3]{Cheng22} that $\cE(u)$ is non-empty.  
\begin{defi}\label{defi:perturbed-definition}
Let $X: \RR^3 \to \RR^3$ and $f: \RR^3 \to \RR$ be smooth and compactly supported functions such that $X$ satisfies~\eqref{eq:X-requirement-1} and~\eqref{eq:X-requirement-2}, and that $\int_{\Omega}f > 0$, unless $f$ is identically zero. Also, suppose $\ep \in (0, 1], p \in (2, 3]$ and $\tau \in (-1, 1)$. For $u \in \cM_p$ and $\gamma \in \cE(u)$, we define 
\[
E_{\ep, p, f, \tau}(u, \gamma) = D_{\ep, p, \tau}(u) + V_{f}(\gamma) + \tau V_{\Div X}(\gamma).
\]
\end{defi}
\begin{rmk}
Further restrictions on $X$ and $f$ will be introduced when necessary. Also, it is to prove Theorem~\ref{thm:main-1} when $H = 0$ that we allow $f$ to be identically zero, in which case it is understood that $V_f(\gamma) = 0$ for all paths $\gamma$ in $\cM_p$. 
\end{rmk}

As explained in~\cite[Section 2.3]{Cheng22}, on each simply-connected neighborhood $\cA$ in $\cM_p$, choosing some $u_0 \in \cA$ and $\gamma_0 \in \cE(u_0)$ as a reference, we may define a functional on $\cA$ by
\[
E_{\ep, p, f, \tau}^\cA(u) = E_{\ep, p, f, \tau}(u, \gamma_0 + l_u),
\]
where $l_u$ is any path in $\cA$ leading from $u_0$ to $u$, and ``$+$'' denotes concatenation of paths. Since $\cA$ is simply-connected, by Lemma~\ref{lemm:volume-properties}(a) the choice of $l_u$ is irrelevant and $E_{\ep, p, f, \tau}^\cA$ is well-defined. We refer to $E_{\ep, p, f, \tau}^\cA$ as the local reduction of $E_{\ep, p, f, \tau}$ induced by $(u_0, \gamma_0)$ on $\cA$. By Lemma~\ref{lemm:volume-properties}(b), switching to a different reference pair alters $E_{\ep, p, f, \tau}^\cA$ by a constant integer multiple of $\int_{\Omega}(f + \tau \Div X )$. Nonetheless, we omit the dependence of $E_{\ep, p, f, \tau}^\cA$ on $(u_0, \gamma_0)$ from the notation since the choice is usually clear from the context. Finally, since $p > 2$, it is standard to verify that $E^{\cA}_{\ep, p, f, \tau}$ is twice continuously (Fr\'echet) differentiable on $\cA$.

We end this section by defining the first and second variation of $E_{\ep, p, f, \tau}$ and stating the first variation formula. Given $u \in \cM_p$, we choose an arbitrary simply-connected neighborhood $\cA$ of it, consider a local reduction $E^{\cA}_{\ep, p, f, \tau}$ on $\cA$, and define the first variation of $E_{\ep, p, f, \tau}$ at $u$ by 
\[
(\delta E_{\ep, p, f, \tau})_u = (\delta E^\cA_{\ep, p, f, \tau})_u.
\]
By Lemma~\ref{lemm:volume-properties}(b) this is well-defined, and we say that $u$ is a critical point of $E_{\ep, p, f, \tau}$ if $(\delta E_{\ep, p, f, \tau})_u = 0$, in which case $(\delta E_{\ep, p, f, \tau}^{\cA})_{u} = 0$ for any local reduction whose domain contains $u$, and we define the second variation of $E_{\ep, p, f, \tau}$ at $u$ by
\[
(\delta^2 E_{\ep, p, f, \tau})_u = (\delta^2 E^\cA_{\ep, p, f, \tau})_u.
\]
Again this does not depend on the choice of local reduction. Before giving the first variation formula as promised (Lemma~\ref{lemm:first-variation}), we recall two standard identities, one for immediate use and the other for Section~\ref{subsec:second-variation}. 
\begin{lemm}\label{lemm:X-identities}
Suppose $p > 2$. For all $u, \varphi \in W^{1, p}(\bB; \RR^3)$, we have
\begin{equation}\label{eq:volume-form-Lie}
\begin{split}
&(\Div X)_u\ \varphi \cdot u_{x^1} \times u_{x^2}\\
=\ &  (\nabla X)_u(\varphi) \cdot u_{x^1} \times u_{x^2} + \varphi \cdot (\nabla X)_u(u_{x^1}) \times u_{x^2} + \varphi \cdot u_{x^1} \times (\nabla X)_u(u_{x^2}),
\end{split}
\end{equation}
and 
\begin{equation}\label{eq:volume-form-Lie-2}
\begin{split}
&- \varphi \cdot \big[ (\nabla^2 X)_{u}(\varphi, u_{x^1}) + (\nabla X)_u(\varphi_{x^1})\big] \times u_{x^2} -  \varphi \cdot  u_{x^1} \times \big[ (\nabla^2X)_u(\varphi, u_{x^2}) +  (\nabla X)_u(\varphi_{x^2}) \big]\\
=\ & \varphi \cdot \big[  (\nabla X)_u(u_{x^1}) \times \varphi_{x^2} + \varphi_{x^1} \times (\nabla X)_u(u_{x^2})  \big]\\ 
&+  (\nabla^2 X)_u(\varphi, \varphi) \cdot u_{x^1} \times u_{x^2} + (\nabla X)_u(\varphi) \cdot (\varphi_{x^1} \times u_{x^2} + u_{x^1} \times \varphi_{x^2})\\
& - [\nabla (\Div X)]_u(\varphi)\ \varphi \cdot u_{x^1} \times u_{x^2} -  (\Div X)_u\ \varphi \cdot (\varphi_{x^1} \times u_{x^2} + u_{x^1} \times \varphi_{x^2}),
\end{split}
\end{equation}
almost everywhere in $\bB$.
\end{lemm}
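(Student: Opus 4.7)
Both identities are purely algebraic and hold pointwise almost everywhere, so the task is to verify them at a fixed $x \in \bB$ where $u, \varphi, \nabla u, \nabla \varphi$ are defined and the compositions $X \circ u$, $(\nabla X) \circ u$, $(\nabla^2 X) \circ u$, $(\Div X) \circ u$, $[\nabla(\Div X)] \circ u$ are all well-defined (justified by $p > 2$ and the Sobolev embedding $W^{1,p}(\bB;\RR^3) \hookrightarrow C^0(\overline{\bB};\RR^3)$). For identity~\eqref{eq:volume-form-Lie}, my plan is to use $\Div X = \tr \nabla X$ together with the elementary multilinear identity
\[
(\tr A)\,\det(a,b,c) \;=\; \det(Aa,b,c) + \det(a,Ab,c) + \det(a,b,Ac),
\]
valid for any $3\times 3$ matrix $A$ and $a, b, c \in \RR^3$ (it is the derivation property of $\tr A$ on $\Lambda^3 \RR^3$). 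Applying this pointwise with $A = (\nabla X)_{u(x)}$, $a = \varphi(x)$, $b = u_{x^1}(x)$, $c = u_{x^2}(x)$, and recalling that $\det(a,b,c) = a \cdot b \times c$, yields~\eqref{eq:volume-form-Lie} immediately.

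For identity~\eqref{eq:volume-form-Lie-2}, the plan is to obtain it by differentiating~\eqref{eq:volume-form-Lie} along the one-parameter family $u_s := u + s\varphi$, with $\varphi$ held fixed as a function on $\bB$. Since $u_s \in W^{1,p}$ for every $s$, identity~\eqref{eq:volume-form-Lie} remains valid almost everywhere with $u_s$ substituted for $u$. Differentiating at $s = 0$ using the chain-rule expansions
\[
(\Div X)_{u_s} = (\Div X)_u + s\,[\nabla(\Div X)]_u(\varphi) + O(s^2),
\]
\[
(\nabla X)_{u_s}(\cdot) = (\nabla X)_u(\cdot) + s\,(\nabla^2 X)_u(\varphi,\cdot) + O(s^2),
\]
together with $(u_s)_{x^\alpha} = u_{x^\alpha} + s\varphi_{x^\alpha}$, produces a linear relation among six groups of first-order terms: three that arise from varying the three factors in $(\Div X)_u\,\varphi \cdot u_{x^1} \times u_{x^2}$, and three that arise from varying each of the three terms on the right side of~\eqref{eq:volume-form-Lie}. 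Isolating on the left the two contributions coming from differentiating the composition $(\nabla X)_u(u_{x^\alpha})$ (namely $(\nabla^2 X)_u(\varphi, u_{x^\alpha}) + (\nabla X)_u(\varphi_{x^\alpha})$, for $\alpha = 1, 2$), and moving all remaining first-order terms to the right, reproduces exactly~\eqref{eq:volume-form-Lie-2}.

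The main obstacle is purely bookkeeping: there are six terms on the right side of~\eqref{eq:volume-form-Lie-2}, each with a specific placement of $\varphi$ versus $\varphi_{x^\alpha}$, a specific slot in the triple product $a \cdot b \times c$, and a specific sign; care is required to verify that the first-order expansion matches them exactly. No new analytic idea is needed beyond the multilinear algebra used for~\eqref{eq:volume-form-Lie}. As an alternative to the differentiation argument, one could verify~\eqref{eq:volume-form-Lie-2} directly by applying the same trace identity twice, but the variational viewpoint clarifies why the precise combination of $(\nabla^2 X)(\varphi,\varphi)$, $[\nabla(\Div X)](\varphi)$, and the mixed derivative terms must appear.
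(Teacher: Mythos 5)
Your proof is correct and follows essentially the same route as the paper: the trace identity you invoke for~\eqref{eq:volume-form-Lie} is exactly the infinitesimal ($t=0$ derivative) form of the paper's identity $(e^{tA})^*\vol_{\RR^3} = \det(e^{tA})\vol_{\RR^3}$, and your derivation of~\eqref{eq:volume-form-Lie-2} by differentiating the first identity along $u_s = u + s\varphi$ at $s=0$ is precisely the paper's argument. The only cosmetic difference is that the paper reduces to $u,\varphi\in C^1(\overline{\bB})$ by density rather than arguing pointwise a.e., but since both identities are purely algebraic your direct pointwise justification works equally well.
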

\begin{proof}
It suffices to prove the identities when $u, \varphi$ are in $C^1(\overline{\bB}; \RR^3)$. The identity~\eqref{eq:volume-form-Lie} results from differentiating 
\[
(e^{tA})^*\vol_{\RR^3} =  \det (e^{tA}) \cdot \vol_{\RR^3}
\]
at $t = 0$ with $A$ being $(\nabla X)_{u}$ evaluated at the point of interest. To obtain~\eqref{eq:volume-form-Lie-2}, we replace $u$ by $u+ t\varphi$ in~\eqref{eq:volume-form-Lie}, differentiate with respect at $ t=0$, and rearrange.
\end{proof}
\begin{lemm}\label{lemm:first-variation}
Letting $h = f + \tau \Div X$, the first variation formula of $E_{\ep, p, f, \tau}$ is given by 
\begin{equation}\label{eq:first-variation}
\begin{split}
(\delta E_{\ep, p, f, \tau})_u(\varphi) =\ & \int_{\bB} [1 + \ep^{p-2}(1 + 2F_{\tau}(u, \nabla u))^{\frac{p}{2} - 1}] \big( \bangle{\nabla u, \nabla \varphi} - \tau  X(u) \cdot (\varphi_{x^1} \times u_{x^2} + u_{x^1} \times \varphi_{x^2}) \big)\\
& - \tau \int_{\bB}[1 + \ep^{p-2}(1 + 2F_{\tau}(u, \nabla u))^{\frac{p}{2} - 1}]   (\nabla X)_u(\varphi) \cdot u_{x^1} \times u_{x^2}\\
& + \int_{\bB} h(u)\varphi \cdot u_{x^1} \times u_{x^2},
\end{split}
\end{equation}
for all $\varphi \in T_u\cM_p$. Consequently, defining $\nabla^\perp u = (-u_y, u_x)$, smooth critical points of $E_{\ep, p, f, \tau}$ satisfy
\begin{equation}\label{eq:EL}
\begin{split}
&\Delta u + (p-2) \cdot \frac{\ep^{p-2}(1 + 2F_\tau(u, \nabla u))^{\frac{p}{2} - 1} }{1 + \ep^{p-2}(1 + 2F_\tau(u, \nabla u))^{\frac{p}{2} - 1}}\cdot \Big\langle \frac{\nabla (F_{\tau}(u, \nabla u))}{1 + 2F_{\tau}(u, \nabla u)}, \nabla u - \tau X(u) \times \nabla^\perp u\Big\rangle\\
=\ &\Big( \frac{h(u)}{1 + \ep^{p-2}(1 + 2F_\tau(u, \nabla u))^{\frac{p}{2} - 1}} - \tau \Div X(u) \Big)\ u_{x^1} \times u_{x^2} \text{ in }\bB,
\end{split}
\end{equation}
along with the boundary condition
\begin{equation}\label{eq:EL-bc}
(u_r + \tau X(u) \times u_\theta) \perp T_u \Sigma \text{ on }\partial \bB.
\end{equation}
\end{lemm}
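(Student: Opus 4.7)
The plan is to compute $\delta E_{\ep, p, f, \tau}$ by differentiating its three pieces separately. Since $E^{\cA}_{\ep, p, f, \tau}$ depends on the path $\gamma$ only through $V_f(\gamma) + \tau V_{\Div X}(\gamma)$, whose variation at the endpoint $u$ is $\int_{\bB} h(u)\,\varphi \cdot u_{x^1}\times u_{x^2}$ by the definition~\eqref{eq:volume-definition} of $V_h$, the real work is to differentiate $D_{\ep, p, \tau}(u)$. Since $G_\tau = (1+2F_\tau)^{p/2}$, we have $\partial G_\tau = p(1+2F_\tau)^{p/2-1}\partial F_\tau$ in either slot, so the contributions from $F_\tau$ and $\tfrac{\ep^{p-2}}{p}G_\tau$ combine with the common prefactor $K := 1 + \ep^{p-2}(1+2F_\tau(u,\nabla u))^{p/2-1}$. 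Using the antisymmetry of $\mu_{ijk}$, one checks $\partial_{\xi^i_\alpha}F_\tau = \xi^i_\alpha - \tau \mu_{ijk}X^k(y)\xi^j_\beta \epsilon_{\alpha\beta}$ and $\partial_{y^k}F_\tau = -\tfrac{\tau}{2}\mu_{ijl}\partial_k X^l(y)\xi^i_\alpha \xi^j_\beta \epsilon_{\alpha\beta}$; after substituting $(u,\nabla u)$ and pairing with $(\varphi,\nabla\varphi)$, the latter collapses to $-\tau (\nabla X)_u(\varphi)\cdot u_{x^1}\times u_{x^2}$ and the former to $\langle \nabla u,\nabla\varphi\rangle - \tau X(u)\cdot(\varphi_{x^1}\times u_{x^2} + u_{x^1}\times \varphi_{x^2})$, yielding~\eqref{eq:first-variation}.

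To extract~\eqref{eq:EL} and~\eqref{eq:EL-bc} for a smooth critical point, I would integrate by parts in~\eqref{eq:first-variation}. The term $\int K\langle \nabla u,\nabla\varphi\rangle$ produces $-\int (K\Delta u + \langle \nabla K,\nabla u\rangle)\cdot\varphi$ plus boundary contribution $\int_{\partial\bB} K u_r\cdot\varphi$. Using antisymmetry of $\mu_{kij}$ one rewrites the cross-product term as $-\tau \int K X^k(u)\mu_{kij}\epsilon_{\alpha\beta}\varphi^i_{x^\alpha} u^j_{x^\beta}$ and integrates by parts against $\varphi^i_{x^\alpha}$; the second-derivative-of-$u$ contribution vanishes against $\epsilon_{\alpha\beta}$, and the terms arising from differentiating $X(u)$ reorganize, using the identity~\eqref{eq:volume-form-Lie}, into $-\tau K(\Div X)_u\,\varphi\cdot u_{x^1}\times u_{x^2} + \tau K(\nabla X)_u(\varphi)\cdot u_{x^1}\times u_{x^2}$. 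The second of these exactly cancels the separate $-\tau K (\nabla X)_u(\varphi)\cdot u_{x^1}\times u_{x^2}$ summand from~\eqref{eq:first-variation}. What remains in the interior, besides $-K\Delta u\cdot\varphi$, is the first-order term $-\tau \partial_{x^\alpha}K \cdot \epsilon_{\alpha\beta}(X(u)\times u_{x^\beta})$ coming from differentiating $K$, together with $-\langle \nabla K,\nabla u\rangle\cdot\varphi$ and $(-\tau K\Div X(u) + h(u))\,u_{x^1}\times u_{x^2}$. The leftover boundary contribution, after the identification $\epsilon_{\alpha\beta}n_\alpha u^j_{x^\beta} = u^j_\theta$ on $\partial\bB$ and cyclic permutation of the triple product, becomes $\tau \int_{\partial\bB} K(X(u)\times u_\theta)\cdot\varphi$.

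Dividing the resulting pointwise interior identity by $K$ and computing $\nabla K/K = (p-2)\cdot\tfrac{\ep^{p-2}(1+2F_\tau)^{p/2-1}}{K}\cdot \tfrac{\nabla F_\tau}{1+2F_\tau}$, I would note that $\epsilon_{\alpha\beta} u_{x^\beta} = -(\nabla^\perp u)_\alpha$ in view of the definition $\nabla^\perp u = (-u_y, u_x)$; this lets me combine $\langle \nabla \ln K,\nabla u\rangle$ with the leftover first-order term into $\langle \nabla \ln K,\,\nabla u - \tau X(u)\times \nabla^\perp u\rangle$, which is precisely the form~\eqref{eq:EL} demands. The boundary condition~\eqref{eq:EL-bc} then drops out from requiring $\int_{\partial\bB} K(u_r + \tau X(u)\times u_\theta)\cdot\varphi = 0$ for every $\varphi \in T_u\cM_p$, i.e.\ for every $\varphi|_{\partial\bB}$ taking values in $T_u\Sigma$. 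The calculation is largely bookkeeping; the main care is required in invoking~\eqref{eq:volume-form-Lie} to arrange the cancellation of the $(\nabla X)_u(\varphi)$ terms, and in tracking the signs in the $\epsilon_{\alpha\beta}$-contractions so that the two first-order pieces on the left of~\eqref{eq:EL} fuse into the compact expression $\nabla u - \tau X(u)\times \nabla^\perp u$.
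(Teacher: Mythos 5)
Your proposal is correct and follows exactly the route the paper indicates: compute the Gateaux derivative of $D_{\ep,p,\tau}$ directly using the common factor $K = 1 + \ep^{p-2}(1+2F_\tau)^{p/2-1}$, add the endpoint variation of the volume terms, then integrate by parts and invoke the first identity of Lemma~\ref{lemm:X-identities} to absorb the $(\nabla X)_u(\varphi)$ term and extract~\eqref{eq:EL} and~\eqref{eq:EL-bc}. The bookkeeping with $\mu_{ijk}$, $\epsilon_{\alpha\beta}$, and the conversion $\epsilon_{\alpha\beta}u_{x^\beta} = -(\nabla^\perp u)_\alpha$ all checks out, so this is a faithful elaboration of the paper's terse one-line argument.
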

\begin{proof}
The formula~\eqref{eq:first-variation} is obtained by direct computation. The equation~\eqref{eq:EL} and boundary condition~\eqref{eq:EL-bc} follow upon integrating by parts with the help of the first identity in Lemma~\ref{lemm:X-identities}.
\end{proof}
\subsection{Regularity of critical points}\label{subsec:smoothness}
Let $H_1 >0$ be chosen so that
\[
\sup_{\RR^3}(|\nabla X| + |\nabla^2 X|) + \sup_{\RR^3}( |f| + |\nabla f|) \leq H_1.
\]
Below, we prove that critical points of $E_{\ep, p, f, \tau}$ are smooth on $\overline{\bB}$ provided $p$ is sufficiently close to $2$. 
\begin{prop}[Interior regularity]\label{prop:interior-regularity}
There exists $p_0 \in (2, 3]$ depending only on $\tau$ such that if $u \in \cM_p$ is a critical point of $E_{\ep, p, f, \tau}$ for some $p \in (2, p_0]$, then $u$ is smooth in $\bB$.
\end{prop}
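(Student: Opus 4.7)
The argument is a bootstrap that exploits the fact that, for $p$ close to $2$, equation~\eqref{eq:EL} is a small second-order perturbation of Poisson's equation. The first step is to rewrite~\eqref{eq:EL} in the form $\Delta u = -A\cdot \nabla^2 u + B$, where the coefficients $A, B$ depend on $u$ and $\nabla u$ and satisfy pointwise bounds
\[
|A| \leq C(p-2), \qquad |B| \leq C(1+|\nabla u|^2).
\]
Expanding $\nabla(F_\tau(u, \nabla u))$ via the chain rule separates two contributions: $\partial_y F_\tau \cdot \nabla u$ is quadratic in $\nabla u$ and feeds into $B$, while $\partial_\xi F_\tau \cdot \nabla^2 u$ is the source of $A$. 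The key pointwise estimate for the latter is
\[
\Big|\frac{\partial_\xi F_\tau}{1+2F_\tau}\cdot \big(\nabla u - \tau X(u)\times \nabla^\perp u\big)\Big| \leq C,
\]
which holds because $|\partial_\xi F_\tau| \leq C|\nabla u|$ and $1+2F_\tau \geq 1+(1-|\tau|)|\nabla u|^2$. Combined with the explicit prefactor $(p-2)$ and the bounded factor $\frac{\ep^{p-2}(1+2F_\tau)^{p/2-1}}{1+\ep^{p-2}(1+2F_\tau)^{p/2-1}} \leq 1$ from~\eqref{eq:EL}, this gives $|A|\leq C(p-2)$. The bound on $B$ is a direct computation using $|u_{x^1}\times u_{x^2}| \leq \frac12|\nabla u|^2$ together with the $C^1$-bounds on $X$ controlling $\partial_y F_\tau$.

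Next, I upgrade $u$ to $W^{2, p/2}_\loc(\bB)$ by an absorption argument. Localizing~\eqref{eq:EL} on a ball $\bB_r(x_0) \Subset \bB$ with a cutoff and applying the interior Calderón--Zygmund $W^{2, p/2}$ estimate for $\Delta$ (rigorously justified via difference quotients of $u$, which lie in $W^{1, p}_\loc$), I obtain
\[
\|\nabla^2 u\|_{L^{p/2}(\bB_r)} \leq C_{p/2}\cdot C\cdot (p-2)\,\|\nabla^2 u\|_{L^{p/2}(\bB_{2r})} + C_{p/2}\big(1+\|\nabla u\|_{L^p(\bB_{2r})}^2\big).
\]
Choosing $p_0$ close enough to $2$ so that $C_{p/2}\cdot C\cdot (p-2) < 1/4$ for $p \in (2, p_0]$ and performing a standard nested-ball iteration absorbs the first term, yielding $u \in W^{2, p/2}_\loc(\bB)$.

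From here the argument is a standard bootstrap. Sobolev embedding in dimension $2$ gives $W^{2, p/2}_\loc \hookrightarrow W^{1, q_1}_\loc$ with $q_1 = 2p/(4-p)$; a direct computation shows $q_1 - p = p(p-2)/(4-p) > 0$, so $|\nabla u|^2 \in L^{q_1/2}_\loc$ with $q_1/2 > p/2$. Repeating the absorption argument produces $u \in W^{2, q_1/2}_\loc$, and iterating finitely many times (the constants $C_q$ remain bounded as $q$ traverses the relevant range, so the smallness requirement on $p-2$ can be arranged uniformly) we reach $u \in W^{2, q}_\loc$ for some $q > 2$. Morrey's embedding then gives $u \in C^{1, \alpha}_\loc$, at which stage~\eqref{eq:EL} may be viewed as a linear elliptic equation with Hölder-continuous coefficients, and classical Schauder theory followed by a further bootstrap delivers $u \in C^\infty(\bB)$. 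The main technical obstacle is the first step: verifying that the non-Laplacian second-order contribution to~\eqref{eq:EL} collapses to an $O(p-2)$ coefficient times $|\nabla^2 u|$, which hinges on the $(1+2F_\tau)$ denominator absorbing the $|\nabla u|$-growth in $\partial_\xi F_\tau$.
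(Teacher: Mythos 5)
Your recasting of \eqref{eq:EL} as $\Delta u = -A\cdot\nabla^2 u + B$ with $|A| \le C_\tau(p-2)$ and $|B| \le C(1+|\nabla u|^2)$ is correct; it is essentially the interior content of Lemma~\ref{lemm:coefficient-bounds}(d), and the observation that the $(1+2F_\tau)$ denominator absorbs the linear growth of $\partial_\xi F_\tau$ is exactly right. The problem is your initial absorption step at exponent $q = p/2$. The Calder\'on--Zygmund constant $C_q$ on $L^q(\RR^2)$ degenerates like $(q-1)^{-1}$ as $q\to 1^+$, so with $q = p/2$ one has $C_{p/2}\asymp (p-2)^{-1}$, and the product $C_{p/2}(p-2)$ is bounded away from $0$ uniformly in $p$. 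Taking $p_0\to 2$ therefore does \emph{not} make $C_{p/2}C_\tau(p-2)$ small; whether it falls below $1/4$ depends on the size of $C_\tau$ (fixed by $\Sigma$, $X$, $\tau$), which you cannot influence. There is also a circularity in ``rigorously justified via difference quotients'': a difference-quotient argument closed through Calder\'on--Zygmund at an exponent $q\neq 2$ already presupposes $\nabla^2 u\in L^q_\loc$ to justify the operator norm bound uniformly in $h$. Difference quotients yield a genuine a priori gain only at $q=2$, where the leading part produces an energy identity that bypasses Calder\'on--Zygmund.

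The remedy, and what Sacks--Uhlenbeck and the paper's own boundary proof (Proposition~\ref{prop:boundary-regularity}) actually do, is to get the first derivative gain from the \emph{divergence} form of the Euler--Lagrange equation rather than from \eqref{eq:EL}. Plugging test functions of the form $(u-\bar u)\zeta^2$ into the weak form (the interior analogue of \eqref{eq:E-L-fermi-2}) gives a Caccioppoli inequality via the ellipticity in \eqref{eq:leading-coefficient}, which holds uniformly in $p$; a standard difference-quotient argument as in~\cite[Chapter~4, Section~5]{LadyzhenskayaUraltseva1968} then yields $\nabla^2 u\in L^2_\loc$ with no smallness of $p-2$. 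This is also logically necessary because \eqref{eq:EL} was obtained by integrating by parts in \eqref{eq:first-variation} and is not a priori available for a $W^{1,p}$ critical point. Once $\nabla u \in L^4_\loc$, your structure $|A|\le C_\tau(p-2)$ is the right tool: invert the leading matrix for $p-2$ small and iterate Calder\'on--Zygmund at exponents $q\ge 2$, where the constants are uniformly bounded; from there your Sobolev bootstrap to $C^{1,\alpha}$ and then $C^\infty$ is fine. The $(p-2)$-smallness is made for the bootstrap, not for the first gain of regularity, precisely because it trades against a constant that blows up at $q=p/2$.
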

\begin{proof}
The divergence and non-divergence forms of the Euler-Lagrange equation of $E_{\ep,p,f, \tau}$, derived in Lemma~\ref{lemm:first-variation}, allows us to obtain interior regularity in essentially the same way as in~\cite{Sacks-Uhlenbeck1981}. The details are omitted.
\end{proof}
We next address boundary regularity. Suppose $u \in \cM_p$ is a critical point of $E_{\ep, p, f, \tau}$ and take $x_0 \in \partial \bB$, which we assume without loss of generality to be $e_1 := (1, 0)$. To keep track of the scale on which we have estimates, we choose $r \in (0, r_\bB)$ and $L > 0$ such that 
\begin{equation}\label{eq:W1p-scale-invariant-bound}
r^{p-2} \int_{\bB_r(e_1) \cap \bB} |\nabla u|^p \leq L^p.
\end{equation}
Then, with $\mu = 1 - \frac{2}{p}$, Sobolev embedding gives
\[
r^{\mu}[u]_{\mu; \bB_r(e_1) \cap \bB} \leq S_pr^{1 - \frac{2}{p}} \|\nabla u\|_{p; \bB_r(e_1) \cap \bB}.
\]
Therefore, given $\sigma \in (0, 1)$, whenever $z \in \bB_{\sigma r}(e_1) \cap \bB$, we deduce from~\eqref{eq:W1p-scale-invariant-bound} that
\[
|u(z) - u(e_1)| \leq |z - e_1|^\mu [u]_{\mu; \bB_{r}(e_1) \cap \bB} \leq  S_{p}\sigma^\mu L.
\]
Thus, choosing $\sigma_1 \in (0, \frac{1}{4})$ such that
\begin{equation}\label{eq:sigma-1-definition}
S_p (2\sigma_1)^\mu L < \rho_{\tau},
\end{equation}
where $\rho_{\tau}$ is the constant defined in Remark~\ref{rmk:rho-tau}, we see that $u(\bB_{2\sigma_1 r}(e_1) \cap \bB)$ is contained in some $U \in \cF$, so that we may introduce a chart $\Phi: U \to V$ as described in Section~\ref{subsec:function-spaces} and consider the objects $\Psi, g, P, \widehat{X}$, and so on. In addition, we define
\[
B_y = [(d\Psi)_y]^{-1} \circ (\nabla X)_{\Psi(y)} \circ (d\Psi)_y \ \ \text{ and }\ \   C_y = [(d\Psi)_y]^{-1} \circ (\nabla^2 \Psi)_y,
\]
so that each $B_y$ is a linear map from $\RR^3$ to $\RR^3$, and each $C_y$ is a symmetric bilinear map on $\RR^3$ taking values in $\RR^3$. Note that since $X$ has length at most $1$ and agrees with $\bN \circ \Pi$ on a neighborhood of $\Sigma$, we have by~\eqref{eq:isometry-deviation} that
\begin{equation}\label{eq:X-hat-length}
|\widehat{X}(y)| \leq 1 + \delta, \text{ for all }y \in V,
\end{equation}
and that $\widehat{X}= -\paop{y^3}$ near $\{y^3 = 0\}$ in $V$. Regarding the map $u$ itself, we let
\[
\widehat{u}(x) = \Phi(u(x)), \text{ for }x \in \bB_{2\sigma_1r}(e_1) \cap \bB,
\]
and, with $F$ being the conformal map introduced at the end of Section~\ref{subsec:function-spaces},
\begin{equation}\label{eq:v-definition}
v(x) = \widehat{u}(F(rx))\ \ \text{ and }\ \  \blambda(x) = \lambda(rx), \text{ for }x \in \bB^+_{\sigma_1},
\end{equation}
where we recall that $\lambda$ is the conformal factor such that $F^*g_{\RR^2} = \lambda^2 g_{\RR^2}$. Finally we write 
\[
\bep = \frac{\ep}{r}.
\]

To express the Euler-Lagrange equation in terms of $v$ in a more manageable way, for $(x, z, \xi) \in \bB_{\sigma_1}^+ \times V \times \RR^{3 \times 2}$, we define
\[
N(x, z, \xi) = (\blambda(x))^{-2}\big( g_{ij}(z) \delta_{\alpha\beta} - \tau  P_{ijk}(z)\widehat{X}^k(z)  \epsilon_{\alpha\beta} \big) \xi^i_{\alpha} \xi^j_{\beta},
\]
and let
\begin{equation}\label{eq:E-L-coefficients}
\begin{split}
A_{i\alpha}(x, z, \xi) = \ & [1 + (\bep)^{p-2}(r^2 + N)^{\frac{p}{2} - 1}] \cdot \big( g_{ij}(z)\xi^j_\alpha - \tau P_{ijk}(z)\widehat{X}^k(z)\xi^j_\beta \epsilon_{\alpha\beta} \big),\\
A_i(x, z, \xi)=\ & \frac{1}{2}\widehat{h}(z) P_{ijk}(z) \xi^j_\alpha \xi^k_\beta \epsilon_{\alpha\beta}+ [1 + (\bep)^{p-2}(r^2 + N)^{\frac{p}{2} - 1}] \cdot \frac{1}{2}g_{jk,i}(z)\xi^j_{\alpha} \xi^k_{\beta}\delta_{\alpha\beta}\\
& -\tau [1 + (\bep)^{p-2}(r^2 + N)^{\frac{p}{2} - 1}] P_{mjk}(z) \big( C^m_{il}(z) \widehat{X}^k(z)\xi^l_\alpha\xi^j_\beta + \frac{1}{2}B_{im}(z) \xi^j_\alpha \xi^k_\beta\big) \epsilon_{\alpha\beta},
\end{split}
\end{equation}
where $\widehat{h} = (f + \tau \Div X) \circ \Psi$. For use in the non-divergence form of the Euler-Lagrange equation, we define
\begin{equation}\label{eq:E-L-coefficients-2}
\begin{split}
E^{ij}_{\alpha\beta}(x, z, \xi) =\ & \frac{g^{il}(z)\pa{A_{l\alpha}}{\xi^j_\beta}(x, z, \xi)}{1 + \bep^{p-2}(r^2 + N)^{\frac{p}{2}-1}} - \big(  \delta_{ij}\delta_{\alpha\beta}  - \tau P^i_{jk}(z)\widehat{X}^k(z)\epsilon_{\alpha\beta} \big),
\\
b_i(x, z, \xi) = \ & \frac{g^{il}(z)\big(A_l(x, z, \xi) - \pa{A_{l\alpha}}{z^k}(x, z, \xi)\xi^{k}_{\alpha} - \pa{A_{l\alpha}}{x^\alpha}(x, z, \xi) \big)}{1 + \bep^{p-2}(r^2 + N)^{\frac{p}{2} - 1}}.
\end{split}
\end{equation}
The next two lemmas are both proved in Appendix~\ref{sec:structure}, with Lemma~\ref{lemm:coefficient-bounds} containing estimates on the coefficients that are crucial to the proof of boundary regularity.
\begin{lemm}\label{lemm:first-variation-simplified}
Let $u \in \cM_p$ be a critical point of $E_{\ep, p, f, \tau}$ for some $p \in (2, p_0]$. Then, in the above notation and with $v$ as defined by~\eqref{eq:v-definition} and $\sigma_1$ chosen to satisfy~\eqref{eq:sigma-1-definition}, we have
\begin{equation}\label{eq:E-L-fermi-2}
\int_{\bB^+_{\sigma_1}} A_{i\alpha}(x, v, \nabla v) \partial_{\alpha}\psi^i + A_i(x, v, \nabla v) \psi^i = 0,
\end{equation}
for all $\psi \in W^{1, p}(\bB_{\sigma_1}^+;\RR^3)$ such that $\psi = 0$ on $\partial \bB_{\sigma_1} \cap \RR^2_+$ and $\psi^3 = 0$ on $\bT_{\sigma_1}$. Moreover, pointwise in $\bB^+_{\sigma_1}$ we have
\begin{equation}\label{eq:E-L-fermi-non-div}
\Delta v^i + E_{\alpha\beta}^{ij}(x, v, \nabla v)\partial_{\alpha\beta}v^j = b_i(x, v, \nabla v).
\end{equation}
\end{lemm}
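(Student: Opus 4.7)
The plan is to derive both \eqref{eq:E-L-fermi-2} and \eqref{eq:E-L-fermi-non-div} by pulling back the global Euler--Lagrange system from Lemma~\ref{lemm:first-variation} through the composition of the coordinate chart $\Phi$ (on the target side) and the conformal rescaling $x \mapsto F(rx)$ (on the domain side), then reorganizing the resulting expression into the divergence and non-divergence forms prescribed by the definitions \eqref{eq:E-L-coefficients} and \eqref{eq:E-L-coefficients-2}.

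For the weak form \eqref{eq:E-L-fermi-2}: given $\psi$ as in the statement, I would construct a corresponding tangent vector field $\varphi \in T_u \cM_p$ by setting $\varphi(y) = (d\Psi)_{\widehat{u}(y)}\bigl(\psi(\tfrac{1}{r}F^{-1}(y))\bigr)$ on $F(\bB_r^+)$ and extending by zero. Since $\Phi$ flattens $\Sigma$ to $\{y^3 = 0\}$ with $(d\Phi)(\bN) = -\partial/\partial y^3$, the hypothesis that $\psi^3$ vanishes on $\bT_{\sigma_1}$ translates exactly into $\varphi \in T_u\Sigma$ along $\partial \bB$, so $\varphi$ is an admissible test vector field in \eqref{eq:first-variation}. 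Each term in \eqref{eq:first-variation} is then rewritten in local coordinates: $\langle \nabla u, \nabla \varphi \rangle$ produces the leading $g_{ij}(v)\partial_\alpha v^j$ factor and also Christoffel-type contributions from $\nabla^2 \Psi$ (captured by $C$); the triple product $u_{x^1} \times u_{x^2}$ paired against $\partial \Psi/\partial y^k$ yields the tensor $P_{ijk}(v)$; contracting with $X(u)$ gives the $\widehat{X}^k(v)$ factor appearing in $N$ and $A_{i\alpha}$; the term $(\nabla X)_u(\varphi)$ generates $B_v$; and $h(u)\varphi \cdot u_{x^1} \times u_{x^2}$ becomes the first term in $A_i$. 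The conformal rescaling by $F \circ (r\cdot)$ introduces $\blambda^2 r^2$ from the area element and cancels $\blambda^{-2} r^{-2}$ from $|\nabla u|^2$, leaving $N$ with a single $\blambda^{-2}$ and converting $\ep^{p-2}(1+2F_\tau)^{p/2-1}$ into $\bep^{p-2}(r^2 + N)^{p/2-1}$. Matching terms against \eqref{eq:E-L-coefficients} gives exactly~\eqref{eq:E-L-fermi-2}.

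For the non-divergence form \eqref{eq:E-L-fermi-non-div}: once \eqref{eq:E-L-fermi-2} is established, interior regularity from Proposition~\ref{prop:interior-regularity} allows integration by parts against test functions compactly supported in $\bB_{\sigma_1}^+$, yielding the pointwise identity $\partial_\alpha A_{i\alpha}(x, v, \nabla v) = A_i(x, v, \nabla v)$. Expanding by the chain rule gives
\[
\pa{A_{i\alpha}}{\xi^j_\beta}(x, v, \nabla v)\, \partial_{\alpha\beta}v^j = A_i - \pa{A_{i\alpha}}{x^\alpha} - \pa{A_{i\alpha}}{z^k}\partial_\alpha v^k.
\]
The second-order coefficient splits as $[1 + \bep^{p-2}(r^2+N)^{p/2-1}]\bigl(g_{ij}(v)\delta_{\alpha\beta} - \tau P_{ijk}(v)\widehat{X}^k(v)\epsilon_{\alpha\beta}\bigr)$ plus a remainder coming from differentiating the bracketed factor, and the first part, when contracted with $g^{il}(v)$, produces $[1 + \bep^{p-2}(r^2+N)^{p/2-1}]\Delta v^l$ plus the $\tau P^l_{jk}\widehat{X}^k \epsilon_{\alpha\beta}\partial_{\alpha\beta}v^j$ contribution that gets absorbed into $E^{ij}_{\alpha\beta}$. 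Dividing through by $1 + \bep^{p-2}(r^2+N)^{p/2-1}$ and rearranging yields precisely \eqref{eq:E-L-fermi-non-div} with the coefficients in \eqref{eq:E-L-coefficients-2}.

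The main obstacle is purely bookkeeping: tracking every Christoffel-like term arising from $\nabla^2 \Psi$, every $\nabla X$-derivative that has to be conjugated by $d\Psi$ to produce $B$, and every conformal factor arising from the rescaling, while verifying that $\blambda$ only enters in the single location $\blambda^{-2}$ inside $N$. Because each piece is a direct computation from \eqref{eq:first-variation} and the definitions, no conceptual difficulty arises beyond organizing these terms into the precise forms \eqref{eq:E-L-coefficients}--\eqref{eq:E-L-coefficients-2}; this is why the detailed verification is deferred to Appendix~\ref{sec:structure}.
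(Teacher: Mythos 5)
Your proposal takes essentially the same route as the paper's proof in Appendix~\ref{sec:structure}: construct the admissible variation $\varphi = (d\Psi)_{\widehat{u}}\bigl(\psi(\tfrac{1}{r}F^{-1}(\cdot))\bigr)$ extended by zero, rewrite each term of~\eqref{eq:first-variation} in the chart (producing $g$, $P$, $C$, $B$, $\widehat{X}$, $\widehat{h}$), pull back by the conformal map $F(r\,\cdot)$ to match~\eqref{eq:E-L-coefficients}, and then pass to the non-divergence form via interior smoothness and the chain rule. One small correction in the last step: the $\tau P^i_{jk}\widehat{X}^k\epsilon_{\alpha\beta}$ piece of the leading coefficient is \emph{not} absorbed into $E^{ij}_{\alpha\beta}$ — it is explicitly subtracted in~\eqref{eq:E-L-coefficients-2} — and the reason this does no harm is the paper's observation that $\epsilon_{\alpha\beta}\partial_{\alpha\beta}v^j = 0$ by equality of mixed partials, which your write-up should state.
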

\begin{proof}
See Appendix~\ref{sec:structure}.
\end{proof}
\begin{lemm}\label{lemm:coefficient-bounds}
For all $x \in \bB^+_{\sigma_1}$, $z \in V$ and $\xi \in \RR^{3 \times 2}$, the following bounds hold:
\vskip 1mm
\begin{enumerate}
\item[(a)] 
\begin{equation}\label{eq:N-norm}
\frac{1-|\tau|}{4}|\xi|^2 \leq  N(x, z, \xi) \leq 4|\xi|^2.
\end{equation}
\vskip 1mm
\item[(b)] 
\begin{equation}\label{eq:leading-coefficient}
\begin{split}
A_{i\alpha}\xi^{i}_\alpha \geq\ & \frac{(1 - |\tau|)^2}{8} \cdot [1 + (\bep)^{p-2}(r^2 + |\xi|^2)^{\frac{p}{2} - 1}]  |\xi|^2,\\
\pa{A_{i\alpha}}{\xi^{j}_\beta}\eta^i_\alpha \eta^j_\beta \geq\ & \frac{(1 - |\tau|)^2}{8} \cdot [1 + (\bep)^{p-2}(r^2 + |\xi|^2)^{\frac{p}{2} - 1}]  |\eta|^2.
\end{split}
\end{equation}
\vskip 1mm
\item[(c)] 
\begin{equation}\label{eq:coefficient-bounds}
\begin{split}
|A_{i\alpha}| + |\xi| \Big|  \pa{A_{i\alpha}}{\xi^j_\beta}  \Big| + \Big|  \pa{A_{i\alpha}}{z^m}  \Big| + \Big| \pa{A_{i\alpha}}{x^\gamma} \Big| \leq\ & C_{\tau, H_1} [1 + (\bep)^{p-2}(r^2 + |\xi|^2)^{\frac{p}{2} - 1}] |\xi|,\\
|A_{i}| + |\xi| \Big|  \pa{A_{i}}{\xi^j_\beta}  \Big| + \Big|  \pa{A_{i}}{z^m}  \Big| + \Big| \pa{A_i}{x^\gamma} \Big| \leq\ & C_{\tau, H_1}[1 + (\bep)^{p-2}(r^2 + |\xi|^2)^{\frac{p}{2} - 1}] |\xi|^2.
\end{split}
\end{equation}
\vskip 1mm
\item[(d)] 
\begin{equation}\label{eq:E-b-e-bound}
\begin{split}
|E^{ij}_{\alpha\beta}| \leq\ & C_{\tau}(p-2),\ \ |b_i| \leq C_{\tau, H_1}(|\xi| + |\xi|^2).
\end{split}
\end{equation}
\end{enumerate}
\end{lemm}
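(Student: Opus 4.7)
All four parts are direct calculations starting from the definitions~\eqref{eq:E-L-coefficients} and~\eqref{eq:E-L-coefficients-2}, with the main analytic inputs being the chart estimate~\eqref{eq:norm-close}, the bound $|\widehat{X}| \leq 1+\delta$ from~\eqref{eq:X-hat-length}, the conformal factor bound $\tfrac{3}{4}\leq \blambda \leq \tfrac{5}{4}$ from~\eqref{eq:lambda-bound}, and the uniform bounds from $H_1$ on $X$ and $f$. Throughout, it is convenient to write $\omega(\xi) = 1 + \bep^{p-2}(r^2 + N(x,z,\xi))^{\frac{p}{2}-1}$ and $\phi_{l\alpha}(\xi) = g_{lj}(z)\xi^j_\alpha - \tau P_{ljk}(z)\widehat{X}^k(z)\xi^j_\beta\epsilon_{\alpha\beta}$, so that $A_{l\alpha} = \omega \cdot \phi_{l\alpha}$, and it is also helpful to record the identity $\pa{N}{\xi^j_\beta} = 2\blambda^{-2}\phi_{j\beta}(\xi)$.

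For part~(a), I would substitute $\eta = \widehat{X}(z)$ into~\eqref{eq:norm-close} to get $\tfrac{1-|\tau|}{2}|\xi|^2 \leq \blambda^2 N \leq 2|\xi|^2$, then divide by $\blambda^2$ using $\blambda^{-2} \in [\tfrac{16}{25}, \tfrac{16}{9}]$. For the first inequality in part~(b), write $A_{i\alpha}\xi^i_\alpha = \omega\cdot \blambda^2 N$ and apply part~(a); the only subtlety is replacing $(r^2+N)^{\frac{p}{2}-1}$ by $(r^2+|\xi|^2)^{\frac{p}{2}-1}$, which costs a factor $((1-|\tau|)/4)^{\frac{p}{2}-1}$, and since the exponent lies in $(0,\tfrac{1}{2}]$ and the base in $(0,1)$, this factor is bounded below by $(1-|\tau|)/4$. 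For the second inequality, the product rule gives
\[
\pa{A_{i\alpha}}{\xi^j_\beta}\eta^i_\alpha\eta^j_\beta = \omega\bigl(g_{ij}\delta_{\alpha\beta} - \tau P_{ijk}\widehat{X}^k\epsilon_{\alpha\beta}\bigr)\eta^i_\alpha\eta^j_\beta + \pa{\omega}{\xi^j_\beta}\eta^j_\beta \cdot \phi_{i\alpha}\eta^i_\alpha,
\]
the first term is bounded below by $\omega\cdot \tfrac{1-|\tau|}{2}|\eta|^2$ via~\eqref{eq:norm-close}, and the second term equals $\bep^{p-2}\tfrac{p-2}{2}(r^2+N)^{\frac{p}{2}-2}\cdot 2\blambda^{-2}(\phi_{j\beta}\eta^j_\beta)^2 \geq 0$, so the same replacement argument as above finishes the proof.

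Part~(c) is essentially bookkeeping. Each of $g_{ij},\, P_{ijk},\, \widehat{X}^k,\, C^m_{il},\, B_{im},\, \widehat{h}$ is smooth with bounds depending only on $\Sigma,\tau$ and, where relevant, $H_1$, while $A_{i\alpha}$ and $A_i$ are homogeneous in $\xi$ of degrees one and two respectively once $\omega$ is treated as a factor. Combining the product and chain rules with $|\pa{\omega}{\xi^j_\beta}| \leq C_\tau \bep^{p-2}(r^2+|\xi|^2)^{\frac{p}{2}-2}|\xi|$ and the equivalence of $r^2+N$ with $r^2+|\xi|^2$ established in~(a), one reads off each of the listed bounds.

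Part~(d) is the heart of the lemma and the main obstacle, since the claimed bound on $E^{ij}_{\alpha\beta}$ requires a factor $(p-2)$ that is invisible in the naive estimate. The key observation is that, upon expanding $A_{l\alpha} = \omega\,\phi_{l\alpha}$, the product rule gives
\[
\pa{A_{l\alpha}}{\xi^j_\beta} = \omega\bigl(g_{lj}\delta_{\alpha\beta} - \tau P_{ljk}\widehat{X}^k\epsilon_{\alpha\beta}\bigr) + \phi_{l\alpha}\pa{\omega}{\xi^j_\beta}.
\]
Contracting with $g^{il}$ and dividing by $\omega$, the first term collapses to exactly $\delta^i_j\delta_{\alpha\beta} - \tau P^i_{jk}\widehat{X}^k\epsilon_{\alpha\beta}$, which is precisely the quantity subtracted off in the definition of $E^{ij}_{\alpha\beta}$, leaving the cancellation
\[
E^{ij}_{\alpha\beta} = \frac{g^{il}\phi_{l\alpha}}{\omega}\cdot \pa{\omega}{\xi^j_\beta}.
\]
Since $\pa{\omega}{\xi^j_\beta}$ carries an explicit factor $(p-2)/2$ from differentiating $(r^2+N)^{\frac{p}{2}-1}$, and since $\bep^{p-2}(r^2+N)^{\frac{p}{2}-2}|\xi|^2 \leq C_\tau \bep^{p-2}(r^2+N)^{\frac{p}{2}-1} \leq C_\tau \omega$, the bound $|E^{ij}_{\alpha\beta}| \leq C_\tau(p-2)$ follows. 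The estimate on $b_i$ is then immediate from part~(c): $|A_l|$, $|\pa{A_{l\alpha}}{z^k}\xi^k_\alpha|$, and $|\pa{A_{l\alpha}}{x^\alpha}|$ are each controlled by $C_{\tau,H_1}\,\omega\cdot(|\xi|+|\xi|^2)$, and dividing by $\omega \geq 1$ gives the required $C_{\tau,H_1}(|\xi| + |\xi|^2)$.
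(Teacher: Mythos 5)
Your proof is correct and takes essentially the same approach as the paper's: parts (a) and (b) mirror the paper's argument (deriving the bound $\tfrac{1-|\tau|}{4}(r^2+|\xi|^2)^{\frac{p}{2}-1}\leq (r^2+N)^{\frac{p}{2}-1}$ and noting the non-negativity of the extra term in $\pa{A_{i\alpha}}{\xi^j_\beta}$), while for parts (c) and (d) — whose details the paper omits — you correctly supply the computation, in particular identifying the cancellation $E^{ij}_{\alpha\beta}=\omega^{-1}g^{il}\phi_{l\alpha}\,\pa{\omega}{\xi^j_\beta}$ as the source of the $(p-2)$ factor.
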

\begin{proof}
See Appendix~\ref{sec:structure}.
\end{proof}
\begin{prop}[Boundary regularity]\label{prop:boundary-regularity}
There exists $p_1 \in (2, p_0]$ depending only on $\tau$ so that if $u \in \cM_p$ is a critical point of $E_{\ep, p, f, \tau}$ for some $p \in (2, p_1]$ and if $x_0 \in \partial \bB$ is any boundary point, then $u$ is smooth on a neighborhood of $x_0$ relative to $\overline{\bB}$.
\end{prop}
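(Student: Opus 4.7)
The plan is to localize around an arbitrary boundary point, taking WLOG $x_0 = e_1$, and work in the straightened setting of Lemma~\ref{lemm:first-variation-simplified}: on $\bB_{\sigma_1}^+$, the map $v$ defined by~\eqref{eq:v-definition} satisfies both the weak form~\eqref{eq:E-L-fermi-2} and the non-divergence form~\eqref{eq:E-L-fermi-non-div}. I first extract the mixed boundary conditions on $\bT_{\sigma_1}$: since $u(\partial\bB)\subset\Sigma$ corresponds via $\Phi$ to $v^3=0$ on $\bT_{\sigma_1}$, the third component inherits a Dirichlet condition, whereas testing~\eqref{eq:E-L-fermi-2} against vector fields $\psi$ supported near $\bT_{\sigma_1}$ with $\psi^3=0$ and integrating by parts yields the natural oblique-type condition $A_{i2}(x,v,\nabla v)=0$ on $\bT_{\sigma_1}$ for $i=1,2$. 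This recasts the problem as a mixed Dirichlet--oblique boundary value problem amenable to the estimates of Appendix~\ref{sec:linear}.

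The heart of the proof is then to apply the linear $W^{2,p}$-estimate for such problems to the non-divergence form~\eqref{eq:E-L-fermi-non-div}, viewed as a perturbation of the flat Laplacian:
\[
\Delta v^i \;=\; -\,E_{\alpha\beta}^{ij}(x,v,\nabla v)\,\partial_{\alpha\beta}v^j \;+\; b_i(x,v,\nabla v).
\]
The crucial point is that by Lemma~\ref{lemm:coefficient-bounds}(d), $|E_{\alpha\beta}^{ij}|\leq C_\tau(p-2)$, and by~\eqref{eq:isometry-deviation} and the definition of $A_{i\alpha}$ in~\eqref{eq:E-L-coefficients}, the principal parts of the oblique coefficients $\partial A_{i\alpha}/\partial\xi^j_\beta$ on $\bT_{\sigma_1}$ are a small perturbation of constants. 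Choosing $p_1\in(2,p_0]$ close enough to $2$, depending only on $\tau$ and the constants in the linear estimate of Appendix~\ref{sec:linear}, both principal-part perturbations can be absorbed into the left-hand side. The remaining interior and boundary inhomogeneities are controlled by $|\nabla v|+|\nabla v|^2$ via Lemma~\ref{lemm:coefficient-bounds}(c)--(d); combined with $v\in W^{1,p}\cap L^\infty$ (the latter from Sobolev embedding since $p>2$), this delivers an initial $W^{2,q}$-bound for some $q>1$.

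From there one bootstraps in the standard manner: each improvement in the Sobolev integrability of $\nabla v$ upgrades the integrability of the quadratic-gradient terms on the right-hand side, producing $v\in W^{2,q'}$ for larger $q'$; once $\nabla v\in L^\infty_{\loc}$, the coefficient functions in~\eqref{eq:E-L-fermi-non-div} become H\"older continuous, so Schauder theory applied to the linear mixed Dirichlet--oblique problem (with coefficients of the form $\partial A_{i\alpha}/\partial \xi^j_\beta$ evaluated along $v$) pushes $v$ into $C^\infty$ up to $\bT_{\sigma_1}$. Unwinding the conformal map $F$ and the chart $\Phi$ then yields smoothness of $u$ on a neighborhood of $e_1$ relative to $\overline{\bB}$, and since $e_1$ was chosen as an arbitrary boundary point, we obtain the stated boundary regularity.

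The main obstacle is the very first $W^{2,p}$ step: one must simultaneously absorb the $(p-2)$-small principal-part perturbation $E^{ij}_{\alpha\beta}\partial_{\alpha\beta}v^j$ \emph{and} the small-perturbation boundary coefficients arising from~\eqref{eq:isometry-deviation}, while controlling the quadratic gradient nonlinearity in $b_i$ and in the oblique data using only the a priori $W^{1,p}$ regularity of $v$. The delicate aspect is that the $\tau$-dependent constant $C_\tau$ in Lemma~\ref{lemm:coefficient-bounds}(d) and its boundary counterpart must be balanced against the admissible perturbation size in the linear estimate of Appendix~\ref{sec:linear}, which is exactly why the threshold $p_1$ is permitted to depend on $\tau$ while being independent of $\ep$, $r$, and the particular critical point $u$.
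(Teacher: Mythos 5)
Your overall architecture matches the paper: localize near $x_0 = e_1$, pass to the straightened map $v$ via Lemma~\ref{lemm:first-variation-simplified}, identify a mixed Dirichlet--oblique boundary value problem, and use the Appendix~\ref{sec:linear} estimates together with the smallness of $p - 2$ to absorb the principal-part perturbations. But there is a genuine gap in the initial step.

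You propose to obtain ``an initial $W^{2,q}$-bound for some $q>1$'' by applying the linear $W^{2,p}$ estimate directly to the non-divergence form~\eqref{eq:E-L-fermi-non-div}, absorbing the term $E^{ij}_{\alpha\beta}\partial_{\alpha\beta}v^j$ via Lemma~\ref{lemm:coefficient-bounds}(d) and controlling the inhomogeneity $b_i$ using $v\in W^{1,p}\cap L^\infty$. This is circular. The non-divergence form involves $\nabla^2 v$ both in the term $E^{ij}_{\alpha\beta}\partial_{\alpha\beta}v^j$ that you want to absorb and (implicitly) in the derivation of the oblique boundary condition $A_{i2}(x,v,\nabla v)|_{\bT}=0$, whose trace only makes sense once $A_{i\alpha}(x,v,\nabla v)$ is in some $W^{1,r}$. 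To absorb a term like $E\cdot\nabla^2 v$ with $\|E\|_\infty$ small, one must \emph{a priori} know $\nabla^2 v \in L^q$ up to the boundary; at this stage one only knows $v\in W^{1,p}$ with $p>2$ (smoothness of $v$ holds in the open half-disk by Proposition~\ref{prop:interior-regularity}, but with no uniform control as one approaches $\bT_{\sigma_1}$). The missing ingredient is the difference-quotient argument: the paper first establishes a Caccioppoli-type inequality~\eqref{eq:L-U} from the \emph{divergence} form~\eqref{eq:E-L-fermi-2}, then applies difference quotients in the tangential direction $x^1$ (which preserves the boundary constraint $v^3=0$ on $\bT$) to get the tangential second-derivative bound~\eqref{eq:2nd-derivative-tangential}, and only then uses the non-divergence form~\eqref{eq:E-L-fermi-non-div} algebraically — inverting the matrix $\delta_{ij}+E^{ij}_{22}$, which is possible precisely because $p-2$ is small — to bound $v_{x^2x^2}$ pointwise by $|\nabla v|+|\nabla v|^2+|\nabla v_{x^1}|$, as in~\eqref{eq:v-pointwise-hessian}. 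Iterating then yields the baseline $W^{2,2}\cap W^{1,4}$ bound~\eqref{eq:W14-W22-estimate}, after which the contraction-mapping argument and bootstrapping proceed much as you sketch. Without that difference-quotient step, there is no bound on $\nabla^2 v$ up to $\bT_{\sigma_1}$ to feed into any linear estimate, so the perturbation argument never gets off the ground.

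One smaller issue: the smallness of the boundary-coefficient perturbation $e_{ij}(v) = \tau[J^i_j(v) - J^i_j(v(0))]$ does not come from~\eqref{eq:isometry-deviation} alone, but from the oscillation bound~\eqref{eq:v-osc-bound} together with restricting to a small enough half-ball $\bB^+_s$; this is why the paper introduces a further radius $s < \sigma_3$ and subtracts off the average $(w_{x^1})_{\bB^+_s}$ to apply Poincar\'e. Simply invoking $(p-2)$-smallness for the boundary part, as you do, is not sufficient — the boundary coefficients are $O(\tau)$, not $O(p-2)$, and their smallness is geometric (small oscillation), not a consequence of choosing $p_1$.
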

\begin{proof}
The structure of the proof is largely the same as the corresponding result in~\cite{Cheng22}. Nonetheless we include the details to show how Appendix~\ref{sec:linear} is relevant. We prove in two steps that $u$ is $W^{2, 4}$ near $x_0$. Higher regularity is then standard.
\vskip 1mm
\noindent\textbf{Step 1: $W^{2, 2}$-estimates}
\vskip 1mm
As mentioned above, without loss of generality we assume that $x_0 = e_1$. Choosing $r \in (0, r_{\bB})$ and $L > 0$ such that~\eqref{eq:W1p-scale-invariant-bound} holds, we define $\sigma_1$ by~\eqref{eq:sigma-1-definition} and define $v$ in~\eqref{eq:v-definition}. Then $v^3 = 0$ on $\bT_{\sigma_1}$. Also, by~\eqref{eq:W1p-scale-invariant-bound},~\eqref{eq:lambda-bound} and~\eqref{eq:isometry-deviation} we have
\begin{equation}\label{eq:W1p-bound-for-v}
\int_{\bB^+_{\sigma_1}} |\nabla v|^p \leq Cr^{p-2}\int_{\bB_{2\sigma_1 r}(e_1) \cap \bB} |\nabla u|^{p-2} \leq CL^p,
\end{equation}
for some universal constant $C$. Thus, letting $\mu  = 1 - \frac{2}{p}$, we see from Sobolev embedding that 
\begin{equation}\label{eq:v-osc-bound}
\osc_{\bB^+_{\sigma} }v \leq C_{p}L\cdot \sigma^\mu,
\end{equation}
which together with Lemma~\ref{lemm:coefficient-bounds} puts the equation~\eqref{eq:E-L-fermi-2} into the class treated in Chapter 4 of the monograph~\cite{LadyzhenskayaUraltseva1968} by Ladyzhenskaya and Ural'tseva. In particular, substituting test functions of the form $\psi = (v - v(0))\zeta^2$ into~\eqref{eq:E-L-fermi-2}, which are admissible since $v^3 = 0$ on $\bT_{\sigma_1}$, we see using~\eqref{eq:v-osc-bound} and Lemma~\ref{lemm:coefficient-bounds} and following the arguments in~\cite[Chapter 4, Lemma 1.3]{LadyzhenskayaUraltseva1968} that there exist $C > 0$ and $\sigma_2 < \sigma_1$ depending only on $\tau, H_1, p$ and $L$ such that whenever $\rho \leq \sigma_2$ and $\zeta \in W^{1, p}_{0}(\bB_\rho)$, we have
\begin{equation}\label{eq:L-U}
\int_{\bB^+_{\rho}} [1 + \bep^{p-2}(r^2 + |\nabla v|^2)^{\frac{p}{2} - 1}] |\nabla v|^2 \zeta^2 \leq C\rho^{2\mu} \int_{\bB^+_{\rho}} [1 + \bep^{p-2}(r^2 + |\nabla v|^2)^{\frac{p}{2} - 1}] |\nabla \zeta|^2.
\end{equation}
Thanks to~\eqref{eq:L-U} and Lemma~\ref{lemm:coefficient-bounds}, by a standard difference quotient argument (see~\cite[Chapter 4, Section 5]{LadyzhenskayaUraltseva1968}) we deduce from~\eqref{eq:E-L-fermi-2} the following: There exists $\sigma_3 < \frac{\sigma_2}{2}$ such that
\begin{equation}\label{eq:2nd-derivative-tangential}
\begin{split}
\int_{\bB^+_{\sigma_3}} (1 + \bep^{p-2}(r^2 + |\nabla v|^2)^{\frac{p}{2} - 1}) |\nabla v_{x^1}|^2 \leq\ & C\int_{\bB^+_{2\sigma_3}} |\nabla v|^2 + \bep^{p-2}(r^2 + |\nabla v|^2)^{\frac{p}{2}}\\
\leq\ & C \cdot D_{\ep, p}(u; \bB_{4\sigma_3 r}(e_1) \cap \bB).
\end{split}
\end{equation}
Here again the threshold $\sigma_3$ and the constants $C$ in~\eqref{eq:2nd-derivative-tangential} depend only on $H_1, \tau, p, L$. At this point we recall~\eqref{eq:E-L-fermi-non-div} from Lemma~\ref{lemm:first-variation-simplified} and rearrange it to get
\[
\Big( \delta_{ij} + E^{ij}_{22}(x, v, \nabla v) \Big)v^j_{x^2 x^2} = b_i(x, v, \nabla v) - v^i_{x^1 x^1} - \sum_{(\alpha, \beta) \neq (2, 2)} E_{\alpha\beta}^{ij}(x, v, \nabla v)\partial_{\alpha\beta}v^j.
\]
By Lemma~\ref{lemm:coefficient-bounds}(d), provided $p_1 - 2$ is sufficiently small depending on $\tau$, we may invert the matrix $\big(\delta_{ij} + E^{ij}_{22}(x, v, \nabla v) \big)_{1 \leq i, j \leq 3}$ and add $|\nabla v_{x^1}|$ to both sides to deduce that
\begin{equation}\label{eq:v-pointwise-hessian}
|\nabla^2 v| \leq C_{H_1, \tau}(|\nabla v| + |\nabla v|^2 + |\nabla v_{x^1}|).
\end{equation}
Thus, if $q \in [p, 4)$ is such that $\nabla v \in L^{q}$, we have upon taking the $L^{\frac{q}{2}}$-norm on $\bB^+_{\sigma_{3}}$ of both sides of~\eqref{eq:v-pointwise-hessian} and using~\eqref{eq:2nd-derivative-tangential} as well as the Sobolev embedding $W^{1, \frac{q}{2}} \to L^{\frac{2q}{4 - q}}$ that
\begin{equation}\label{eq:to-be-iterated}
\begin{split}
\|\nabla v\|_{\frac{2q}{4 - q}; \bB^+_{\sigma_3}} \leq\ & C_{q, p, H_1, \tau, L} \cdot \big( \|\nabla v\|_{\frac{q}{2}; \bB^+_{\sigma_3}} + \|\nabla v\|_{q;\bB^+_{\sigma_3}}^2 + D_{\ep, p}(u; \bB_{4\sigma_3 r}(e_1) \cap \bB) \big).
\end{split}
\end{equation}
Since $p > 2$ and since~\eqref{eq:W1p-bound-for-v} bounds $\|\nabla v\|_{p; \bB^+_{\sigma_3}}$, we may start with $q = p$ and iterate~\eqref{eq:to-be-iterated} for a finite number of times depending on $p$ to obtain a bound on $\|\nabla v\|_{4; \bB^+_{\sigma_3}}$ and, via taking the $L^2$-norm of both sides of~\eqref{eq:v-pointwise-hessian}, a bound on $\|\nabla^2 v\|_{2; \bB^+_{\sigma_3}}$. The resulting estimate has the form
\begin{equation}\label{eq:W14-W22-estimate}
\|\nabla v\|_{4; \bB^+_{\sigma_3}} + \| \nabla^2 v \|_{2; \bB^+_{\sigma_3}} \leq C(p, H_1, \tau, L, D_{\ep, p}(u; \bB_{4\sigma_3 r}(e_1) \cap \bB)).
\end{equation}
In particular, $A_{i\alpha}(x, v, \nabla v)$ lies in $(L^q \cap W^{1, r})(\bB^+_{\sigma_3})$ for all $q < \infty$ and $r < 2$. Using $(\cdot)|_{\bT_{\sigma_3}}$ to denote the Sobolev trace, we deduce from~\eqref{eq:E-L-fermi-2} and~\eqref{eq:E-L-fermi-non-div} that $A_{i, 2}(x, v, \nabla v)|_{\bT_{\sigma_3}} = 0$ for $i = 1, 2$. Recalling also that $\widehat{X} = -\paop{y^3}$ near $V \cap \{y^3 = 0\}$, and hence $P_{ijk}(v)\widehat{X}^k(v) = -P_{ij3}(v)$ near $\bT_{\sigma_3}$, we get
\[
\begin{split}
& \sum_{j = 1}^2 \big[g_{ij}(v)v^j_{x^2} - \tau  P_{ij3}(v)v^j_{x^1} \big]\big|_{\bT_{\sigma_3}} = 0\ \text{ for }i = 1, 2.
\end{split}
\]
Here $j$ need only be summed up to $2$ because $g_{i3}$ and $P_{i33}$ both vanish identically on $V$. As $[g_{ij}(y)]_{1 \leq i, j \leq 2}$ is invertible, it follows that
\begin{equation}\label{eq:E-L-fermi-bc}
\begin{split}
& \sum_{j = 1}^2 \big[v^i_{x^2} + \tau  J^i_j(v)v^j_{x^1} \big]\big|_{\bT_{\sigma_3}}  = 0\ \text{ for } i = 1, 2,
\end{split}
\end{equation}
where recall that $J^{i}_j = -g^{il}P_{lj3}$.
\vskip 1mm
\noindent\textbf{Step 2: $W^{2, 4}$-regularity}
\vskip 1mm
To continue, with $s < \sigma_3$ to be determined, we let $\eta$ be a cutoff function such that $\eta = 1$ on $\bB_{\frac{s}{2}}$ and vanishes outside of $\bB_{s}$, and that $|\nabla \eta| \leq Cs^{-1}$, with $C$ being a dimensional constant. For any $\zeta \in C^\infty_c(\bB_{\frac{s}{2}})$, by~\eqref{eq:E-L-fermi-non-div} we have
\begin{equation}\label{eq:E-L-non-div-cutoff}
\begin{split}
&\Delta (\zeta v^i) + \eta \cdot E^{ij}_{\alpha\beta}(x, v, \nabla v)\partial_{\alpha\beta}(\zeta v^j) - (\zeta v^i) \\
=\ & \zeta \cdot b_i(x, v, \nabla v) - \zeta v^i + 2\nabla \zeta \cdot \nabla v^i + v^i\Delta \zeta \\
& + E^{ij}_{\alpha\beta}(x, v, \nabla v)\big( \partial_{\alpha}\zeta \partial_\beta v^j +  \partial_{\beta}\zeta \partial_{\alpha} v^j + v^j \partial_{\alpha\beta}\zeta \big) =: \widetilde{f}_i.
\end{split}
\end{equation}
As for the boundary conditions, we have $\zeta v^3 = 0$ on $\partial \RR^2_+$. Also, letting
\[
e_{ij}(z) = \tau \big[J^i_j(z) - J^i_j(v(0))\big],
\]
\begin{equation}\label{eq:B0-definition}
(B_0 w)^i = w^i_{x^2} + \tau \sum_{j = 1}^2 J^i_j(v(0)) w^j_{x^1}, \text{ for }i = 1, 2,
\end{equation}
and denoting by $(\cdot)_{\bB^+_{s}}$ the average over $\bB^+_{s}$, we have from~\eqref{eq:E-L-fermi-bc} that, on $\partial \RR^2_+$ and for $i = 1, 2$, 
\begin{equation}\label{eq:E-L-bc-cutoff}
\begin{split}
& [B_0(\zeta v)]^i + \eta \cdot \sum_{j = 1}^2 e_{ij}(v) [ (\zeta v^j)_{x^1} - ((\zeta v^j)_{x^1})_{\bB^+_{s}}]\\
=\ & -\eta \cdot \sum_{j = 1}^2 e_{ij}(v)((\zeta v^j)_{x^1})_{\bB^+_{s}} + v^i \zeta_{x^2} + \tau \sum_{j = 1}^2 J^i_j(v)v^j\zeta_{x^1} =:\widetilde{g}_i.
\end{split}
\end{equation}
Below, whenever $e_{ij}$ and $J_i^j$ appear, it is understood that $i, j$ range from $1$ to $2$, and we omit the summation symbols in $j$.

We next reformulate~\eqref{eq:E-L-non-div-cutoff} and~\eqref{eq:E-L-bc-cutoff} as a fixed point condition and use the contraction mapping principle to improve the regularity of $\zeta v$. Specifically, for $q \geq 2$ we introduce the following Banach spaces:
\[
\cX_q = \{w \in W^{2, q}(\RR^2_+; \RR^3)\ \big|\ (w^3)|_{\partial \RR^2_+} = 0 \},\ \ \ \cY_q = L^q(\RR^2_+; \RR^3) \times W^{1- \frac{1}{q}, q}(\partial \RR^2_+; \RR^2),
\]
where $\cY_q$ is normed as in~\eqref{eq:Zp-norm}. Since $v^3(0) = 0$, by the observations made about $J$ in Section~\ref{subsec:function-spaces} and the estimate~\eqref{eq:J-close}, the operator $B_0$ defined in~\eqref{eq:B0-definition} belongs to the class described in Definition~\ref{defi:B0-defi}, with $|a| \leq \frac{1 + |\tau|}{2}$. Thus, by Lemma~\ref{lemm:inhomogeneous-estimate} and Lemma~\ref{lemm:inhomogeneous}, as well as standard $W^{2, p}$-theory for the Dirichlet problem, the bounded linear operator
\[
(\Delta -1, B_0|_{\partial \RR^2_+}): w \mapsto (\Delta w - w, (B_0w)|_{\partial \RR^2_+})
\] 
from $\cX_q$ to $\cY_q$ has a bounded inverse. Next, in accordance with~\eqref{eq:E-L-non-div-cutoff} and~\eqref{eq:E-L-bc-cutoff}, we define
\[
\begin{split}
(Lw)^i =\ & \Delta w^i + \eta \cdot E^{ij}_{\alpha\beta}(x, v, \nabla v)\partial_{\alpha\beta}w^j - w^i, \text{ for }i = 1, 2, 3,\\
(Bw)^i =\ & (B_0w)^i + \eta \cdot  e_{ij}(v)[w^j_{x^1} - (w^j_{x^1})_{\bB^+_{s}}], \text{ for }i = 1, 2.
\end{split}
\]
\begin{claim*}
The assignment
\[
(L, B|_{\partial \RR^2_+}): w \mapsto (Lw, (Bw)|_{\partial \RR^2_+})
\] 
defines a bounded linear operator from $\cX_q$ to $\cY_q$. Moreover, for all $w \in \cX_q$ we have
\begin{equation}\label{eq:fixed-point-estimate-1}
\|(\Delta - 1 - L)w\|_{q; \RR^2_+} \leq C_{\tau}(p-2)\|\nabla^2 w\|_{q; \RR^2_+},
\end{equation}
and
\begin{equation}\label{eq:fixed-point-estimate-2}
\| (B_0 - B)w \|_{1, q; \RR^2_+}
\leq \big( C_{p,\tau, q}L s^{1 - \frac{2}{p}}   +  C_{\tau ,q} s^{1 - \frac{1}{q}} \|\nabla v\|_{2q; \bB^+_{s}} \big) \|\nabla^2 w\|_{q; \RR^2_+}.
\end{equation}
\end{claim*}
\noindent We justify the claim at the end of this proof. Assuming it for now, we define $T_q: \cX_q \to \cX_q$ by
\[
T_qw = w + (\Delta - 1, B_0|_{\partial \RR^2_+})^{-1}\big( \widetilde{f} - Lw, (\widetilde{g} - Bw)|_{\partial \RR^2_+} \big),
\]
which makes sense since the pair $(\widetilde{f}, \widetilde{g}|_{\partial\RR^2_+})$, defined in~\eqref{eq:E-L-non-div-cutoff} and~\eqref{eq:E-L-bc-cutoff}, lies in $\cY_q$ for all $q \geq 2$ thanks to~\eqref{eq:W14-W22-estimate} and Sobolev embedding. This fact about $(\widetilde{f}, \widetilde{g}|_{\partial\RR^2_+})$ together with Lemma~\ref{lemm:intersection} also shows that given $q, r \geq 2$, both $T_q$ and $T_r$ preserve $\cX_q \cap \cX_r$, and in fact the two operators agree on the intersection. Thus, below we drop the subscripts and simply write $T$ for $T_q$. Finally, note that $\zeta v$ is a fixed point of $T$ on $\cX_2$.

To determine the choice of $s$, observe that given $w_1, w_2 \in \cX_q$, we have
\begin{equation}\label{eq:contraction-bvp}
\left\{
\begin{array}{l}
(\Delta - 1)(Tw_1 - Tw_2) = (\Delta -1-L)(w_1 - w_2),\\
\big[B_0 (Tw_1 - Tw_2)\big]\big|_{\partial \RR^2_+} = \big[(B_0 - B)(w_1 - w_2)\big]\big|_{\partial \RR^2_+},
\end{array}
\right.
\end{equation}
so that by Lemma~\ref{lemm:inhomogeneous-estimate} with $p = q$ and $a_0 = \frac{1 + |\tau|}{2}$, as well as the estimates in the above claim, we get
\[
\begin{split}
\|Tw_1 - Tw_2\|_{2, q} \leq\ & C_{q, \tau}(p-2) \|\nabla^2 w_1 - \nabla^2 w_2\|_{q; \RR^2_+}\\
& + \big( C_{p,\tau, q}L s^{1 - \frac{2}{p}}   +  C_{\tau ,q} s^{1 - \frac{1}{q}} \|\nabla v\|_{2q; \bB^+_{s}} \big) \|\nabla^2 w_1 - \nabla^2 w_2\|_{q; \RR^2_+}.
\end{split}
\]
Decreasing $p_1$ so that in the first term on the right hand side we have
\[
(C_{2, \tau} + C_{4, \tau}) (p_1 - 2) < \frac{1}{4}, 
\]
and recalling that~\eqref{eq:W14-W22-estimate} provides a bound on $\|\nabla v\|_{2q; \bB^+_{\sigma_3}}$, we can choose $s$ small enough so that for both $q = 2$ and $q = 4$ we have
\[
\|Tw_1  - Tw_2\|_{2, q} \leq \frac{1}{2}\|w_1 -w_2\|_{2, q}, \text{ for all }w_1, w_2 \in \cX_q.
\]
Combining this with the fact that $T$ preserves $\cX_2 \cap \cX_4$, we conclude that the unique fixed point of $T$ on $\cX_2$, which must be $\zeta v$, in fact lies in $\cX_2 \cap \cX_4$. Consequently $v \in W^{2, 4}_{\loc}(\bB^+_{\frac{s}{2}} \cup \bT_{\frac{s}{2}})$ since $\zeta \in C^{\infty}_c(\bB_{\frac{s}{2}})$ is arbitrary. 

The coefficients $E(x, v, \nabla v)$ and $J(v)$ in~\eqref{eq:E-L-fermi-non-div} and~\eqref{eq:E-L-fermi-bc} are now sufficiently regular for us to begin inductively improving the differentiability of $v$ using linear estimates in a way that does not require further decreasing $p_1$. The details of this last step are omitted.
\end{proof}
\begin{proof}[Proof of Claim]
To see that $(L, B|_{\partial \RR^2_+})$ is bounded from $\cX_q$ to $\cY_q$, the only nontrivial point is to estimate $\eta \cdot e(v)[w_{x^1} - (w_{x^1})_{\bB^+_{s}}]$ in $W^{1, q}$ by the $W^{2, q}$ norm of $w$, which can be achieved with the help of the Poincar\'e inequality. More precisely, recalling our choice of $\eta$ and noting that on $\bB^+_{s}$ we have
\[
|e_{ij}(v)| \leq C_{p, \tau}L  s^{1 - \frac{2}{p}},\ \ \big| \nabla (e_{ij}(v)) \big| \leq C_{\tau} |\nabla v|,
\]
the first inequality being a consequence of~\eqref{eq:v-osc-bound}, we can estimate as follows:
\[
\begin{split}
\| \eta \cdot e(v)[w_{x^1} - (w_{x^1})_{\bB^+_{s}}]  \|_{1, q; \RR^2_+} 
\leq\  & C_{p, \tau}L s^{1 - \frac{2}{p}} \cdot (1 + s^{-1}) \cdot \|w_{x^1} - (w_{x^1})_{\bB^+_{s}} \|_{q; \bB^+_{s}}\\
& + C_{\tau}\|\nabla v\|_{2q; \bB^+_{s}} \|w_{x^1} - (w_{x^1})_{\bB^+_{s}} \|_{2q; \bB^+_{s}}\\
& +C_{p, \tau} L s^{1 - \frac{2}{p}}\cdot \|\nabla^2 w_1 - \nabla^2 w_2\|_{q; \bB^+_{s}}.
\end{split}
\]
Since $q \geq 2$, applying the Poincar\'e inequality to the first two terms on the right hand side, and paying attention to the powers of $s$ thus introduced, we obtain 
\[
\| \eta \cdot e(v)[w_{x^1} - (w_{x^1})_{\bB^+_{s}}]  \|_{1, q; \RR^2_+}
\leq \big( C_{p,\tau, q}L s^{1 - \frac{2}{p}}   +  C_{\tau ,q}\|\nabla v\|_{2q; \bB^+_{s}}\cdot s^{1 - \frac{1}{q}} \big) \|\nabla^2 w\|_{q; \RR^2_+}.
\]
Therefore $(L, B|_{\partial\RR^2_+}): \cX_q \to \cY_q$ is bounded, and we have also established~\eqref{eq:fixed-point-estimate-2}. The estimate~\eqref{eq:fixed-point-estimate-1} is a direct consequence of Lemma~\ref{lemm:coefficient-bounds}(d). 
\end{proof}
\begin{rmk}\label{rmk:quantitative-W22}
It follows from~\eqref{eq:W14-W22-estimate} and the relation between $v$ and $u$ that for all $x_0 \in \partial \bB$, whenever $r \in (0, r_{\bB})$ and $L > 0$ are such that~\eqref{eq:W1p-scale-invariant-bound} holds with $e_1$ replaced by $x_0$, there holds
\begin{equation}\label{eq:W22-estimate-for-u}
r^{\frac{1}{2}}\|\nabla u\|_{4; \bB_{\frac{3}{4}\sigma_3 r}(x_0) \cap \bB} + r\|\nabla^2 u\|_{2; \bB_{\frac{3}{4}\sigma_3 r}(x_0) \cap \bB} \leq C(p, H_1, \tau, L, D_{\ep, p}(u; \bB_{4\sigma_3 r}(x_0) \cap \bB)),
\end{equation}
where $\sigma_3$ is as determined in the course of the above proof and depends only on $p, L, \tau$ and $H_1$. This estimate is used in Section~\ref{sec:proof}, and is also the reason we assume an $L^4$-bound to start with in Proposition~\ref{prop:W14-W2q}.
\end{rmk}
\begin{rmk}\label{rmk:C0-regularity}
Essentially the same argument as in the proof of Proposition~\ref{prop:boundary-regularity} shows the following: Suppose $u$ lies in $W^{1, q}(\bB; \RR^3) \cap C^0(\overline{\bB}, \partial \bB; \RR^3, \Sigma)$ for some $q > 2$ and satisfies
\begin{equation}\label{eq:weak-EL}
\begin{split}
0 =\ & \int_{\bB} \bangle{\nabla u, \nabla \varphi} - \tau  X(u) \cdot (\varphi_{x^1} \times u_{x^2} + u_{x^1} \times \varphi_{x^2})\\
& - \tau \int_{\bB} (\nabla X)_u(\varphi) \cdot u_{x^1} \times u_{x^2} + \int_{\bB} h(u)\varphi \cdot u_{x^1} \times u_{x^2}, 
\end{split}
\end{equation}
for all test functions $\varphi \in W^{1, q}(\bB; \RR^3)$ such that $\varphi(x) \in T_{u(x)}\Sigma$ for all $x \in \partial \bB$. Then $u$ is smooth on $\overline{\bB}$. Going back to~\eqref{eq:weak-EL} and integrating by parts using the identity~\eqref{eq:volume-form-Lie}, we see that $u$ is a solution in the classical sense of
\begin{equation}\label{eq:ep-0-classical}
\left\{
\begin{array}{l}
\Delta u = f(u)u_{x^1} \times u_{x^2} \text{ in }\bB,\\
u_r + \tau X(u) \times u_{\theta} \perp T_u\Sigma \text{ on }\partial \bB.
\end{array}
\right.
\end{equation}
These observations are used several times in Section~\ref{sec:proof}.
\end{rmk}
\subsection{A Palais-Smale type condition}
Here, as in~\cite{Cheng22,cz-cmc} we prove that $E_{\ep, p, f, \tau}$ satisfies a version of the Palais-Smale condition where we assume that $D_{\ep, p}$, as opposed to the functional itself, is bounded along the sequence.
\begin{prop}\label{prop:Palais-Smale}
Suppose $(u_m)$ is a sequence in $\cM_p$ such that $D_{\ep, p}(u_m) \leq C_0$ for some $C_0$ independent of $m$ and that
\begin{equation}\label{eq:Palais-Smale}
\lim_{m \to \infty}\| (\delta E_{\ep, p, f, \tau})_{u_m} \| = 0.
\end{equation}
Then a subsequence of $(u_m)$ converges strongly in $W^{1, p}$ to a critical point $u$ of $E_{\ep, p, f, \tau}$ such that $D_{\ep, p}(u) \leq C_0$.
\end{prop}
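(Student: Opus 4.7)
The plan is the standard Palais--Smale argument for variational problems adapted to the Finsler manifold $\cM_p$, with the key analytic input being the uniform $p$-convexity of the $D_{\ep,p,\tau}$ integrand when $p > 2$.

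\textbf{Step 1 (Extraction and tangent test function).} The hypothesis $D_{\ep, p}(u_m) \leq C_0$ combined with~\eqref{eq:D-p-estimates} gives a uniform $W^{1,p}$ bound, using $u_m|_{\partial\bB} \subset \Sigma$ to control $L^p$. Since $p > 2$, the compact embedding $W^{1,p}(\bB) \hookrightarrow C^{0, 1-2/p}(\overline{\bB})$ yields a subsequence with $u_m \rightharpoonup u$ weakly in $W^{1,p}$ and uniformly on $\overline\bB$. Hence $u \in \cM_p$, and $\psi_m := -\Theta_{u_m}^{-1}(u)$ (via the $\overline g$-exponential chart of Section~\ref{subsec:function-spaces}) lies in $T_{u_m}\cM_p$ for all large $m$. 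A direct computation using the $\overline g$-geodesics from Remark~\ref{rmk:h-geodesics} gives $\|\psi_m - (u_m - u)\|_\infty = O(\|u_m - u\|_\infty^2)$ and $\|\nabla \psi_m - \nabla(u_m - u)\|_p \leq C\|u_m - u\|_\infty(\|\nabla u_m\|_p + \|\nabla u\|_p)$, so $\psi_m - (u_m - u) \to 0$ in $W^{1,p}$ and $\|\psi_m\|_\infty \to 0$.

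\textbf{Step 2 (Reducing to a monotonicity inequality).} Set $G(y,\xi) = F_\tau(y,\xi) + \frac{\ep^{p-2}}{p}(1+2F_\tau(y,\xi))^{p/2}$. Using the antisymmetry of the cross product, the $\nabla\psi_m$ line of~\eqref{eq:first-variation} rearranges to $\int \nabla_\xi G(u_m, \nabla u_m) \cdot \nabla \psi_m$, while the remaining terms pair $\psi_m$ against $L^1$ quantities (products of $|\nabla u_m|^2$-type factors and bounded functions of $u_m$) and hence vanish because $\|\psi_m\|_\infty \to 0$. Thus the hypothesis $(\delta E_{\ep,p,f,\tau})_{u_m}(\psi_m) = o(1)$, combined with the growth bound $|\nabla_\xi G(y,\xi)| \leq C_\ep(1 + |\xi|^{p-1}) \in L^{p'}$ to absorb $\nabla\psi_m - \nabla(u_m - u)$, gives
\[
\int_{\bB} \nabla_\xi G(u_m,\nabla u_m) \cdot \nabla(u_m - u) = o(1).
\]
By Vitali's theorem (with dominating function $C_\ep(1 + |\nabla u|^{p-1}) \in L^{p'}$), $\nabla_\xi G(u_m, \nabla u) \to \nabla_\xi G(u, \nabla u)$ strongly in $L^{p'}$, while $\nabla(u_m - u) \rightharpoonup 0$ weakly in $L^p$; subtracting yields
\[
\int_{\bB} \big[\nabla_\xi G(u_m, \nabla u_m) - \nabla_\xi G(u_m, \nabla u)\big] \cdot \nabla(u_m - u) = o(1).
\]

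\textbf{Step 3 ($p$-monotonicity and conclusion).} Writing $F_\tau(y,\xi) = \frac{1}{2}\langle A_y \xi, \xi \rangle$ with $A_y$ symmetric and $(1-|\tau|)I \leq A_y \leq (1+|\tau|)I$ (cf.~\eqref{eq:F-estimates}), the substitution $\tilde \xi = A_y^{1/2}\xi$ reduces the integrand to the standard form $[1 + \ep^{p-2}(1 + |\tilde\xi|^2)^{p/2-1}]\tilde\xi$, for which the classical $p$-Laplacian inequality delivers the pointwise monotonicity
\[
\big(\nabla_\xi G(y,\xi) - \nabla_\xi G(y,\eta)\big)\cdot(\xi-\eta) \geq c_p(1-|\tau|)^{p/2}\ep^{p-2}|\xi-\eta|^p
\]
for $p \geq 2$. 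Integrating and combining with Step 2 forces $\nabla u_m \to \nabla u$ in $L^p$, hence $u_m \to u$ strongly in $W^{1,p}$. Then $D_{\ep, p}(u) = \lim D_{\ep, p}(u_m) \leq C_0$, and for any $\varphi \in T_u \cM_p$, transporting via $\Theta_{u_m}$ produces $\varphi_m \in T_{u_m}\cM_p$ with $\varphi_m \to \varphi$ in $W^{1,p}$; since the first variation of any local reduction is continuous, $(\delta E_{\ep,p,f,\tau})_u(\varphi) = \lim(\delta E_{\ep,p,f,\tau})_{u_m}(\varphi_m) = 0$, so $u$ is a critical point.

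\textbf{Main obstacle.} The decisive point is the $p$-convexity in Step 3: without the $\ep^{p-2}(1+2F_\tau)^{p/2}$ perturbation the functional is only quadratic in the gradient, yielding only $L^2$-convergence of $\nabla u_m$, which is insufficient to pass to the limit in the superquadratic terms of the Euler--Lagrange operator. The perturbation restores genuine $p$-Laplacian-type convexity. A secondary technical wrinkle (more pronounced than in the free boundary case) is that the antisymmetric cross-product term associated with $\tau$ must be packaged into the operator $\nabla_\xi G$, and the strict convexity there relies crucially on $|\tau| < 1$ via the positive lower bound $(1-|\tau|)I \leq A_y$.
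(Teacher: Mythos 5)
Your proof is correct, and it takes a genuinely different route from the paper's. The paper localizes: near each boundary point it flattens $\Sigma$ and $\partial\bB$ via the charts $\Phi$ and $F$ of Section~\ref{subsec:function-spaces}, uses test functions of the form $\psi_{mn} = (v_m - v_n)\zeta^2$ (admissible because in the flattened chart the tangency condition becomes the linear constraint $\psi^3 = 0$ on $\bT$), and invokes the coercivity of $\pa{A_{i\alpha}}{\xi^j_\beta}$ from Lemma~\ref{lemm:coefficient-bounds}(b) to derive a Cauchy property for the local gradients; the global conclusion is patched together afterwards. You instead work globally with a single test function $\psi_m = -\Theta_{u_m}^{-1}(u)$ built from the $\overline g$-exponential chart, compare each $u_m$ directly to the weak limit $u$, and package the entire ellipticity into an explicit $p$-Laplacian-type monotonicity for $\nabla_\xi G$ after the change of variables $\tilde\xi = A_y^{1/2}\xi$. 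Both arguments rest on the same core mechanism — the $\ep^{p-2}$-regularized $p$-convexity, which is precisely why the $L^p$ convergence (and not merely $L^2$) of the gradients is obtainable — but yours avoids the localization and Cauchy bookkeeping at the cost of invoking the exponential chart globally (legitimate for large $m$ by the uniform convergence). Two minor points worth flagging: the embedding into $C^{0,1-2/p}$ is continuous but not compact (compactness holds into $C^{0,\alpha}$ for $\alpha < 1-2/p$, which is all you use); and in Step 3 the change of variables yields $\nabla_\xi G = [1+\ep^{p-2}(1+|\tilde\xi|^2)^{p/2-1}]A_y^{1/2}\tilde\xi$ rather than literally $[\cdots]\tilde\xi$, but pairing with $\xi - \eta = A_y^{-1/2}(\tilde\xi - \tilde\eta)$ cancels the $A_y^{1/2}$ by symmetry, so your reduction to the standard monotonicity inequality and the final constant $c_p(1-|\tau|)^{p/2}\ep^{p-2}$ are both correct.
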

\begin{proof}
By assumption $(u_m)$ is bounded in $W^{1, p}$. Thus, taking a subsequence if necessary, we may assume that $(u_m)$ converges weakly in $W^{1, p}$ and uniformly to some $u \in \cM_p$. In particular, there exists $r \in (0, r_{\bB}]$ such that for all $m$ sufficiently large and $x_0 \in \partial \bB$, 
\[
u_m(\bB_{2r}(x_0) \cap \bB) \cup u(\bB_{2r}(x_0) \cap \bB) \subset B_{\rho_{\tau}}(u(x_0)).
\]
Fixing $x_0 \in \partial \bB$, we may then introduce $\Phi, \Psi, g, F, ...$ as in Section~\ref{subsec:function-spaces} and define a new sequence $(v_m)$ by letting 
\[
\widehat{u}_m(x) = \Phi(u_m(x)) \text{ for }x \in \bB_{2r}(x_0) \cap \bB,
\]
and
\[
v_m(x) = \widehat{u}_m(F(rx)) \text{ for }x \in \bB^+. 
\]
Given $\psi \in W^{1, p}(\bB^+)$ such that $\psi^3 = 0$ on $\bT$ and $\psi = 0$ on $\partial \bB \cap \RR^2_+$, we define
\[
\widehat{\psi}(x)  =
\left\{
\begin{array}{l}
\psi(\frac{1}{r}F^{-1}(x)) \text{ if }x \in F(\bB_{r}^+),\\
0 \text{ on }\bB \setminus F(\bB^+_{r}).
\end{array}
\right.
\]
and let $\psi^{(m)} = (d\Psi)_{\widehat{u}_m}(\widehat{\psi})$. Then $\psi^{(m)} \in T_{u_m}\cM_p$. Moreover, by Lemma~\ref{lemm:first-variation-simplified}, or rather the proof of~\eqref{eq:E-L-fermi-2}, we have in the notation introduced in~\eqref{eq:E-L-coefficients} that
\[
(\delta E_{\ep, p, f, \tau})_{u_m}(\psi^{(m)}) = \int_{\bB^+} A_{i\alpha}(x, v_m, \nabla v_m) \partial_{\alpha}\psi^i + A_i(x, v_m, \nabla v_m) \psi^i.
\]
Next, take a cutoff function $\zeta \in C^{\infty}_{c}(\bB)$ so that $\zeta = 1$ on $\bB_{\frac{1}{2}}$, and define
\[
\psi_{mn} = (v_m - v_n)\zeta^2.
\]
With the help of Lemma~\ref{lemm:coefficient-bounds} and Young's inequality, we obtain positive constants $c_\tau$ and $C_{\tau, H_1}$ so that
\begin{equation}\label{eq:P-S-big}
\begin{split}
&(\delta E_{\ep, p, f, \tau})_{u_m}(\psi_{mn}^{(m)}) - (\delta E_{\ep, p, f, \tau})_{u_n}(\psi_{mn}^{(n)})\\
\geq\ & c_\tau\int_{\bB^+} [1 + \bep^{p-2}(r^2 + |\nabla v_n|^2 + |\nabla v_m|^2)^{\frac{p}{2} - 1}] \zeta^2 |\nabla v_n - \nabla v_m|^2 \\
&- C_{\tau, H_1} \int_{\bB^+} [1 + \bep^{p-2}(r^2 + |\nabla v_n|^2 + |\nabla v_m|^2)^{\frac{p}{2} - 1}] |\nabla \zeta|^2 |v_n - v_m|^2\\
& - C_{\tau, H_1} \int_{\bB^+} [1 + \bep^{p-2}(r^2 + |\nabla v_n|^2 + |\nabla v_m|^2)^{\frac{p}{2} - 1}] \zeta^2 (|\nabla v_m|^2 + |\nabla v_n|^2) |v_n - v_m|^2.
\end{split}
\end{equation}
Since the $W^{1, p}$-norms of $\psi_{mn}^{(m)}$ and $\psi_{mn}^{(n)}$ are bounded uniformly in $m, n$, we deduce from the assumption~\eqref{eq:Palais-Smale} that the left hand side of~\eqref{eq:P-S-big} tends to zero as $m, n \to \infty$. Using the weak-$W^{1,p}$ and uniform convergence of $(u_m)$, we see that so do the second and third terms on the right hand side of~\eqref{eq:P-S-big}. Hence 
\[
\lim_{m, n \to \infty} \|\nabla v_m - \nabla v_n\|_{p; \bB^+_{\frac{1}{2}}} = 0,
\]
which shows that each boundary point $x_0 \in \partial \bB$ has a neighborhood on which $(u_m)$ converges strongly in $W^{1, p}$. A similar argument gives strong $W^{1, p}$-convergence on any compact subset of $\bB$, and we conclude strong $W^{1, p}$ convergence on all of $\overline{\bB}$. The remaining conclusions are obvious.
\end{proof}
\subsection{The second variation}\label{subsec:second-variation}
Introducing, for $(y, \xi) \in \RR^3 \times \RR^{3 \times 2}$, the functions
\[
\begin{split}
C_{i\alpha}(y, \xi) =\ & [1 + \ep^{p-2}(1 + 2F_\tau(y, \xi))^{\frac{p}{2} - 1}] ( \xi^i_\alpha - \tau \cdot \mu_{ijk}X^k(y) \xi^j_{\beta}\epsilon_{\alpha\beta}  )\\
C_i(z, \xi) = \ & h(y) \mu_{ijk} \xi^{j}_{\alpha} \xi^{k}_{\beta} \frac{\epsilon_{\alpha\beta}}{2} - \tau\cdot [1 + \ep^{p-2}(1 + 2F_\tau(y, \xi))^{\frac{p}{2} - 1}] \mu_{ljk}\pa{X^l}{y^i}(y) \xi^j_{\alpha}\xi^k_{\beta} \frac{\epsilon_{\alpha\beta}}{2},
\end{split}
\]
we may express the first variation formula~\eqref{eq:first-variation} as 
\[
(\delta E_{\ep, p, f, \tau})_u(\varphi) = \int_{\bB} C_{i\alpha}(u, \nabla u) \partial_\alpha\varphi^i + C_i(u, \nabla u) \varphi^i, \text{ for all }\varphi \in T_u \cM_p.
\]
Suppose now that $u$ is a smooth critical point of $E_{\ep, p, f, \tau}$. Then a direct computation shows that
\begin{equation}\label{eq:second-variation}
\begin{split}
(\delta^2 E_{\ep, p, f, \tau})_u(\varphi, \varphi) =\ & \int_{\bB}\big( \pa{C_{i\alpha}}{\xi^j_\beta}\partial_\beta\varphi^j + \pa{C_{i\alpha}}{z^m}\varphi^m \big)\partial_\alpha\varphi^i + \big( \pa{C_i}{\xi^j_\beta}\partial_\beta \varphi^j + \pa{C_i}{z^m}\varphi^m \big) \varphi^i\\
& + \int_{\partial\bB} [1 + \ep^{p-2}(1 + 2F_\tau)^{\frac{p}{2} - 1}] (u_r + \tau X(u) \times u_\theta) \cdot A^\Sigma_u(\varphi, \varphi),
\end{split}
\end{equation}
for all $\varphi \in T_u\cM_p$, where $A^\Sigma$ is the second fundamental form of $\Sigma$, given by
\[
A^\Sigma_y(v, w) = (\nabla_v w \cdot X(y))X(y) \text{ for all }y \in \Sigma,\ v, w \in T_y \Sigma.
\]
As in~\cite{Cheng22}, a simple but important observation about the second variation of $E_{\ep, p, f, \tau}$ is that it extends to a Hilbert space. Specifically we have the following lemma.
\begin{lemm}\label{lemm:second-variation-extend}
Let $u \in \cM_p$ be a smooth critical point of $E_{\ep, p, f, \tau}$. Then the second variation $(\delta^2 E_{\ep, p, f, \tau})_u$ extends to a bounded, symmetric bilinear form on the Hilbert space 
\[
\cH_u = \{\varphi \in W^{1, 2}(\bB; \RR^3)\ |\ (\varphi|_{\partial \bB})(x) \in T_{u(x)}\Sigma \text{ for a.e. }x \in \partial \bB\}.
\]
Moreover, $H_u$ possesses an $L^2$-orthonormal basis consisting of eigenfunctions of $(\delta^2 E_{\ep, p, f, \tau})_u$. Each eigenfunction is smooth on $\overline{\bB}$, and the corresponding eigenvalues form a sequence of real numbers tending to infinity.
\end{lemm}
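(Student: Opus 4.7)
The plan is to verify in order that the second-variation formula~\eqref{eq:second-variation} defines a bounded symmetric bilinear form on $\cH_u$, that it satisfies a G\aa rding-type inequality, that a standard Riesz--Rellich argument then yields the asserted spectral decomposition, and finally that each eigenfunction is smooth up to $\partial \bB$ by linear elliptic regularity. The central computational point I expect to be the main obstacle is the G\aa rding inequality, as this is where the specific structure of $(\delta^2 E_{\ep, p, f, \tau})_u$ and the assumption $|\tau| < 1$ come into play.

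For boundedness and symmetry, I observe that since $u$ is smooth on $\overline{\bB}$, the coefficient of every quadratic expression in $(\varphi, \nabla \varphi)$ appearing in~\eqref{eq:second-variation}---namely the values of the partial derivatives of $C_{i\alpha}$ and $C_i$ at $(u, \nabla u)$---is a bounded smooth function of $x \in \bB$. The interior terms are therefore controlled by $\|\varphi\|_{1, 2}^2$ via Cauchy--Schwarz. The boundary integrand $[1 + \ep^{p-2}(1 + 2F_\tau)^{\frac{p}{2} - 1}] (u_r + \tau X(u) \times u_\theta) \cdot A^\Sigma_u(\varphi, \varphi)$ is quadratic in the trace $\varphi|_{\partial \bB}$ with bounded smooth coefficients, so it too is controlled by $\|\varphi\|_{1, 2}^2$ through the Sobolev trace inequality. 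Symmetry is automatic from the fact that each local reduction $E^\cA_{\ep, p, f, \tau}$ is $C^2$ on $\cA \subset \cM_p$.

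For the G\aa rding inequality, retaining only the leading-order contribution when differentiating $C_{i\alpha}$ in $\xi^j_\beta$ yields the principal quadratic form
\[
\big[1 + \ep^{p-2}(1 + 2F_\tau(u, \nabla u))^{\frac{p}{2}-1}\big] \big( \delta_{ij}\delta_{\alpha\beta} - \tau \mu_{ijk}X^k(u)\epsilon_{\alpha\beta} \big) \partial_\alpha \varphi^i \partial_\beta \varphi^j,
\]
whose symbol satisfies $(\delta_{ij}\delta_{\alpha\beta} - \tau \mu_{ijk}X^k\epsilon_{\alpha\beta})\xi^i_\alpha \xi^j_\beta \geq (1-|\tau|)|\xi|^2$, because $|X| \leq 1$ implies $|\mu_{ijk} X^k \xi^i_\alpha \xi^j_\beta \epsilon_{\alpha\beta}| \leq |\xi|^2$. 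Hence this principal contribution dominates $(1-|\tau|)\int_{\bB}|\nabla \varphi|^2$. Every remaining bulk contribution carries either an extra factor of $\varphi$ or a derivative of the smooth map $u$ paired against $|\nabla \varphi|$ or $|\varphi|$, and can be absorbed into $\frac{1 - |\tau|}{2}\int|\nabla \varphi|^2 + K\int|\varphi|^2$ via Young's inequality. The boundary term is handled by the trace inequality $\|\varphi\|_{L^2(\partial\bB)}^2 \leq \eta \|\nabla \varphi\|_{L^2}^2 + C_\eta \|\varphi\|_{L^2}^2$. Choosing $K$ sufficiently large then yields
\[
(\delta^2 E_{\ep, p, f, \tau})_u(\varphi, \varphi) + K \|\varphi\|_{L^2}^2 \geq \tfrac{1 - |\tau|}{4} \|\nabla \varphi\|_{L^2}^2, \qquad \varphi \in \cH_u.
\]

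With the G\aa rding inequality in hand, the rest proceeds by a standard Riesz--Rellich argument. The bilinear form $Q(\varphi, \psi) := (\delta^2 E_{\ep, p, f, \tau})_u(\varphi, \psi) + K \langle \varphi, \psi\rangle_{L^2}$ defines an inner product on $\cH_u$ equivalent to the restriction of $\langle \cdot, \cdot\rangle_{W^{1, 2}}$; the Riesz representation theorem then produces a bounded self-adjoint positive operator $T: \cH_u \to \cH_u$ with $Q(T\varphi, \psi) = \langle \varphi, \psi\rangle_{L^2}$. Compactness of the embedding $\cH_u \hookrightarrow L^2(\bB; \RR^3)$ via Rellich makes $T$ compact, so the spectral theorem provides an $L^2$-orthonormal basis $\{\varphi_k\}$ of eigenfunctions of $T$ with eigenvalues $\mu_k \searrow 0^+$; setting $\lambda_k = \mu_k^{-1} - K$ yields eigenvalues of $(\delta^2 E_{\ep, p, f, \tau})_u$ tending to $+\infty$. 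Finally, each $\varphi_k$ weakly solves a second-order linear elliptic system with smooth coefficients obtained by linearizing~\eqref{eq:EL} at $u$, together with a tangency condition and an oblique derivative condition on $\partial \bB$ that emerges upon integrating the weak eigenvalue equation by parts. Arguing as in the proof of Proposition~\ref{prop:boundary-regularity}---flattening $\Sigma$ and $\partial \bB$ via the charts of Section~\ref{subsec:function-spaces} and invoking the $W^{2, q}$ estimates of Appendix~\ref{sec:linear}---then bootstraps $\varphi_k$ to smoothness up to the boundary, while interior regularity is classical.
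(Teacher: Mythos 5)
Your proof is correct and follows essentially the same route as the paper: boundedness via the smoothness of $u$ and a boundary trace estimate, a G\aa rding inequality coming from the ellipticity lower bound $\pa{C_{i\alpha}}{\xi^j_\beta}\eta^i_\alpha\eta^j_\beta \geq (1-|\tau|)|\eta|^2$ (which the paper records as~\eqref{eq:ellipticity-lead}), and then the standard Riesz--Rellich spectral argument plus elliptic regularity. The only cosmetic difference is that the paper proves the trace estimate by integrating $\Div(|\varphi|^2 x)$ over $\bB$ rather than citing the trace inequality directly, and it leaves the spectral and regularity steps to the reader, which you have filled in faithfully.
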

\begin{proof}
Since $u$ is smooth, the coefficients in~\eqref{eq:second-variation} are all bounded on $\bB$, and hence 
\[
\big|(\delta^2 E_{\ep, p, f, \tau})_u (\varphi, \varphi)\big| \leq C\int_{\bB} |\nabla \varphi|^2 + |\varphi|^2  + C\int_{\partial \bB} |\varphi|^2  \leq C \|\varphi\|_{1, 2; \bB}^2,
\]
say for all $\varphi \in C^1(\overline{\bB}; \RR^3) \cap H_u$, where in getting the second inequality we used the fact that
\begin{equation}\label{eq:boundary-Sobolev}
\begin{split}
\int_{\partial \bB} |\varphi|^2 =\ & \int_{\partial \bB} |\varphi|^2 x \cdot \nu =  \int_{\bB} \Div(|\varphi|^2 x) \leq C\int_{\bB} |\varphi|^2 + |\varphi||\nabla \varphi|.
\end{split}
\end{equation}
As $C^1(\overline{\bB}; \RR^3) \cap H_u$ is dense in $H_u$, we infer that $(\delta^2 E_{\ep, p, f, \tau})_u$ extends to a bounded, symmetric bilinear form on $\cH_u$, and the formula~\eqref{eq:second-variation} continues to hold. To get the remaining conclusions, note that since
\begin{equation}
\label{eq:ellipticity-lead}
\pa{C_{i\alpha}}{\xi^{j}_{\beta}}(y, \xi)\eta^i_{\alpha}\eta^j_{\beta} \geq (1 - |\tau|)|\eta|^2 \text{ for all }y \in \RR^3 \text{ and }\xi, \eta \in \RR^{3 \times 2},
\end{equation}
we get the following estimate from~\eqref{eq:second-variation}:
\[
\begin{split}
(\delta^2 E_{\ep, p, f, \tau})_u(\varphi, \varphi) \geq\ & (1 - |\tau|)\int_{\bB} |\nabla \varphi|^2 - C\int_{\bB} |\nabla \varphi| |\varphi| + |\varphi|^2 - C\int_{\partial \bB}|\varphi|^2.
\end{split}
\]
Combining this with~\eqref{eq:boundary-Sobolev} and Young's inequality, we obtain a G$\mathring{a}$rding type inequality for $(\delta^2 E_{\ep, p, f, \tau})_u$ on $H_u$, and hence the desired basis of $\cH_u$ consisting of eigenfunctions of $(\delta^2 E_{\ep, p, f, \tau})_u$. The smoothness of the eigenfunctions then follows from the smoothness of the coefficients in~\eqref{eq:second-variation} and the ellipticity of the leading coefficients $\pa{C_{i\alpha}}{\xi^j_{\beta}}$ as expressed by~\eqref{eq:ellipticity-lead}.
\end{proof}
\begin{defi}\label{defi:index}
The number of negative eigenvalues of $(\delta^2 E_{\ep, p, f, \tau})_u$ counted with multiplicity is denoted $\Ind_{\ep, p, f, \tau}(u)$. Below we let $V_u^{-}$ be the sum of the negative eigenspaces of $(\delta^2 E_{\ep, p, f, \tau})_u$ and $V_u^{\perp}$ the $L^2$-orthogonal complement of $V_u^{-}$ in $T_u\cM_p$. Then both $V_u^{-}$ and $V_u^{\perp}$ are closed subspaces of $T_u\cM_p$, with $V_u^-$ being finite dimensional, and we have
\[
T_u\cM_p = V_u^{-} \oplus V_u^{\perp}.
\]
For each $\xi \in T_u\cM_p$, we write $\xi = \xi^{-} + \xi^{\perp}$ according to this decomposition. Since $V_{u}^{-}$ and $V_u^{\perp}$ are both closed subspaces, the norms $\xi \mapsto \|\xi^{-}\|_{1, p} + \|\xi^{\perp}\|_{1, p}$ and $\xi \mapsto \|\xi\|_{1, p}$ are equivalent on $T_u\cM_p$.
\end{defi}

The next lemma is the counterpart of~\cite[Propsition 4.3]{Cheng22}. The proof, which is not reproduced in full here to avoid repetition, involves just basic calculus, but the resulting estimates form the cornerstone for the deformation procedure in the proof of Proposition~\ref{prop:mountain-pass-upgraded} below. We continue to assume that $u$ is a smooth critical point of $E_{\ep, p, f, \tau}$, and in addition choose a simply-connected neighborhood $\cA$ of $u$ along with a local reduction $E^{\cA}_{\ep, p, f, \tau}$. Letting
\[
\cB_s = \{\varphi \in T_u \cM_p \ |\ \|\varphi\|_{1, p} < s\},
\]
for $s$ sufficiently small we may introduce a coordinate chart
\[
\Theta: (\cB_s, \{0\}) \to (\cA, \{u\}) 
\]
centered at $u$ such that $(d\Theta)_0$ is the identity map of $T_u\cM_p$. Letting $\widetilde{E} = E^{\cA}_{\ep, p, f, \tau}\circ \Theta$, then $\widetilde{E}$ is twice continuously differentiable on $\cB_s$. Moreover, since $u$ is a critical point, we have $(\delta \widetilde{E})_0 = 0$ and $(\delta^2 \widetilde{E})_0 = (\delta^2 E_{\ep, p, f, \tau})_u$.
\begin{lemm}\label{lemm:morse-nbd}
There exist $r \in (0, \frac{s}{3})$, $a > 0$ and $\theta \in (0, 1)$ depending on $u$ such that the following hold regardless of the choice of local reduction $E^{\cA}_{\ep, p, f, \tau}$.
\begin{enumerate}
\item[(a)] For all $\xi \in \cB_r$ such that $\|\xi^\perp\|_{1, p} \leq \theta \cdot \|\xi^-\|_{1, p}$ we have
\[
\widetilde{E}(0) - \widetilde{E}(\xi) \geq a \|\xi^-\|_{1, p}^2.
\]
\item[(b)] For all $\varphi \in \cB_r$ and $\xi \in V_u^-$ such that $\|\xi\|_{1, p} = 1$ and $(\delta \widetilde{E})_{\varphi}(\xi) \leq 0$, we have
\[
\widetilde{E}(\varphi) - \widetilde{E}(\varphi + t\xi) \geq at^2, \text{ for all }t \in [0, r].
\]
\end{enumerate}
\end{lemm}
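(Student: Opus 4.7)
The plan is to prove both assertions by a Morse-lemma-style Taylor expansion of $\widetilde{E}$ around the critical point $0 = \Theta^{-1}(u)$, combined with the spectral decomposition $T_u\cM_p = V_u^-\oplus V_u^\perp$ from Definition~\ref{defi:index}. A preliminary observation is that by Lemma~\ref{lemm:volume-properties}(b) different reference pairs change $E^\cA_{\ep,p,f,\tau}$ only by an additive constant, so $(\delta \widetilde E)$ and $(\delta^2\widetilde E)$ are independent of the reduction; this confirms the assertion that the conclusion is insensitive to $(u_0,\gamma_0)$. Writing $Q := (\delta^2 E_{\ep,p,f,\tau})_u$, the facts $(\delta \widetilde E)_0=0$ and $(\delta^2\widetilde E)_0 = Q$ (from $(d\Theta)_0=\Id$ and the criticality of $u$) together with the $C^2$ regularity of $\widetilde{E}$ on $\cB_s$ (available because $p>2$) give the expansion
\[
\widetilde{E}(\xi) - \widetilde{E}(0) = \tfrac12 Q(\xi,\xi) + R(\xi),\qquad |R(\xi)|\le \omega(\|\xi\|_{1,p})\|\xi\|_{1,p}^2,
\]
where $\omega(t)\to 0$ as $t\to 0^+$.

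Next I use two spectral bounds. Since $V_u^-$ and $V_u^\perp$ come from the eigenspace decomposition of $Q$ in Lemma~\ref{lemm:second-variation-extend}, they are $Q$-orthogonal. On the finite-dimensional space $V_u^-$ all norms are equivalent and $Q$ is negative definite, so for some $c_1>0$,
\[
Q(\eta,\eta) \le -c_1\|\eta\|_{1,p}^2 \text{ for all } \eta \in V_u^-.
\]
On $V_u^\perp$ the bilinear form is just bounded: from \eqref{eq:second-variation} and the smoothness of $u$, using $\|\cdot\|_{1,2}\le C\|\cdot\|_{1,p}$ on $\bB$, there is $C_1>0$ with $|Q(\eta,\eta)| \le C_1 \|\eta\|_{1,p}^2$ on $V_u^\perp$.

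For part (a), decompose $\xi=\xi^-+\xi^\perp$ and use the $Q$-orthogonality to obtain
\[
Q(\xi,\xi) \le -c_1\|\xi^-\|_{1,p}^2 + C_1\|\xi^\perp\|_{1,p}^2.
\]
Under the hypothesis $\|\xi^\perp\|_{1,p}\le \theta\|\xi^-\|_{1,p}$ this is $\le -(c_1-C_1\theta^2)\|\xi^-\|_{1,p}^2$. Moreover, since the direct-sum norm $\xi\mapsto \|\xi^-\|_{1,p}+\|\xi^\perp\|_{1,p}$ is equivalent to $\|\cdot\|_{1,p}$ on $T_u\cM_p$ (Definition~\ref{defi:index}), the cone condition also yields $\|\xi\|_{1,p}^2\le C'\|\xi^-\|_{1,p}^2$ for a constant $C'$ independent of $\theta\in(0,1)$. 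Fix $\theta\in(0,1)$ so small that $c_1-C_1\theta^2\ge c_1/2$, and then choose $r<s/3$ so small that $\omega(r)\cdot C' \le c_1/8$; for $\xi\in\cB_r$ satisfying the constraint, combining these estimates gives $\widetilde{E}(0)-\widetilde{E}(\xi)\ge (c_1/8)\|\xi^-\|_{1,p}^2$, so $a=c_1/8$ works.

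For part (b), use Taylor along the segment $s\mapsto \varphi+s\xi$ (which lies in $\cB_{2r}\subset\cB_s$ for $t\in[0,r]$ and $\|\xi\|_{1,p}=1$): since $(\delta\widetilde E)_\varphi(\xi)\le 0$,
\[
\widetilde{E}(\varphi+t\xi)-\widetilde{E}(\varphi) \le \int_0^t (t-s)\,(\delta^2\widetilde{E})_{\varphi+s\xi}(\xi,\xi)\,ds.
\]
Here the key observation is that $\{\xi\in V_u^- : \|\xi\|_{1,p}=1\}$ is compact by finite dimensionality, so $Q(\xi,\xi)\le -c_1$ holds uniformly on it, and the $C^2$-continuity of $\widetilde{E}$ lets us shrink $r$ further so that $(\delta^2\widetilde{E})_\eta(\xi,\xi)\le -c_1/2$ for every $\eta\in\cB_{2r}$ and every such $\xi$. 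Integrating gives $\widetilde{E}(\varphi)-\widetilde{E}(\varphi+t\xi)\ge (c_1/4)t^2$. Replacing $a$ by the minimum of the two values obtained finishes the proof. The main obstacle is really just the bookkeeping in part (a): the Taylor remainder is controlled by $\|\xi\|_{1,p}^2$, but we need to dominate it by $\|\xi^-\|_{1,p}^2$, which is exactly what the cone constraint $\|\xi^\perp\|_{1,p}\le\theta\|\xi^-\|_{1,p}$ and the norm equivalence of the direct-sum decomposition provide.
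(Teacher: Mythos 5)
Your proof is correct and takes essentially the same approach as the paper (which itself defers the final manipulations to the earlier reference~\cite{Cheng22}): a Taylor expansion of $\widetilde{E}$ around $0$, the spectral decomposition $T_u\cM_p = V_u^-\oplus V_u^\perp$ with the negative-definiteness of $Q=(\delta^2\widetilde{E})_0$ on $V_u^-$, $Q$-orthogonality of the two subspaces, and $C^2$-continuity to control the deviation of $(\delta^2\widetilde{E})_\psi$ from $Q$ on a small ball. The only cosmetic difference is that you use the second-order Taylor remainder for (a) and the first-order integral remainder for (b), while the paper's sketch uses the integral form of the first-order remainder throughout; the underlying estimates are the same.
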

\begin{proof}
The proof is essentially the same as that of~\cite[Proposition 4.3]{Cheng22}. Thus we only explain the beginning of the argument and refer the reader to~\cite{Cheng22} for further details. Since both (a) and (b) concern differences of the values of $\widetilde{E}$ at two points and since $\cB_s$ is a connected set, by Lemma~\ref{lemm:volume-properties} we see that the validity of the asserted estimates is independent of the choice of local reduction. Next, since $(\delta^2\widetilde{E})_0$ is a bounded bilinear form on $T_u\cM_p$, there exists $C_u > 0$ such that 
\[
(\delta^2\widetilde{E})_0(\xi, \xi) \leq C_u \|\xi\|_{1, p}^2, \text{ for all } \xi \in T_u\cM_p.
\]
On the other hand, since $V_u^{-}$ is finite dimensional, the restrictions of the $L^2$ and $W^{1, p}$-norms are equivalent, and hence there exists $c_u > 0$ such that
\[
(\delta^2\widetilde{E})_0(\xi, \xi) \leq -c_u \|\xi\|_{1, p}^2, \text{ for all } \xi  \in V_u^{-}.
\]
Also, we have $(\delta^2 \widetilde{E})_0(V_u^{-}, V_u^{\perp}) = 0$. Finally, since $\widetilde{E}$ is $C^2$, there exists $r \in (0, \frac{s}{3})$ such that 
\[
|(\delta^2\widetilde{E})_\psi(\xi, \xi) - (\delta^2\widetilde{E})_0(\xi, \xi)| \leq \frac{c_u}{16}\|\xi\|_{1, p}^2, \text{ for all }\xi \in T_u\cM_p \text{ and }\psi \in \cB_{2r}.
\]
We can then use the first-order Taylor expansion of $\widetilde{E}$ with the remainder in integral form, as in the proof of~\cite[Proposition 4.3]{Cheng22}, to get both conclusions of the lemma. 
\end{proof}

As in Section~\ref{subsec:smoothness}, we finish with a discussion of the case $\ep = 0$. Given a smooth solution $u: (\overline{\bB}, \partial \bB) \to (\RR^3, \Sigma)$ of~\eqref{eq:ep-0-classical}, define $(\delta^2 E_{f, \tau})_u$ to be the bilinear form obtained by setting $\ep = 0$ in~\eqref{eq:second-variation}. Also, we introduce the space
\[
\cV = \{\varphi \in C^\infty(\overline{\bB}; \RR^3) \ |\ \varphi(x) \in T_{u(x)}\Sigma, \text{ for all }x \in \partial \bB\},
\]
and use $\Ind^E_{f, \tau}(u)$ to denote the index of $(\delta^2 E_{f, \tau})_u$ on $\cV$. That is, $\Ind^E_{f, \tau}(u)$ is the supremum of $\dim \cL$ over all finite-dimensional subspaces $\cL$ of $\cV$ on which $(\delta^2 E_{f, \tau})_u$ is negative definite.
\begin{lemm}\label{lemm:second-variation-0}
For all $\varphi \in \cV$, we have the following expression for $(\delta^2 E_{f, \tau})_u(\varphi, \varphi)$:
\begin{equation}\label{eq:second-variation-0}
\begin{split}
(\delta^2 E_{f, \tau})_u(\varphi, \varphi) =\ & \int_{\bB}|\nabla \varphi|^2 + (\nabla f)_u(\varphi)\ \varphi \cdot u_{x^1} \times u_{x^2} + f(u)\ \varphi \cdot (\varphi_{x^1} \times u_{x^2} + u_{x^1} \times \varphi_{x^2})\\
& + \int_{\partial \bB} \big(u_r + \tau X(u) \times u_\theta\big) \cdot  A^{\Sigma}_u(\varphi, \varphi) + \tau \int_{\partial \bB} \varphi \cdot X(u) \times \varphi_\theta.
\end{split}
\end{equation}
\end{lemm}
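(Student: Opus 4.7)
The plan is to bypass a head-on expansion of~\eqref{eq:second-variation} at $\ep=0$ (which would require tracking all the partials of $C_{i\alpha}$ and $C_i$) and instead compute the Hessian by differentiating a cleaner ``capillary'' form of the first variation along a curve in $\cM_p$. The first step will be to recast Lemma~\ref{lemm:first-variation} at $\ep=0$: combining the $(\nabla X)_u$-term with the $\tau\Div X$-piece of $h=f+\tau\Div X$ via the pointwise identity~\eqref{eq:volume-form-Lie}, and recognizing the surviving $\tau$-integrand as the planar divergence of $\bigl(X(u)\cdot\varphi\times u_{x^2},\ X(u)\cdot u_{x^1}\times\varphi\bigr)$, Stokes' theorem produces
\[
(\delta E_{f,\tau})_u(\varphi)=\int_{\bB}\bangle{\nabla u,\nabla\varphi}\,dx+\int_{\bB}f(u)\,\varphi\cdot u_{x^1}\times u_{x^2}\,dx+\tau\!\int_{\partial\bB}\varphi\cdot X(u)\times u_\theta\,d\theta.
\]

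For the second step, given $\varphi\in\cV$, I will pick any smooth curve $u_s$ in $\cM_p$ with $u_0=u$, $\dot u_0=\varphi$ and set $\psi=\ddot u_0$, so that $(\delta^2 E_{f,\tau})_u(\varphi,\varphi)=\tfrac{d}{ds}\bigl[(\delta E_{f,\tau})_{u_s}(\dot u_s)\bigr]\big|_{s=0}$ (this is legitimate at a critical point even though $\psi\notin T_u\cM_p$). Differentiating the simplified formula splits the result into, on the one hand, the explicit $\varphi$-terms
\[
\int_{\bB}|\nabla\varphi|^2+(\nabla f)_u(\varphi)\,\varphi\cdot u_{x^1}\times u_{x^2}+f(u)\,\varphi\cdot(\varphi_{x^1}\times u_{x^2}+u_{x^1}\times\varphi_{x^2})
\]
\[
+\tau\!\int_{\partial\bB}\bigl[\,\varphi\cdot(\nabla X)_u(\varphi)\times u_\theta+\varphi\cdot X(u)\times\varphi_\theta\,\bigr],
\]
and, on the other, a single $\psi$-term equal to the first variation functional---extended to arbitrary $W^{1,p}$ vector fields along $u$---applied to the non-tangent field $\psi$.

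The third step handles this $\psi$-contribution by integration by parts: since $u$ is a smooth critical point, $\Delta u=f(u)\,u_{x^1}\times u_{x^2}$ pointwise in $\bB$ (cf.\ Remark~\ref{rmk:C0-regularity}), killing the interior and leaving $\int_{\partial\bB}(u_r+\tau X(u)\times u_\theta)\cdot\psi$. The boundary condition forces this integrand to see only the normal component of $\psi$. Differentiating twice in $s$ the identity $\dot u_s\cdot X(u_s)\equiv 0$ on $\partial\bB$ yields $\psi\cdot X(u)=-\varphi\cdot(\nabla X)_u(\varphi)$, which by the definition $A^\Sigma_y(v,w)=(\nabla_v w\cdot X(y))X(y)$ is exactly $A^\Sigma_u(\varphi,\varphi)\cdot X(u)$. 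Thus the normal part of $\psi$ equals $A^\Sigma_u(\varphi,\varphi)$ on $\partial\bB$, so the $\psi$-term contributes precisely
\[
\int_{\partial\bB}(u_r+\tau X(u)\times u_\theta)\cdot A^\Sigma_u(\varphi,\varphi).
\]

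Finally, the leftover boundary term $\tau\!\int_{\partial\bB}\varphi\cdot(\nabla X)_u(\varphi)\times u_\theta$ vanishes pointwise: on $\partial\bB$, $X|_\Sigma$ is the unit normal, hence $(\nabla X)_u(\varphi)$ is the (tangential) Weingarten image of $\varphi$; since $u_\theta$ is also tangent to $\Sigma$, the cross product $(\nabla X)_u(\varphi)\times u_\theta$ is parallel to $X(u)$ and orthogonal to $\varphi\in T_u\Sigma$. Combining the surviving pieces yields~\eqref{eq:second-variation-0}. The main subtlety I expect is the third step: $\psi$ is not tangent to $\cM_p$, so the first variation must be extended to arbitrary vector fields, and the identification of its normal component with the second fundamental form $A^\Sigma_u(\varphi,\varphi)$ must be done intrinsically from the geometric constraint $u_s(\partial\bB)\subset\Sigma$, independently of the specific chart used to generate the curve $u_s$.
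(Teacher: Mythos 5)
Your proposal is correct, but it follows a genuinely different route than the paper. The paper starts from the already-established Hessian formula~\eqref{eq:second-variation}, sets $\ep=0$, and then massages that expression using two rounds of integration by parts together with the algebraic identity~\eqref{eq:volume-form-Lie-2} (Lemma~\ref{lemm:X-identities}), taking the boundary $A^\Sigma$-term as already present in~\eqref{eq:second-variation}. You instead (i) first push the bulk $\tau$-terms of the first variation to the boundary via a single application of~\eqref{eq:volume-form-Lie} and Stokes, yielding $(\delta E_{f,\tau})_u(\varphi)=\int_{\bB}\langle\nabla u,\nabla\varphi\rangle + f(u)\varphi\cdot u_{x^1}\times u_{x^2}+\tau\int_{\partial\bB}\varphi\cdot X(u)\times u_\theta$, and (ii) then recompute the Hessian from scratch as $\frac{d}{ds}\big[(\delta E_{f,\tau})_{u_s}(\dot u_s)\big]\big|_{s=0}$ along a curve $u_s\subset\cM_p$, letting the $A^\Sigma$-term re-emerge from the normal component of $\ddot u_0$. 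Both are valid; your version has the merit of not requiring~\eqref{eq:volume-form-Lie-2} (only~\eqref{eq:volume-form-Lie}) and of making the geometric origin of the $A^\Sigma$-term transparent, at the cost of having to justify — as you note — that the curve-based Hessian at a critical point is chart-independent and agrees with the Fr\'echet Hessian that defines $(\delta^2 E_{f,\tau})_u$ via~\eqref{eq:second-variation}; since the local reductions $E^{\cA}_{f,\tau}$ are $C^2$ on $\cM_p$ (for $p>2$), this is standard. I verified the Stokes computation in Step~1 and the cancellations; they check out. Two small remarks: the identity $\psi\cdot X(u)=-\varphi\cdot(\nabla X)_u(\varphi)$ comes from differentiating $\dot u_s\cdot X(u_s)\equiv 0$ \emph{once}, not twice; and the vanishing in Step~4 (which parallels the boundary-term cancellation in the paper's~\eqref{eq:nabla-X-by-parts}) uses that $(\nabla X)_{u}(\varphi)\in T_u\Sigma$ on $\partial\bB$, which holds because for $y\in\Sigma$ and $v\in T_y\Sigma$ one has $(\nabla X)_y(v)=d\bN_y(v)\perp X(y)$.
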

\begin{proof}
Denoting $f + \tau\Div X$ by $h$ and setting $\ep = 0$ in~\eqref{eq:second-variation} gives
\begin{equation}\label{eq:second-variation-0-1}
\begin{split}
(\delta^2 E_{f, \tau})_u(\varphi, \varphi) =\ & \int_{\bB} |\nabla \varphi|^2 - 2\tau  X(u) \cdot \varphi_{x^1} \times \varphi_{x^2} -  \tau \int_{\bB} (\nabla X)_u(\varphi) \cdot (\varphi_{x^1} \times u_{x^2} + u_{x^1} \times \varphi_{x^2})  \\
& - \tau \int_{\bB} (\nabla X)_u(\varphi) \cdot (\varphi_{x^1} \times u_{x^2} + u_{x^1} \times \varphi_{x^2}) + (\nabla^2 X)_u(\varphi, \varphi) \cdot u_{x^1} \times u_{x^2}\\
& + \int_{\bB} (\nabla h)_u(\varphi)\  \varphi \cdot u_{x^1} \times u_{x^2} + h(u)\ \varphi \cdot (\varphi_{x^1} \times u_{x^2} + u_{x^1} \times \varphi_{x^2})\\
& + \int_{\partial \bB} \big(u_r + \tau X(u) \times u_\theta\big) \cdot  A^{\Sigma}_u(\varphi, \varphi).
\end{split}
\end{equation}
(We deliberately repeat $\int_{\bB} (\nabla X)_u(\varphi) \cdot (\varphi_{x^1} \times u_{x^2} + u_{x^1} \times \varphi_{x^2})$ instead of combining the two occurrences into one term.) Integrating by parts in the second term on the right hand side, we get
\begin{equation}\label{eq:nabla-X-by-parts}
\begin{split}
&-\tau\int_{\bB}(\nabla X)_u(\varphi) \cdot (\varphi_{x^1} \times u_{x^2} + u_{x^1} \times \varphi_{x^2})\\
 =\ & \tau \int_{\partial \bB} \varphi \cdot (\nabla X)_u(\varphi) \times u_{\theta} - \tau \int_{\bB}\varphi \cdot \big[ (\nabla X)_u(\varphi_{x^1}) + (\nabla^2X)_u(u_{x^1}, \varphi) \big] \times u_{x^2}\\
&-\tau \int_{\bB}\varphi \cdot u_{x^1} \times \big[ (\nabla X)_u(\varphi_{x^2}) + (\nabla^2X)_u(u_{x^2}, \varphi) \big].
\end{split}
\end{equation}
The boundary term on the right hand side vanishes because when $x \in \partial \bB$, the vectors $\varphi(x), (\nabla X)_{u(x)}(\varphi(x))$ and $u_\theta(x)$ all lie in $T_{u(x)}\Sigma$. Combining~\eqref{eq:nabla-X-by-parts} with~\eqref{eq:volume-form-Lie-2} gives
\[
\begin{split}
&-  \tau \int_{\bB} (\nabla X)_u(\varphi) \cdot (\varphi_{x^1} \times u_{x^2} + u_{x^1} \times \varphi_{x^2})  \\
& - \tau \int_{\bB} (\nabla X)_u(\varphi) \cdot (\varphi_{x^1} \times u_{x^2} + u_{x^1} \times \varphi_{x^2}) + (\nabla^2 X)_u(\varphi, \varphi) \cdot u_{x^1} \times u_{x^2}\\
=\ & \tau \int_{\bB} \varphi \cdot \big[  (\nabla X)_u(u_{x^1}) \times \varphi_{x^2} + \varphi_{x^1} \times (\nabla X)_u(u_{x^2})  \big]\\
& - \tau \int_{\bB}[\nabla (\Div X)]_u(\varphi)\ \varphi \cdot u_{x^1} \times u_{x^2} +  (\Div X)_u\ \varphi \cdot (\varphi_{x^1} \times u_{x^2} + u_{x^1} \times \varphi_{x^2}).
\end{split}
\]
Returning to~\eqref{eq:second-variation-0-1} and recalling that $h = f + \tau \Div X$, we deduce that 
\begin{equation}\label{eq:second-variation-0-2}
\begin{split}
(\delta^2 E_{f, \tau})_u(\varphi, \varphi) =\ & \int_{\bB} |\nabla \varphi|^2 - 2\tau X(u) \cdot \varphi_{x^1} \times \varphi_{x^2}\\
& + \tau \int_{\bB} \varphi \cdot \big[  (\nabla X)_u(u_{x^1}) \times \varphi_{x^2} + \varphi_{x^1} \times (\nabla X)_u(u_{x^2})  \big]\\
& + \int_{\bB} (\nabla f)_u(\varphi)\  \varphi \cdot u_{x^1} \times u_{x^2} + f(u)\ \varphi \cdot (\varphi_{x^1} \times u_{x^2} + u_{x^1} \times \varphi_{x^2})\\
& + \int_{\partial \bB} \big(u_r + \tau X(u) \times u_\theta\big) \cdot  A^{\Sigma}_u(\varphi, \varphi).
\end{split}
\end{equation}
To finish, we integrate by parts to get
\[
\begin{split}
-2\tau \int_{\bB}X(u) \cdot \varphi_{x^1} \times \varphi_{x^2} =\ & -\tau\int_{\bB} X(u)\cdot (\varphi_{x^1} \times \varphi_{x^2} + \varphi_{x^1} \times \varphi_{x^2})\\
=\ & \tau\int_{\partial \bB} \varphi \cdot X(u) \times \varphi_\theta\\
&- \tau \int_{\bB}\varphi \cdot \big[ (\nabla X)_u(u_{x^1}) \times \varphi_{x^2} + \varphi_{x^1} \times (\nabla X)_u(u_{x^2})  \big].
\end{split}
\]
Substituting this into~\eqref{eq:second-variation-0-2} gives~\eqref{eq:second-variation-0}.\end{proof}
\section{Main estimates}\label{sec:PDE}
\subsection{Bounds on critical points of the perturbed functional}\label{subsec:a-priori}
In this section we prove several estimates on critical points of $E_{\ep, p, f, \tau}$ by modifying the arguments in~\cite{Fraser2000}, replacing estimates for the Neumann problem by those collected in Appendix~\ref{sec:linear} for oblique derivative problems. At the end of this section we recall two consequences of the maximum principle from~\cite{Cheng22}, both still applicable in the present context. We continue to assume that $H_1$ is an upper bound for $|f|, |\nabla f|, |\nabla X|$ and $|\nabla^2X|$ on $\RR^3$.
\begin{prop}\label{prop:W14-W2q}
Let $p_1$ be as in Proposition~\ref{prop:boundary-regularity}. Given $q > 2$ and $L  > 0$, there exist $p_2 \in (2, p_1]$ (depending only on $q, \tau$) and $\theta_1 \in (0, \frac{1}{4})$ (depending only on $q,\tau, L$) with the following properties: Suppose $u \in \cM_p$ is a critical point of $E_{\ep, p, f, \tau}$ with $p \in (2, p_2]$, and that 
\begin{equation}\label{eq:W14-bound}
r^2\int_{\bB_r(x_0) \cap \bB} |\nabla u|^4 \leq L^4 \text{ for some }x_0 \in \partial\bB \text{ and }r \in (0, r_{\bB}].
\end{equation}
Then for all $\sigma < \rho \leq \theta_1$, there holds
\begin{equation}\label{eq:W2q-estimate}
r^{2 - \frac{2}{q}} \|\nabla^2 u\|_{q; \bB_{\frac{3}{4}\sigma r}(x_0) \cap \bB} \leq C(\rho, \rho - \sigma, q, H_1, L, \tau)\cdot r^{\frac{1}{2}} \|\nabla u\|_{4; \bB_{\frac{5}{4}\rho r}(x_0) \cap \bB}.
\end{equation}
\end{prop}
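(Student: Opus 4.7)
The plan is to adapt the fixed-point argument used in the proof of Proposition~\ref{prop:boundary-regularity}, now extracting \emph{quantitative} estimates for $\nabla^2 v$ from the scale-invariant hypothesis~\eqref{eq:W14-bound}. Without loss of generality I take $x_0 = e_1$. I fix the chart $\Phi_U$ flattening $\Sigma$ near $u(e_1)$ and the conformal map $F$ flattening $\partial \bB$ near $e_1$, and introduce $v$ on $\bB^+_{\sigma_1}$ by~\eqref{eq:v-definition}, using some fixed $p \in (2, p_1]$ whose ultimate value $p_2$ will be determined below. A direct change of variables using~\eqref{eq:F-radius-comparable} and~\eqref{eq:lambda-bound} converts~\eqref{eq:W14-bound} into a scale-invariant bound of the form $\|\nabla v\|_{4;\bB^+_{\sigma_1}} \leq CL$; by Morrey embedding, $\osc_{\bB^+_{\rho}}v \leq CL\,\rho^{1/2}$, which stays below $\rho_\tau$ provided $\theta_1$ is chosen small in terms of $\tau$ and $L$. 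This confirms that $\Phi_U$ remains valid on the full preimage $F(\bB^+_{\theta_1})$.

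Given $\sigma < \rho \leq \theta_1$, I take a cutoff $\eta \in C^\infty_c(\bB_\rho)$ with $\eta \equiv 1$ on $\bB_{3\sigma/4}$ and $|\nabla^k \eta| \leq C_k (\rho - \sigma)^{-k}$, and set $w = \eta v$. Exactly as in~\eqref{eq:E-L-non-div-cutoff} and~\eqref{eq:E-L-bc-cutoff}, the non-divergence form~\eqref{eq:E-L-fermi-non-div} and the oblique boundary condition~\eqref{eq:E-L-fermi-bc} imply that $w \in \cX_q$ satisfies
\[
(\Delta - 1)w + \eta \, E^{ij}_{\alpha\beta}(x, v, \nabla v)\partial_{\alpha\beta} w^j = \widetilde f \quad \text{in }\RR^2_+,
\]
\[
(B_0 w)^i + \eta \, e_{ij}(v)\big[w^j_{x^1} - (w^j_{x^1})_{\bB^+_\rho}\big] = \widetilde g^i \quad \text{on }\partial \RR^2_+,\ i = 1, 2,
\]
where, thanks to Lemma~\ref{lemm:coefficient-bounds}(d), the cutoff-induced terms $\widetilde f, \widetilde g$ are pointwise bounded by $C(\rho - \sigma)^{-2}(1 + |\nabla v| + |\nabla v|^2)$ on $\bB^+_\rho$.

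I then invert $(\Delta - 1, B_0|_{\partial \RR^2_+}): \cX_q \to \cY_q$, which is bounded and invertible by Lemmas~\ref{lemm:inhomogeneous-estimate} and~\ref{lemm:inhomogeneous}, to recast the above as a fixed-point equation $w = Tw$ on $\cX_q$. Arguing as in the Claim following~\eqref{eq:E-L-bc-cutoff}, the perturbations are controlled by
\[
C_{q,\tau}(p - 2)\|\nabla^2 w\|_{q;\RR^2_+} + \big(C_{p,\tau,q} L\, \theta_1^{1 - 2/p} + C_{\tau,q}\theta_1^{1 - 1/q}\|\nabla v\|_{2q;\bB^+_\rho}\big)\|\nabla^2 w\|_{q;\RR^2_+}.
\]
Choosing $p_2$ close enough to $2$ depending on $q$ and $\tau$, and then $\theta_1$ small enough depending on $q, \tau$ and $L$, makes $T$ a strict contraction on $\cX_q$, whose fixed point $w = \eta v$ satisfies
\[
\|\nabla^2 v\|_{q;\bB^+_{3\sigma/4}} \leq C(\rho, \rho - \sigma, q, H_1, L, \tau) \cdot \big(1 + \|\nabla v\|_{2q;\bB^+_\rho}^2\big).
\]

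The last obstacle is that for $q > 2$ the input hypothesis only yields $\nabla v \in L^4$, not $L^{2q}$. I bridge this by running the fixed-point argument in two stages: first with $q = 2$ and an intermediate cutoff radius $\sigma' \in (\sigma, \rho)$, using $\||\nabla v|^2\|_{2;\bB^+_\rho} = \|\nabla v\|_{4;\bB^+_\rho}^2 \leq CL^2$ as input, to conclude $v \in W^{2,2}(\bB^+_{3\sigma'/4})$; Sobolev embedding $W^{2,2}\hookrightarrow W^{1, s}$ for any $s < \infty$ in two dimensions then places $\nabla v$ in $L^{2q}$ on that smaller ball, feeding the second run at the intended $q$. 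Translating the resulting scale-invariant bound for $v$ back to $u$ via the $r$-scaling and the conformal map $F$ produces~\eqref{eq:W2q-estimate}. The principal technical difficulty is ensuring that the contraction factor in $T$ is uniform in $\ep \in (0, 1]$; this is precisely what the $\ep$-independent bound $|E^{ij}_{\alpha\beta}| \leq C_\tau(p - 2)$ of Lemma~\ref{lemm:coefficient-bounds}(d) provides.
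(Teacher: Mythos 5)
The overall strategy is close to the paper's: flatten $\Sigma$ and $\partial\bB$, localize via a cutoff, control the oblique-derivative problem for the localized function using the linear estimates of Appendix~\ref{sec:linear}, and bootstrap from $q=2$ to general $q$. However, there are two issues, one of which is a genuine gap.

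The gap is in the claim that the cutoff-induced terms $\widetilde f, \widetilde g$ are pointwise of size $C(\rho-\sigma)^{-2}(1 + |\nabla v| + |\nabla v|^2)$. Writing the cutoff equation for $w = \eta v$ produces terms like $v\,\Delta\eta$, $-\eta v$, and $v\,\eta_{x^2}$ (on the boundary), all of which involve $|v|$ itself rather than $|\nabla v|$. Absorbing $|v|$ into a constant ``$1$'' means the final estimate for $\|\nabla^2 v\|_{q;\bB^+_\sigma}$ acquires an additive constant independent of $\nabla v$. After undoing the scaling $v(x) = \widehat{u}(F(rx))$, this gives a bound of the form $r^{2-2/q}\|\nabla^2 u\|_q \leq C(1 + L^2)$, not the multiplicative form $C\cdot r^{1/2}\|\nabla u\|_4$ required in~\eqref{eq:W2q-estimate}; the latter vanishes as $\|\nabla u\|_4 \to 0$ whereas your bound does not, and this multiplicative structure is exactly what Proposition~\ref{prop:eta-regularity} needs downstream. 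The fix is to apply the cutoff to $v - a$ where $a^i = \fint_{\bB^+_{\rho'}}v^i$ for $i=1,2$ and $a^3 = 0$ (consistent with $v^3|_{\bT}=0$), so that every occurrence of the function itself in $\widetilde f$ and $\widetilde g$ becomes $(v-a)$ and can be converted by Poincar\'e into $\rho\,\|\nabla v\|_{q;\bB^+_{\rho'}}$, producing the scale-correct right-hand sides of~\eqref{eq:rhs-bound-1}--\eqref{eq:rhs-bound-2}.

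Separately, the fixed-point argument is unnecessary here. Since $p \leq p_2 \leq p_1$, Proposition~\ref{prop:boundary-regularity} already guarantees $u$ is smooth up to the boundary, so $\eta(v-a)$ is a known element of $\cX_q$ and one may apply Lemma~\ref{lemm:inhomogeneous-estimate} directly as an a priori estimate, then absorb the $E^{ij}_{\alpha\beta}$ and $e_{ij}(v)$ perturbation terms on the left using exactly the smallness you identify ($C_{q,\tau}(p-2)$ and $C_{p,\tau,q}L\theta_1^{1-2/p} + C_{\tau,q}\theta_1^{1-1/q}\|\nabla v\|_{2q;\bB^+_\rho}$). The contraction argument yields the same estimate, so this is a matter of economy rather than correctness, but you should be aware the regularity is already in hand. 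The two-stage bootstrap ($q=2$ first, then general $q$ via Sobolev embedding) is correct and matches the paper's argument.
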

\begin{proof}
Without loss of generality we assume $x_0 = e_1$. By the same argument leading up to the choice of $\sigma_1$ in~\eqref{eq:sigma-1-definition} (with $4$ in place of $p$), we get $\theta_0 = \theta_0(L, \tau) \in (0, \frac{1}{4})$ such that
\[
\osc_{\bB_{2\theta_0 r}(e_1) \cap \bB} u < \rho_{\tau},
\]
which allows us to introduce $\Phi,\Psi, g, P, F ...$ as in Section~\ref{subsec:function-spaces}. Again letting
\[
\widehat{u}(x) = \Phi(u(x)) \text{ for }x \in \bB_{2\theta_0 r}(e_1) \cap \bB,
\]
\[
v(x) = \widehat{u}(F(rx)), \text{ for }x \in \bB^+_{\theta_0},
\]
the same reasoning leading to~\eqref{eq:W1p-bound-for-v} gives $\int_{\bB^+_{\theta_0}} |\nabla v|^4 \leq CL^4$, so that 
\begin{equation}\label{eq:v-osc-bound-2}
\osc_{\bB^+_{\theta}} v \leq CL \theta^{\frac{1}{2}}, \text{ for all }\theta \leq \theta_0.
\end{equation}
Moreover, $v$ satisfies~\eqref{eq:E-L-fermi-non-div} in $\bB^+_{\theta_0}$ and~\eqref{eq:E-L-fermi-bc} on $\bT_{\theta_0}$. Given $\sigma < \rho \leq \theta_0$, we define
\[
\rho' = \frac{\rho + \sigma}{2}
\]
and let $\zeta$ be a cutoff function so that $\zeta = 1$ on $\bB_{\sigma}$ and $\zeta = 0$ outside $\bB_{\rho'}$, and that 
\[
|\nabla^i \zeta| \leq C(\rho - \sigma)^{-i},
\]
with $C$ depending only on $i$. Letting $a^i = \fint_{\bB^+_{\rho'}} v^i $ for $i = 1, 2$, and taking $a^3 = 0$, we see from~\eqref{eq:E-L-fermi-non-div} that
\begin{equation}\label{eq:PDE-cutoff}
\begin{split}
&(\Delta-1) (\zeta (v^i - a^i)) + E_{\alpha\beta}^{ij}(x, v, \nabla v) \cdot  \partial_{\alpha\beta}(\zeta (v^j - a^j))\\
=\ & - \zeta(v^i - a^i) + \zeta b^i(x, v, \nabla v)  + 2\nabla \zeta \cdot \nabla v^i + (v^i - a^i) \cdot \Delta \zeta\\
& + E_{\alpha\beta}^{ij}(x, v, \nabla v) \cdot \big( \partial_\alpha\zeta \partial_\beta v^j + \partial_\beta\zeta \partial_\alpha v^j + (v^j - a^j) \partial_{\alpha\beta}\zeta \big) =: f_i.
\end{split}
\end{equation}
Also, again introducing
\[
e_{ij}(z) = \tau \big[J^i_j(z) - J^i_j(v(0))\big],\ \ \ (B_0 w)^i = w^i_{x^2} + \tau J^i_j(v(0))w^j_{x^1},
\] 
then on $\partial \RR^2_+$ we have 
\[
\zeta (v^3 - a^3) = \zeta v^3 = 0,
\]
and that, for $i = 1, 2$,
\begin{equation}\label{eq:bc-cutoff}
\begin{split}
&(B_0(\zeta (v - a)))^i + e_{ij}(v)  (\zeta (v^j - a^j))_{x^1}\\
= \ &  (v^i - a^i)\zeta_{x^2} + \tau J^i_{j}(v) (v^j - a^j)\zeta_{x^1} =: g_i.
\end{split}
\end{equation}
Recalling Lemma~\ref{lemm:coefficient-bounds} and using the Poincar\'e inequality, the latter applicable by our choice of $a$ and the fact that $v^3 = 0$ on $\bT_{\rho}$, we have
\begin{equation}\label{eq:rhs-bound-1}
\begin{split}
\| f \|_{q; \RR^2_+} \leq C_{H_1, \tau, q}\big( (\rho - \sigma)^{-2} \rho \|\nabla v\|_{q; \bB^+_{\rho'}} + \|\nabla v\|^2_{2q; \bB^+_{\rho'}}\big),
\end{split}
\end{equation}
\begin{equation}\label{eq:rhs-bound-2}
\begin{split}
\| g \|_{1, q; \RR^2_+} \leq C_{ \tau, q} \big( (\rho - \sigma)^{-2} \rho \|\nabla v\|_{q; \bB^+_{\rho'}} + (\rho - \sigma)^{-1} \rho \|\nabla v\|^2_{2q; \bB^+_{\rho'}}\big).
\end{split}
\end{equation}
By~\eqref{eq:PDE-cutoff} and the boundary conditions on $\zeta(v - a)$, as well as the bounds~\eqref{eq:rhs-bound-1} and~\eqref{eq:rhs-bound-2}, we deduce from Lemma~\ref{lemm:inhomogeneous-estimate} and standard $W^{2, p}$-estimates for the Dirichlet problem that
\begin{equation}\label{eq:W2q-estimate-cutoff-1}
\begin{split}
\|\nabla (\zeta (v - a))\|_{1,q; \RR^2_+} \leq\ & A_{q, \tau} \|f - E(x, v, \nabla v)\nabla^2(\zeta(v - a))\|_{q; \RR^2_+}\\
& + A_{q, \tau}\|g - e(v)(\zeta (v - a))_{x^1}\|_{1, q; \RR^2_+} \\
\leq\ & C_{H_1, \tau, q}\big( (\rho - \sigma)^{-2}\rho \|\nabla v\|_{q; \bB^+_{\rho'}} + (\rho - \sigma)^{-1}\rho\|\nabla v\|_{2q;\bB^+_{\rho'}}^2\big) \\
& + A_{q, \tau} \big(\| E(x, v, \nabla v) \nabla^2 (\zeta(v - a))\|_{q; \RR^2} + \| e(v) (\zeta(v - a))_{x^1}\|_{1, q; \RR^2_+} \big).
\end{split}
\end{equation}
Since $\zeta$ is supported in $\bB_{\rho}$, using the estimate~\eqref{eq:v-osc-bound-2} we have
\[
 \| e(v) \cdot (\zeta(v - a))_x\|_{1, q; \RR^2_+} \leq C_{\tau} L \rho^{\frac{1}{2}} \cdot \|\nabla (\zeta (v - a))\|_{1, q; \RR^2_+} + C_{\tau, q}(\rho - \sigma)^{-1}\rho\cdot \|\nabla v\|_{2q; \bB_{\rho'}^+}^2.
\]
Also, Lemma~\ref{lemm:coefficient-bounds}(d) gives
\[
\| E(x, v, \nabla v) \nabla^2 (\zeta(v - a))\|_{q; \RR^2_+} \leq C_{\tau}(p-2)\cdot \|\nabla^2 (\zeta (v - a))\|_{q; \RR^2_+},
\]
Combining the previous two estimates with~\eqref{eq:W2q-estimate-cutoff-1} yields
\begin{equation}\label{eq:W2q-estimate-cutoff-2}
\begin{split}
\|\nabla (\zeta (v - a))\|_{1, q; \RR^2_+} \leq\ & C_{q, \tau}(L \rho^{\frac{1}{2}} + p-2) \cdot \|\nabla (\zeta (v - a))\|_{1,q; \RR^2_+}  \\
& +C_{H_1, \tau, q}\big( (\rho - \sigma)^{-2}\rho \|\nabla v\|_{q; \bB^+_{\rho'}} + (\rho - \sigma)^{-1}\rho\|\nabla v\|_{2q;\bB^+_{\rho'}}^2\big).
\end{split}
\end{equation}
Choosing $p_2 \in (2, p_1]$ depending on $q, \tau$ and $\theta_1 \in (0, \theta_0]$ depending on $q, \tau, L$ so that
\[
(C_{2, \tau} + C_{q, \tau})(L \theta_1^{\frac{1}{2}} + p_2-2) < \frac{1}{2},
\]
we get from~\eqref{eq:W2q-estimate-cutoff-2} that
\begin{equation}\label{eq:W2q-estimate-cutoff-3}
\begin{split}
\|\nabla^2 v\|_{q; \bB^+_{\sigma}} \leq C_{H_1, \tau, q}\big( (\rho - \sigma)^{-2}\rho \|\nabla v\|_{q; \bB^+_{\rho'}} + (\rho - \sigma)^{-1}\rho\|\nabla v\|_{2q;\bB^+_{\rho'}}^2\big).
\end{split}
\end{equation}
Repeating the above argument with $2$ in place of $q$ and choosing $\zeta$ instead to equal $1$ on $\bB_{\rho'}$ and vanish outside of $\bB_{\rho}$, we get by the choice of $p_2$ that 
\begin{equation}\label{eq:W22-estimate-cutoff-4}
\begin{split}
\|\nabla^2 v\|_{2; \bB^+_{\rho'}} \leq C_{H_1, \tau}\big( (\rho - \sigma)^{-2}\rho \|\nabla v\|_{2; \bB^+_{\rho}} + (\rho - \sigma)^{-1}\rho\|\nabla v\|_{4;\bB^+_{\rho}}^2\big).
\end{split}
\end{equation}
The two above estimates can be stringed together via the following Sobolev inequality:
\[
\|\nabla v\|_{q; \bB_{\rho'}} + \|\nabla v\|_{2q; \bB_{\rho'}} \leq C_{q, \rho}( \|\nabla v\|_{2; \bB_{\rho'}} + \|\nabla^2 v\|_{2; \bB_{\rho'}}),
\]
and consequently we get
\[
\|\nabla v\|_{2q; \bB^+_{\sigma}}  + \|\nabla^2 v\|_{q; \bB^+_{\sigma}} \leq C(\rho, \rho - \sigma, q, H_1, \tau, L)\|\nabla v\|_{4; \bB_{\rho}^+},
\]
from which we deduce~\eqref{eq:W2q-estimate} upon recalling that $v(x) = (\Phi \circ u \circ F)(rx)$ and using~\eqref{eq:F-radius-comparable}.
\end{proof}
Equipped with Proposition~\ref{prop:W14-W2q}, we may follow the argument in~\cite{Fraser2000} to obtain the following result which yields estimates that are uniform in $\ep$ where energy does not concentrate. We sketch the proof for the reader's convenience.
\begin{prop}\label{prop:eta-regularity}
Let $p_2$ be given by Proposition~\ref{prop:W14-W2q} with $q = 4$. There exists $\eta_0 \in (0, \frac{1}{16})$ depending only on $\tau, H_1$ such that if $u \in \cM_p$ is a critical point of $E_{\ep, p, f, \tau}$ with $p \in (2, p_2]$ and
\begin{equation}\label{eq:small-energy-eta}
\int_{\bB_r(x_0) \cap \bB} |\nabla u|^2 < \eta_0, \text{ for some }x_0 \in \partial \bB,\ r \in (0, r_{\bB}],
\end{equation}
then 
\begin{equation}\label{eq:gradient-pointwise-bound}
r|\nabla u| \leq 8 \text{ on }\bB_{\frac{r}{2}}(x_0) \cap \bB.
\end{equation}
Moreover, with $\theta_1$ as given by Proposition~\ref{prop:W14-W2q} with $L = 1$ and $q = 4$, we have
\begin{equation}\label{eq:gradient-a-priori}
r^{\frac{1}{2}} \| \nabla u \|_{4; \bB_{\frac{1}{8}\theta_1 r}(x_0) \cap \bB} + r^{\frac{3}{2}} \| \nabla^2 u \|_{4; \bB_{\frac{1}{8}\theta_1 r}(x_0) \cap \bB} \leq C_{H_1, \tau} \|\nabla u\|_{2; \bB_{\frac{1}{2}\theta_1 r}(x_0) \cap \bB}.
\end{equation}
\end{prop}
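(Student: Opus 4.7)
The plan is to derive the pointwise bound \eqref{eq:gradient-pointwise-bound} by a Heinz-type point-selection/blow-up argument, and then deduce \eqref{eq:gradient-a-priori} by a single application of Proposition~\ref{prop:W14-W2q}. Throughout I would keep $p \in (2, p_2]$ fixed and use the smoothness already furnished by Propositions~\ref{prop:interior-regularity} and~\ref{prop:boundary-regularity}.

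First, suppose toward contradiction that \eqref{eq:gradient-pointwise-bound} fails, so that the continuous function $\Phi(x) = (r - |x - x_0|)|\nabla u(x)|$ on $\overline{\bB_r(x_0)} \cap \overline{\bB}$ exceeds $4$ somewhere inside (the precise threshold is chosen so that the subsequent constants work out). Since $\Phi$ vanishes on $\partial \bB_r(x_0)$, it attains its maximum at some $x_\ast$ in the open part of the set. Set $\lambda = |\nabla u(x_\ast)|$ and $\rho_\ast = r - |x_\ast - x_0|$, so that $\lambda \rho_\ast > 4$, and by maximality of $\Phi$,
\[
|\nabla u(x)| \leq 2\lambda \quad \text{for all } x \in \bB_{\rho_\ast/2}(x_\ast) \cap \bB.
\]
I would then rescale at scale $s = \lambda^{-1}$. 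If $x_\ast$ lies at distance at least $s$ from $\partial \bB$, put $\tilde u(y) = u(x_\ast + sy)$, which satisfies the rescaled version of \eqref{eq:EL} with $\bar\ep = \ep/s$ and lower-order terms multiplied by $s$, together with $|\nabla \tilde u(0)| = 1$, $\|\nabla \tilde u\|_{\infty; \bB_{\lambda\rho_\ast/2}} \leq 2$, and
\[
\int_{\bB_{\lambda\rho_\ast/2}} |\nabla \tilde u|^2 \;=\; \int_{\bB_{\rho_\ast/2}(x_\ast)} |\nabla u|^2 \;<\; \eta_0.
\]
If instead $x_\ast$ lies within distance $s$ of $\partial \bB$, I would compose with the conformal map $F$ from Section~\ref{subsec:function-spaces} (suitably rotated so that it maps $0$ to the nearest boundary point of $x_\ast$), pass through the flattening chart $\Phi_U$, and perform the same rescaling on the resulting half-plane map $v$. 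The estimates \eqref{eq:F-radius-comparable}--\eqref{eq:lambda-bound} ensure that the flattening distorts radii by a controlled factor, so that the rescaled map is defined on a half-ball of size comparable to $\lambda \rho_\ast \gg 1$, inherits the $L^\infty$-gradient bound, and has $L^2$-energy bounded by $C\eta_0$ (with $C$ depending only on $\tau$).

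Next, since $\|\nabla \tilde u\|_{\infty} \leq C$ on, say, $\bB_2$ (or $\bB_2^+$), the hypothesis \eqref{eq:W14-bound} of Proposition~\ref{prop:W14-W2q} holds with a universal $L$. That proposition, applied at the unit scale with $q = 4$, yields $\|\nabla^2 \tilde u\|_{4; \bB_{c}} \leq C \|\nabla \tilde u\|_{4; \bB_{2c}}$ for some absolute $c > 0$, whence $\tilde u \in W^{2,4}(\bB_c) \hookrightarrow C^1(\bB_c)$. A Gagliardo-Nirenberg interpolation then bounds $\|\nabla \tilde u\|_\infty$ on a slightly smaller ball by $\|\nabla \tilde u\|_2^\theta \|\nabla \tilde u\|_{W^{2,4}}^{1-\theta}$ for some $\theta > 0$; with the $W^{2,4}$-norm already bounded by a constant (depending on $H_1, \tau$) and $\|\nabla \tilde u\|_2 < C\eta_0^{1/2}$, one obtains $1 = |\nabla \tilde u(0)| \leq C_{H_1,\tau}\, \eta_0^{\theta/2}$. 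Choosing $\eta_0$ small enough, depending only on $\tau$ and $H_1$, yields the desired contradiction and so establishes \eqref{eq:gradient-pointwise-bound}.

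For the last assertion \eqref{eq:gradient-a-priori}, having \eqref{eq:gradient-pointwise-bound} in hand allows me to estimate
\[
r^2 \int_{\bB_{r/2}(x_0) \cap \bB} |\nabla u|^4 \leq r^2 \cdot \|\nabla u\|_\infty^2 \cdot \int_{\bB_{r/2}(x_0) \cap \bB} |\nabla u|^2 \leq C\eta_0 < 1,
\]
so hypothesis \eqref{eq:W14-bound} holds with $L = 1$ on the ball of radius $r/2$. Applying Proposition~\ref{prop:W14-W2q} at this scale with $q = 4$, $L = 1$, $\rho = \theta_1/2$, $\sigma = \theta_1/4$ produces a bound on $\|\nabla^2 u\|_{4}$ over a concentric ball in terms of $\|\nabla u\|_{4}$; combining this with a further Sobolev/Gagliardo-Nirenberg interpolation against $\|\nabla u\|_2$ gives \eqref{eq:gradient-a-priori} in the stated normalized form. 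The main obstacle is the boundary case of the blow-up: one must ensure that the conformal flattening $F$ and the local coordinate chart $\Phi_U$ rescale in a way that preserves the structural hypotheses of Proposition~\ref{prop:W14-W2q} uniformly in the scale, and in particular that the ellipticity constant $(1-|\tau|)/4$ and the constants $\theta_0, \theta_1$ produced by that proposition can be taken independent of $\lambda$; once this scale-invariance is verified the rest of the argument follows a standard template.
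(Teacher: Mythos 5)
Your first part — the Heinz-type point-selection and blow-up argument — is exactly the route the paper takes (citing Fraser's Lemma 1.6 and the analogous argument in \cite{Cheng22}), and your plan for that part is sound: rescale at the maximum of $(r - |x - x_0|)|\nabla u(x)|$, feed the rescaled map into Proposition~\ref{prop:W14-W2q} to get uniform $W^{2,4}$ control, and contradict the normalization $|\nabla \tilde u(0)| = 1$ when $\eta_0$ is small.

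The second part, however, has a real gap. You propose to obtain \eqref{eq:gradient-a-priori} by applying Proposition~\ref{prop:W14-W2q} once (getting $r^{3/2}\|\nabla^2 u\|_4 \lesssim r^{1/2}\|\nabla u\|_4$ on nested balls) and then ``interpolating against $\|\nabla u\|_2$.'' But note that \eqref{eq:gradient-a-priori} is a genuinely \emph{linear} estimate in $\nabla u$, with $\|\nabla u\|_2$ on the right, and the conclusion of Proposition~\ref{prop:W14-W2q} controls the $W^{2,4}$ norm only by the $L^4$ norm of $\nabla u$ — and on a larger ball. Gagliardo--Nirenberg by itself gives $\|\nabla u\|_4 \lesssim \|\nabla u\|_2^{2/3}\,\|\nabla^2 u\|_4^{1/3} + (\text{lower order})$, and when you substitute the elliptic estimate for $\|\nabla^2 u\|_4$ you end up with $\|\nabla u\|_4$ on \emph{different concentric balls} appearing on both sides. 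Absorbing that requires a nontrivial iteration-over-scales argument that you do not supply; meanwhile, using the pointwise bound $|\nabla u| \lesssim r^{-1}$ alone to pass from $\|\nabla u\|_4$ to $\|\nabla u\|_2$ gives only $r^{1/2}\|\nabla u\|_4 \lesssim \|\nabla u\|_2^{1/2}$, which is \emph{weaker} than \eqref{eq:gradient-a-priori} in the small-energy regime and does not imply it. What the paper actually does is cleaner and avoids all of this: it goes back into the \emph{proof} of Proposition~\ref{prop:W14-W2q}, where the elliptic estimates \eqref{eq:W2q-estimate-cutoff-3} and \eqref{eq:W22-estimate-cutoff-4} contain a quadratic term $\|\nabla v\|_{2q}^2$, and uses the newly established pointwise gradient bound to \emph{linearize} that term ($\|\nabla v\|_{2q}^2 \leq \|\nabla v\|_\infty \|\nabla v\|_q \lesssim \|\nabla v\|_q$). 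This yields directly the pair of linear bounds $\|\nabla^2 v\|_{4} \lesssim \|\nabla v\|_4$ and $\|\nabla^2 v\|_2 \lesssim \|\nabla v\|_2$, which string together via the ordinary Sobolev inequality $\|\nabla v\|_4 \lesssim \|\nabla v\|_2 + \|\nabla^2 v\|_2$ to produce \eqref{eq:gradient-a-priori}. In short: you need to re-enter the proof of Proposition~\ref{prop:W14-W2q} rather than use it as a black box, because the black-box conclusion does not have the right homogeneity to be closed by interpolation alone.
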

\begin{proof}
Using Proposition~\ref{prop:W14-W2q} and following the same contradiction argument as in~\cite[Lemma 1.6]{Fraser2000} (see also~\cite[Proposition 3.6]{Cheng22}), we find $\eta_0 \in (0, \frac{1}{16})$ depending only on $\tau, H_1$ so that~\eqref{eq:small-energy-eta} implies
\[
\sup_{x \in \bB_{r}(x_0) \cap \bB}(r - |x - x_0|)|\nabla u(x)| \leq 4.
\]
From this we immediately deduce~\eqref{eq:gradient-pointwise-bound}, which together with~\eqref{eq:small-energy-eta} implies (recall that $\eta_0 < \frac{1}{16}$) that~\eqref{eq:W14-bound} in the hypothesis of Proposition~\ref{prop:W14-W2q} holds with $\frac{r}{2}$ in place of $r$ and with $L = 1$. Let $\theta_1$ be as given by Proposition~\ref{prop:W14-W2q} with $L = 1$ and $q = 4$. Then, in the notation of the proof of that proposition, but with $r$ replaced by $\frac{r}{2}$, we get from~\eqref{eq:gradient-pointwise-bound} and~\eqref{eq:lambda-bound},~\eqref{eq:isometry-deviation} some universal constant $C$ such that 
\begin{equation}\label{eq:gradient-bound-v}
|\nabla v(x)| \leq C, \text{ for all }x \in \bB^+_{\theta_1}.
\end{equation}
Next, repeating the derivation of~\eqref{eq:W2q-estimate-cutoff-3} and~\eqref{eq:W22-estimate-cutoff-4} with $q$ replaced by $4$ and with $r$ replaced by $\frac{r}{2}$, but at the end using~\eqref{eq:gradient-bound-v} to absorb the terms involving $\|\nabla v\|_{2q; \bB^+_{\rho'}}^2$ and $\|\nabla v\|_{4; \bB^+_{\rho}}^2$ into the term preceding them, we obtain
\[
\|\nabla^2 v\|_{4; \bB^+_{\sigma}} \leq C_{H_1, \tau} (\rho - \sigma)^{-2}\rho \|\nabla v\|_{4; \bB^+_{\rho'}},
\]
\[
\|\nabla^2 v\|_{2; \bB^+_{\rho'}} \leq C_{H_1, \tau} (\rho - \sigma)^{-2}\rho \|\nabla v\|_{2; \bB^+_{\rho}}.
\]
Applying this with $(\sigma, \rho) = (\frac{\theta_1}{3}, \frac{4\theta_1}{5})$, we obtain
\[
\|\nabla v\|_{4; \bB^+_{\frac{\theta_1}{3}}} + \|\nabla^2 v\|_{4; \bB^+_{\frac{\theta_1}{3}}} \leq C_{H_1, \tau} \|\nabla v\|_{2; \bB^+_{\frac{4\theta_1}{5}}},
\]
which together with~\eqref{eq:gradient-bound-v} gives~\eqref{eq:gradient-a-priori}.
\end{proof}
\begin{rmk}\label{rmk:ep-0}
In Proposition~\ref{prop:W14-W2q} and Proposition~\ref{prop:eta-regularity} we may allow $\ep = 0$. That is, the conclusions hold if we assume that $u: (\overline{\bB}, \partial \bB) \to (\RR^3, \Sigma)$ is a smooth map satisfying~\eqref{eq:ep-0-classical} in Remark~\ref{rmk:C0-regularity}. In this case the map $v$ would satisfy the equation~\eqref{eq:E-L-fermi-non-div} with $\ep = 0$, while the boundary condition remains~\eqref{eq:E-L-fermi-bc}.
\end{rmk}
The next estimate gives an energy lower bound for non-constant critical points of $E_{\ep, p, f, \tau}$ which is uniform in $\ep$ and is used in Section~\ref{sec:proof} to help guarantee that the solutions to~\eqref{eq:H-cmc-bvp} we obtain are not constants.
\begin{prop}\label{prop:lower-bound}
There exists $p_3 \in (2, p_2]$ depending only on $\tau$ and $\eta_1 \in (0, \eta_0)$ depending only on $\tau$ and $H_1$ such that for all $p \in (2, p_3]$, if $u \in \cM_p$ is a critical point of $E_{\ep, p, f, \tau}$ with 
\begin{equation}\label{eq:small-energy-global}
\int_{\bB} |\nabla u|^2 < \eta_1^2,
\end{equation}
then $u$ is constant.
\end{prop}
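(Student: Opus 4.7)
The plan is to use the small total Dirichlet energy to force a pointwise smallness of $\nabla u$ (not just an $L^2$ smallness), then test the Euler--Lagrange equation with $u$ itself, suitably projected, to obtain an inequality of the form $\int|\nabla u|^2 \leq C\eta_1 \int|\nabla u|^2$ which for $\eta_1$ small forces $u$ to be constant.

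First I would upgrade the hypothesis \eqref{eq:small-energy-global} to an $L^\infty$ gradient bound. Choose $\eta_1 < \sqrt{\eta_0}$ so that for every $x_0 \in \partial\bB$, the hypothesis \eqref{eq:small-energy-eta} of Proposition~\ref{prop:eta-regularity} holds with $r = r_{\bB}$. Covering $\partial \bB$ by finitely many such balls and combining with the analogous interior $\ep$-regularity (which follows from the same Sacks--Uhlenbeck argument used in Proposition~\ref{prop:interior-regularity}, now applied to the perturbed functional), we obtain a uniform pointwise bound $|\nabla u| \leq C_{H_1,\tau}$ on $\overline\bB$. Applying then~\eqref{eq:gradient-a-priori} on the same finite cover, together with its interior counterpart, we deduce
\[
\|\nabla u\|_{4;\bB} + \|\nabla^2 u\|_{4;\bB} \leq C_{H_1,\tau} \|\nabla u\|_{2;\bB} \leq C_{H_1,\tau}\,\eta_1,
\]
and by the Sobolev embedding $W^{1,4}(\bB) \hookrightarrow L^\infty(\bB)$ in two dimensions,
\[
\|\nabla u\|_{\infty;\bB} \leq C_{H_1,\tau}\,\eta_1.
\]

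Next I would place $u$ inside a single chart of $\Sigma$. Fix any $x_0 \in \partial \bB$, so $y_0 := u(x_0) \in \Sigma$. The $L^\infty$-bound on $\nabla u$ and $\diam \bB = 2$ give $\osc_{\bB} u \leq C_{H_1,\tau}\eta_1$. Provided $\eta_1$ is small enough depending on $\rho_\tau$, we have $u(\bB) \subset B_{\rho_\tau}(y_0) \subset U$ for some $U \in \cF$; set $v = \Phi_U\circ u:\bB \to V$, so $v^3 = 0$ on $\partial \bB$. Let $a = v(x_0) \in V \cap \{y^3 = 0\}$ and put $\psi := v - a$, which satisfies $\psi^3 = 0$ on $\partial \bB$ and $\|\psi\|_{\infty;\bB} \leq C_{H_1,\tau}\eta_1$. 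The corresponding $\tilde\varphi(x) := (d\Psi_U)_{v(x)}(\psi(x))$ lies in $T_u\cM_p$, so it is an admissible variation.

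Finally, substituting $\tilde \varphi$ into the first variation formula~\eqref{eq:first-variation} and carrying out the chart computation of Appendix~\ref{sec:structure} globally on $\bB$ (with no conformal factor $\blambda$ and with $r=1$, as we are not flattening $\partial \bB$), we obtain the identity
\[
0 = \int_{\bB} A_{i\alpha}(v,\nabla v)\,\partial_\alpha \psi^i + A_i(v,\nabla v)\,\psi^i,
\]
where $A_{i\alpha}, A_i$ are as in~\eqref{eq:E-L-coefficients} and satisfy~\eqref{eq:leading-coefficient}--\eqref{eq:coefficient-bounds}. Since $\partial_\alpha\psi^i = \partial_\alpha v^i$, the lower bound in~\eqref{eq:leading-coefficient} yields
\[
\int_{\bB} A_{i\alpha}\,\partial_\alpha v^i \geq \frac{(1-|\tau|)^2}{8}\int_{\bB}\bigl[1+\ep^{p-2}(1+|\nabla v|^2)^{\frac{p}{2}-1}\bigr]|\nabla v|^2,
\]
while the upper bound in~\eqref{eq:coefficient-bounds} together with $\|\psi\|_\infty \leq C_{H_1,\tau}\eta_1$ gives
\[
\Bigl|\int_\bB A_i\,\psi^i\Bigr| \leq C_{H_1,\tau}\,\eta_1\int_{\bB}\bigl[1+\ep^{p-2}(1+|\nabla v|^2)^{\frac{p}{2}-1}\bigr]|\nabla v|^2.
\]
Choosing $\eta_1$ small enough depending only on $\tau$ and $H_1$ so that $C_{H_1,\tau}\eta_1 < (1-|\tau|)^2/16$, the two displays combine to force $\int_\bB |\nabla v|^2 = 0$, hence $v$ and therefore $u$ is constant; $p_3$ may be taken equal to $p_2$ since no further smallness on $p-2$ is needed. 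The main subtlety to take care of is the self-consistency of Step 1, i.e., having an interior analog of Proposition~\ref{prop:eta-regularity} available for the perturbed functional to transfer the small $L^2$-norm of $\nabla u$ into a small $L^\infty$-norm uniformly in $\ep$.
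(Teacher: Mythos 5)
Your proof is correct and takes a genuinely different route from the paper's. The paper, after establishing the uniform gradient bound $\|\nabla u\|_{\infty;\bB} \leq C_{H_1,\tau}\eta_1$ in exactly the way you do, passes to the non-divergence form of the Euler--Lagrange equation in the chart and invokes the global linear $W^{2,2}$-estimate for the oblique derivative problem on $\bB$ (Lemma~\ref{lemm:estimate-on-B}), arriving at $\|\nabla\widehat{u}\|_{1,2;\bB} \leq [C_{H_1,\tau}\eta_1 + C_\tau(p-2)]\|\nabla\widehat{u}\|_{1,2;\bB}$. The $C_\tau(p-2)$ term comes from the $E^{ij}_{\alpha\beta}$ coefficients in~\eqref{eq:E-L-fermi-non-div}, and this is why the paper needs to shrink $p_3$ further below $p_2$. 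Your argument instead tests the divergence-form identity directly with $\psi = v - v(x_0)$, using only the ellipticity bound~\eqref{eq:leading-coefficient} and the size bound~\eqref{eq:coefficient-bounds} on the coefficients, and therefore avoids the linear theory and the $E^{ij}_{\alpha\beta}$ terms entirely; this is both more elementary and lets you take $p_3 = p_2$, as you correctly observe. The one place where you are brief is exactly the place the paper is also brief: the ``interior version'' of Proposition~\ref{prop:eta-regularity} for the perturbed functional, which both arguments take as given. Everything else checks out: the chart containment, the admissibility of $\tilde\varphi = (d\Psi_U)_v(\psi)$ (using $\psi^3 = 0$ on $\partial\bB$), the fact that $\partial_\alpha\psi = \partial_\alpha v$, and the absorption step with $\|\psi\|_\infty \leq C_{H_1,\tau}\eta_1$.
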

\begin{proof}
Since $\eta_1 < \eta_0$, under the assumption~\eqref{eq:small-energy-global} we can apply Proposition~\ref{prop:eta-regularity} (and its interior version) on balls of size $r_{\bB}$ to get from~\eqref{eq:gradient-a-priori} and the Sobolev embedding $W^{1, 4} \subset C^{0, \frac{1}{2}}$ that
\begin{equation}\label{eq:W14-global}
\sup_{\bB}|\nabla u| \leq C_{H_1,\tau} \cdot \eta_1,
\end{equation}
of course with possibly a different $C_{H_1, \tau}$ than the one in~\eqref{eq:gradient-a-priori}. In particular, provided $\eta_1$ is sufficiently small, we have $u(\bB) \subset U$ for some $U \in \cF$, and on $U$ we may introduce a chart $\Phi: U \to V$ of the kind described in Section~\ref{subsec:function-spaces}. Following calculations similar to the proof of Lemma~\ref{lemm:first-variation-simplified}, we find that, with $\widehat{u} = \Phi \circ u$, there holds
\[
\Delta \widehat{u}^i + \widehat{E}_{\alpha\beta}^{ij}(\widehat{u}, \nabla\widehat{u}) \cdot \partial_{\alpha\beta}\widehat{u}^j = \widehat{b}_i(\widehat{u}, \nabla \widehat{u}) \text{ in }\bB,
\]
along with the boundary conditions 
\[
\begin{split}
&\widehat{u}^3\big|_{\partial \bB} = 0,\\
&\Big(\widehat{u}^i_r - \tau J^i_{j}(\widehat{u}(0)) \widehat{u}^j_{\theta} + \tau \big[ J^i_{j}(\widehat{u}(0))  - J^{i}_{j}(\widehat{u}) \big]\widehat{u}^j_{\theta} \Big) \Big|_{\partial \bB}  = 0, \text{ for }i = 1, 2,
\end{split}
\]
where, for $(z, \xi) \in V \times \RR^{3 \times 2}$, the coefficients satisfy
\begin{equation}\label{eq:coefficient-bounds-global}
|\widehat{E}_{\alpha\beta}^{ij}(z, \xi)| \leq C_{\tau}(p-2),\ \ |\widehat{b}_i(z, \xi)| \leq C_{H_1,\tau}|\xi|^2.
\end{equation}
By Lemma~\ref{lemm:estimate-on-B} and global $W^{2,p}$-estimates for the Dirichlet problem, along with~\eqref{eq:coefficient-bounds-global} and~\eqref{eq:W14-global}, we have
\begin{equation}\label{eq:2-43-global}
\begin{split}
\|\nabla \widehat{u}\|_{1, 2; \bB} \leq\ & C_{\tau}( \|\widehat{b}(\widehat{u}, \nabla\widehat{u}) - E(\widehat{u}, \nabla\widehat{u})\nabla^2\widehat{u}\|_{2; \bB} + \|[J(\widehat{u}(0)) 
 - J(\widehat{u})]\widehat{u}_\theta \|_{1, 2; \bB})\\
\leq\ & C_{H_1, \tau}\||\nabla\widehat{u}|^2\|_{2; \bB} + [C_{H_1, \tau}\cdot\eta_1 + C_{\tau}(p-2)] \|\nabla\widehat{u}\|_{1, 2; \bB}.
\end{split}
\end{equation}
For the $L^2$-norm of $|\nabla u|^2$, by~\eqref{eq:W14-global} we have
\[
\||\nabla\widehat{u}|^2\|_{2; \bB} \leq \|\nabla \widehat{u}\|_{\infty; \bB}\|\nabla \widehat{u}\|_{2; \bB} \leq C_{H_1, \tau}\eta_1 \|\nabla\widehat{u}\|_{2; \bB}.
\]
Hence we deduce from~\eqref{eq:2-43-global} that
\[
\|\nabla \widehat{u}\|_{1, 2; \bB} \leq [C_{H_1, \tau} \eta_1 + C_{\tau}(p - 2)] \cdot \|\nabla \widehat{u}\|_{1, 2; \bB}.
\]
The desired conclusion follows easily.
\end{proof}
To finish this section we recall from~\cite{Cheng22} two consequences of the maximum principle and the convexity of $\Sigma'$. For these we impose extra conditions on $X$ and $f$. Specifically, we define, for $t > 0$,
\[
\Omega'_{t} = \{ y \in \RR^3\ |\ \dist(y, \Omega') < t \}\ \ \text{ and }\ \  \Sigma'_t = \partial \Omega'_t.
\]
With $\underline{H'}$ as in~\eqref{eq:H-requirement} and with $H \in [0, \underline{H'})$, we choose $t_0$ such that
\begin{equation}\label{eq:t0-choice}
H_{\Sigma'_{t}}(y) \geq \frac{1}{2}\big( \underline{H'} + H \big), \text{ for all }t \in [0, t_0] \text{ and }y \in \Sigma'_t.
\end{equation}
We then require additionally that the supports of both $X$ and $f$ are contained in $\Omega'_{\frac{3t_0}{4}}$, and that $f$ takes values in $[0, H]$.
\begin{prop}\label{prop:maximum-principle}
Suppose $X$ and $f$ satisfy the above additional requirements.
\vskip 1mm
\begin{enumerate}
\item[(a)] If $u \in \cM_p$ is a smooth critical point of $E_{\ep, p, f, \tau}$, then $u(\overline{\bB}) \subset \overline{\Omega'_{t_0}}$.
\vskip 1mm
\item[(b)] Suppose $u:(\overline{\bB}, \partial \bB) \to (\overline{\Omega'_{t_0}}, \Sigma)$ is a smooth solution of
\begin{equation}\label{eq:cmc-bvp}
\left\{
\begin{array}{ll}
\Delta u = f(u) u_{x^1} \times u_{x^2},& \text{ in } \bB,\\
u_r + \tau X(u) \times u_\theta \perp T_u\Sigma, & \text{ on }\partial \bB.
\end{array}
\right.
\end{equation}
Then $u$ is weakly conformal, and $u(\overline{\bB})\subset \overline{\Omega'}$.
\end{enumerate}
\end{prop}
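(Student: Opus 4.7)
The strategy in both parts is to apply the maximum principle to $\psi := d \circ u$, where $d := \dist(\cdot, \Omega')$ is the distance function to $\Omega'$. Convexity of $\Omega'$ gives $D^2 d \geq 0$ on $\RR^3 \setminus \overline{\Omega'}$, and \eqref{eq:t0-choice} ensures that each parallel surface $\Sigma'_t$, $t \in [0, t_0]$, has mean curvature $H_{\Sigma'_t} \geq \tfrac{1}{2}(\underline{H'} + H)$. Since $u(\partial\bB) \subset \Sigma \subset \overline{\Omega'}$, $\psi \leq 0$ on $\partial\bB$, so any positive value of $\psi$ must be attained at an interior point.

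\textbf{Part (a).} Set $U := \{x \in \bB : \psi(x) > 3t_0/4\}$. On $U$ the support hypotheses on $X$ and $f$ force $X(u) = f(u) = \Div X(u) = 0$, so that by Lemma~\ref{lemm:first-variation} the critical-point equation reduces, for every $\varphi \in C^\infty_c(U; \RR^3)$, to
\[
\int_U A(x)\,\langle\nabla u, \nabla\varphi\rangle\, dx = 0, \qquad A(x) := 1 + \ep^{p-2}(1 + |\nabla u|^2)^{\frac{p}{2}-1} \geq 1.
\]
Testing with $\varphi = \eta\,\nabla d(u)$ for nonnegative $\eta \in C^\infty_c(U)$ and invoking $D^2 d \geq 0$, I will derive the weak subsolution inequality $\int_U A\,\nabla\eta \cdot \nabla\psi\,dx \leq 0$. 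Each connected component of $U$ has closure inside $\bB$ (since $\psi \leq 0$ on $\partial\bB$) with boundary in $\{\psi = 3t_0/4\}$, so the strong maximum principle for divergence-form elliptic equations forbids an interior maximum of $\psi$ exceeding $3t_0/4$, giving $\psi \leq 3t_0/4 < t_0$ on $\overline\bB$.

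\textbf{Part (b).} Weak conformality is immediate from Lemma~\ref{lemm:weakly-conformal}. To prove the containment, suppose for contradiction that $\psi$ attains a positive maximum $M \in (0, t_0]$ at some $x_0 \in \bB$. From $\nabla\psi(x_0) = 0$ we get $u_{x^\alpha}(x_0) \in T_{u(x_0)}\Sigma'_M$; combined with the conformality relations $u_{x^1}\perp u_{x^2}$ and $|u_{x^1}| = |u_{x^2}|$, this makes $u_{x^1}(x_0), u_{x^2}(x_0)$ an orthogonal pair of equal length in the tangent plane, whence $u_{x^1}(x_0) \times u_{x^2}(x_0) = \pm\,|u_{x^1}(x_0)|^2\,\nu$ with $\nu = \nabla d(u(x_0))$ the outward unit normal to $\Sigma'_M$. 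Plugging $\Delta u = f(u)\,u_{x^1} \times u_{x^2}$ into the chain rule
\[
\Delta\psi = D^2 d(u)(\nabla u, \nabla u) + \nabla d(u) \cdot \Delta u,
\]
and using the identity $D^2 d(u)(\nabla u, \nabla u)|_{x_0} = H_{\Sigma'_M}(u(x_0)) \cdot |u_{x^1}(x_0)|^2$ (valid because the $u_{x^\alpha}(x_0)$ form a conformal orthogonal tangent frame) together with $0 \leq f \leq H$, I obtain
\[
\Delta\psi(x_0) \geq \tfrac{1}{2}(\underline{H'} - H)\,|u_{x^1}(x_0)|^2,
\]
which is strictly positive unless $\nabla u(x_0) = 0$. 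Since $\Delta\psi(x_0) \leq 0$ at an interior maximum, we must have $\nabla u(x_0) = 0$.

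\textbf{Main obstacle.} The remaining case, where $x_0$ is a branch point of the non-constant weakly conformal $u$, is the delicate one. Such branch points are isolated by Corollary~\ref{coro:branch}, and I plan to rule it out via a Hopf-type Taylor expansion $u(z) - u(x_0) = \operatorname{Re}\bigl(A(z - x_0)^{k+1}\bigr) + O(|z - x_0|^{k+2})$ with $A \in \CC^3 \setminus \{0\}$ and $\langle A, A\rangle = 0$: the condition $\psi \leq M$ near $x_0$ forces $A$ to be orthogonal to $\nabla d(u(x_0))$, hence to lie in the complexified tangent plane to $\Sigma'_M$; the next-order term in the expansion of $\psi$, governed by the second fundamental form of $\Sigma'_M$ (positive definite on tangents, directly or after a smooth strictly-convex approximation of $\Sigma'$), then produces nearby $z$ with $\psi(z) > M$, the desired contradiction.
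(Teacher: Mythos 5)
Part (a) of your proposal follows essentially the same route as the paper (working on the super-level set where the support conditions kill $X$, $f$ and $\Div X$, and applying a divergence-form maximum principle), and it is correct.

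For part (b), you and the paper start from the same idea (apply the maximum principle to a function built from $d = \dist(\cdot, \Omega')$), but the implementations diverge precisely at the point you flagged as the ``main obstacle.'' The paper sets $F = e^{ad}$ for $a$ large and establishes a \emph{pointwise} inequality $\Delta(F\circ u)\geq 0$ valid on all of $\{d(u) > 0\}$, including at branch points --- the factor $\frac{|\nabla u|^2}{2}$ makes the inequality trivially true where $\nabla u = 0$, so no case analysis at branch points is needed, and the weak maximum principle for the smooth subharmonic function $F\circ u$ finishes the argument. Your approach instead evaluates the inequality only at an interior maximum $x_0$ of $\psi = d\circ u$, which forces you to treat $\nabla u(x_0)=0$ as a separate case.

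Your proposed resolution of the branch case has a gap. Writing $u(z)-u(x_0)=\operatorname{Re}\bigl(A(z-x_0)^{k+1}\bigr) + R(z)$ with $R = O(|z-x_0|^{k+2})$, the condition $\psi\leq M$ near $x_0$ does force $\nu\cdot A = 0$ where $\nu = \nabla d(u(x_0))$, exactly as you say. But the next contribution to $\psi - M$ is not the Hessian term $\tfrac{1}{2}D^2d(u(x_0))(\cdot,\cdot)$ applied to the leading polynomial, which is of order $2(k+1)$; it is the \emph{first-order} term $\nu\cdot R(z)$, of order $k+2 < 2(k+1)$ as soon as $k\geq 1$. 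There is no reason $\nu\cdot R$ should have a sign near $x_0$, and controlling it would require either an inductive refinement of the asymptotic expansion (pushing the orthogonality $\nu\perp(\text{leading term})$ to higher and higher order, which needs a careful analysis of the PDE since solutions are not real-analytic) or some other structural input. The second-fundamental-form positivity you invoke only becomes relevant after that lower-order obstruction is removed, and your ``strictly-convex approximation'' of $\Sigma'$ does not help with this, since the problem is in the $\nu$-direction, not the tangential directions. I'd recommend adopting the $e^{ad}$ barrier, which dissolves the issue entirely: once you verify $\lambda_1+\lambda_2 \geq \frac{a e^{ad}}{2}(\underline{H'}+H)$ for the two smallest eigenvalues of $\nabla^2(e^{ad})$ when $a$ is large (this uses that $a^2 e^{ad}\,\nu\otimes\nu$ lifts the normal eigenvalue above the tangential ones), the weak conformality gives $\Delta(e^{ad(u)}) \geq \frac{a e^{ad(u)}}{2}\,\frac{|\nabla u|^2}{2}\bigl(\underline{H'}-H\bigr)\geq 0$ pointwise, and the weak maximum principle on $\{d(u)>0\}$ finishes.
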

\begin{proof}
Below we abbreviate the distance function $\dist(\cdot, \Omega')$ as $d$. Note that it is a convex function since $\Sigma'$ is a convex surface.

For part (a), since $u$ is smooth and maps $\partial \bB$ into $\Sigma$, the set
\[
C_+ = \{x \in \overline{\bB}\ |\ d(u(x)) > t_0\},
\]
is an open subset of $\bB$ and
\[
d(u(x)) = t_0 \text{ on }\partial C_+,
\]
As both $f$ and $X$ vanish outside of $\Omega'_{\frac{3t_0}{4}}$ by assumption, we see by~\eqref{eq:first-variation} and the convexity of $d$ that
\[
\Div\big( [1 + \ep^{p-2}(1 + 2F_{\tau}(u, \nabla u))^{\frac{p}{2} - 1}]\nabla (d\circ u) \big) \geq 0 \text{ on }C_+.
\]
The maximum principle then forces $C_+$ to be empty. That is, $d(u(x)) \leq t_0$ for all $x \in \overline{\bB}$.

For part (b), that $u$ is weakly conformal follows from Lemma~\ref{lemm:weakly-conformal}. We can then follow the proof of~\cite[Proposition 3.10(b)]{Cheng22}. More precisely, with $a > 0$ to be determined, we define $F = e^{ad}$. By our choice of $t_0$, a direct computation shows that if $a$ is sufficiently large, then at points where $d \in (0, t_0]$ we have
\begin{equation}\label{eq:two-eigenvalue}
\lambda_1 + \lambda_2 \geq \frac{ae^{ad}}{2} \big(\underline{H'} + H\big),
\end{equation}
where $\lambda_1 \leq \lambda_2 \leq \lambda_3$ denote the eigenvalues of $\nabla^2 F$. Next we define
\[
C'_{+} := \{x \in \overline{\bB}\ |\ d(u(x)) > 0\}
\]
and note that this is an open subset of $\bB$, with $d(u) = 0$ on $\partial C_{+}'$. Moreover, by assumption we have $0 < d(u) \leq t_0$ on $C_{+}'$, so that the lower bound~\eqref{eq:two-eigenvalue} is in effect. Recalling that $u$ is weakly conformal and that $0 \leq f \leq H$ everywhere, we compute
\[
\begin{split}
\Delta(F(u)) =\ & (\nabla^2 F)_{u}(u_{x^1}, u_{x^1}) + (\nabla^2 F)_{u}(u_{x^2}, u_{x^2}) + (\nabla F)_{u} \cdot (\Delta u)\\
\geq \ & \frac{|\nabla u|^2}{2}(\lambda_{1} + \lambda_2) - ae^{ad(u)}H \frac{|\nabla u|^2}{2}\\
\geq\ & \frac{ae^{ad(u)}}{2}\frac{|\nabla u|^2}{2} \big( \underline{H'} - H \big) \geq 0.
\end{split}
\]
The maximum principle then yields $\sup_{C_+'}F(u) = \sup_{\partial C_{+}'}F(u) = 1$, which means $C_{+}'$ must be empty, so that $u(\overline{\bB}) \subset \overline{\Omega'}$.
\end{proof}

\subsection{Removal of isolated singularities}\label{subsec:remove}
Let $f, X$ be as in Definition~\ref{defi:perturbed-definition}, and suppose we have a smooth solution $u:(\overline{\RR^2_+}, \partial\RR^2_+) \to (\RR^3, \Sigma)$ of
\begin{equation}\label{eq:half-space-bvp}
\left\{
\begin{array}{l}
\Delta u = f(u)u_{x^1} \times u_{x^2} \text{ in }\RR^2_+,\\
u_{x^2} - \tau X(u) \times u_{x^1} \perp T_u\Sigma \text{ on }\partial \RR^2_+,
\end{array}
\right.
\end{equation}
along with
\begin{equation}\label{eq:finiteness}
\int_{\RR^2_+} |\nabla u|^2 < \infty.
\end{equation}
Formulated this way, the isolated singularity is at infinity, and its removal boils down to showing that the limit of $u(x)$ exists as $x \to \infty$. In fact, we establish a decay estimate on $|\nabla u|$ in Proposition~\ref{prop:limit-at-infinity} below, which is the main result of the section and whose proof is based on the differential inequality derived in Proposition~\ref{prop:decay-ODE}. Crucial to the proof of the latter is Lemma~\ref{lemm:poincare-q}. This approach is inspired by earlier instances of its use in the bubbling analysis of harmonic maps, such as~\cite[Lemma 3.2]{Parker1996},~\cite[Lemma 2.1]{Lin-Wang1998},~\cite[Lemma B.2]{ColdingMinicozzi2008} and~\cite[Lemma 7.4]{Lin-Sun-Zhou2020}. In these works, the counterpart of Lemma~\ref{lemm:poincare-q} is either provided by the classical Wirtinger inequality or derived from it by reflection. Here we prove Lemma~\ref{lemm:poincare-q} without reflecting the solution. 

We start with a slightly stronger version of Lemma~\ref{lemm:weakly-conformal}.
\begin{lemm}\label{lemm:half-space-conformal}
Let $u$ be as above. Then it is weakly conformal.
\end{lemm}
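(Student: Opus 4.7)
The plan is to run essentially the same Hopf-differential argument as in Lemma \ref{lemm:weakly-conformal}, but now on the upper half plane, and then use Schwarz reflection together with the finite energy assumption \eqref{eq:finiteness} to force the Hopf differential to vanish.

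First I would define the (unweighted) Hopf differential $\varphi(z) = \langle u_z, u_z\rangle$ on $\RR^2_+$, where $z = x^1 + i x^2$ and $u_z = \tfrac{1}{2}(u_{x^1} - i u_{x^2})$. The PDE $\Delta u = f(u)\, u_{x^1}\times u_{x^2}$ implies $\langle \Delta u, u_z\rangle = 0$, and hence $\varphi_{\bar z} = \tfrac{1}{2}\langle \Delta u, u_z\rangle = 0$, so $\varphi$ is holomorphic on $\RR^2_+$.

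Next I would show that $\varphi$ takes real values on $\partial \RR^2_+$. Writing out $\varphi = \tfrac{1}{4}(|u_{x^1}|^2 - |u_{x^2}|^2) - \tfrac{i}{2}\langle u_{x^1}, u_{x^2}\rangle$, it suffices to check $\langle u_{x^1}, u_{x^2}\rangle = 0$ on $\partial \RR^2_+$. Since $u(\partial \RR^2_+)\subset \Sigma$, the tangential derivative $u_{x^1}$ lies in $T_u\Sigma$ on $\partial \RR^2_+$. The boundary condition in \eqref{eq:half-space-bvp} gives $u_{x^2} = \tau X(u)\times u_{x^1} + N$ with $N \perp T_u\Sigma$; taking the inner product with $u_{x^1}$ kills the $N$-term by orthogonality and the cross-product term by the triple-product identity, so $\langle u_{x^1}, u_{x^2}\rangle = 0$ on $\partial \RR^2_+$, as desired.

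With $\varphi$ holomorphic on $\RR^2_+$, smooth up to $\partial \RR^2_+$, and real-valued on the boundary, Schwarz reflection produces a holomorphic extension $\widetilde\varphi$ to all of $\CC$ via $\widetilde\varphi(z) = \overline{\varphi(\bar z)}$ for $z \in \RR^2_-$. To finish, I would use the assumption \eqref{eq:finiteness}: since $|\varphi| \leq |u_z|^2 \leq \tfrac{1}{2}|\nabla u|^2$, the function $|\widetilde\varphi|$ is a non-negative subharmonic function on $\CC$ with $\int_{\CC}|\widetilde\varphi| \leq \int_{\RR^2_+}|\nabla u|^2 < \infty$. The sub-mean-value inequality applied on $B_R(z_0)$ then gives $|\widetilde\varphi(z_0)|\leq (\pi R^2)^{-1}\|\widetilde\varphi\|_{L^1(\CC)} \to 0$ as $R\to\infty$, so $\widetilde\varphi\equiv 0$. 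Consequently $\varphi \equiv 0$ on $\RR^2_+$, which is exactly the weak conformality of $u$.

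The only real obstacle is the reflection step, but the sign/conjugation convention for the Hopf differential makes it automatic once we know $\mathrm{Im}\,\varphi = 0$ on the boundary; the global $L^1$ bound from \eqref{eq:finiteness} then does all the remaining work via the standard subharmonic-function argument, with no need to reflect the map $u$ itself across $\partial \RR^2_+$.
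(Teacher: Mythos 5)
Your proposal is correct and follows essentially the same route as the paper: show the Hopf differential $\varphi=\langle u_z,u_z\rangle$ is holomorphic, real on $\partial\RR^2_+$, extend by Schwarz reflection, and kill it with an $L^1$-Liouville argument coming from \eqref{eq:finiteness}. The only cosmetic difference is that you phrase the final step via the sub-mean-value property of $|\widetilde\varphi|$ rather than invoking ``Liouville's theorem'' directly, but these are the same argument.
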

\begin{proof}
By~\eqref{eq:half-space-bvp}, the Hopf differential $\varphi := \bangle{u_z, u_z}$ is holomorphic on $\RR^2_+$ and real on $\partial\RR^2_+$. By~\eqref{eq:finiteness}, $\varphi$ is integrable on $\RR^2_+$. Hence, by performing a reflection and then using Liouville's theorem, we see that $\varphi$ vanishes identically, so that $u$ is weakly conformal.
\end{proof}
Next we consider the conformal transformation $(t, \theta) \to (e^{t}\cos\theta, e^{t}\sin\theta)$ from $\RR \times [0, \pi]$ to $\overline{\RR^2_+} \setminus \{0\}$ and write $u(t, \theta)$ for $u(e^t\cos\theta, e^t \sin\theta)$. In these new independent variables, $u$ maps $\RR \times \{0, \pi\}$ into $\Sigma$ and satisfies
\begin{equation}\label{eq:pmc-t-theta}
\left\{
\begin{array}{l}
\Delta u = f(u)u_{t} \times u_{\theta} \text{ on }\RR \times [0, \pi],\\
u_\theta - \tau X(u) \times u_t \perp T_u\Sigma \text{ on } \RR \times \{0, \pi\}.
\end{array}
\right.
\end{equation}
The assumption~\eqref{eq:finiteness} implies that for all $\eta > 0$ there exists $a  > 0$ such that 
\begin{equation}\label{eq:small-energy-cylinder}
\int_{[a, \infty) \times [0, \pi]} |u_\theta|^2 + |u_t|^2 < \eta^2.
\end{equation}
Repeating the proofs of Proposition~\ref{prop:W14-W2q} and Proposition~\ref{prop:eta-regularity} in the present context and using the Sobolev embedding $W^{1, 4} \to C^{0, \frac{1}{2}}$, we get some $\widetilde{\eta}_0 \in (0, 1)$ and $C > 0$ such that if $\eta \in (0, \widetilde{\eta}_0)$ and if $a > 0$ is chosen so that~\eqref{eq:small-energy-cylinder} holds, then
\begin{equation}\label{eq:small-gradient-cylinder}
|\nabla u|^2(t, \theta) \leq C\int_{[t-1, t + 1] \times [0, \pi]} |\nabla u|^2 < C\eta^2, \text{ for all }(t, \theta) \in (a + 1, \infty) \times [0, \pi].
\end{equation}
Since $u(t, 0), u(t, \pi)$ lie in $\Sigma$, we deduce from~\eqref{eq:small-gradient-cylinder} that
\begin{equation}\label{eq:small-distance-to-sigma}
\dist(u(t, \theta), \Sigma) < C\eta, \text{ for all }(t, \theta) \in (a + 1, \infty) \times [0, \pi].
\end{equation}
Thus, since $X$ is assumed to agree with $\bN\circ\Pi$, and hence with $X \circ \Pi$, near $\Sigma$, decreasing $\widetilde{\eta}_0$ if necessary, we may assume that
\begin{equation}\label{eq:X-unit-length}
X(u(t, \theta)) = X(\Pi(u(t, \theta))), \text{ for all }(t, \theta) \in (a+1, \infty) \times [0, \pi].
\end{equation}
Below,~\eqref{eq:small-gradient-cylinder},~\eqref{eq:small-distance-to-sigma} and~\eqref{eq:X-unit-length} are used repeatedly, often without further comment. Also, to save space, we write $X_y$ for $X(y)$ and introduce
\begin{equation}\label{eq:p-q-defi}
p = u_t + \tau X_u \times u_\theta,\ \ q = u_\theta - \tau X_u \times u_t.
\end{equation}
\begin{lemm}\label{lemm:pq-comparison}
For all $(t, \theta) \in \RR \times [0, \pi]$ we have
\begin{equation}\label{eq:partial-comparable}
(1 - |\tau|)|u_\theta| \leq |q| \leq (1 + |\tau|)|u_\theta|,
\end{equation}
\begin{equation}\label{eq:2nd-partial-comparable}
|q_{t}| + |q_{\theta}| \geq (1 - |\tau|)\cdot \big( |u_{\theta\theta}| + |u_{\theta t}| \big) - C|\tau| \cdot |\nabla u|^2.
\end{equation}
\end{lemm}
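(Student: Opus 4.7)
The plan is to exploit weak conformality (Lemma~\ref{lemm:half-space-conformal}), the pointwise bound $|X|\le 1$, and the PDE $\Delta u=f(u)\,u_t\times u_\theta$ from~\eqref{eq:pmc-t-theta}. For part (1) at a point where $u_\theta\neq 0$, conformality gives $|u_t|=|u_\theta|$ and $u_t\perp u_\theta$, so $\{\hat{e}_1,\hat{e}_2,\hat{e}_3\}:=\{u_t/|u_t|,\,u_\theta/|u_\theta|,\,(u_t\times u_\theta)/|u_t|^2\}$ is a right-handed orthonormal frame. Writing $X_u=\alpha\hat{e}_1+\beta\hat{e}_2+\gamma\hat{e}_3$ with $\alpha^2+\beta^2+\gamma^2\le 1$, a direct computation gives $X_u\times u_t=|u_\theta|(\gamma\hat{e}_2-\beta\hat{e}_3)$, hence
\[
q=(1-\tau\gamma)|u_\theta|\,\hat{e}_2+\tau\beta|u_\theta|\,\hat{e}_3,\qquad |q|^2=\bigl[(1-\tau\gamma)^2+\tau^2\beta^2\bigr]|u_\theta|^2.
\]
Setting $s=\beta^2+\gamma^2\in[0,1]$ and estimating $|\gamma|\le\sqrt{s}$ yields the elementary inequality $(1-|\tau|\sqrt{s})^2\le (1-\tau\gamma)^2+\tau^2 s\le (1+|\tau|\sqrt{s})^2$, which immediately implies~\eqref{eq:partial-comparable}. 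Points where $u_\theta=0$ force $u_t=0$ by conformality, so both sides vanish.

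For part (2), I would differentiate $q=u_\theta-\tau X_u\times u_t$ and absorb every first-derivative-times-first-derivative contribution (which only involves $\nabla X$, $f$, or $DX$) into the allowed error $C|\tau||\nabla u|^2$, using $\sup|\nabla X|+\sup|f|\le H_1$. Concretely,
\[
q_\theta=u_{\theta\theta}-\tau X_u\times u_{t\theta}-\tau(DX)_u(u_\theta)\times u_t,\qquad q_t=u_{\theta t}-\tau X_u\times u_{tt}-\tau(DX)_u(u_t)\times u_t.
\]
The crucial step is to eliminate $u_{tt}$ in $q_t$ via the PDE $u_{tt}=-u_{\theta\theta}+f(u)\,u_t\times u_\theta$, which converts $-\tau X_u\times u_{tt}$ into $\tau X_u\times u_{\theta\theta}$ plus an $O(|\tau||\nabla u|^2)$ remainder. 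Then by the triangle inequality and $|X_u|\le 1$,
\[
|q_\theta|\ge |u_{\theta\theta}|-|\tau||u_{t\theta}|-C|\tau||\nabla u|^2,\qquad |q_t|\ge |u_{t\theta}|-|\tau||u_{\theta\theta}|-C|\tau||\nabla u|^2,
\]
and adding gives~\eqref{eq:2nd-partial-comparable}.

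The main conceptual obstacle is that a naive triangle-inequality bound on $|q_t|$ would involve $|u_{tt}|$, which is not controlled by $|u_{t\theta}|+|u_{\theta\theta}|$ alone; without the PDE substitution one could never close the estimate in terms of the $\theta$-derivatives that will be needed downstream (in Lemma~\ref{lemm:poincare-q} and Proposition~\ref{prop:decay-ODE}). Once the PDE is used to trade $u_{tt}$ for $-u_{\theta\theta}$, the rest is bookkeeping. The remaining minor point is to verify that the constant $C$ can be chosen depending only on $H_1$ (hence universal for our purposes), which follows from $\sup|\nabla X|,\sup|f|\le H_1$ and $|X|\le 1$.
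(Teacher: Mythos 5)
Your proposal is correct and follows essentially the same route as the paper: part (1) from conformality plus $|X_u|\le 1$ (the paper's terse ``we see at once'' uses the same triangle inequality that your explicit frame computation makes quantitative), and part (2) by substituting $u_{tt}=-u_{\theta\theta}+f(u)\,u_t\times u_\theta$ before applying the triangle inequality, which is exactly the paper's step of ``replacing $u_{tt}$ by $\Delta u-u_{\theta\theta}$.'' You also correctly identify the PDE substitution as the crucial move that converts the $u_{tt}$-term into a $u_{\theta\theta}$-term so the estimate closes in the $\theta$-derivatives needed downstream.
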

\begin{proof}
Since $u$ is weakly conformal and $|X| \leq 1$, we see at once that~\eqref{eq:partial-comparable} holds. Next, differentiating $q$ and replacing $u_{tt}$ by $\Delta u - u_{\theta\theta}$, and again using $|X| \leq 1$, we obtain
\[
\begin{split}
|q_{t}| + |q_{\theta}| =\ & |u_{\theta t} + \tau X_u \times u_{\theta\theta} - \tau X_u \times \Delta u - \tau (\nabla X)_u(u_t) \times u_t|\\
& + |u_{\theta\theta} - \tau X_u \times u_{t\theta} - \tau (\nabla X)_u(u_\theta) \times u_t|\\
\geq\ & |u_{\theta t}| - |\tau| |u_{\theta\theta}| - |\tau| \cdot (|\Delta u| + C|\nabla u|^2) + |u_{\theta\theta}| - |\tau| |u_{t\theta}| - C|\tau| |\nabla u|^2.
\end{split}
\]
Grouping the terms appropriately and noting that $|\Delta u| \leq C|\nabla u|^2$ yields~\eqref{eq:2nd-partial-comparable}.
\end{proof}
\begin{lemm}\label{lemm:poincare-q}
Given $\eta \in (0, \widetilde{\eta}_0)$, whenever $a > 0$ is such that~\eqref{eq:small-energy-cylinder} holds, we have for all $t > a + 1$ that
\[
\Big( 1 - \frac{C\eta^2}{(1 - |\tau|)^2} \Big) \cdot \int_{0}^{\pi} |q(t, \theta)|^2 d\theta \leq 2\int_{0}^{\pi} |q_\theta(t, \theta)|^2 d\theta.
\]
\end{lemm}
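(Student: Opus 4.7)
The plan is to reduce the claimed inequality to two one-dimensional Wirtinger-type estimates via a decomposition of $q$ adapted to $X_u$, followed by a perturbative comparison between $\alpha_\theta^2 + |\phi_\theta|^2$ and $|q_\theta|^2$. For $t > a + 1$, the estimates~\eqref{eq:small-distance-to-sigma} and~\eqref{eq:X-unit-length}, combined with $X = \bN \circ \Pi$ near $\Sigma$, give $|X_u| \equiv 1$, so I would write
\[
q = \alpha X_u + \phi, \qquad \alpha := q \cdot X_u, \qquad \phi := q - \alpha X_u,
\]
yielding the pointwise identity $|q|^2 = \alpha^2 + |\phi|^2$. Since $X_u \times u_t \perp X_u$, the scalar $\alpha$ coincides with $u_\theta \cdot X_u$. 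Using the tubular representation $u = \pi_\Sigma(u) + \rho X_u$, where $\rho$ denotes the signed distance of $u$ to $\Sigma$, together with $(\pi_\Sigma(u))_\theta \perp X_u$ and $X_u \cdot (X_u)_\theta = 0$, one finds $\alpha = \rho_\theta$. Regarding boundary data: the boundary condition in~\eqref{eq:pmc-t-theta} forces $q$ at $\theta \in \{0, \pi\}$ to be parallel to $X_u$, whence $\phi(t, 0) = \phi(t, \pi) = 0$; and $\rho$ itself vanishes at $\theta \in \{0, \pi\}$ because $u$ maps the boundary into $\Sigma$.

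Since the first Dirichlet eigenvalue of $-\partial_\theta^2$ on $[0, \pi]$ equals $1$, Wirtinger's inequality applied componentwise to $\phi$ gives $\int_0^\pi |\phi|^2 \, d\theta \leq \int_0^\pi |\phi_\theta|^2 \, d\theta$. For $\alpha$, integration by parts using $\alpha = \rho_\theta$ and the vanishing of $\rho$ at the endpoints yields
\[
\int_0^\pi \alpha^2 \, d\theta = -\int_0^\pi \rho \, \alpha_\theta \, d\theta \leq \|\rho\|_{L^2(0, \pi)} \|\alpha_\theta\|_{L^2(0, \pi)} \leq \|\rho_\theta\|_{L^2(0, \pi)} \|\alpha_\theta\|_{L^2(0, \pi)},
\]
where the last step applies Wirtinger to the scalar $\rho$. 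Since $\|\rho_\theta\|_{L^2} = \|\alpha\|_{L^2}$, this simplifies to $\|\alpha\|_{L^2} \leq \|\alpha_\theta\|_{L^2}$. Summing the two pieces delivers
\[
\int_0^\pi |q|^2 \, d\theta \leq \int_0^\pi \bigl( \alpha_\theta^2 + |\phi_\theta|^2 \bigr) \, d\theta.
\]

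It then remains to compare $\alpha_\theta^2 + |\phi_\theta|^2$ with $|q_\theta|^2$. Differentiating $q = \alpha X_u + \phi$ and using $X_u \cdot (X_u)_\theta = 0$ together with $\phi_\theta \cdot X_u = -\phi \cdot (X_u)_\theta$ (from $\phi \cdot X_u \equiv 0$), I expand
\[
|q_\theta|^2 = |\phi_\theta|^2 + \alpha_\theta^2 + \alpha^2 |(X_u)_\theta|^2 - 2\alpha_\theta \, \phi \cdot (X_u)_\theta + 2\alpha \, \phi_\theta \cdot (X_u)_\theta.
\]
Absorbing the two cross terms via Young's inequality with parameter $\tfrac{1}{2}$ produces $|\phi_\theta|^2 + \alpha_\theta^2 \leq 2|q_\theta|^2 + C |q|^2 |(X_u)_\theta|^2$. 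Now $|(X_u)_\theta|^2 \leq H_1^2 |u_\theta|^2$; the pointwise bound $|u_\theta|^2 \leq |q|^2/(1-|\tau|)^2$ from Lemma~\ref{lemm:pq-comparison} together with the uniform estimate $\|q\|_\infty \leq C\eta$ on $(a+1, \infty) \times [0, \pi]$, which follows from~\eqref{eq:small-gradient-cylinder} and~\eqref{eq:partial-comparable}, gives $\int_0^\pi |q|^2 |(X_u)_\theta|^2 \, d\theta \leq C\eta^2 (1 - |\tau|)^{-2} \int_0^\pi |q|^2 \, d\theta$; rearranging then yields the claimed inequality. The main obstacle is controlling the normal component $\alpha$, for which no direct Dirichlet boundary condition is available; identifying $\alpha$ with $\rho_\theta$ and exploiting the vanishing of $\rho$ at the boundary reduces this to a chain of two scalar Wirtinger inequalities bridged by integration by parts, thereby avoiding the reflection device used in precedents such as~\cite{Parker1996, Lin-Wang1998, ColdingMinicozzi2008, Lin-Sun-Zhou2020}, which is not well suited to the oblique boundary condition considered here.
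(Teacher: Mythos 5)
Your proof is correct, and it takes a genuinely different (and arguably cleaner) route to control the normal component of $q$ than the paper does. Both proofs split $q$ into tangential and normal parts along $X_u$ --- your $\phi$ and $\alpha X_u$ are exactly the paper's $q^T$ and $q^\perp$ --- and both handle the tangential part by Wirtinger with Dirichlet boundary data at $\theta = 0, \pi$. The difference is in the normal component. The paper applies the Poincar\'e inequality with mean to $\langle u_\theta, X_u \rangle$ and must then estimate the mean term $\int_0^\pi \langle u_\theta, X_u\rangle\, d\theta$ via integration by parts, using the fact that $u(t,0), u(t,\pi) \in \Sigma$ and $X_{u(t,\pi)}$ is the unit normal there, so the resulting boundary quantity is quadratically small; this produces one of the paper's $C\eta^2$ error terms. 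You instead observe that $\alpha = u_\theta \cdot X_u = \rho_\theta$, where $\rho$ is the signed distance of $u$ to $\Sigma$ (valid on the region of interest thanks to~\eqref{eq:small-distance-to-sigma}), and that $\rho(t, 0) = \rho(t, \pi) = 0$ because $u(\partial \RR^2_+) \subset \Sigma$. The chain of integration by parts plus two scalar Wirtinger inequalities (or, more directly, the observation that $\int_0^\pi \alpha \, d\theta = \rho(t,\pi) - \rho(t,0) = 0$, so Wirtinger-with-mean applies with zero mean) then gives $\|\alpha\|_{L^2} \leq \|\alpha_\theta\|_{L^2}$ exactly, with no error term at this stage. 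The entire $\eta^2$ error in your version arises only in the final conversion from $\alpha_\theta^2 + |\phi_\theta|^2$ to $|q_\theta|^2$, whereas the paper accumulates error at three separate stages. Your identification $\alpha = \rho_\theta$ neatly replaces the paper's geometric estimate of the mean term by an exact cancellation, and the Young's-inequality bookkeeping in your last step produces the stated factor of $2$ and the $(1 - |\tau|)^{-2}\eta^2$ error, as required.
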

\begin{proof}
We first split $q$ into $q^T + q^\perp$, where
\[
q^T := q - \bangle{q, X_u}X_u\ \  \text{ and }\ \ q^\perp := \bangle{q, X_u}X_u.
\]
Noting that $|X(u(t, \theta))| = 1$ on $(a + 1, \infty) \times [0, \pi]$, we have
\begin{equation}\label{eq:q-pythagoras}
|q|^2 = |q^T|^2 + |q^\perp|^2.
\end{equation}
By~\eqref{eq:pmc-t-theta}, $q^T$ vanishes at $\theta = 0$ and $\theta = \pi$, and thus the Poincar\'e inequality gives
\begin{equation}\label{eq:T-poincare}
\int_{0}^{\pi} |q^T|^2 d\theta \leq \int_{0}^{\pi} |(q^T)_\theta|^2 d\theta.
\end{equation}
By a direct computation, we find that
\[
\begin{split}
(q^T)_{\theta} =\ & (q_\theta)^T - \bangle{q, (\nabla X)_u(u_\theta)}X_u - \bangle{q, X_u}(\nabla X)_u(u_\theta).
\end{split}
\]
Combining this with~\eqref{eq:T-poincare} gives
\begin{equation}\label{eq:poincare-t}
\int_{0}^{\pi}\big|q^T\big|^2 d\theta \leq 2\int_{0}^{\pi}|(q_\theta)^T|^2 d\theta + C\eta^2 \int_{0}^{\pi} |q|^2 d\theta.
\end{equation}
Turning to $q^\perp$, since $\bangle{X_u \times u_t, X_u} = 0$, we have
\[
q^\perp = \bangle{u_{\theta}, X_u}X_u.
\]
By the Poincar\'e inequality applied to $\bangle{u_\theta, X_u}$ on $[0, \pi]$ we have
\begin{equation}\label{eq:poincare-n-1}
\begin{split}
\int_{0}^{\pi} \big|q^\perp\big|^2 d\theta =\ & \int_{0}^{\pi} \bangle{u_\theta, X_u}^2 d\theta\\
\leq\ & \int_{0}^{\pi} \big| \bangle{u_\theta, X_u}_\theta \big|^2 d\theta + \frac{1}{\pi}\Big( \int_{0}^{\pi} \bangle{u_\theta, X_u} d\theta \Big)^2.
\end{split}
\end{equation}
For the very last term, we use  $u_\theta(t, \theta) = \big[ u(t, \theta) - u(t, 0) \big]_{\theta}$ and integrate by parts to get
\[
\begin{split}
\int_{0}^{\pi} \bangle{u_\theta, X_u} d\theta=\ & \bangle{u(t, \pi) - u(t, 0), X_{u(t, \pi)}} - \int_{0}^{\pi}\bangle{u(t, \theta) - u(t, 0), (\nabla X)_u(u_\theta)} d\theta\\
\leq\ & C|u(t, \pi) - u(t, 0)|^2 + C\eta\int_{0}^{\pi}|u(t, \theta) - u(t, 0)| d\theta,
\end{split}
\]
where the squared term in the last line comes from the fact that $u(t, 0), u(t, \pi) \in \Sigma$ and that $X_{u(t, \pi)}$ is a unit normal to $T_{u(t, \pi)}\Sigma$. Squaring the above estimate leads to
\begin{equation}\label{eq:poincare-n-2}
\begin{split}
\Big( \int_{0}^{\pi} \bangle{u_\theta, X_u} d\theta \Big)^2 \leq\ & C\eta^2 |u(t, \pi) - u(t, 0)|^2 + C\eta^2 \int_{0}^{\pi} |u(t, \theta) - u(t, 0)|^2 d\theta\\
\leq\ & C\eta^2 \int_{0}^{\pi} |u_\theta|^2 d\theta \leq \frac{C\eta^2}{(1 - |\tau|)^2}\int_{0}^{\pi} |q|^2 d\theta,
\end{split}
\end{equation}
where in getting that last inequality we used Lemma~\ref{lemm:pq-comparison}. On the other hand, for the first term on the last line of~\eqref{eq:poincare-n-1}, we have
\[
\begin{split}
\bangle{u_\theta, X_u}_\theta =\ & \bangle{q, X_u}_\theta = \bangle{q_\theta, X_u} + \bangle{q, (\nabla X)_u(u_\theta)}.
\end{split}
\]
Therefore
\[
\int_{0}^{\pi} |\bangle{u_\theta, X_u}_\theta|^2 d\theta \leq 2\int_{0}^{\pi} \big| (q_\theta)^\perp \big|^2 d\theta + C\eta^2 \int_{0}^{\pi}|q|^2 d\theta.
\]
Upon substituting this and~\eqref{eq:poincare-n-2} into~\eqref{eq:poincare-n-1}, adding the resulting estimate to~\eqref{eq:poincare-t}, and then recalling~\eqref{eq:q-pythagoras}, we get that
\[
\begin{split}
\int_{0}^{\pi}|q|^2 d\theta =\ & \int_{0}^{\pi} |q^T|^2 d\theta + \int_{0}^{\pi}|q^\perp|^2 d\theta\leq 2\int_{0}^{\pi}|q_\theta|^2 d\theta + \frac{C\eta^2}{(1 - |\tau|)^2} \int_{0}^{\pi}|q|^2 d\theta.
\end{split}
\]
Rearranging gives the asserted inequality.
\end{proof}
\begin{lemm}\label{lemm:replace-tt}
We have the following pointwise bounds on $\RR \times [0, \pi]$.
\vskip 1mm
\begin{enumerate}
\item[(a)] $|(X_u \times u_t)_{tt} - X_u \times u_{ttt}| \leq C(|\nabla u|^3 + |\nabla u||u_{\theta\theta}|)$.
\vskip 1mm
\item[(b)] $|(\Delta u)_t| \leq C(|\nabla u|^3 + |\nabla u| |u_{\theta\theta}| + |\nabla u||u_{\theta t}|)$.
\vskip 1mm
\item[(c)] $|p_t - (u_{tt} + \tau X_u \times u_{\theta t})| \leq C|\tau||\nabla u|^2$.
\vskip 1mm
\item[(d)] $|q_{\theta} + p_t| \leq  C|\nabla u|^2$.
\end{enumerate}
\end{lemm}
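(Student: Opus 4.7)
The proof plan is to handle each of the four estimates by a direct computation using the product rule, combined with two basic identities: the PDE $\Delta u = f(u)\,u_t \times u_\theta$ from \eqref{eq:pmc-t-theta}, which gives $|\Delta u| \leq C|\nabla u|^2$ and hence $|u_{tt}| \leq |u_{\theta\theta}| + C|\nabla u|^2$, together with the bounds $|X|, |\nabla X|, |\nabla^2 X|, |f|, |\nabla f| \leq C$. Throughout, $C$ absorbs the universal bounds on $X$ and $f$.

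For (a), I would differentiate twice to get
\[
(X_u \times u_t)_{tt} - X_u \times u_{ttt} = (\nabla^2 X)_u(u_t, u_t) \times u_t + (\nabla X)_u(u_{tt}) \times u_t + 2(\nabla X)_u(u_t) \times u_{tt},
\]
and then replace $u_{tt}$ by $\Delta u - u_{\theta\theta}$ inside the last two terms. The contribution of $\Delta u$ is $O(|\nabla u|^2)$, so the estimate follows. For (b), I would differentiate $\Delta u = f(u)\,u_t \times u_\theta$ in $t$, again replacing $u_{tt}$ by $\Delta u - u_{\theta\theta}$ where it appears, to get $|(\Delta u)_t| \leq C|\nabla u|^3 + C|\nabla u||u_{\theta\theta}| + C|\nabla u||u_{\theta t}|$.

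For (c), I would just compute $p_t = u_{tt} + \tau(\nabla X)_u(u_t) \times u_\theta + \tau X_u \times u_{\theta t}$ using the definition $p = u_t + \tau X_u \times u_\theta$ from \eqref{eq:p-q-defi}; the stray term $\tau (\nabla X)_u(u_t) \times u_\theta$ has magnitude $\leq C|\tau||\nabla u|^2$. For (d), I would similarly expand both $q_\theta$ and $p_t$ and add them. The second-derivative terms combine to $u_{tt} + u_{\theta\theta} = \Delta u = f(u)\,u_t \times u_\theta$, which is $O(|\nabla u|^2)$, while the remaining terms $\tau[(\nabla X)_u(u_t) \times u_\theta - (\nabla X)_u(u_\theta) \times u_t]$ are also $O(|\nabla u|^2)$.

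There is no real obstacle here: each estimate is a mechanical consequence of the chain rule plus the replacement $u_{tt} = \Delta u - u_{\theta\theta}$. The only thing to be careful about is to ensure that every occurrence of $u_{tt}$ on the right-hand sides has been eliminated in favor of $u_{\theta\theta}$ and lower-order terms, so that the bounds stated in (a)--(d) genuinely do not involve $|u_{tt}|$.
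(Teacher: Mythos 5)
Your proposal is correct and follows exactly the same route as the paper's proof: differentiate directly via the product rule, use $u_{tt} = \Delta u - u_{\theta\theta} = -u_{\theta\theta} + f(u)\,u_t\times u_\theta$ to eliminate $u_{tt}$ in (a) and (b), and expand $p_t$ and $q_\theta$ from the definitions for (c) and (d), with the cross terms $\pm\tau X_u\times u_{\theta t}$ cancelling in (d) and $u_{tt}+u_{\theta\theta}=\Delta u$ giving the remaining $O(|\nabla u|^2)$ bound. You have merely written out in full the details the paper leaves as "straightforward."
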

\begin{proof}
By direct computation we have
\[
(X_u \times u_t)_{tt} - X_u \times u_{ttt} =  [(\nabla X)_u(u_{tt}) + (\nabla^2 X)_u(u_t, u_t)] \times u_t +  2(\nabla X)_u(u_t) \times u_{tt},
\]
\[
(\Delta u)_t = (\nabla f)_u(u_t) u_t \times u_\theta + f(u) \cdot u_{tt} \times u_\theta + f(u) u_t \times u_{\theta t}.
\]
Parts (a) and (b) then follow easily upon replacing $u_{tt}$ by $-u_{\theta\theta} + f(u)u_t \times u_\theta$. Part (c) and part (d) are straightforward from the definition of $p$ and $q$ in~\eqref{eq:p-q-defi}.
\end{proof}

\begin{prop}\label{prop:decay-ODE}
There exists $a > 0$ such that 
\[
\frac{d^2}{dt^2}\int_{0}^{\pi} |q(t, \theta)|^2 d\theta \geq \frac{1}{2}\int_{0}^{\pi}|q(t, \theta)|^2 d\theta, \text{ for all $t > a + 1$.}
\]
\end{prop}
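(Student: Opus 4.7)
Let $\phi(t) = \int_0^\pi |q(t,\theta)|^2\,d\theta$; the goal is to bound $\phi''(t)$ from below in terms of $\phi(t)$. Direct differentiation gives
\[
\phi''(t) = 2\int_0^\pi \big(|q_t|^2 + \langle q, q_{tt}\rangle\big)\,d\theta,
\]
so I need to relate $q_{tt}$ to $q_{\theta\theta}$, integrate by parts in $\theta$, and apply Lemma~\ref{lemm:poincare-q}. The strategy is to exploit a near-conformal structure: if we set $p = u_t + \tau X_u\times u_\theta$ and $q = u_\theta - \tau X_u \times u_t$ as in \eqref{eq:p-q-defi}, then a direct computation using $\Delta u = f(u)u_t\times u_\theta$ and the antisymmetry $a\times a=0$ shows
\[
p_t + q_\theta = E_1, \qquad p_\theta - q_t = E_2,
\]
where $|E_1| + |E_2| \leq C|\nabla u|^2$. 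Differentiating the first in $\theta$ and the second in $t$ and subtracting yields $q_{tt} + q_{\theta\theta} = \partial_\theta E_1 - \partial_t E_2 =: R$, with $|R| \leq C(|\nabla u|^3 + |\nabla u||\nabla^2 u|)$.

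Substituting this perturbed Laplace identity and integrating by parts in $\theta$, I would obtain
\[
\phi''(t) = 2\int_0^\pi \big(|q_t|^2 + |q_\theta|^2\big)\,d\theta - 2\big[\langle q, q_\theta\rangle\big]_{\theta = 0}^{\theta = \pi} + 2\int_0^\pi \langle q, R\rangle\,d\theta.
\]
The bulk term is handled directly by Lemma~\ref{lemm:poincare-q}, which forces $2\int_0^\pi |q_\theta|^2 \geq (1 - C\eta^2/(1-|\tau|)^2)\phi(t)$. To absorb the remainder $\langle q, R\rangle$, I would use Lemma~\ref{lemm:pq-comparison} to bound $|\nabla^2 u|$ pointwise by $C(|q_t|+|q_\theta|) + C|\nabla u|^2$, and exploit conformality (Lemma~\ref{lemm:half-space-conformal}, which gives $|u_t| = |u_\theta|$) together with \eqref{eq:partial-comparable} to estimate $\int_0^\pi |\nabla u|^2 d\theta \leq C\phi(t)$; then Cauchy--Schwarz and the smallness $|\nabla u| \leq C\eta$ from \eqref{eq:small-gradient-cylinder} make this remainder a small multiple of $\phi(t) + \int_0^\pi(|q_t|^2+|q_\theta|^2)\,d\theta$.

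The main obstacle I anticipate is the boundary term $[\langle q,q_\theta\rangle]_0^\pi$, because $\theta = 0, \pi$ is a genuine boundary (no periodicity). Here I would exploit the oblique boundary condition in \eqref{eq:pmc-t-theta}, which says $q \perp T_u\Sigma$, so at the endpoints $q = \langle q, X_u\rangle X_u$ with $\langle q, X_u\rangle = \langle u_\theta, X_u\rangle$, giving $\langle q, q_\theta\rangle = \langle u_\theta,X_u\rangle\langle X_u, q_\theta\rangle$. The key observation is that the normal component $\langle X_u, q_\theta\rangle$ is itself small at the boundary: differentiating the boundary constraint $\langle u_t, X_u\rangle = 0$ in $t$ gives $\langle u_{tt}, X_u\rangle = O(|\nabla u|^2)$, so by the PDE $\langle X_u, u_{\theta\theta}\rangle = -\langle X_u, u_{tt}\rangle + f(u)\det(X_u,u_t,u_\theta) = O(|\nabla u|^2)$, and the remaining terms in $q_\theta$ that survive after pairing with $X_u$ are already $O(|\nabla u|^2)$. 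This forces $|\langle q, q_\theta\rangle|_{\theta=0,\pi} \leq C|\nabla u|^3$ pointwise, and a standard one-dimensional trace inequality $|\nabla u(t,0)|^2 + |\nabla u(t,\pi)|^2 \leq C\int_0^\pi(|\nabla u|^2 + |\nabla^2 u|^2)d\theta$, combined again with the bounds above, absorbs the boundary term into a small multiple of $\phi + \int(|q_t|^2+|q_\theta|^2)$. Choosing $a$ large so that $\eta$ in \eqref{eq:small-energy-cylinder} is sufficiently small then yields $\phi''(t) \geq \tfrac{1}{2}\phi(t)$, as desired.
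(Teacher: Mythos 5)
Your proposal is correct and rests on the same two pillars as the paper's proof: Lemma~\ref{lemm:pq-comparison}, used to trade $|\nabla u|$, $|\nabla^2 u|$ for $|q|$, $|q_t|$, $|q_\theta|$, and Lemma~\ref{lemm:poincare-q} as the Wirtinger-type ingredient. At bottom it is the same argument, but you package it more transparently. The explicit near-Cauchy--Riemann system $p_t + q_\theta = O(|\nabla u|^2)$, $p_\theta - q_t = O(|\nabla u|^2)$ and the resulting clean identity $q_{tt} + q_{\theta\theta} = R$ with $|R| \lesssim |\nabla u|^3 + |\nabla u||\nabla^2 u|$ capture in one stroke what the paper extracts piecemeal through Lemma~\ref{lemm:replace-tt}(a)--(d) and two separate integrations by parts; the paper's estimate~\eqref{eq:replace-tt}(d) is exactly the first half of your system. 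Your boundary term $[\langle q, q_\theta\rangle]_0^\pi$ is handled by a pointwise cubic bound at $\theta = 0,\pi$ followed by a one-dimensional trace inequality, whereas the paper's boundary term $[\langle q, u_{tt}\rangle]_0^\pi$ is converted to an interior integral via the fundamental theorem of calculus over $[0,\pi]$; both hinge on the same observation that $q$ is purely normal at $\theta = 0,\pi$ and that differentiating $\langle u_t, X_u\rangle = 0$ in $t$ makes the normal component of $u_{tt}$ quadratic in $\nabla u$. One point you should spell out: Lemma~\ref{lemm:pq-comparison} only controls $|u_{\theta\theta}| + |u_{\theta t}|$, not the full Hessian. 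To bound $|u_{tt}|$ (which your trace inequality needs) you must also invoke the PDE $u_{tt} = -u_{\theta\theta} + f(u)\,u_t\times u_\theta$; as stated, your citation of Lemma~\ref{lemm:pq-comparison} slightly overclaims. Also, the phrase ``antisymmetry $a\times a = 0$'' is not what makes $E_1, E_2$ quadratic; the cancellation of the $\tau X_u\times u_{t\theta}$ terms is just symmetry of mixed partials, and the remaining error terms are quadratic simply because they are products of first derivatives.
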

\begin{proof}
With $\eta$ to be determined, we choose $a$ so large that~\eqref{eq:small-energy-cylinder} holds. Twice differentiating $\int_{0}^{\pi}\frac{|q(t, \theta)|^2}{2} d\theta$ with respect to $t$, integrating by parts and using Lemma~\ref{lemm:replace-tt}(a), we get
\begin{equation}\label{eq:energy-tt-derivative}
\begin{split}
\frac{d^2}{dt^2}\int_{0}^{\pi} \frac{|q|^2}{2} d\theta =\ & \int_{0}^{\pi} |q_{t}|^2 d\theta +  \int_{0}^{\pi} \bangle{q, u_{\theta tt}} d\theta  - \tau \int_{0}^{\pi}\bangle{q, (X_u \times u_t)_{tt}} d\theta \\
\geq\ & \bangle{q, u_{tt}}\big|_{\theta = 0}^{\theta = \pi} 
 + \int_{0}^{\pi}|q_t|^2 - \bangle{q_\theta, u_{tt}}d\theta  - \tau\int_{0}^{\pi} \bangle{q, X_u \times u_{ttt}} d\theta\\
& - C|\tau|\int_{0}^{\pi} |q|(|\nabla u|^3 + |\nabla u||u_{\theta\theta}|)d\theta.
\end{split}
\end{equation}
For the term involving $u_{ttt}$, we get after writing $u_{ttt}$ as $(\Delta u)_t - u_{\theta\theta t}$, using Lemma~\ref{lemm:replace-tt}(b), and again integrating by parts that
\[
\begin{split}
-\tau\int_{0}^{\pi}\bangle{q, X_u \times u_{ttt}} d\theta =\ & -\tau\int_{0}^{\pi} \bangle{q, X_u \times (\Delta u)_t} d\theta + \tau \int_{0}^{\pi} \bangle{q, X_u \times u_{\theta\theta t}} d\theta\\
=\ & -\tau\int_{0}^{\pi} \bangle{q, X_u \times (\Delta u)_t} d\theta + \tau \bangle{q, X_u \times u_{\theta t}}\big|_{\theta = 0}^{\theta = \pi}\\
&- \tau \int_{0}^{\pi} \bangle{q_\theta, X_u \times u_{\theta t}}d\theta  - \tau \int_{0}^{\pi} \bangle{q,(\nabla X)_u(u_\theta) \times u_{\theta t}} d\theta\\
\geq\ & -C|\tau| \int_{0}^{\pi} |q|(|\nabla u|^3 + |\nabla u||u_{\theta\theta}| + |\nabla u||u_{\theta t}|) d\theta\\
& + \tau \bangle{q, X_u \times u_{\theta t}}\big|_{\theta = 0}^{\theta = \pi} - \tau \int_{0}^{\pi} \bangle{q_\theta, X_u \times u_{\theta t}}d\theta.
\end{split}
\]
Note that the boundary term vanishes since $q$ is parallel to $X_u$ and hence orthogonal to $X_u \times u_{\theta t}$ when $\theta = 0, \pi$. Substituting the above into~\eqref{eq:energy-tt-derivative} and using Lemma~\ref{lemm:replace-tt}(c) gives
\begin{equation}\label{eq:energy-tt-derivative-2}
\begin{split}
\frac{d^2}{dt^2}\int_{0}^{\pi} \frac{|q|^2}{2} d\theta\geq\ & \bangle{q, u_{tt}} \big|_{\theta = 0}^{\theta = \pi} +\int_{0}^{\pi} |q_{t}|^2 -  \bangle{q_{\theta}, p_t}d\theta\\
 & - C|\tau| \int_{0}^{\pi} |q| \cdot \big(|\nabla u|^3 + |\nabla u||u_{\theta\theta}| + |\nabla u||u_{\theta t}| \big) d\theta\\
 & - C|\tau| \int_{0}^{\pi} |q_{\theta}| |\nabla u|^2 d\theta.
\end{split}
\end{equation}
To continue, we use Lemma~\ref{lemm:replace-tt}(d) to see that
\begin{equation}\label{eq:replace-tt-d}
-\int_{0}^{\pi}\bangle{q_\theta, p_t} d\theta \geq \int_{0}^{\pi} |q_\theta|^2 d\theta - C\int_{0}^{\pi}|q_\theta||\nabla u|^2 d\theta.
\end{equation}
On the other hand, for the boundary term in~\eqref{eq:energy-tt-derivative-2}, observe that when $\theta = 0, \pi$, we have 
\[
q = \bangle{q, X_u}X_u,\ \ \bangle{u_{tt}, X_u} = -\bangle{u_t, (\nabla X)_u(u_t)},
\]
and hence by the fundamental theorem of calculus and some straightforward computation we deduce that
\begin{equation}\label{eq:boundary-term}
\begin{split}
\bangle{q, u_{tt}}\big|_{\theta = 0}^{\theta = \pi} =\ & - \bangle{q, X_u}\bangle{u_{t}, (\nabla X)_u(u_t)}\big|_{\theta = 0}^{\theta = \pi}\\
=\ & -\int_{0}^{\pi} \big( \bangle{q, X_u}\bangle{u_{t}, (\nabla X)_u(u_t)}\big)_{\theta}\  d\theta\\
\geq\ & -C\int_{0}^{\pi} |q_\theta||\nabla u|^2 + |q|(|\nabla u|^3 + |\nabla u||u_{t\theta}|) d\theta.
\end{split}
\end{equation}
Combining~\eqref{eq:replace-tt-d} and~\eqref{eq:boundary-term} with~\eqref{eq:energy-tt-derivative-2}, and also using~\eqref{eq:small-gradient-cylinder}, we obtain
\begin{equation}\label{eq:energy-tt-derivative-3}
\begin{split}
\frac{d^2}{dt^2}\int_{0}^{\pi} \frac{|q|^2}{2} d\theta\geq\ & \int_{0}^{\pi} |q_t|^2 + |q_\theta|^2 d\theta - C\eta\int_{0}^{\pi}|q| (|\nabla u|^2 + |u_{t\theta}| + |u_{\theta\theta}|) d\theta\\
& - C\eta \int_{0}^{\pi}|q_\theta||\nabla u| d\theta.
\end{split}
\end{equation}
We then apply Lemma~\ref{lemm:pq-comparison} to the second and third terms on the right hand side to get
\begin{equation}\label{eq:2nd-line-2}
\int_{0}^{\pi} |q| \cdot \big( |\nabla u|^2 + |u_{\theta \theta}| + |u_{\theta t}| \big) d\theta \leq C(1 - |\tau|)^{-3}\int_{0}^{\pi} |q| \cdot \big( |q|^2 + |q_{t}| + |q_{\theta}| \big) d\theta,
\end{equation}
and 
\begin{equation}\label{eq:3rd-line-2}
 \int_{0}^{\pi} |q_{\theta}| |\nabla u| d\theta \leq C(1 - |\tau|)^{-1}  \int_{0}^{\pi}  |q_{\theta}| |q| d\theta.
\end{equation}
Putting~\eqref{eq:2nd-line-2} and~\eqref{eq:3rd-line-2} back into~\eqref{eq:energy-tt-derivative-3} and using Young's inequality gives
\begin{equation}\label{eq:energy-tt-derivative-4}
\begin{split}
\frac{d^2}{dt^2}\int_{0}^{\pi} \frac{|q|^2}{2} d\theta \geq\ & \big(1 - C (1 - |\tau|)^{-3}\eta\big)\cdot \int_{0}^{\pi}  |q_{t}|^2 +  |q_{\theta}|^2 d\theta - C(1 - |\tau|)^{-3}\eta \cdot \int_{0}^{\pi} |q|^2 d\theta.
\end{split}
\end{equation}
Applying Lemma~\ref{lemm:poincare-q}, we finally get
\[
\frac{d^2}{dt^2}\int_{0}^{\pi} \frac{|q|^2}{2} d\theta \geq \frac{1}{2}\Big( 1 - C(1 - |\tau|)^{-3}\eta \Big) \cdot \int_{0}^{\pi} |q|^2 d\theta.
\]
The asserted estimate follows upon choosing $\eta$ small enough so that
\[
C(1 - |\tau|)^{-3}\eta < \frac{1}{2}.
\]
\end{proof}
\begin{prop}\label{prop:limit-at-infinity}
Suppose $u:(\overline{\RR^2_+}, \partial\RR^2_+) \to (\RR^3, \Sigma)$ is a smooth solution of~\eqref{eq:half-space-bvp} satisfying~\eqref{eq:finiteness}. With $a > 0$ as given by Proposition~\ref{prop:decay-ODE} and $\kappa = \frac{1}{\sqrt{2}}$, we have
\begin{equation}\label{eq:grad-exp-decay}
\sup_{|x| \geq e^{a + 2}} |x|^{\kappa + 2}|\nabla u(x)|^2 < \infty.
\end{equation}
In particular, $\lim_{x \to \infty}u(x)$ exists.
\end{prop}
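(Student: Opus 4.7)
The plan is to convert the differential inequality of Proposition~\ref{prop:decay-ODE} into exponential decay of the angular energy
\[
\phi(t) := \int_0^\pi |q(t,\theta)|^2\, d\theta
\]
at rate $\kappa = 1/\sqrt{2}$, and then transfer that decay back to a pointwise bound on $|\nabla u|$ in Cartesian coordinates. The governing ODE $y'' = \tfrac12 y$ has solutions $e^{\pm t/\sqrt{2}}$, so the key step is to rule out the growing branch using the fact that $\phi$ must be integrable on $[a+1,\infty)$. Indeed, the change of variables $(t,\theta)\mapsto e^t(\cos\theta,\sin\theta)$ is conformal, so $\int_0^\infty\int_0^\pi|\nabla_{t,\theta}u|^2\,d\theta\,dt = \int_{\RR^2_+}|\nabla u|^2 < \infty$, and Lemma~\ref{lemm:pq-comparison} then gives $\phi \in L^1(a+1,\infty)$.

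For the ODE step, I would introduce $\psi_\pm(t) := \phi'(t) \pm \kappa \phi(t)$. Proposition~\ref{prop:decay-ODE} gives $\psi_-' + \kappa\psi_- \geq 0$ and $\psi_+' - \kappa\psi_+ \geq 0$, i.e., $e^{\kappa t}\psi_-$ and $e^{-\kappa t}\psi_+$ are both non-decreasing. If $\psi_-(t_0) > 0$ at any $t_0 > a+1$, then the first monotonicity propagates positivity forward, giving $\phi' > \kappa\phi$ and hence $\phi(t) \geq \phi(t_0)e^{\kappa(t-t_0)}$ by Gr\"onwall, contradicting $\phi\in L^1$. Thus $\phi' \leq \kappa\phi$ throughout, which together with the second monotonicity and the bound $\psi_+ \leq 2\kappa\phi$ rules out the possibility $\psi_+(t_2) > 0$ by the same contradiction. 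Hence $\psi_+(t) \leq 0$, meaning $(e^{\kappa t}\phi)' \leq 0$, which integrates to $\phi(t) \leq C e^{-\kappa t}$.

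Next I would transfer this to a pointwise gradient bound. By Lemma~\ref{lemm:pq-comparison}, $\int_0^\pi |u_\theta|^2 d\theta \leq (1-|\tau|)^{-2}\phi(t) \leq Ce^{-\kappa t}$, and the weak conformality from Lemma~\ref{lemm:half-space-conformal} (which transfers to the $(t,\theta)$ coordinates by conformal invariance) yields the same bound for $\int_0^\pi |u_t|^2 d\theta$. The $\eta$-regularity behind \eqref{eq:small-gradient-cylinder} actually controls $|\nabla_{t,\theta}u(t,\theta)|^2$ pointwise by the Dirichlet integral on the strip $[t-1,t+1]\times[0,\pi]$; integrating the slice-wise bound over such a strip gives $|\nabla_{t,\theta}u(t,\theta)|^2 \leq Ce^{-\kappa t}$ for $t > a+2$. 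Since $|x| = e^t$ and the coordinate change has conformal factor $|x|$, one has $|\nabla_x u(x)|^2 = |x|^{-2}|\nabla_{t,\theta}u|^2$, yielding $|x|^{\kappa+2}|\nabla u(x)|^2 \leq C$ on $|x| \geq e^{a+2}$, which is \eqref{eq:grad-exp-decay}.

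Finally, the existence of $\lim_{x\to\infty}u(x)$ follows by an elementary Cauchy argument. The bound $|\nabla u(x)| \leq C|x|^{-1-\kappa/2}$ shows that for $R_1 < R_2$ large and any $\varphi\in[0,\pi]$, integrating along the radial segment gives $|u(R_2 e^{i\varphi}) - u(R_1 e^{i\varphi})| \leq C\int_{R_1}^{R_2} r^{-1-\kappa/2}\,dr \to 0$, while integrating along the arc $|x|=R$ gives oscillation $\leq C R^{-\kappa/2}$; combining the two gives a Cauchy condition and hence the limit. The main obstacle throughout is really Step~1, the ODE selection argument—but the first-order reduction via $\psi_\pm$ handles it cleanly once integrability of $\phi$ is in hand.
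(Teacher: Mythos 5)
Your proposal is correct, and the core ODE step is handled by a genuinely different (though equally standard) device than the paper uses. The paper exploits the comparison principle for $\phi'' \geq \kappa^2\phi$: it compares $\phi$ on $[t_1,t_2]$ with the explicit $\sinh$ barrier matching the endpoint data, then sends $t_2\to\infty$ using only boundedness of $\phi$ (which comes from~\eqref{eq:small-gradient-cylinder}) to conclude $\phi(t)\leq e^{-\kappa(t-t_1)}\phi(t_1)$. You instead factor the second-order inequality as two coupled first-order ones via $\psi_\pm=\phi'\pm\kappa\phi$, observe that $e^{\kappa t}\psi_-$ and $e^{-\kappa t}\psi_+$ are monotone, and propagate positivity to rule out the growing mode against boundedness (or integrability) of $\phi$. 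This buys you an argument that never needs to write down an explicit solution of $y''=\kappa^2 y$ and avoids the $t_2\to\infty$ limit; the paper's version is perhaps geometrically cleaner since the barrier is visible. One small remark: as you present it, the $\psi_+$ step requires the already-established $\psi_-\leq 0$ (to get $\psi_+\leq 2\kappa\phi$) and then boundedness of $\phi$ alone suffices for the contradiction, so the $L^1$ hypothesis you invoked up front is not essential—boundedness, which is exactly what the paper uses, is enough. The transfer to the pointwise bound (integrating the decaying slice energies over a strip of width $2$ and feeding that into the $\eta$-regularity estimate, plus conformal rescaling) matches the paper's final step; and the Cauchy argument for the limit, which the paper leaves implicit, is fine.
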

\begin{proof}
It suffices to establish~\eqref{eq:grad-exp-decay} as the second conclusion follows easily. Following our convention throughout this section, we write $u(t, \theta)$ for $u(e^t\cos\theta, e^t\sin\theta)$. Then whenever $t_2 > t > t_1 \geq a + 1$, we have from Proposition~\ref{prop:decay-ODE} that
\begin{equation}\label{eq:ODE-comparison}
\begin{split}
\int_{0}^{\pi} |q(t, \theta)|^2 d\theta \leq \frac{\sinh \kappa(t - t_1)}{\sinh \kappa(t_2 - t_1)} \cdot \int_{0}^{\pi}|q(t_2, \theta)|^2 d\theta + \frac{\sinh \kappa(t_2 - t)}{\sinh \kappa(t_2 - t_1)} \cdot \int_{0}^{\pi}|q(t_1, \theta)|^2 d\theta.
\end{split}
\end{equation}
Since the function $t \mapsto \int_{0}^\pi |q(t, \theta)|^2 d\theta$ is bounded on $(a + 1, \infty)$ by~\eqref{eq:small-gradient-cylinder}, upon letting $t_2$ tend to infinity in~\eqref{eq:ODE-comparison} while keeping $t, t_1$ fixed, we obtain 
\[
\int_{0}^{\pi}|q(t, \theta)|^2 d\theta \leq e^{-\kappa(t - t_1)}\int_{0}^{\pi} |q(t_1, \theta)|^2 d\theta, \text{ for all }t > t_1 \geq a+1.
\]
Taking $t_1 = a+1$ and integrating from $t$ to infinity gives
\[
\int_{t}^{\infty} \int_{0}^{\pi} |q(s, \theta)|^2 d\theta ds \leq \frac{e^{-\kappa(t - a - 1)}}{\kappa}\int_{0}^{\pi}|q(a+1, \theta)|^2 d\theta, \text{ for all }t > a+ 1.
\]
From this, we infer with the help of Lemma~\ref{lemm:pq-comparison} and the first inequality in~\eqref{eq:small-gradient-cylinder} that
\[
\sup_{(t, \theta) \in [a+2, \infty) \times [0, \pi]} e^{\kappa t}|\nabla u(t, \theta)|^2 < \infty, 
\]
which gives~\eqref{eq:grad-exp-decay}. 
\end{proof}
\begin{coro}\label{coro:removal-singularity}
Suppose $u:(\overline{\RR^2_+}, \partial\RR^2_+) \to (\RR^3, \Sigma)$ is a smooth solution of~\eqref{eq:half-space-bvp} satisfying~\eqref{eq:finiteness}. Given $x_0 \in \partial \bB$ and a conformal map $G$ from $(\bB, \partial \bB \setminus \{x_0\})$ onto $(\RR^2_+, \partial \RR^2_+)$, the composition $v: = u \circ G$ extends to a smooth map on $\overline{\bB}$.
\end{coro}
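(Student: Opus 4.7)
The plan is in four steps: extend $v$ continuously to $\overline{\bB}$, promote the extension to $W^{1,q}$ for some $q>2$, verify the weak Euler--Lagrange equation \eqref{eq:weak-EL} on all of $\bB$ by a cut-off argument, and invoke Remark \ref{rmk:C0-regularity}.

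First I would apply Proposition \ref{prop:limit-at-infinity} to conclude that the limit $y_\infty := \lim_{y \to \infty} u(y)$ exists; since $u(\partial \RR^2_+) \subset \Sigma$ and $\Sigma$ is closed, $y_\infty \in \Sigma$. Defining $v(x_0) := y_\infty$ thus produces a continuous extension of $v$ to $\overline{\bB}$ that sends $\partial \bB$ into $\Sigma$. Next, near $x_0$ the conformal map $G$ is, up to rotation, a M\"obius transformation sending $x_0$ to infinity, so that $|G(x)| \leq C/|x - x_0|$ and $|DG(x)| \leq C/|x - x_0|^2$ on a small half-disk centered at $x_0$. Combined with the decay $|\nabla u(y)| \leq C|y|^{-(\kappa+2)/2}$ provided by \eqref{eq:grad-exp-decay}, the chain rule yields
\[
|\nabla v(x)| \leq C|x - x_0|^{\kappa/2 - 1}
\]
in a punctured neighborhood of $x_0$. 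Since $\kappa = 1/\sqrt{2} > 0$, choosing any $q \in (2, 4/(2 - \kappa))$ makes $|x - x_0|^{q(\kappa/2 - 1)}$ integrable near $x_0$, so $v \in W^{1, q}(\bB; \RR^3)$ for such $q$.

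Third, I would verify the weak Euler--Lagrange equation \eqref{eq:weak-EL} (with $h = f$) on all of $\bB$. The system \eqref{eq:half-space-bvp} is conformally invariant and $G$ is a conformal diffeomorphism of $\overline{\bB} \setminus \{x_0\}$ onto $\overline{\RR^2_+}$, so $v$ is a classical solution on $\overline{\bB} \setminus \{x_0\}$ of the PDE and boundary condition in \eqref{eq:ep-0-classical}. For a test function $\varphi \in W^{1, q}(\bB; \RR^3)$ with $\varphi|_{\partial \bB} \in T_v \Sigma$, I would insert a cut-off $\chi_\delta \in C^\infty(\overline{\bB})$ vanishing on $B_{\delta/2}(x_0)$, equal to $1$ outside $B_\delta(x_0)$, and satisfying $|\nabla \chi_\delta| \leq 4/\delta$, so that $\chi_\delta \varphi$ is admissible and supported away from $x_0$. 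The classical PDE and boundary condition on $\overline{\bB} \setminus \{x_0\}$ then give \eqref{eq:weak-EL} with $\chi_\delta \varphi$ in place of $\varphi$. The gradient decay above forces the cut-off error
\[
\frac{1}{\delta} \int_{B_\delta(x_0) \cap \bB} |\nabla v|\, dx \leq C\delta^{\kappa/2}
\]
and its analogues for the other terms involving $\nabla \chi_\delta$ to vanish as $\delta \to 0$, while the remaining terms pass to the limit by dominated convergence since $|\nabla v||\nabla \varphi| \in L^{q/2}(\bB) \subset L^1(\bB)$.

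The conclusion then follows from Remark \ref{rmk:C0-regularity}. The crux of this strategy is the pointwise decay $|\nabla v(x)| = O(|x - x_0|^{\kappa/2 - 1})$: any exponent strictly greater than $-1$ suffices to recover $W^{1, q}$ integrability for some $q > 2$ and to kill the cut-off error, and Proposition \ref{prop:decay-ODE} (via Proposition \ref{prop:limit-at-infinity}) is exactly what supplies such an exponent. The main work has therefore already been done upstream, in the delicate ODE estimate for $\int_0^\pi |q|^2\, d\theta$; in the present corollary the challenge is only the bookkeeping needed to convert that decay into a valid weak formulation across $x_0$.
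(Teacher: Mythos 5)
Your proof is correct and matches the paper's approach: both extract the continuous extension and the $W^{1,q}$ ($q>2$) bound from Proposition~\ref{prop:limit-at-infinity} and then invoke Remark~\ref{rmk:C0-regularity}. You simply spell out the cut-off argument (using the pointwise decay $|\nabla v(x)|=O(|x-x_0|^{\kappa/2-1})$ to kill the $\nabla\chi_\delta$ error terms), which the paper leaves implicit; this is a useful detail to make explicit, since a point has \emph{positive} $W^{1,q}$-capacity in $\RR^2$ when $q>2$, so the pointwise rate rather than a mere $L^q$ bound on $\nabla v$ is what makes the singularity removable.
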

\begin{proof}
The map $v$ initially is a smooth solution of~\eqref{eq:cmc-bvp} on $\overline{\bB} \setminus \{x_0\}$. Proposition~\ref{prop:limit-at-infinity} implies that $v$ extends continuously to $\overline{\bB}$ and that there exists some $q > 2$ such that $\nabla v \in L^q(\bB)$. Smoothness then follows from Remark~\ref{rmk:C0-regularity}.
\end{proof}

\subsection{Branch points}\label{subsec:branch-points}

Suppose we have a smooth, weakly conformal and non-constant solution $u:(\bB^+ \cup \bT, \bT) \to (\RR^3, \Sigma)$ of
\begin{equation}\label{eq:half-ball-bvp}
\left\{
\begin{array}{l}
\Delta u = f(u)u_{x^1} \times u_{x^2} \text{ in }\bB^+,\\
u_{x^2} - \tau X(u) \times u_{x^1} \perp T_u\Sigma \text{ on }\bT,
\end{array}
\right.
\end{equation}
and that $\nabla u(0) = 0$. Our goal in this section is to show that the origin is an isolated zero of $\nabla u$, and that the tangent plane to the image of $u$, namely $\Span\{u_{x^1}(x), u_{x^2}(x)\}$, has a limit as $x \to 0$. The methods are inspired by the work of Hildebrandt and Nitsche~\cite{Hildebrandt-Nitsche1979} on free boundary minimal surfaces. The main results are Proposition~\ref{prop:smooth-extension} and Corollary~\ref{coro:branch}. 

Since $u$ is smooth and $u(0) \in \Sigma$, and since~\eqref{eq:half-ball-bvp} is conformally invariant, dilating the domain if necessary, we may assume that the derivatives of $u$ of all orders are bounded on $\bB^+ \cup \bT$, that $X = X \circ \Pi$ on $u(\bB^+ \cup \bT)$, and that the latter is contained in $U$ for some $U \in \cF$. We then let
\[
\Phi: U \to V
\]
be the chart described in Section~\ref{subsec:function-spaces}. With $\widehat{u} = \Phi \circ u$, and adopting the notation of Section~\ref{subsec:function-spaces}, we compute based on~\eqref{eq:half-ball-bvp} that 
\begin{equation}\label{eq:half-ball-fermi}
\left\{
\begin{array}{l}
\Delta \widehat{u}^i = -\Gamma_{jk}^i(\widehat{u}) \bangle{\nabla\widehat{u}^j, \nabla \widehat{u}^k} + \widehat{f}(\widehat{u}) P^i_{jk}(\widehat{u})\widehat{u}^j_{x^1} \widehat{u}^k_{x^2} \text{ in }\bB^+,\\
\widehat{u}_{x^2}^i + \tau J_{j}^i(\widehat{u})\widehat{u}^j_{x^1} = 0 \text{ on }\bT \text{ for }i = 1, 2,\\
\widehat{u}^3 = 0 \text{ on }\bT.
\end{array}
\right.
\end{equation}
Also, the bounds in~\eqref{eq:metric-close} and the fact that $u$ is weakly conformal implies that 
\begin{equation}\label{eq:conformal-hat}
\frac{2 + 2|\tau|}{3 + |\tau|} |\widehat{u}_{x^1}| \leq |\widehat{u}_{x^2}| \leq \frac{3 + |\tau|}{2 + 2|\tau|}|\widehat{u}_{x^1}|.
\end{equation}
Next, following the approach of the previous section, we define, 
\[
p = \widehat{u}_{x^1} - \tau J(\widehat{u})  \widehat{u}_{x^2},\ \ q = \widehat{u}_{x^2} + \tau J(\widehat{u})\widehat{u}_{x^1},
\]
where we think of each $J(y)$ as a linear map from $\RR^3$ to itself. Then we have
\begin{equation}\label{eq:pq-boundary}
p^3 = 0,\ \ q^{1} = q^{2} = 0 \text{ on }\bT.
\end{equation}
We find it most convenient to carry out some of the subsequent calculations in terms of complex variables. Thus, below we introduce
\begin{equation}\label{eq:F-pq-defi}
\begin{split}
F =\ & p - \sqrt{-1}q\\
=\ & 2\widehat{u}_z - 2\sqrt{-1}\tau J(\widehat{u})\widehat{u}_z,
\end{split}
\end{equation}
where in the second line we have extended the maps $\{J(y)\}_{y \in V}$ by complex linearity to $\CC^3$. Note that the estimate~\eqref{eq:J-close} continues to hold.
\begin{lemm}\label{lemm:almost-CR-complex}
There exist families $\{T_y\}_{y \in V}$ and $\{\overline{T}_y\}_{y \in V}$ of linear maps from $\CC^3$ to itself, depending smoothly on $y$, such that 
\begin{equation}\label{eq:Ty-two-side-bound}
\frac{|\xi|}{4} \leq |T_y(\xi)|, |\overline{T}_y(\xi)| \leq \frac{|\xi|}{1 - |\tau|}, \text{ for all } \xi \in \CC^3,
\end{equation}
and
\begin{equation}\label{eq:u-z-F}
\widehat{u}_z =  T_{\widehat{u}}(F),\ \ \widehat{u}_{\overline{z}} =  \overline{T}_{\widehat{u}}(\overline{F}).
\end{equation}
Moreover, there exist $\Lambda_{jk}^i \in C^{\infty}(\bB^+ \cup \bT; \CC)$ for $i, j, k \in \{1, 2, 3\}$ so that 
\begin{equation}\label{eq:Fz-uz}
F_{\overline{z}}^i = \Lambda_{jk}^i \overline{F^j}F^k.
\end{equation}
\end{lemm}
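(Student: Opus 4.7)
The plan is to build $T_y$ and $\overline{T}_y$ as (constant multiples of) the inverses of the operators $I \mp \sqrt{-1}\tau J(y)$ on $\CC^3$, and then to obtain \eqref{eq:Fz-uz} by directly differentiating the explicit formula \eqref{eq:F-pq-defi} for $F$ and substituting the PDE \eqref{eq:half-ball-fermi} for $\Delta\widehat{u}$.

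First I would verify that $I \pm \sqrt{-1}\tau J(y)$ are invertible on $\CC^3$. By \eqref{eq:J-close}, the complex-linear extension of $\tau J(y)$ satisfies $|\sqrt{-1}\tau J(y)\xi| \leq \tfrac{1+|\tau|}{2}|\xi|$, whose operator norm is strictly less than $1$. Hence the Neumann series converges, and the reverse triangle inequality gives
\[
\tfrac{1-|\tau|}{2}|\xi| \;\leq\; |(I \mp \sqrt{-1}\tau J(y))\xi| \;\leq\; \tfrac{3+|\tau|}{2}|\xi|, \qquad \xi \in \CC^3.
\]
Setting $T_y := \tfrac{1}{2}(I-\sqrt{-1}\tau J(y))^{-1}$ and $\overline{T}_y := \tfrac{1}{2}(I+\sqrt{-1}\tau J(y))^{-1}$ then yields the two-sided bound \eqref{eq:Ty-two-side-bound} (with $\tfrac{1}{3+|\tau|}\leq\tfrac{1}{4}$ on the left and $\tfrac{1}{1-|\tau|}$ on the right). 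Because $\widehat{u}$ is real, $\widehat{u}_{\overline{z}} = \overline{\widehat{u}_z}$ and $J(\widehat{u})$ is real, so applying complex conjugation to $F = 2(I-\sqrt{-1}\tau J(\widehat{u}))\widehat{u}_z$ gives $\overline{F} = 2(I+\sqrt{-1}\tau J(\widehat{u}))\widehat{u}_{\overline{z}}$, and \eqref{eq:u-z-F} follows from the definitions of $T_y, \overline{T}_y$. Smooth dependence on $y$ is inherited from that of $J$.

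For \eqref{eq:Fz-uz} I would differentiate \eqref{eq:F-pq-defi} to obtain
\[
F_{\overline{z}} \;=\; 2(I-\sqrt{-1}\tau J(\widehat{u}))\widehat{u}_{z\overline{z}} \;-\; 2\sqrt{-1}\tau (\nabla J)_{\widehat{u}}(\widehat{u}_{\overline{z}})\,\widehat{u}_z,
\]
and then replace $4\widehat{u}_{z\overline{z}} = \Delta \widehat{u}$ using the first line of \eqref{eq:half-ball-fermi}. A direct rewrite in Wirtinger derivatives gives
\[
\langle\nabla \widehat{u}^j,\nabla \widehat{u}^k\rangle = 2(\widehat{u}^j_z \widehat{u}^k_{\overline{z}} + \widehat{u}^j_{\overline{z}} \widehat{u}^k_z), \qquad \widehat{u}^j_{x^1}\widehat{u}^k_{x^2} - \widehat{u}^k_{x^1}\widehat{u}^j_{x^2} = 2\sqrt{-1}(\widehat{u}^j_{\overline{z}}\widehat{u}^k_z - \widehat{u}^j_z\widehat{u}^k_{\overline{z}}),
\]
and using the antisymmetry of $P^i_{jk}$ in $j,k$ to symmetrize the $\widehat{f}$-term, both expressions are purely of the mixed type $\sum a^i_{jk}(\widehat{u})\widehat{u}^j_z \widehat{u}^k_{\overline{z}}$ with smooth coefficients. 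The same is true of $(\nabla J)_{\widehat{u}}(\widehat{u}_{\overline{z}})\widehat{u}_z$. Consequently every term in $F_{\overline{z}}$ has the form $a^i_{jk}(\widehat{u})\widehat{u}^j_z \widehat{u}^k_{\overline{z}}$.

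Finally, inserting $\widehat{u}_z = T_{\widehat{u}}(F)$ and $\widehat{u}_{\overline{z}} = \overline{T}_{\widehat{u}}(\overline{F})$ into this mixed bilinear expression, and collecting the resulting matrix entries of $T_{\widehat{u}}$ and $\overline{T}_{\widehat{u}}$ (which depend smoothly on $\widehat{u}$, hence smoothly on $x \in \bB^+\cup\bT$), produces \eqref{eq:Fz-uz} with
\[
\Lambda^i_{jk}(x) \;=\; \sum_{l,m} a^i_{lm}(\widehat{u}(x))\, T_{\widehat{u}(x)}^{lk}\, \overline{T}_{\widehat{u}(x)}^{mj}.
\]
The computation is essentially algebraic; the only point requiring care is that the $\sqrt{-1}$ and the antisymmetry of $P^i_{jk}$ conspire so that no pure $\widehat{u}_z\widehat{u}_z$ or $\widehat{u}_{\overline{z}}\widehat{u}_{\overline{z}}$ term appears on the right of \eqref{eq:Fz-uz} — equivalently, that $\Delta\widehat{u}$ is a genuinely \emph{mixed} Hermitian form in $(\widehat{u}_z,\widehat{u}_{\overline{z}})$. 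That is the one substantive step; everything else is an unwinding of the definitions.
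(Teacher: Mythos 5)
Your proof is correct and follows essentially the same route as the paper: define $T_y$ as (a constant multiple of) the inverse of $I - \sqrt{-1}\tau J(y)$, bound it using \eqref{eq:J-close}, set $\overline{T}_y$ by conjugation (your explicit formula $\frac{1}{2}(I+\sqrt{-1}\tau J(y))^{-1}$ agrees with the paper's $\overline{T}_y(\xi) = \overline{T_y(\overline{\xi})}$ since $J$ is real), and then differentiate $F$ and substitute the PDE. One small slip: your parenthetical should read $\frac{1}{3+|\tau|} \geq \frac{1}{4}$, not $\leq$, to confirm the derived lower-bound constant dominates the required one; and your explicit check that the antisymmetry of $P^i_{jk}$ kills the pure $\widehat{u}_z\widehat{u}_z$ and $\widehat{u}_{\overline{z}}\widehat{u}_{\overline{z}}$ terms is a genuinely useful detail that the paper leaves implicit but is indeed needed for \eqref{eq:Fz-uz} to have the stated Hermitian-bilinear form.
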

\begin{proof}
Define $S_y: \CC^3 \to \CC^3$ by $S_y(\xi) = 2\xi - 2\sqrt{-1}\tau J(y)\xi$. By~\eqref{eq:J-close} we have
\[
(1 - |\tau|)|\xi| \leq |S_y(\xi)| \leq 4|\xi|,
\]
which shows that each $S_y$ is invertible, and that the inverse, denoted $T_y$, satisfies~\eqref{eq:Ty-two-side-bound}. The smooth dependence of $T_y$ on $y$ follows since $S_y$ depends smoothly on $y$. Also, the first equation in~\eqref{eq:u-z-F} is an immediate consequence of the fact that $F = S_{\widehat{u}}(\widehat{u}_z)$. Next, letting 
\[
\overline{T}_y(\xi) = \overline{T_y(\overline{\xi})},
\]
we get the second equation in~\eqref{eq:u-z-F} and that each $\overline{T}_y$ satisfies~\eqref{eq:Ty-two-side-bound} as well.

To obtain~\eqref{eq:Fz-uz}, we apply $\partial_{\overline{z}}$ to~\eqref{eq:F-pq-defi}, use the PDE in~\eqref{eq:half-ball-fermi} along with the following standard relations
\[
\widehat{u}_{z\overline{z}} = \frac{1}{4}\Delta\widehat{u},\ \ \widehat{u}_{x^1} = \widehat{u}_z + \widehat{u}_{\overline{z}},\ \ \widehat{u}_{x^2} = \sqrt{-1}(\widehat{u}_z - \widehat{u}_{\overline{z}}),
\]
and finally use~\eqref{eq:u-z-F} to express $\widehat{u}_z$ and $\widehat{u}_{\overline{z}}$ in terms of $F$ and $\overline{F}$.
\end{proof}

\begin{lemm}\label{lemm:reflection}
$F$ and $\Lambda_{jk}^i$ admit extensions to $\bB$, denoted with the same letters and lying respectively in $C^{0, 1}(\bB;\CC^3)$ and $L^{\infty}(\bB;\CC)$, so that
\begin{equation}\label{eq:almost-CR-eq}
F_{\overline{z}}^i = \Lambda^{i}_{jk}\overline{F^j}F^k \text{ almost everywhere on }\bB.
\end{equation}
\end{lemm}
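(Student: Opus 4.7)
The strategy is a componentwise Schwarz-type reflection across $\bT$, dictated by the boundary identities~\eqref{eq:pq-boundary}. Recall that on $\bT$, $F^1 = p^1$ and $F^2 = p^2$ are real-valued, while $F^3 = -\sqrt{-1}\,q^3$ is purely imaginary. Setting $\varepsilon_1 = \varepsilon_2 = +1$, $\varepsilon_3 = -1$, and writing $\bB^- := \bB \cap \{\imag z < 0\}$, I would extend $F$ to $\bB^-$ by
\begin{equation*}
F^i(z) \,:=\, \varepsilon_i\, \overline{F^i(\overline{z})}, \qquad z \in \bB^-,
\end{equation*}
and analogously extend each $\Lambda^i_{jk}$ by $\Lambda^i_{jk}(z) := \varepsilon_i\varepsilon_j\varepsilon_k\,\overline{\Lambda^i_{jk}(\overline{z})}$ on $\bB^-$. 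Continuity of $F$ at each $x \in \bT$ is then immediate: $\varepsilon_i\overline{F^i(x)} = F^i(x)$ for $i = 1, 2$ because the values are real, while for $i = 3$ we have $\varepsilon_3\overline{F^3(x)} = -\sqrt{-1}\,q^3(x) = F^3(x)$, using $q^3(x) \in \RR$.

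The Lipschitz regularity of the extended $F$ then follows because~\eqref{eq:F-pq-defi} shows $F \in C^\infty(\overline{\bB^+};\CC^3)$ with bounded first derivatives on the compact set $\overline{\bB^+}$; the reflection transfers the same derivative bounds to $\overline{\bB^-}$; and the continuous match on $\bT$ promotes these one-sided Lipschitz bounds to a global one on $\bB$ via the triangle inequality applied to line segments crossing $\bT$. The extended $\Lambda^i_{jk}$ lies in $L^\infty(\bB;\CC)$ simply because the original is smooth, hence bounded, on $\overline{\bB^+}$.

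To verify~\eqref{eq:almost-CR-eq} almost everywhere, I would use the chain-rule identity $\partial_{\overline{z}}\bigl[\overline{h(\overline{z})}\bigr] = \overline{h_{\overline{w}}(\overline{z})}$, valid for any smooth $h:\CC \to \CC$ and checked by a direct computation with $\partial_{\overline z} = \frac{1}{2}(\partial_x + i\partial_y)$. Applying this componentwise and combining with the PDE~\eqref{eq:Fz-uz} at $\overline{z} \in \bB^+$, I compute for $z \in \bB^-$:
\begin{align*}
F^i_{\overline z}(z) &= \varepsilon_i\, \overline{F^i_{\overline w}(\overline z)} \,=\, \varepsilon_i\, \overline{\Lambda^i_{jk}(\overline z)\,\overline{F^j(\overline z)}\,F^k(\overline z)} \\
&= \varepsilon_i\varepsilon_j\varepsilon_k\,\overline{\Lambda^i_{jk}(\overline z)}\,\overline{F^j(z)}\,F^k(z),
\end{align*}
where the last step uses $F^j(\overline z) = \varepsilon_j\,\overline{F^j(z)}$, i.e., the reflection formula read backwards. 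This matches the extended $\Lambda$, and since~\eqref{eq:almost-CR-eq} already holds classically on $\bB^+$ and $\bT$ has measure zero, it holds a.e.\ on $\bB$. The only delicate point is that the sign pattern $(\varepsilon_1,\varepsilon_2,\varepsilon_3)$ must be chosen simultaneously compatibly with both continuity of $F$ across $\bT$ and the quadratic right-hand side after conjugation; this compatibility is exactly what forces the symmetric product $\varepsilon_i\varepsilon_j\varepsilon_k$ in the reflected coefficients, and everything else reduces to conjugation bookkeeping.
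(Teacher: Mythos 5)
Your proof is correct and is essentially the same argument as the paper's, merely written in componentwise form. The paper packages the sign pattern $(\varepsilon_1,\varepsilon_2,\varepsilon_3)=(1,1,-1)$ into the reflection map $R:(y^1,y^2,y^3)\mapsto(y^1,y^2,-y^3)$ across $\{y^3=0\}$ (extended complex-linearly to $\CC^3$), so that your extension rule $F^i(z)=\varepsilon_i\,\overline{F^i(\overline z)}$ is exactly $F(z)=R(\overline{F(\overline z)})$ and your reflected coefficients are obtained from the same substitution; the continuity on $\bT$, the Lipschitz glue across $\bT$, the $\partial_{\bar z}$-conjugation identity, and the resulting formula for the extended $\Lambda^i_{jk}$ all match.
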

\begin{proof}
Let $R: \RR^3 \to \RR^3$ denote reflection across $\{y^3 = 0\}$. We extend $p, q$ to $\bB$ by requiring that
\[
p(x^1, x^2) = R(p(x^1, -x^2)),\ \ q(x^1, x^2) = -R(q(x^1, -x^2)), \text{ for }x^2 < 0.
\]
By~\eqref{eq:pq-boundary} and the smoothness of $\widehat{u}$ on $\bB^+ \cup \bT$, we see that the extended $p$ and $q$ are both Lipschitz on $\bB$. Still letting
\[
F = p - \sqrt{-1} q,
\]
we have that
\begin{equation}\label{eq:F-reflection}
F(x^1, x^2) = R(\overline{F(x^1, -x^2)}) \text{ for }x^2 < 0.
\end{equation}
Here we are extending $R$ to $\CC^3$ by complex linearity. By~\eqref{eq:F-reflection} and a simple calculation we get
\begin{equation}\label{eq:F-z-lower}
F_{\overline{z}}(x^1, x^2) = R(\overline{F_{\overline{z}}(x^1, -x^2)}), \text{ for }x^2 < 0.
\end{equation}
Replacing $F_{\overline{z}}(x^1, -x^2)$ using~\eqref{eq:Fz-uz} and then applying~\eqref{eq:F-reflection} to express $F(x^1, -x^2)$ and $\overline{F}(x^1, -x^2)$ in terms of $F(x^1, x^2)$ and $\overline{F}(x^1, x^2)$ shows how to define $\Lambda^i_{jk}$ on the lower half disk so that~\eqref{eq:almost-CR-eq} holds.
\end{proof}
We may now state the main results of this section.
\begin{prop}\label{prop:smooth-extension}
There exist $k \in \NN$ and a smooth function $\widehat{F}: \bB^+ \cup \bT \to \CC^3$ such that $\widehat{F}^3(0) \neq 0$ and 
\[
\widehat{u}_z(z) = z^k \widehat{F}(z) \text{ on $\bB^+ \cup \bT$ near }0.
\]
\end{prop}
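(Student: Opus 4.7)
The plan is to work with the reflection-extended $F\in C^{0,1}(\bB;\CC^3)$ provided by Lemma~\ref{lemm:reflection}, which satisfies the almost-CR system $F^i_{\overline{z}}=\Lambda^i_{jk}\overline{F^j}F^k$ almost everywhere on $\bB$. Since $\nabla u(0)=0$ we have $F(0)=0$, and the Lipschitz bound yields $|F(z)|\le L|z|$ near $0$; in particular $|F_{\overline{z}}|\le C|z|\,|F|$, so the equation is a small-coefficient linear perturbation of $\overline{\partial}$.

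I would then argue that $F$ vanishes to a finite order $k\ge 1$ at $0$. First, the case $F\equiv 0$ on a neighborhood of $0$ is excluded: it would force $\widehat{u}_z=T_{\widehat{u}}(F)=0$ on an open set and hence $\widehat{u}$ to be locally constant, whereupon unique continuation for the elliptic system~\eqref{eq:half-ball-fermi} would make $u$ constant on all of $\bB^+\cup\bT$, contradicting the non-triviality of $u$. Having ruled this out, since $F$ is smooth on $\bB^+\cup\bT$ (inherited from $\widehat{u}$), let $k\ge 1$ be its order of vanishing at $0$. Plugging the Taylor expansion of $F$ into $F_{\overline{z}}=O(|F|^2)$ and comparing orders shows, inductively on $j\in[k,2k]$, that the degree-$j$ terms of $F$ involving $\overline{z}^n$ with $n\ge 1$ must all vanish; thus $F(z)=P(z)+O(|z|^{2k+1})$ for a holomorphic polynomial $P(z)=cz^k+\cdots$ with $c\ne 0$. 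Iterating this argument on the quotient $F/z^k$, which solves an almost-CR equation of the same structure, yields a smooth extension of $\widehat{F}(z):=F(z)/z^k$ to a neighborhood of $0$ in $\bB^+\cup\bT$ with $\widehat{F}(0)=c$. Using $\widehat{u}_z=T_{\widehat{u}}(F)$ and the smooth invertibility of $T$ then produces the asserted factorization $\widehat{u}_z(z)=z^k\widetilde{F}(z)$ with $\widetilde{F}$ smooth and $\widetilde{F}(0)=T_{\widehat{u}(0)}(c)=:c'$.

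To show $\widetilde{F}^3(0)\ne 0$, I would exploit weak conformality $g_{ij}(\widehat{u})\widehat{u}_z^i\widehat{u}_z^j=0$. Comparing $z^{2k}$ coefficients and using that $\widehat{u}(0)\in\{y^3=0\}$, which by (n1) makes $g$ block-diagonal at that point with $g_{22}=g_{11}$, $g_{12}=0$, $g_{33}=1$, yields
\begin{equation*}
g_{11}(\widehat{u}(0))\big(((c')^1)^2+((c')^2)^2\big)+((c')^3)^2=0.
\end{equation*}
The reflection formulas in Lemma~\ref{lemm:reflection} force $c^1,c^2\in\RR$ and $c^3\in\sqrt{-1}\RR$. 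A direct computation of $T_{\widehat{u}(0)}$---which is block-diagonal with $T^{33}=1/2$ and a $2\times 2$ block of the form $(2I-2\sqrt{-1}\tau J(\widehat{u}(0))|_{12})^{-1}$, where $J^1_2(\widehat{u}(0))=\pm 1$ by the orthonormality clause (ii) of Section~\ref{subsec:function-spaces}---shows that $(c')^3=c^3/2$ and, invoking $|\tau|<1$, that $g_{11}(\widehat{u}(0))(((c')^1)^2+((c')^2)^2)$ is a strictly positive multiple of the real quantity $(c^1)^2+(c^2)^2$. Consequently $(c')^3=0$ would force $c^1=c^2=c^3=0$, contradicting the choice of $k$.

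The main obstacle I anticipate is the finite-order-vanishing and smooth-factorization step: one must pass from the pointwise Lipschitz equation $F_{\overline{z}}=O(|F|^2)$ to a genuinely smooth factorization $F=z^k\widehat{F}$ that respects the reflection extension across $\bT$. Any direct appeal to Hartman--Wintner or Bers--Vekua type representation theorems would need to be adapted to the vector-valued nonlinear setting and strengthened enough to yield smoothness---not merely continuity---of the quotient $\widehat{F}=F/z^k$ up to and across $\bT$.
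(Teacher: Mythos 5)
Your proposal identifies the right starting point (the reflected Lipschitz $F$ and the almost-CR system from Lemma~\ref{lemm:reflection}) and the right issues (ruling out infinite-order vanishing, producing the factorization $\widehat u_z = z^k\widehat F$, and extracting $\widehat F^3(0)\neq 0$), but the central factorization-and-smoothness step is where you have a real gap, and you are right to flag it. Your Taylor-matching argument is not adequate as written: the reflected $F$ is merely Lipschitz across $\bT$, so it has no Taylor expansion on the full disk, and even restricting to $\bB^+\cup\bT$, matching degree-$j$ terms in $F_{\bar z}=\Lambda\overline F F$ does not obviously propagate to a \emph{smooth} quotient $F/z^k$ up to and across $\bT$. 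The phrase ``iterating this argument on the quotient $F/z^k$, which solves an almost-CR equation of the same structure'' is not accurate either: after dividing by $z^k$ the right-hand side acquires a factor $\bar z^k$, so the equation changes form, and in any case the iteration would at best give finitely many derivatives at $0$, not smoothness on a neighborhood.

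The paper avoids all of this with the Vekua/similarity-principle trick, which is both shorter and sharper. Write the CR system as $F_{\bar z}=LF$ with $L^i_j=\Lambda^i_{kj}\overline F^k\in L^\infty(\bB)$ and $L$ smooth on $\bB^+\cup\bT$; solve $\Upsilon_{\bar z}=L$ with $\Upsilon\in W^{1,p}(\bB)$ for every $p<\infty$. Then $e^{-\Upsilon}F$ is distributionally holomorphic on $\bB$, hence smooth, and finite-order vanishing is immediate (else $F\equiv 0$ near $0$, forcing $\widehat u$ constant). Factoring $e^{-\Upsilon}F = z^k G$ with $G$ holomorphic and $G(0)\neq 0$ gives $\widehat F := T_{\widehat u}(e^{\Upsilon}G)$, continuous on $\bB^+\cup\bT$ with $\widehat F(0)\neq 0$. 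The genuinely delicate point --- smoothness of $\widehat F$ up to $\bT$ --- is then handled separately: one shows $e^{\Upsilon}G$ is smooth on $\bB^+\cup\bT$ by viewing its real and imaginary parts $(\widetilde p,\widetilde q)$ as solving a first-order div-curl system with the boundary conditions $\widetilde p^3|_{\bT}=0$, $\widetilde q^1|_{\bT}=\widetilde q^2|_{\bT}=0$ inherited from~\eqref{eq:pq-boundary}, and running a tangential difference-quotient argument. Your approach gives no mechanism for producing this boundary smoothness, and an appeal to Hartman--Wintner type representations would indeed, as you anticipate, need nontrivial adaptation. So the key missing idea is the similarity principle ($\Upsilon_{\bar z}=L$, holomorphy of $e^{-\Upsilon}F$), together with the div-curl regularity bootstrap for the quotient.

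For the claim $\widehat F^3(0)\neq 0$, your route through the reflection symmetry of the leading coefficient ($c^1,c^2\in\RR$, $c^3\in\sqrt{-1}\RR$) and an explicit block computation of $T_{\widehat u(0)}$ is more elaborate than the paper's. The paper simply observes that on $\bT$ one has $q^1=q^2=0$ (by~\eqref{eq:pq-boundary}) and $J^3_j\equiv 0$, so $|\widehat u^3_{x^2}|=|q^3|=|q|\geq \tfrac{1-|\tau|}{4}|\widehat u_{x^2}|$ by~\eqref{eq:J-close}, and combining with~\eqref{eq:conformal-hat} gives $|\widehat u_z^3|\geq c_\tau|\widehat u_z|$ on $\bT$; dividing by $z^k$ and letting $z\to 0$ along $\bT$ gives $|\widehat F^3(0)|\geq c_\tau|\widehat F(0)|>0$. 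Your weak-conformality computation also requires the Taylor expansion to already be in hand, which is the step you have not secured, so as written it is circular; the paper's estimate only uses pointwise quantities on $\bT$ and the already-established continuous extension $\widehat F$, so it closes cleanly.
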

\begin{proof}
Let $(\Lambda^i_{jk}) \subset L^{\infty}(\bB)$ be as given by Lemma~\ref{lemm:reflection}, and define $L^i_{j} = \Lambda^i_{kj}\overline{F}^k$, so that each $L^i_{j}$ is smooth on $\bB^+ \cup \bT$ and bounded on $\bB$. Then we may find $\Upsilon : \bB \to \CC^{3 \times 3}$ lying in $W^{1, p}(\bB)$ for all $1 < p < \infty$ so that $\Upsilon_{\overline{z}} = L$ on $\bB$. Also,~\eqref{eq:almost-CR-eq} becomes 
\begin{equation}\label{eq:F-L}
F_{\overline{z}}^i = L^{i}_j F^j,
\end{equation}
from which we see that $e^{-\Upsilon}F$ is holomorphic in the distributional sense, and hence smooth, on $\bB$. Moreover, by assumption it vanishes at the origin, and the order of vanishing must be finite for otherwise $F$, and hence $\widehat{u}_z$, would vanish on $\bB^+ \cup \bT$, making $\widehat{u}$ constant. Thus, we get some $k \in \NN$ and a holomorphic function $G:\bB \to \CC^3$ so that $G(0) \neq 0$ and that 
\begin{equation}\label{eq:uz-factorization}
e^{-\Upsilon(z)}F(z) = z^k G(z) \text{ for }z \in \bB.
\end{equation}
Consequently, if we let 
\[
\widehat{F} =  T_{\widehat{u}}(e^{\Upsilon}G),
\]
then $\widehat{F}$ is continuous on $\bB^+ \cup \bT$ and agrees with $z^{-k}\widehat{u}_z$ on $(\bB^+ \cup \bT) \setminus \{0\}$. Moreover, $\widehat{F}(0) \neq 0$ since $G(0) \neq 0$.

To show that $\widehat{F}^3 \neq 0$, note that by~\eqref{eq:pq-boundary} and the fact that $J^3_{j}(y) = 0$ for all $y \in V$, we have 
\[
|\widehat{u}^3_{x^2}| = |q^3 | = |q| \text{ on } \bT.
\]
On the other hand, by~\eqref{eq:J-close} and~\eqref{eq:conformal-hat}, we have on $\bT$ that
\[
|q| \geq \frac{1 - |\tau|}{4} |\widehat{u}_{x^2}|.
\]
Combining these two estimates and using~\eqref{eq:conformal-hat} again, we deduce the existence of a positive constant $c_\tau$ such that 
\[
|\widehat{u}_z^3| \geq c_\tau |\widehat{u}_z| \text{ on }\bT.
\] 
Dividing this by $z^{k}$ on $\bT \setminus \{0\}$ and letting $z$ tend to zero gives 
\[
|\widehat{F}^3(0)| \geq c_\tau |\widehat{F}(0)| > 0.
\]

It remains to prove that $\widehat{F}$ is smooth on $\bB^+ \cup \bT$, and for that it suffices to prove that $e^{\Upsilon}G$ is. To that end, we observe the following.
\vskip 1mm
\begin{enumerate}
\item[(i)] $e^{\Upsilon}G$ lies in $W^{1, p}(\bB)$ for all $1 < p < \infty$. 
\vskip 1mm
\item[(ii)] On $(\bB^+ \cup \bT) \setminus \{0\}$, $e^{\Upsilon}G$ agrees with $z^{-k}F$ and hence satisfies, by~\eqref{eq:F-L}, that 
\[
\big(e^{\Upsilon}G\big)_{\overline{z}} = L\cdot e^{\Upsilon}G.
\] 
In other words, splitting $e^{\Upsilon}G$ and $L$ into their real and imaginary parts as
\[
e^{\Upsilon}G = \widetilde{p} - \sqrt{-1}\widetilde{q},\ \ L = M + \sqrt{-1}N,
\]
we have that $M, N$ are smooth on $\bB^+ \cup \bT$, and that
\begin{equation}\label{eq:tilde-pq-PDE}
\left\{
\begin{array}{l}
\widetilde{p}_{x^1} + \widetilde{q}_{x^2} = 2(M\widetilde{p} + N\widetilde{q}),\\
\widetilde{p}_{x^2} - \widetilde{q}_{x^1} = 2(N\widetilde{p} - M\widetilde{q} ),
\end{array}
\right.
\text{ on }(\bB^+ \cup \bT)\setminus \{0\}.
\end{equation}
\vskip 1mm
\item[(iii)] By~\eqref{eq:pq-boundary}, on $\bT \setminus \{0\}$ there holds
\[
\widetilde{p}^3 = \frac{p^3}{x^k} = 0 \text{ and } \widetilde{q}^{i} = \frac{q^{i}}{x^k} = 0 \text{ for }i = 1, 2.
\]
\end{enumerate}
In addition, letting
\[
\cH_T = \{\xi \in W^{1, 2}(\bB^+; \RR^3)\ |\ \xi^{3}|_{\bT} = 0\},\ \ \cH_{\perp} = \{\eta \in W^{1, 2}(\bB^+; \RR^3)\ |\ \eta^{1}|_{\bT} = \eta^{2}|_{\bT} = 0\},
\]
we see by a simple approximation argument that
\[
\int_{\bB^+}\xi_{x^1} \cdot \eta_{x^2}  - \xi_{x^2} \cdot \eta_{x^1} = 0,
\]
whenever $\xi \in \cH_T$, $\eta \in \cH_\perp$ and at least one of the two has vanishing trace on $\partial \bB \cap \RR^2_+$. Combining the above observations, we see that $\widetilde{p} \in \cH_T$, $\widetilde{q} \in \cH_\perp$, and that
\[
\int_{\bB^+}\bangle{\nabla \widetilde{p}, \nabla \xi} + \bangle{\nabla\widetilde{q}, \nabla \eta} = 2\int_{\bB^+} (M\widetilde{p} + N\widetilde{q})(\xi_{x^1} + \eta_{x^2}) + (N\widetilde{p} - M\widetilde{q} )(\xi_{x^2} - \eta_{x^1}),
\]
for all $\xi \in \cH_T$ and $\eta \in \cH_{\perp}$ that vanish on $\partial \bB \cap \RR^2_+$. Since $M, N$ are smooth on $\bB^+ \cup \bT$, by difference quotient arguments in the $x^1$-direction applied to the above equation along with differentiation of~\eqref{eq:tilde-pq-PDE}, we conclude that $\widetilde{p}$ and $\widetilde{q}$, and hence $e^{\Upsilon}G$, are smooth on $\bB^+ \cup \bT$. 
\end{proof}

Proposition~\ref{prop:smooth-extension} has the following global consequence.
\begin{coro}\label{coro:branch}
Let $u:(\overline{\bB}, \partial \bB) \to (\RR^3, \Sigma)$ be a smooth, non-constant solution of
\begin{equation}\label{eq:pmc-bvp}
\left\{
\begin{array}{l}
\Delta u = f(u) u_{x^1} \times u_{x^2} \text{ on }\bB,\\
u_{r} + \tau X(u) \times u_\theta \perp T_u\Sigma \text{ on }\partial\bB,
\end{array}
\right.
\end{equation}
and denote by $\cS$ the set of zeros of $\nabla u$ on $\overline{\bB}$. Then we have the following.
\begin{enumerate}
\item[(a)] $\cS$ is a finite set. 
\vskip 1mm
\item[(b)] The vector bundle over $\overline{\bB} \setminus \cS$ whose fiber at $x$ is $\Span\{u_{x^1}(x), u_{x^2}(x)\}$ extends smoothly across $\cS$ to an oriented rank-two subbundle, denoted $E$, of $\overline{\bB} \times \RR^3$.
\vskip 1mm
\item[(c)] $E_x \neq T_{u(x)}\Sigma$ for all $x \in \partial \bB$.
\end{enumerate}
In particular, $u$ is a branched immersion in the sense defined below Remark~\ref{rmk:tau}.
\end{coro}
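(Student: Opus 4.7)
The plan is to establish, in a neighborhood of each point $x_0 \in \cS$, a local factorization of the form
\begin{equation*}
\widetilde{u}_z(z) = z^{k}\, h(z),
\end{equation*}
where $\widetilde{u}$ denotes either $u$ itself (if $x_0 \in \bB$) or $\Phi \circ u$ for a boundary chart $\Phi$ at $u(x_0) \in \Sigma$ (if $x_0 \in \partial \bB$), $z$ is a conformal coordinate on $\overline{\bB}$ centered at $x_0$, and $h$ is smooth with $h(0) \neq 0$. For boundary points of $\cS$ this is exactly Proposition~\ref{prop:smooth-extension}, applied after a rotation moving $x_0$ to $e_1$ and then pulling back by the conformal map $F$ from Section~\ref{subsec:function-spaces} to identify a neighborhood of $e_1$ in $\overline{\bB}$ with $\bB^+ \cup \bT$. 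For interior points, the same factorization comes from a simpler version of the argument: taking $F := u_z$, the PDE in~\eqref{eq:pmc-bvp} yields the $\overline{\partial}$-equation $F^i_{\overline{z}} = -\tfrac{i f(u)}{2}\,\epsilon_{ijk}F^j\overline{F^k}$, which is of the form treated in Lemma~\ref{lemm:almost-CR-complex} with smooth coefficients on a full disk about $x_0$; the similarity principle portion of Proposition~\ref{prop:smooth-extension}'s proof then applies verbatim, without any need for the reflection step in Lemma~\ref{lemm:reflection}.

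Granting the factorization, part~(a) is immediate, since each point of $\cS$ is then isolated and $\overline{\bB}$ is compact. For~(b), the identity
\begin{equation*}
\Span\{\re(z^k h(z)),\ \im(z^k h(z))\} = \Span\{\re h(z),\ \im h(z)\} \qquad (z \neq 0)
\end{equation*}
(valid because multiplication by $z^k$ corresponds to an invertible $2 \times 2$ real matrix for $z \neq 0$) shows that the tangent plane extends continuously to $z = 0$ as $\Span\{\re h(0), \im h(0)\}$. Weak conformality read in the chart gives $g_{ij}(\widetilde{u}(0))\, h^i(0) h^j(0) = 0$, so $\re h(0)$ and $\im h(0)$ are $g$-orthogonal of equal $g$-length; combined with $h(0) \neq 0$ and positive-definiteness of $g$, this forces them to be linearly independent. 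Transporting by $d\Psi$ at boundary branch points (and working directly in $\RR^3$ at interior ones) produces the smooth oriented rank-two bundle $E$, and the smoothness of $h$ upgrades continuous extension to smooth extension.

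For~(c), first consider $x \in \partial\bB \setminus \cS$. Since $u(\partial \bB) \subset \Sigma$, we have $u_\theta(x) \in T_{u(x)}\Sigma$; the boundary condition of~\eqref{eq:pmc-bvp} forces the $T_{u(x)}\Sigma$-component of $u_r(x)$ to equal $-\tau X(u)\times u_\theta$, so (using $|X(u)| = 1$ and $X(u) \perp u_\theta$) we get $|u_r^T|^2 = \tau^2|u_\theta(x)|^2$. Weak conformality at $r = 1$ gives $|u_r| = |u_\theta|$ and $u_r \cdot u_\theta = 0$, hence
\begin{equation*}
\big(u_r(x) \cdot X(u(x))\big)^2 = |u_r(x)|^2 - |u_r^T(x)|^2 = (1 - \tau^2)\,|u_\theta(x)|^2 > 0,
\end{equation*}
since $|\tau| < 1$ and $u_\theta(x) \neq 0$ (else weak conformality gives $u_r(x) = 0$, contradicting $x \notin \cS$). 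Therefore $u_r(x) \notin T_{u(x)}\Sigma$ and $E_x \neq T_{u(x)}\Sigma$. For $x \in \partial\bB \cap \cS$, the factorization places $E_x$, read in the chart $\Phi$, inside $\Span\{\re \widehat{F}(0),\ \im \widehat{F}(0)\}$, while $T_{u(x)}\Sigma$ corresponds to $\{y^3 = 0\}$; the key non-vanishing $\widehat{F}^3(0) \neq 0$ asserted by Proposition~\ref{prop:smooth-extension} then produces a vector in $E_x$ with nonzero normal component and concludes the proof. The substantive work is thus all contained in Proposition~\ref{prop:smooth-extension} and its straightforward interior analog; the present corollary is bookkeeping around those inputs.
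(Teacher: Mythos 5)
Your proposal is correct and follows essentially the same route as the paper: both reduce (a) and (b) to Proposition~\ref{prop:smooth-extension} together with its classical interior analog, and both derive (c) on $\partial\bB \setminus \cS$ from the boundary condition plus conformality (your computation $(u_r \cdot X(u))^2 = (1-\tau^2)|u_\theta|^2$ is an equivalent, slightly sharper version of the paper's bound $|u_r \cdot X(u)| \geq (1-|\tau|)|u_r|$) and at boundary branch points from $\widehat{F}^3(0) \neq 0$.
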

\begin{proof}
By Lemma~\ref{lemm:weakly-conformal}, $u$ is weakly conformal. Statements (a) and (b) are then immediate consequences of Proposition~\ref{prop:smooth-extension} and its interior version, the latter being classical. To prove statement (c), note by~\eqref{eq:pmc-bvp} that on $\partial \bB$ and away from $\cS$, 
\[
\big| \bangle{u_r, X(u)} \big| = \big| \bangle{u_r + \tau X(u) \times u_{\theta}, X(u)} \big| =  \big| u_r + \tau X(u) \times u_{\theta} \big| \geq (1 - |\tau|) |u_r| > 0,
\]
which implies that $E \neq T_u\Sigma$. That the same holds at the branch points follows from Proposition~\ref{prop:smooth-extension}, specifically the conclusion that $\widehat{F}^3(0) \neq 0$.
\end{proof}
\section{Index comparison}\label{sec:index-comparison}
In this section we establish index comparison results which are needed at the end of Section~\ref{sec:proof} to establish the last claim of Theorem~\ref{thm:main-1}. The bilinear forms of interest are introduced in Section~\ref{subsec:index-notation}, and the two sections that follow compare their indices.
\subsection{Notation}\label{subsec:index-notation}
Let $u: (\overline{\bB}, \partial \bB) \to (\RR^3, \Sigma)$ be a smooth, non-constant solution of~\eqref{eq:H-cmc-bvp}. Then $u$ is a weakly conformal by Lemma~\ref{lemm:weakly-conformal} and a branched immersion by Corollary~\ref{coro:branch}. Below we define $\lambda$ by the relation
\[
|\nabla u|^2 = 2\lambda^2,
\]
and let $\cS$ denote the set of branch points of $u$ and $E$ the oriented rank-two subbundle of $\overline{\bB} \times \RR^3$ given by Corollary~\ref{coro:branch}. The orthogonal complement of $E$ in $\overline{\bB} \times \RR^3$ is denoted by $E^\perp$. Next, recall that the induced metric and the orientation on $E$ turns it into a complex line bundle. Accordingly, its complexification, $E_{\CC}$, splits into $E^{1, 0} \oplus E^{0, 1}$, which away from $\cS$ are spanned over $\CC$ by $u_z$ and $u_{\overline{z}}$, respectively. We denote orthogonal projection onto $E^{1, 0}$ and $E^{0, 1}$ by $(\ \cdot \ )^{1, 0}$ and $(\ \cdot \ )^{0, 1}$. Similarly, orthogonal projection onto $E$ and $E^{\perp}$ are denoted $(\ \cdot \ )^T$ and $(\ \cdot\ )^\perp$. The usual directional derivative on $\RR^3$ induces connections $\nabla^T$ on $E$ and $\nabla^\perp$ on $E^\perp$. Note that $\nabla^T$ preserves both $E^{1, 0}$ and $E^{0, 1}$. Note also that we may choose a unit-length global section $\nu$ of $E^{\perp}$ that agrees with $\lambda^{-2}u_{x^1} \times u_{x^2}$ away from $\cS$. 

For all $x \in \partial \bB$, we see from Corollary~\ref{coro:branch}(c) that the orthogonal projection of $\nu(x)$ onto $T_{u(x)}\Sigma$ and that of $X(u(x))$ onto $E_x$ are both non-zero. Thus it makes sense to define 
\begin{equation}\label{eq:X-nu-tilde-def}
\widetilde{\nu} = \frac{\proj_{T_u\Sigma}(\nu)}{|\proj_{T_u\Sigma}(\nu)|},\ \ \widetilde{X} = \frac{\proj_{E}(X_u)}{|\proj_{E}(X_u)|},
\end{equation}
where again we write $X_y$ to mean $X(y)$. Next, still on $\partial \bB$, but away from $\cS$, we introduce the vector fields
\[
\bn = \lambda^{-1}u_r,\ \ \bt = \lambda^{-1}u_\theta.
\]
Then $\nu = \bn \times \bt$. Moreover, from the boundary condition in~\eqref{eq:H-cmc-bvp}, we have
\begin{equation}\label{eq:capillary-t-n}
\bn = -\tau X_u \times \bt + \sigma X_u,
\end{equation}
where $\sigma = \pm \sqrt{1 - \tau^2}$ and is constant on each connected component of $\partial \bB \setminus \cS$. Taking the cross product of~\eqref{eq:capillary-t-n} with $\bt$ on the right yields
\begin{equation}\label{eq:nu-t-n}
\nu = \tau X_u + \sigma X_u \times \bt,
\end{equation}
and from the two above identities we get
\[
X_u = \sigma \bn + \tau \nu.
\]
This together with~\eqref{eq:nu-t-n} implies that on $\partial \bB\setminus \cS$, the space $E \cap T_{u}\Sigma$ is spanned by $\bt$, and that
\begin{equation}\label{eq:tilde-t-n}
\widetilde{\nu} = \frac{\sigma}{|\sigma|} X_u \times \bt,\ \ \widetilde{X} = \frac{\sigma}{|\sigma|} \bn, \text{ on }\partial \bB \setminus \cS.
\end{equation}
Next we define the relevant bilinear forms. Recall from Section~\ref{subsec:second-variation} that
\[
\cV = \{\psi \in C^\infty(\overline{\bB}; \RR^3) \ |\ \psi(x) \in T_{u(x)}\Sigma, \text{ for all }x \in \partial \bB\}.
\]
In addition, we define
\[
\cE = \{\varphi \in \cV \ |\ \varphi(x) \in E_x \text{ for all }x \in \overline{\bB}\},
\]
and let $\cV_\cS$ and $\cE_\cS$ denote, respectively, those elements of $\cV$ and $\cE$ which are supported away from $\cS$. We then consider the following three bilinear forms:
\vskip 1mm
\begin{enumerate} 
\item
Given $\varphi \in \cV$, we define
\begin{equation}\label{eq:E-second-variation-H}
\begin{split}
(\delta^2 E_{H, \tau})_{u}(\varphi, \varphi) =\ & \int_{\bB} |\nabla \varphi|^2 + H \varphi \cdot \big( \varphi_{x^1} \times u_{x^2} + u_{x^1} \times \varphi_{x^2} \big)\ dx\\
& + \int_{\partial \bB} (u_r + \tau X_u \times u_\theta)\cdot A^\Sigma_u(\varphi, \varphi)\ d\theta + \tau \int_{\partial \bB} \varphi \cdot X_u \times \varphi_\theta\ d\theta.
\end{split}
\end{equation}
Note that this is just~\eqref{eq:second-variation-0} with the constant function $H$ in place of $f$.
\vskip 1mm
\item
For $\varphi \in \cV_{\cS}$, we define
\begin{equation}\label{eq:A-second-variation}
\begin{split}
(\delta^2 A_{H, \tau})_u(\varphi, \varphi) =\ & \int_{\bB} \Big[\big(\sum_{i = 1}^2|\varphi^{\perp}_{x^i}|^2\big) + \lambda^{-2}\big(\sum_{i = 1}^2\varphi_{x^i} \cdot u_{x^i}\big)^2 - \lambda^{-2}\sum_{i, j = 1}^2 (\varphi_{x^i}\cdot u_{x^j})(\varphi_{x^j}\cdot u_{x^i})\Big] dx\\
& + H\int_{\bB} \varphi \cdot \big( \varphi_{x^1} \times u_{x^2} + u_{x^1} \times \varphi_{x^2} \big)dx\\
& + \int_{\partial \bB} (u_r + \tau X_u \times u_\theta)\cdot A^\Sigma_u(\varphi, \varphi)  \  d\theta + \tau \int_{\partial \bB} \varphi \cdot X_u \times \varphi_\theta\  d\theta.
\end{split}
\end{equation}
This is essentially the second variation formula for the functional obtained from $E_{H, \tau}$ by replacing the term $\int_{\bB}\frac{|\nabla u|^2}{2} dx$ in~\eqref{eq:D-tau-definition} with the mapping area of $u$. We stated~\eqref{eq:A-second-variation} in the above form so that the formula is more easily compared with~\eqref{eq:E-second-variation-H} (see Lemma~\ref{lemm:index-comparison} below). For~\eqref{eq:A-second-variation} stated in a way that is perhaps more familiar, see~\eqref{eq:A-second-var-re} in Appendix~\ref{sec:second-variation}.
\vskip 1mm
\item For $f \in C^\infty(\overline{\bB})$, we define
\begin{equation}\label{eq:I-bilinear-form}
\begin{split}
(I_{H, \tau})_u(f, f) =\ & \int_{\bB} \big( |\nabla f|^2 -f^2 |\nabla \nu|^2 \big)\ dx\\
& + \int_{\partial \bB} \big[ \frac{\tau}{\sqrt{1- \tau^2}} (\nu_r \cdot \widetilde{X}) + \frac{u_r \cdot X_u}{1 - \tau^2}h^\Sigma(\widetilde{\nu}, \widetilde{\nu}) \big]  f^2  d\theta,
 \end{split}
\end{equation}
where our convention for the scalar-valued second fundamental form $h^{\Sigma}$ is 
\[
h_y^\Sigma(v, w) = \nabla_{v}w \cdot X_y = -w \cdot (\nabla X)_{y}(v) \text{ for }v, w \in T_y\Sigma.
\]
For the boundary terms in~\eqref{eq:I-bilinear-form} rewritten in a more standard form when $f$ is supported away from $\cS$, see~\eqref{eq:I-second-var-re} in Appendix~\ref{sec:second-variation}.
\end{enumerate}
To finish the list of notation, recall that the index of $(\delta^2 E_{H,\tau})_u$ on $\cV$ is denoted $\Ind^E_{H, \tau}(u)$. In addition, we denote the index of $(\delta^2 A_{H, \tau})_u$ on $\cV_{\cS}$ by $\Ind^A_{H, \tau}(u)$ and that of $(I_{H, \tau})_u$ on $C^\infty(\overline{\bB})$ by $\Ind^I_{H, \tau}(u)$. Again, by index we mean the supremum of $\dim\cL$ over all finite-dimensional subspaces $\cL$ of its domain on which the bilinear form restricts to be negative definite.
\begin{rmk}\label{rmk:index-statement}
The index in the statement of Theorem~\ref{thm:main-1} refers to $\Ind^I_{H, \tau}(u)$.
\end{rmk}
\subsection{Index comparison I}\label{subsec:index-comparison-I}
In this section we compare $\Ind^E_{H, \tau}$ with $\Ind^A_{H, \tau}$, the main result being Proposition~\ref{prop:A-E-index}, which rests upon the formula~\eqref{eq:index-comparison} established in Lemma~\ref{lemm:index-comparison} for the difference between $(\delta^2 E_{H, \tau})_u$ and $(\delta^2 A_{H, \tau})_u$. The identity~\eqref{eq:index-comparison} and its application to index comparison is inspired by the work of Ejiri-Micallef~\cite{Ejiri-Micallef2008} on the energy index and area index of closed minimal surfaces. Similar results for free boundary minimal surfaces and for closed CMC surfaces can be found respectively in~\cite{Lima2022} and~\cite{cz-cmc}. 

We begin with a preliminary lemma that allows us to reduce the proof of Lemma~\ref{lemm:index-comparison} to a simple pointwise calculation.
\begin{lemm}\label{lemm:area-kernel}
For all $\psi, \varphi \in C^\infty(\overline{\bB}\setminus \cS; \RR^3)$ such that $\varphi(x) \in E_x$ for all $x$, we have 
\begin{equation}\label{eq:area-kernel-formula}
\begin{split}
(\delta^2 A_{H, \tau})_{u}(\varphi, \psi) = \ & \int_{\partial \bB}(\varphi \cdot u_r) \Div_{E}\psi d\theta -\frac{H}{2}\int_{\partial \bB} \varphi \times \psi \cdot u_\theta d\theta\\
&  - \frac{\tau}{2}\int_{\partial\bB} (\nabla X)_u(u_\theta) \cdot \varphi \times \psi d\theta\\
& + \tau\int_{\partial \bB}(\varphi \cdot \bn)(X_u \times \bt \cdot \psi_r - X_u \times \bn \cdot \psi_\theta) d\theta.
\end{split}
\end{equation}
In particular, when $\psi \in \cV_{\cS}$ and $\varphi \in \cE_{\cS}$, we have $(\delta^2 A_{H, \tau})_{u}(\varphi, \psi) = 0$.
\end{lemm}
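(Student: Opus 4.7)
The strategy is to polarize~\eqref{eq:A-second-variation} and exploit the tangentiality of $\varphi$ to recast the bulk integrand as a divergence modulo terms that vanish by the Euler-Lagrange equation satisfied by $u$. Since $\varphi \in \cE_{\cS}$ is a section of $E$ supported away from the branch set, and the restriction of $du$ to each $T_x\bB$ is an isomorphism onto $E_x$ for $x \notin \cS$, I can write $\varphi = du(\xi)$ for a smooth vector field $\xi$ on $\overline{\bB}$ supported away from $\cS$. Substituting this -- so that $\varphi_{x^i} = \xi^j_{x^i} u_{x^j} + \xi^j u_{x^i x^j}$ -- into the polarized form of~\eqref{eq:A-second-variation} expresses every bulk term in the conformal frame $\{u_{x^1}/\lambda, u_{x^2}/\lambda, \nu\}$ in a way that is manifestly tensorial in $\xi$.

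The conceptual input is the reparametrization-invariance of $A_{H, \tau}$: tangential variations of the form $du(\xi)$ correspond infinitesimally to diffeomorphisms of the image, which preserve the mapping area, the enclosed $H$-volume, and the wetted area on $\Sigma$. Accordingly, the polarized bulk integrand should combine into a total divergence once the Euler-Lagrange equation is invoked to eliminate the curvature contributions; at the critical point $u$ this identity becomes exact, and Stokes' theorem converts the bulk integral into boundary terms. Regrouping using $u_r = \lambda \bn$, $u_\theta = \lambda \bt$, and the capillary relation~\eqref{eq:capillary-t-n} should then produce the four terms on the right-hand side of~\eqref{eq:area-kernel-formula}.

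For the \emph{in particular} assertion, I additionally use $\psi \in T_u\Sigma$ (from $\psi \in \cV_{\cS}$) and $\varphi \in T_u\Sigma$ (from $\cE_{\cS} \subset \cV_{\cS}$) on $\partial\bB$. The latter forces $\varphi \in E \cap T_u\Sigma = \Span\{\bt\}$, so $\varphi = f\bt$ for some scalar function $f$. By $\bt \perp \bn$, $\varphi \cdot u_r = f\lambda(\bt \cdot \bn) = 0$ and $\varphi \cdot \bn = 0$, killing the first and fourth integrals. For the remaining two, $\bt \times \psi$ is parallel to $X_u$ whenever $\psi, \bt \in T_u\Sigma$, as an elementary cross-product computation in the orthonormal frame $\{\bt,\, X_u \times \bt,\, X_u\}$ shows; meanwhile $(\nabla X)_u(u_\theta) = \lambda(\nabla X)_u(\bt)$ lies in $T_u\Sigma$ because differentiating $|X|^2 = 1$ gives $(\nabla X)_u(\bt) \perp X_u$. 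Hence $\varphi \times \psi \cdot u_\theta = f\lambda(\bt \times \psi) \cdot \bt = 0$ and $(\nabla X)_u(u_\theta) \cdot (\varphi \times \psi) = 0$ as a pairing of a $T_u\Sigma$-vector with one parallel to $X_u$.

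The main obstacle will be verifying the divergence identity in the second paragraph -- a direct but tedious computation in the conformal frame whose geometric meaning is reparametrization invariance. The complication is that $\xi$ need not be tangent to $\partial \bB$ in the general statement, so the flow of $\xi$ does not preserve $\partial \bB$ and the invariance argument does not trivialize the boundary contribution; equation~\eqref{eq:area-kernel-formula} records precisely what survives as a boundary integral after the bulk has been integrated by parts.
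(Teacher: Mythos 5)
Your proposal is conceptually sound and the \emph{in particular} argument is correct and matches the paper's own reasoning (namely that on $\partial\bB$ one has $\varphi \in E \cap T_u\Sigma = \Span\{\bt\}$, forcing $\varphi \cdot u_r = 0$, $\varphi \cdot \bn = 0$, and $\varphi \times \psi \parallel X_u \perp T_u\Sigma \ni (\nabla X)_u(u_\theta),\ u_\theta$). However, for the main formula~\eqref{eq:area-kernel-formula} you only offer a plan: the ``divergence identity'' whose existence you assert is never written down, and you acknowledge this as the main obstacle. As it stands this is a gap, not a proof.

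The route you propose --- substitute $\varphi = du(\xi)$, invoke reparametrization invariance, and then recognize a total divergence --- is a different packaging of the same computational content the paper carries out directly. The paper polarizes~\eqref{eq:A-second-variation}, integrates the $(\Div_E\varphi)(\Div_E\psi)$ term by parts against $\Delta u = H\lambda^2\nu$, applies the commutator and cross-product identities of Lemma~\ref{lemm:area-preparation} (especially~\eqref{eq:area-prep-1}--\eqref{eq:area-prep-4}), integrates by parts a second time, and then expands the boundary terms with the aid of~\eqref{eq:V-E-boundary} and~\eqref{eq:capillary-t-n}. Working instead in $\xi$-variables is legitimate away from $\cS$, but it introduces the second fundamental form $u_{x^ix^j}$ through $\varphi_{x^i} = \xi^j_{x^i}u_{x^j} + \xi^j u_{x^ix^j}$, and one still needs the equation $\Delta u = H\lambda^2 \nu$ to kill the normal curvature contributions and the identities of Lemma~\ref{lemm:area-preparation} (or their $\xi$-variable analogues) to reorganize the tangential ones. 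So the reparametrization heuristic correctly predicts \emph{that} the bulk should reduce to a boundary term, but it does not produce the specific coefficients $(\varphi\cdot u_r)\Div_E\psi$, $-\tfrac{H}{2}\varphi\times\psi\cdot u_\theta$, $-\tfrac{\tau}{2}(\nabla X)_u(u_\theta)\cdot\varphi\times\psi$, and $\tau(\varphi\cdot\bn)(\cdots)$ without doing the work. You yourself flag the key wrinkle: $\xi$ need not be tangent to $\partial\bB$, so the flow argument is genuinely broken at the boundary and the boundary integrand has to be extracted by hand. Moreover, the $\tau$-dependent boundary terms in~\eqref{eq:A-second-variation} (the $A^\Sigma_u$ term and $\tau\int_{\partial\bB}\varphi\cdot X_u\times\varphi_\theta$) are not touched by bulk reparametrization at all, and combining them with the boundary terms emerging from the bulk integration by parts requires the capillary relation~\eqref{eq:capillary-t-n}, the decomposition $X_u = \sigma\bn + \tau\nu$, and a further integration by parts of $\tau\int X_u\cdot(\varphi_\theta\times\psi + \varphi\times\psi_\theta)$ along $\partial\bB$ to produce the $-\tfrac{\tau}{2}\int(\nabla X)_u(u_\theta)\cdot\varphi\times\psi$ term. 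None of this is in your outline. To turn this into a proof you would have to carry out the $\xi$-variable computation through to the boundary and verify it reproduces exactly the right-hand side of~\eqref{eq:area-kernel-formula}; I would recommend instead following the paper's direct polarization and integration-by-parts strategy, which avoids the change of variables entirely.
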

\begin{proof}
This is a standard result. For a proof in our notation, see Appendix~\ref{sec:second-variation}.
\end{proof}
\begin{lemm}\label{lemm:index-comparison}
Let $u: (\overline{\bB}, \partial \bB) \to (\RR^3, \Sigma)$ be a smooth, non-constant solution of~\eqref{eq:H-cmc-bvp}. For all $\varphi \in \cE$ and $\psi \in \cV_{\cS}$, we have 
\begin{equation}\label{eq:index-comparison}
(\delta^2 A_{H, \tau})_u (\psi, \psi) = (\delta^2 E_{H, \tau})_u(\psi + \varphi, \psi + \varphi) - 8\int_{\bB}|\nabla_z^T \varphi^{0, 1} + (\nabla_z\psi)^{0, 1}|^2\ dx.
\end{equation}
Note that since $\nabla^T$ preserves $E^{1, 0}$ and $E^{0, 1}$, we have $\nabla_{z}^T(\varphi^{0, 1}) = (\nabla_{z}^T\varphi)^{0, 1}$, and hence no ambiguity arises from writing $\nabla_z^T \varphi^{0, 1}$ in~\eqref{eq:index-comparison}.
\end{lemm}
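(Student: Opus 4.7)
The plan is to prove the identity by reducing it to (a) a pointwise algebraic comparison of the integrands of~\eqref{eq:E-second-variation-H} and~\eqref{eq:A-second-variation}, and (b) using Lemma~\ref{lemm:area-kernel} to kill the cross and quadratic terms in $\varphi$ on the right-hand side. Since the $H$-volume and boundary terms in~\eqref{eq:E-second-variation-H} and~\eqref{eq:A-second-variation} coincide, the first step amounts to checking, at every point of $\bB\setminus\cS$ and for any smooth $\chi$, the pointwise identity
\[
|\nabla\chi|^2 - \Bigl[\textstyle\sum_i|\chi^\perp_{x^i}|^2 + \lambda^{-2}\bigl(\sum_i \chi_{x^i}\cdot u_{x^i}\bigr)^2 - \lambda^{-2}\sum_{ij}(\chi_{x^i}\cdot u_{x^j})(\chi_{x^j}\cdot u_{x^i})\Bigr] = 8|(\nabla_z\chi)^{0,1}|^2.
\]
To verify this, I would work in isothermal coordinates with the orthonormal frame $e_1 = \lambda^{-1}u_{x^1}$, $e_2 = \lambda^{-1}u_{x^2}$ of $E$, so that $u_z = \tfrac{\lambda}{2}(e_1 - ie_2)$ and $\bangle{u_{\overline z}, u_{\overline z}}_H = \lambda^2/2$ in the Hermitian inner product on $\CC^3$. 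Writing $a_{ij} = \chi_{x^i}\cdot u_{x^j} = \lambda\,\chi_{x^i}\cdot e_j$, the orthogonal projection onto $E^{0,1} = \CC u_{\overline z}$ gives $(\nabla_z\chi)^{0,1} = \frac{2(\chi_z\cdot u_z)}{\lambda^2}u_{\overline z}$, whence
\[
8|(\nabla_z\chi)^{0,1}|^2 = \frac{16|\chi_z\cdot u_z|^2}{\lambda^2} = \frac{(a_{11}-a_{22})^2 + (a_{12}+a_{21})^2}{\lambda^2} = \frac{1}{\lambda^2}\sum_{ij}a_{ij}^2 - \frac{2}{\lambda^2}\det(a_{ij}).
\]
On the other side, $|\nabla\chi|^2 = \sum_i|(\chi_{x^i})^T|^2 + \sum_i|(\chi_{x^i})^\perp|^2$ together with $\sum_i|(\chi_{x^i})^T|^2 = \lambda^{-2}\sum_{ij}a_{ij}^2$ and the elementary identity $(\sum_i a_{ii})^2 - \sum_{ij}a_{ij}a_{ji} = 2\det(a_{ij})$ matches the two sides exactly.

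Integrating and polarizing this pointwise identity then yields
\[
(\delta^2 E_{H,\tau})_u(\chi_1,\chi_2) - (\delta^2 A_{H,\tau})_u(\chi_1,\chi_2) = 8\,\re\!\int_\bB (\nabla_z\chi_1)^{0,1}\cdot\overline{(\nabla_z\chi_2)^{0,1}}\,dx,
\]
which remains valid when the $\chi_i$ do not vanish at $\cS$ because, by Proposition~\ref{prop:smooth-extension}, $u_{x^j}$ and $\lambda$ vanish to the same order at branch points, so $\lambda^{-2}(\chi_{x^i}\cdot u_{x^j})^2$ stays bounded. Applying this with $\chi_1 = \chi_2 = \phi := \psi + \varphi$ gives
\[
(\delta^2 E_{H,\tau})_u(\phi,\phi) = (\delta^2 A_{H,\tau})_u(\phi,\phi) + 8\int_\bB|(\nabla_z\phi)^{0,1}|^2\,dx,
\]
after which I would expand $(\delta^2 A_{H,\tau})_u(\phi,\phi) = (\delta^2 A_{H,\tau})_u(\psi,\psi) + 2(\delta^2 A_{H,\tau})_u(\psi,\varphi) + (\delta^2 A_{H,\tau})_u(\varphi,\varphi)$ and use Lemma~\ref{lemm:area-kernel} to show that the last two summands vanish. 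Each of the four boundary integrands in~\eqref{eq:area-kernel-formula} vanishes pointwise on $\partial\bB\setminus\cS$: since $\varphi\in\cE$ lies in $T_u\Sigma\cap E = \Span\{\bt\}$ (by~\eqref{eq:capillary-t-n}, as $\sigma\neq 0$ for $\tau\in(-1,1)$), we have $\varphi = (\varphi\cdot\bt)\bt$, so $\varphi\cdot\bn = 0$ kills the first and fourth terms, $\varphi\times u_\theta = 0$ kills the $H$-term, and $\varphi\times\psi$ becomes a multiple of $X_u$ by~\eqref{eq:tilde-t-n} while $(\nabla X)_u(u_\theta)\perp X_u$ (since $|X|=1$ near $\Sigma$) kills the third term.

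Combining these, $(\delta^2 A_{H,\tau})_u(\phi,\phi) = (\delta^2 A_{H,\tau})_u(\psi,\psi)$; and since $\nabla^T$ preserves the splitting $E_\CC = E^{1,0}\oplus E^{0,1}$ and $\varphi$ is a section of $E$, the $E^{0,1}$-projection of $\partial_z\varphi$ equals $\nabla_z^T\varphi^{0,1}$, so $(\nabla_z\phi)^{0,1} = \nabla_z^T\varphi^{0,1} + (\nabla_z\psi)^{0,1}$ and~\eqref{eq:index-comparison} falls out after rearranging. The main obstacle is the bookkeeping in the pointwise identity of the first step: although purely algebraic, one must interpret $\chi^\perp_{x^i}$ in~\eqref{eq:A-second-variation} as the $E^\perp$-projection $(\chi_{x^i})^\perp$ of the derivative (rather than the derivative of the projection) and track the reductions above with care. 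Once that is clear, the remainder is a routine application of Lemma~\ref{lemm:area-kernel} together with the capillary boundary geometry developed in Section~\ref{subsec:index-notation}.
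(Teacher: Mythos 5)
Your proposal is correct and follows essentially the same route as the paper: the heart of the matter is the pointwise algebraic identity $16\lambda^{-2}|\chi_z\cdot u_z|^2 = \sum_i(|\chi_{x^i}|^2 - |\chi_{x^i}^\perp|^2) - \lambda^{-2}(\sum_i\chi_{x^i}\cdot u_{x^i})^2 + \lambda^{-2}\sum_{ij}(\chi_{x^i}\cdot u_{x^j})(\chi_{x^j}\cdot u_{x^i})$, which you verify exactly as the paper does (this is the paper's equation~\eqref{eq:grad-relation}, and $8|(\nabla_z\chi)^{0,1}|^2 = 16\lambda^{-2}|\chi_z\cdot u_z|^2$ because $|u_{\bar z}|^2 = \lambda^2/2$), and then Lemma~\ref{lemm:area-kernel} is used to kill the $\varphi$-only and cross terms. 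The one organizational difference is that the paper first reduces to $\varphi\in\cE_{\cS}$ by a logarithmic cutoff, after which every integrand appearing in~\eqref{eq:area-kernel-formula} and in the integration by parts underlying its derivation is supported away from $\cS$, so no branch-point analysis is needed at all. You instead polarize, apply the resulting bilinear identity directly to $\phi = \psi+\varphi$ with $\varphi\in\cE$ (not a priori supported away from $\cS$), and then invoke Lemma~\ref{lemm:area-kernel} to discard $(\delta^2A_{H,\tau})_u(\psi,\varphi)$ and $(\delta^2A_{H,\tau})_u(\varphi,\varphi)$. For the first of these, $\psi\in\cV_{\cS}$ keeps the singular factor $\Div_E\psi$ away from $\cS$, so the formula is clearly applicable. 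For $(\delta^2A_{H,\tau})_u(\varphi,\varphi)$, though, $\Div_E\varphi$ is of order $-k$ at a branch point of multiplicity $k$, and one must check that the integration-by-parts argument in the proof of Lemma~\ref{lemm:area-kernel} does not pick up contributions from $\cS$. This does hold — the bounded-order cancellations you observe via Proposition~\ref{prop:smooth-extension} make all the relevant puncture terms vanish — but it is an extra verification that the paper's cutoff reduction renders unnecessary, and you do not carry it out explicitly. So: correct, same key lemma and same key identity, with a slightly leaner presentation at the cost of a branch-point check the paper sidesteps. Your treatment of the boundary terms (using that $\varphi\in\cE$ is proportional to $\bt$ on $\partial\bB$, so $\varphi\cdot\bn = 0$ and $\varphi\times u_\theta = 0$) and the observation $(\nabla_z\phi)^{0,1} = \nabla_z^T\varphi^{0,1} + (\nabla_z\psi)^{0,1}$ are both exactly right.
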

\begin{proof}
By an approximation argument involving logarithmic cutoff functions, we need only prove~\eqref{eq:index-comparison} when $\varphi \in \cE_{\cS}$, in which case, by Lemma~\ref{lemm:area-kernel}, the left hand side of~\eqref{eq:index-comparison} does not change if we replace $\psi$ by $\psi + \varphi$. Also, since $|u_z|^2 = \frac{\lambda^2}{2}$, we have
\[
|\nabla_z^T \varphi^{0, 1} + (\nabla_z\psi)^{0, 1}|^2 = 2\lambda^{-2}|(\psi_z + \varphi_z) \cdot u_z|^2.
\]
To sum up, we have reduced our task to proving that 
\[
(\delta^2 E_{H, \tau})_u(\varphi, \varphi) = (\delta^2 A_{H, \tau})_u (\varphi, \varphi) + 16\int_{\bB}\lambda^{-2} |\varphi_z \cdot u_z|^2 dx, \text{ for all }\varphi \in \cV_{\cS}.
\]
Inspection of~\eqref{eq:E-second-variation-H} and~\eqref{eq:A-second-variation} shows that to prove this, it suffices to establish
\begin{equation}\label{eq:grad-relation}
\begin{split}
16\lambda^{-2}|\varphi_z \cdot u_z|^2
=\ &  \sum_{i = 1}^2 \big(|\varphi_{x^i}|^2 -|\varphi^{\perp}_{x^i}|^2\big)\\
&- \lambda^{-2}\sum_{i, j = 1}^2 (\varphi_{x^i} \cdot u_{x^i})(\varphi_{x^j} \cdot u_{x^j}) + \lambda^{-2}\sum_{i, j = 1}^2 (\varphi_{x^i}\cdot u_{x^j})(\varphi_{x^j}\cdot u_{x^i}).
\end{split}
\end{equation}
To that end we note that
\[
|\varphi_{x^i}|^2 - |\varphi_{x^i}^\perp|^2 = \lambda^{-2}\sum_{j = 1}^2 (\varphi_{x^i} \cdot u_{x^j})^2.
\]
Using this to replace the first term on the right hand side of~\eqref{eq:grad-relation}, we end up with three summations over $i, j$. Splitting each into terms with $i = j$ and those with $i \neq j$, we get that the right hand side of~\eqref{eq:grad-relation} is equal to
\[
\begin{split}
&\sum_{i}\lambda^{-2}(\varphi_{x^i} \cdot u_{x^i})^2 + \sum_{i \neq j} \lambda^{-2}(\varphi_{x^i}\cdot u_{x^j})^2\\
& - \sum_{i \neq j}\lambda^{-2} (\varphi_{x^i} \cdot u_{x^i})(\varphi_{x^j}\cdot u_{x^j}) + \sum_{i \neq j} \lambda^{-2}(\varphi_{x^i} \cdot u_{x^j})(\varphi_{x^j} \cdot u_{x^i})\\
=\ & \lambda^{-2}(\varphi_{x^1} \cdot u_{x^1} - \varphi_{x^2} \cdot u_{x^2})^2 + \lambda^{-2}(\varphi_{x^1} \cdot u_{x^2} + \varphi_{x^2} \cdot u_{x^1})^2,
\end{split}
\]
which we see is equal to the left hand side of~\eqref{eq:grad-relation} upon noticing that 
\[
\begin{split}
4(\varphi_z \cdot u_z) = (\varphi_{x^1} \cdot u_{x^1} - \varphi_{x^2} \cdot u_{x^2}) - \sqrt{-1}(\varphi_{x^1} \cdot u_{x^2} + \varphi_{x^2} \cdot u_{x^1}).
\end{split}
\]
\end{proof}
\begin{prop}\label{prop:A-E-index}
Let $u: (\overline{\bB}, \partial \bB) \to (\RR^3, \Sigma)$ be a smooth, non-constant solution of~\eqref{eq:H-cmc-bvp}. Then $\Ind^E_{H, \tau}(u) = \Ind^A_{H, \tau}(u)$.
\end{prop}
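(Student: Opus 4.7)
The plan is to prove the two inequalities $\Ind^E_{H,\tau}(u) \leq \Ind^A_{H,\tau}(u)$ and $\Ind^A_{H,\tau}(u) \leq \Ind^E_{H,\tau}(u)$ separately, in both cases using the identity of Lemma~\ref{lemm:index-comparison} as the main input, with a logarithmic cutoff employed to move between $\cV$ and $\cV_{\cS}$ (legitimate because the branch set $\cS$ is finite by Corollary~\ref{coro:branch}(a)). For the easy direction $\Ind^E \leq \Ind^A$, I would start from a $k$-dimensional subspace $\cM \subset \cV$ on which $(\delta^2 E_{H,\tau})_u$ is negative definite. Multiplying by logarithmic cutoffs $\chi_\rho$ supported away from $\cS$ yields a $k$-dimensional subspace $\chi_\rho \cM \subset \cV_{\cS}$ on which $(\delta^2 E_{H,\tau})_u$ is still negative definite for $\rho$ small, since the bilinear form extends continuously to $W^{1,2}(\bB;\RR^3)$. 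Applying Lemma~\ref{lemm:index-comparison} with $\varphi = 0$ then gives
\[
(\delta^2 A_{H,\tau})_u(\psi,\psi) = (\delta^2 E_{H,\tau})_u(\psi,\psi) - 8\int_{\bB} |(\nabla_z\psi)^{0,1}|^2\,dx \leq (\delta^2 E_{H,\tau})_u(\psi,\psi) < 0
\]
for every $\psi \in \chi_\rho\cM \setminus \{0\}$, witnessing $\Ind^A_{H,\tau}(u) \geq k$.

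For the reverse inequality, given a $k$-dimensional subspace $\cL \subset \cV_{\cS}$ on which $(\delta^2 A_{H,\tau})_u$ is negative definite, I would construct an injective linear map $T:\cL \to \cV$ by $T(\psi) := \psi + \varphi(\psi)$, where $\varphi(\psi) \in \cE$ solves the boundary value problem
\[
\nabla_z^T\varphi^{0,1} = -(\nabla_z\psi)^{0,1}\ \text{in }\bB,\qquad \varphi \in \RR\bt\ \text{on }\partial\bB,
\]
the boundary condition being equivalent to $\varphi \in T_u\Sigma$ on $\partial\bB$ since $E \cap T_u\Sigma = \RR\bt$ by Corollary~\ref{coro:branch}(c). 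Granted that $\varphi(\psi)$ exists and depends linearly on $\psi$, Lemma~\ref{lemm:index-comparison} immediately yields $(\delta^2 E_{H,\tau})_u(T\psi, T\psi) = (\delta^2 A_{H,\tau})_u(\psi,\psi) < 0$ for $\psi \in \cL \setminus \{0\}$. For injectivity, if $T\psi = 0$ then $\psi = -\varphi(\psi)$ lies in $\cV_{\cS} \cap \cE = \cE_{\cS}$, which by Lemma~\ref{lemm:area-kernel} is contained in the null space of $(\delta^2 A_{H,\tau})_u$ on $\cV_{\cS}$; combined with the negative definiteness of $(\delta^2 A_{H,\tau})_u|_\cL$, this forces $\psi = 0$.

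The hard part will be establishing existence and linear dependence of $\varphi(\psi)$. This is a first-order elliptic equation for a section of the complex line bundle $E^{0,1}$ over the disk with a totally real Riemann-Hilbert boundary condition prescribed by $\bt$. Since $\overline\bB$ is contractible, $E^{0,1}$ admits a global unitary trivialization in which $\nabla_z^T$ becomes $\partial_z$ plus a bounded zeroth-order perturbation. The plan is to prove, via Fredholm theory for Riemann-Hilbert problems on the disk, that the resulting operator is an isomorphism on appropriate Sobolev spaces: its Fredholm index is determined by the winding number (Maslov index) of $\bt$ along $\partial\bB$, and triviality of the cokernel should follow from a unique continuation argument in the spirit of Bers and Vekua, amounting to the observation that a holomorphic function on $\bB$ with boundary values on a totally real line must vanish, proved via Schwarz reflection. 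Linear dependence on $\psi$ is then automatic from the uniqueness of the solution.
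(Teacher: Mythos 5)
Your overall strategy matches the paper's exactly: logarithmic cutoffs to pass between $\cV$ and $\cV_{\cS}$, Lemma~\ref{lemm:index-comparison} with $\varphi = 0$ for the inequality $\Ind^E_{H,\tau}(u) \leq \Ind^A_{H,\tau}(u)$, and for the reverse inequality the construction of a correction $\varphi(\psi) \in \cE$ solving $\nabla^T_z \varphi^{0,1} = -(\nabla_z\psi)^{0,1}$ with a totally real boundary condition, followed by the injectivity argument via Lemma~\ref{lemm:area-kernel}. You also correctly identify that the boundary condition $\varphi \in E \cap T_u\Sigma$ on $\partial\bB$ (which is the line $\RR\bt$ --- though this comes from the computation in Section~\ref{subsec:index-notation} around~\eqref{eq:tilde-t-n}, not directly from Corollary~\ref{coro:branch}(c), which only gives transversality) is equivalent to the condition $\re(\sigma \cdot (X_u)^{1,0}) = 0$ appearing in the paper's proof. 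The one place where you depart from the paper is in attempting to re-derive the solvability of this Riemann--Hilbert problem, whereas the paper invokes it from~\cite[Proposition 7.1]{Cheng22}.

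That re-derivation, as sketched, has a genuine hole. You propose to get triviality of the cokernel from ``the observation that a holomorphic function on $\bB$ with boundary values on a totally real line must vanish, proved via Schwarz reflection.'' This observation is false as stated: any nonzero real constant is holomorphic on $\bB$ with boundary values in $\RR$, and in general the kernel and cokernel dimensions of a Riemann--Hilbert problem are controlled by the Maslov index of the boundary subbundle, which you mention but never compute. Whether the operator $\nabla^T_z$ with the boundary condition $\RR\bt$ is surjective (which is all you need --- you do not need the claimed isomorphism; surjectivity together with a chosen right inverse already gives linear dependence of $\varphi$ on $\psi$) hinges on that index being $\geq -1$, and verifying this requires tracking the winding of $\bt$ along $\partial\bB$, which is affected by the boundary branch points. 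Without that computation, the existence of $\varphi(\psi)$, and hence the inequality $\Ind^A_{H,\tau}(u) \leq \Ind^E_{H,\tau}(u)$, is not established --- this is precisely the step the paper does not reprove but outsources to the cited proposition.
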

\begin{proof}
By the logarithmic cutoff trick, $\Ind^E_{H, \tau}(u)$ equals the index of $(\delta^2 E_{H, \tau})_u$ restricted to $\cV_{\cS}$. Applying Lemma~\ref{lemm:index-comparison} with $\varphi = 0$, we see that if $(\delta^2 E_{H, \tau})_u$ is negative definite on some finite-dimensional subspace of $\cV_{\cS}$, then so is $(\delta^2 A_{H, \tau})_u$. Thus 
\[
\Ind^E_{H, \tau}(u) \leq \Ind^A_{H, \tau}(u).
\] 
Conversely, suppose $\cL$ is a finite-dimensional subspace of $\cV_{\cS}$ on which $(\delta^2 A_{H, \tau})_u$ is negative definite, and let $\psi_1, \cdots, \psi_d$ be a basis of $\cL$. By the proof of~\cite[Proposition 7.1]{Cheng22}, in which the exact same boundary value problem as the one below is considered, we obtain for each $i \in \{1, \cdots, d\}$  a smooth section $\sigma_i$ of $E^{0, 1}$ so that
\[
\left\{
\begin{array}{l}
\nabla_{z}^T\sigma_i = - (\nabla_z \psi_i)^{0, 1} \text{ on }\bB,\\
\re (\sigma_i \cdot (X_u)^{1, 0}) = 0 \text{ on }\partial \bB.
\end{array}
\right.
\]
Letting $\varphi_i = \overline{\sigma_i} + \sigma_i$, we see that $\varphi_i \in \cE$ and $\sigma_i = \varphi_i^{0, 1}$. Lemma~\ref{lemm:index-comparison} then implies that $(\delta^2 E_{H, \tau})_u$ is negative definite on $\cL' = \Span\{\varphi_i + \psi_i\}_{1 \leq i \leq d}$, so that 
\begin{equation}\label{eq:negative-for-E}
\dim \cL' \leq \Ind_{H, \tau}^E(u).
\end{equation}
To see that $\dim \cL' = \dim \cL$, suppose there exist constants $a_1, \cdots, a_d \in \RR$ such that $\sum_{i = 1}^d a_i(\varphi_i + \psi_i) = 0$. Then we have
\[
\psi: = \sum_{i = 1}^d a_i \psi_i = -\sum_{i = 1}^da_i \varphi_i \in \cL \cap \cE = \cL \cap \cE_{\cS},
\]
where the last equality follows from $\cL \subset \cV_{\cS}$. But then by Lemma~\ref{lemm:area-kernel},
\[
(\delta^2 A_{H, \tau})_u(\psi, \psi) = 0,
\]
which forces $\psi$, and hence each $a_i$, to vanish since $(\delta^2 A_{H, \tau})_u$ is negative definite on $\cL$. Thus, $\psi_1 +\varphi_1, \cdots, \psi_d + \varphi_d$ are linearly independent, that is, $\dim \cL' = \dim\cL$. Recalling~\eqref{eq:negative-for-E}, we conclude that
\[
\Ind^A_{H, \tau}(u) \leq \Ind^E_{H, \tau}(u),
\] 
since $\cL$ is arbitrary.
\end{proof}
\subsection{Index comparison II}\label{subsec:index-comparison-II}
In this section we compare $\Ind^A_{H, \tau}(u)$ with $\Ind^I_{H, \tau}(u)$. The next two lemmas are due respectively to Ros-Souam~\cite{RosSouam1997} and Ainouz-Souam~\cite{Ainouz-Souam2016}. 
\begin{lemm}\label{lemm:A-second-variation-function}
Given $\varphi \in \cV_{\cS}$, define $f = \varphi \cdot \nu$. Then
\[
(\delta^2 A_{H, \tau})_u(\varphi, \varphi)  = (I_{H, \tau})_u(f, f).
\]
\end{lemm}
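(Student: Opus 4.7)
My plan is to decompose $\varphi \in \cV_{\cS}$ into a ``surface-normal'' part $f\nu$ and an ``$E$-tangential'' correction $\eta := \varphi - f\nu$. By construction $\eta$ takes values in $E$ everywhere, though importantly $\eta$ need not be tangent to $\Sigma$ on $\partial\bB$, so $\eta \notin \cE_{\cS}$ and the ``in particular'' part of Lemma~\ref{lemm:area-kernel} does not directly apply. Still, the hypotheses of the general formula~\eqref{eq:area-kernel-formula} are met for the pair $(\eta, \cdot)$, so the cross and self terms involving $\eta$ reduce entirely to boundary integrals. Expanding bilinearly,
\[
(\delta^2 A_{H,\tau})_u(\varphi, \varphi) = (\delta^2 A_{H,\tau})_u(f\nu, f\nu) + 2(\delta^2 A_{H,\tau})_u(\eta, f\nu) + (\delta^2 A_{H,\tau})_u(\eta, \eta),
\]
so only the first term carries bulk information.

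Next, I would compute $(\delta^2 A_{H,\tau})_u(f\nu, f\nu)$ directly from the definition~\eqref{eq:A-second-variation}. Using $|\nu| \equiv 1$ to get $\nu\cdot\nu_{x^i}=0$, differentiating $\nu\cdot u_{x^j}\equiv 0$ to get $\nu_{x^i}\cdot u_{x^j} = -\nu\cdot u_{x^i x^j}$, invoking the conformality relations $|u_{x^1}|^2 = |u_{x^2}|^2 = \lambda^2$ and $u_{x^1}\cdot u_{x^2}=0$, and finally using the PMC equation $\Delta u = H u_{x^1}\times u_{x^2}$ to identify $\sum_i \nu\cdot u_{x^i x^i} = H\lambda^2$, the bulk integrand collapses to $|\nabla f|^2 - f^2|\nabla\nu|^2$ (the $H^2$ and cross-term contributions cancel against the mean-curvature-squared piece produced by $(\sum_i \varphi_{x^i}\cdot u_{x^i})^2$). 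This matches the bulk integrand of $(I_{H,\tau})_u(f,f)$. A boundary contribution $\int_{\partial\bB}(u_r+\tau X_u\times u_\theta)\cdot A^\Sigma_u(f\nu, f\nu) + \tau\int_{\partial\bB} f\nu\cdot X_u\times (f\nu)_\theta$ is also produced and held aside.

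The remaining task is to show that all boundary contributions together yield the boundary integrand of~\eqref{eq:I-bilinear-form}. Along $\partial\bB$ I would work in the adapted basis $\{\bt, \widetilde\nu\}$ of $T_u\Sigma$ together with the normal $X_u$, writing $\varphi = p\bt + q\widetilde\nu$ on $\partial\bB$. Since $\widetilde\nu\cdot\nu = \sqrt{1-\tau^2}$ (from $\nu = \tau X_u + \sigma X_u\times\bt$ and $\widetilde\nu = \tfrac{\sigma}{|\sigma|}X_u\times\bt$) and $\bt\cdot\nu=0$, we obtain $f|_{\partial\bB} = q\sqrt{1-\tau^2}$, and correspondingly $\eta|_{\partial\bB} = p\bt + q\bigl(\widetilde\nu - \sqrt{1-\tau^2}\,\nu\bigr)$. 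One then substitutes these expressions into each of the four boundary terms of~\eqref{eq:area-kernel-formula} for $(\delta^2 A_{H,\tau})_u(\eta,\eta)$ and $(\delta^2 A_{H,\tau})_u(\eta, f\nu)$, and combines with the boundary piece retained from Step 2. The key identities are: the capillary condition $\bn = -\tau X_u\times \bt + \sigma X_u$ and its dual $\nu = \tau X_u + \sigma X_u\times\bt$; the Weingarten relation $h^\Sigma(v,w) = -w\cdot (\nabla X)_u(v)$ for $v,w\in T_u\Sigma$; and $u_r = \lambda\bn$, $u_\theta = \lambda\bt$. Terms depending on $p$ (purely boundary-tangential) should organize into total $\theta$-derivatives which integrate to zero on each closed arc of $\supp\varphi\cap\partial\bB$, while $q^2$-dependent terms collect into
\[
\int_{\partial\bB}\Bigl[\tfrac{\tau}{\sqrt{1-\tau^2}}(\nu_r\cdot \widetilde X) + \tfrac{u_r\cdot X_u}{1-\tau^2} h^\Sigma(\widetilde\nu,\widetilde\nu)\Bigr] f^2\, d\theta,
\]
since $\sigma^2 = 1-\tau^2$ eliminates the sign ambiguity in $\sigma/|\sigma|$.

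The main obstacle is the bookkeeping of the numerous boundary contributions and the careful verification that the $p$-dependent pieces cancel or integrate away. In particular, one must track how the $H$-produced terms from the PMC equation interact with the $\tau$-term $\tau\int_{\partial\bB}\varphi\cdot X_u\times\varphi_\theta$ from the capillary functional, and how $\nu_r$ expands via the surface's second fundamental form — this is where the less standard first boundary summand $\tfrac{\tau}{\sqrt{1-\tau^2}}(\nu_r\cdot\widetilde X)$ arises, a term absent from the free-boundary ($\tau=0$) analog and reflecting the off-diagonal coupling between the surface's extrinsic geometry and the capillary angle. Once the matching is verified, setting $\varphi=\psi$ in the conclusion completes the identification $(\delta^2 A_{H,\tau})_u(\varphi,\varphi) = (I_{H,\tau})_u(f,f)$.
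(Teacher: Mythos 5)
Your proposal is correct and takes essentially the same route as the paper's proof in Appendix~\ref{sec:second-variation}: the decomposition $\varphi = f\nu + \eta$ with $\eta$ valued in $E$ is exactly the paper's $\varphi = f\nu + \xi$; the direct computation showing the pure $f\nu$-piece of the bulk collapses to $|\nabla f|^2 - f^2|\nabla\nu|^2$ (via $\nu\cdot\nu_{x^i}=0$, conformality, and $\Delta u = H\lambda^2\nu$) matches the paper's item (3); and reducing the $\eta$-involved pieces to boundary integrals via Lemma~\ref{lemm:area-kernel} matches items (1)--(2) (the paper invokes the intermediate identity~\eqref{eq:tangential-div-term-2} from Lemma~\ref{lemm:area-kernel}'s proof rather than the final formula~\eqref{eq:area-kernel-formula}, and pairs $(\xi,\varphi)$ and $(\xi,f\nu)$ instead of the literal bilinear split $(\eta,\eta)$, $(\eta,f\nu)$, but these bookkeeping choices are interchangeable). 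The one small slip is in your boundary analysis: you suggest the $p$-dependent terms "organize into total $\theta$-derivatives which integrate to zero," but in fact no integration by parts is needed at that stage. Writing $\varphi = p\,\bt + q\,\widetilde{\nu}$ on $\partial\bB$, the $p^2$-terms cancel algebraically when the $A^\Sigma$ boundary integral is combined with the simplified $(B2)-(B3)$ expression, and the mixed $pq$-terms vanish pointwise via the identity $\nu_r\cdot\bt = \lambda h^\Sigma(X_u\times\bt,\bt)$, which follows from~\eqref{eq:capillary-t-n} and~\eqref{eq:nu-t-n}. This does not affect your conclusion — the $p$-dependent contributions do drop out — but the mechanism is pointwise cancellation, not an exact $\theta$-derivative.
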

\begin{proof}
See~\cite[Section 4]{RosSouam1997}. The proof is also reproduced in our notation in Appendix~\ref{sec:second-variation} for the reader's convenience.
\end{proof}
\begin{lemm}\label{lemm:complement}
There exists a smooth section $\xi$ of $E$ on $\overline{\bB}$ such that $(\nu + \xi)|_x \in T_{u(x)}\Sigma$ for all $x \in \partial \bB$.
\end{lemm}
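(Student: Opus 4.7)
The key observation is that, by~\eqref{eq:nu-t-n}, on $\partial\bB\setminus\cS$ we have $\nu\cdot X_u = \tau$, and by continuity of both $\nu$ and $X_u$, this identity extends to all of $\partial\bB$, including branch points. Since $\nu$ is a unit normal to $E$ and $T_u\Sigma$ is the orthogonal complement of $X_u$ in $\RR^3$, the condition $(\nu + \xi)|_{\partial\bB}\in T_u\Sigma$ is equivalent to $\xi\cdot X_u = -\tau$ on $\partial\bB$. So we only need to produce a smooth section $\xi$ of $E$ over $\overline{\bB}$ satisfying this scalar condition on the boundary.

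The plan is to first construct $\xi$ explicitly on a neighborhood of $\partial\bB$ and then cut it off. By Corollary~\ref{coro:branch}(c), $E_x\neq T_{u(x)}\Sigma$ at every $x\in\partial\bB$, which is equivalent to $\proj_{E_x}X_u(x)\neq 0$. By compactness of $\partial\bB$ and continuity of $x\mapsto\proj_{E_x}X_u(x)$, there exist an open neighborhood $\cU$ of $\partial\bB$ in $\overline{\bB}$ and a constant $c>0$ such that $|\proj_{E_x}X_u(x)|\geq c$ for all $x\in\cU$. On $\cU$ I would set
\[
\xi_0(x) = -\frac{\tau}{|\proj_{E_x}X_u(x)|^2}\,\proj_{E_x}X_u(x),
\]
which is smooth in $\cU$ and satisfies $\xi_0(x)\cdot X_u(x) = -\tau$ everywhere on $\cU$. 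Next, pick a smooth cutoff $\eta\colon\overline{\bB}\to[0,1]$ equal to $1$ on a smaller neighborhood of $\partial\bB$ and supported inside $\cU$, and define $\xi = \eta\,\xi_0$, extended by $0$ outside $\supp\eta$. Then $\xi$ is a smooth section of $E$ over $\overline{\bB}$, and on $\partial\bB$ we have $\xi\cdot X_u = -\tau = -\nu\cdot X_u$, as required.

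There is essentially no obstacle; the only point that warrants explicit verification is the uniform lower bound on $|\proj_EX_u|$ on $\partial\bB$, which is where Corollary~\ref{coro:branch}(c) is used, and the smoothness of $E$ across branch points, which was established in Corollary~\ref{coro:branch}(b).
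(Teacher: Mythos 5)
Your construction is correct and produces, on $\partial\bB$, exactly the same section as the paper's: unwinding the paper's $\xi = \widetilde{\nu}/(\nu\cdot\widetilde{\nu}) - \nu$ using $\nu\cdot X_u = \tau$ and $\nu\cdot\widetilde{\nu} = \sqrt{1-\tau^2}$ gives $-\tau(X_u-\tau\nu)/(1-\tau^2)$, which coincides with your $-\tau\,\proj_{E}X_u/|\proj_{E}X_u|^2$. The only difference is that you make the extension of $\xi$ from $\partial\bB$ to $\overline{\bB}$ explicit via a collar neighborhood and cutoff, whereas the paper leaves that routine step to the reader.
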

\begin{proof}
See~\cite[Proposition 4.1]{Ainouz-Souam2016}. The argument consists of taking advantage of the vector field $\widetilde{\nu}$ on $\partial \bB$ defined in~\eqref{eq:X-nu-tilde-def}, and goes essentially as follows. Recalling that $\nu(x)$ spans $E_x^\perp$, we have
\[
\big(\widetilde{\nu} - (\widetilde{\nu} \cdot \nu)\nu \big)\big|_{x} \in E_x, \text{ for all }x \in \partial \bB.
\]
Since $\nu \cdot \widetilde{\nu} \neq 0$ as can be seen from~\eqref{eq:X-nu-tilde-def}, we may define $\xi = \frac{\widetilde{\nu}}{\nu \cdot \widetilde{\nu}} - \nu$ on $\partial \bB$. The proof is complete upon extending $\xi$ to a section of $E$ on $\overline{\bB}$.
\end{proof}
\begin{prop}\label{prop:index-comparison-2}
Let $u: (\overline{\bB}, \partial \bB) \to (\RR^3, \Sigma)$ be a smooth, non-constant solution of~\eqref{eq:H-cmc-bvp}. Then $\Ind^A_{H, \tau}(u) = \Ind^I_{H, \tau}(u)$.
\end{prop}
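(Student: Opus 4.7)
The plan is to set up explicit linear maps between finite-dimensional subspaces on which the two bilinear forms are negative definite. Lemma~\ref{lemm:A-second-variation-function} furnishes the pointwise identification of the two forms via the normal component $\varphi \mapsto \varphi \cdot \nu$, while Lemma~\ref{lemm:complement} supplies a nowhere-vanishing smooth section $N := \nu + \xi$ of $\overline{\bB} \times \RR^3$ that is tangent to $\Sigma$ along $\partial \bB$ and satisfies $N \cdot \nu \equiv 1$. Together these provide a two-way correspondence between $C^{\infty}(\overline{\bB})$ and subspaces of $\cV_{\cS}$.

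The direction $\Ind^A_{H,\tau}(u) \leq \Ind^I_{H,\tau}(u)$ is direct. Given a finite-dimensional $\cL \subset \cV_{\cS}$ on which $(\delta^2 A_{H,\tau})_u$ is negative definite, I define $T \colon \cL \to C^\infty(\overline{\bB})$ by $T\varphi := \varphi \cdot \nu$. Lemma~\ref{lemm:A-second-variation-function} forces $(I_{H,\tau})_u$ to be negative definite on $T(\cL)$, and the final clause of Lemma~\ref{lemm:area-kernel} yields injectivity of $T$: if $T\varphi = 0$, then $\varphi \in \cE_{\cS}$, hence $(\delta^2 A_{H,\tau})_u(\varphi,\varphi) = 0$, contradicting negative definiteness unless $\varphi = 0$. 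Consequently $\dim \cL = \dim T(\cL) \leq \Ind^I_{H,\tau}(u)$.

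For the reverse direction, given a finite-dimensional $\cL \subset C^\infty(\overline{\bB})$ on which $(I_{H,\tau})_u$ is negative definite, the natural ansatz $f \mapsto fN$ produces an element of $\cV$ but typically not of $\cV_{\cS}$, so I insert standard logarithmic cutoff functions $\eta_\epsilon$ equal to $0$ near $\cS$, equal to $1$ outside the $\epsilon$-neighborhood of $\cS$, with $\|\nabla \eta_\epsilon\|_2 \to 0$ as $\epsilon \to 0$. Setting $S_\epsilon f := \eta_\epsilon f N \in \cV_{\cS}$, one has $(S_\epsilon f) \cdot \nu = \eta_\epsilon f$, so Lemma~\ref{lemm:A-second-variation-function} together with polarization gives
\[
(\delta^2 A_{H,\tau})_u(S_\epsilon f, S_\epsilon g) = (I_{H,\tau})_u(\eta_\epsilon f, \eta_\epsilon g) \quad \text{for all } f, g \in \cL.
\]
The right-hand side converges to $(I_{H,\tau})_u(f,g)$ as $\epsilon \to 0$: all coefficients in the definition~\eqref{eq:I-bilinear-form} of $(I_{H,\tau})_u$ are bounded (since the Gauss map of a branched immersion extends smoothly across $\cS$, keeping $|\nabla \nu|$ bounded, and Corollary~\ref{coro:branch}(c) ensures $\widetilde{\nu}$ and $\widetilde{X}$ extend continuously to branch points on $\partial \bB$, keeping the boundary coefficients bounded), and the only delicate term is the gradient cross-term, which is controlled via Cauchy-Schwarz by $\|\nabla \eta_\epsilon\|_2 \to 0$. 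Hence for $\epsilon$ small, $(\delta^2 A_{H,\tau})_u$ is negative definite on $S_\epsilon(\cL)$, and $S_\epsilon$ is injective on the finite-dimensional $\cL$ for small $\epsilon$ because no nonzero element of $\cL$ can vanish identically on the region $\{\eta_\epsilon = 1\}$, which exhausts $\overline{\bB}$ outside a shrinking neighborhood of the finite set $\cS$. This gives $\dim \cL \leq \Ind^A_{H,\tau}(u)$, completing the proof. The main obstacle, though manageable, is the convergence argument in the cutoff limit, where one must verify term-by-term that the logarithmic cutoff does not spoil the strict negative definiteness inherited from $\cL$.
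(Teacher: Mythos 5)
Your proof is correct and uses the same two ingredients as the paper's (Lemma~\ref{lemm:A-second-variation-function} for the pointwise identification of the forms, Lemma~\ref{lemm:complement} for the nowhere-vanishing section, and the final clause of Lemma~\ref{lemm:area-kernel} for injectivity in the first direction). The only organizational difference is that the paper first uses the logarithmic cutoff trick once and for all to replace $C^\infty(\overline{\bB})$ by $C^\infty_c(\overline{\bB}\setminus\cS)$ as the domain of $(I_{H,\tau})_u$ and then works with functions already supported off $\cS$, whereas you apply the cutoff inline inside the reverse inequality and argue the convergence $(I_{H,\tau})_u(\eta_\epsilon f, \eta_\epsilon g)\to (I_{H,\tau})_u(f,g)$ directly; these are the same argument arranged in a different order.
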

\begin{proof}
Again, by the logarithmic cutoff trick, $\Ind^I_{H, \tau}(u)$ coincides with index of $(I_{H, \tau})_u$ over the smaller domain $C^{\infty}_c(\overline{\bB} \setminus \cS)$. Suppose $\cL$ is a finite-dimensional subspace of $\cV_{\cS}$ on which $(\delta^2 A_{H, \tau})_u$ is negative definite. Let $\psi_1, \cdots, \psi_d$ be a basis for $\cL$ and define 
\[
f_i = \psi_i \cdot \nu, \text{ for }i = 1, \cdots, d.
\]
Then each $f_i$ lies in $C^{\infty}_c(\overline{\bB} \setminus \cS)$, and $(I_{H, \tau})_u$ is negative definite on $\cL' = \Span\{f_1, \cdots, f_d\}$ by Lemma~\ref{lemm:A-second-variation-function}, which implies that 
\begin{equation}\label{eq:negative-for-I}
\dim \cL' \leq \Ind_{H, \tau}^I(u).
\end{equation}
If $a_1, \cdots, a_d \in \RR$ are such that $a_1f_1 + \cdots + a_d f_d =0$ identically, then the combination $\psi: = a_1 \psi_1 + \cdots + a_d \psi_d$ lies in $\cE_{\cS}$ in addition to being in $\cL$. Lemma~\ref{lemm:area-kernel} then gives
\[
(\delta^2 A_{H, \tau})_u(\psi, \psi) = 0,
\]
and that all $a_i$ consequently vanish. Thus $\dim\cL' = \dim\cL$, and we get from~\eqref{eq:negative-for-I} and the arbitrariness of $\cL$ that 
\[
\Ind^A_{H, \tau}(u) \leq \Ind^I_{H, \tau}(u).
\]
Conversely, suppose $f_1, \cdots, f_d$ span a $d$-dimensional subspace of $C^{\infty}_c(\overline{\bB} \setminus \cS)$ on which $(I_{H, \tau})_u$ is negative definite, and let 
\[
\psi_i = f_i (\nu + \xi),
\]
where $\xi$ is the section of $E$ given by Lemma~\ref{lemm:complement}. Then $\psi_i \in \cV_{\cS}$ and $\psi_i \cdot \nu = f_i$, so we have from Lemma~\ref{lemm:A-second-variation-function} that $(\delta^2 A_{H, \tau})_u$ is negative definite on $\Span\{\psi_1, \cdots, \psi_d\}$. This latter subspace of $\cV_{\cS}$ is $d$-dimensional since $\nu + \xi$ is nowhere vanishing on $\overline{\bB}$, and we deduce that 
\[
\Ind^I_{H, \tau}(u) \leq \Ind^A_{H, \tau}(u).
\]
 \end{proof}

\section{Existence of critical points}\label{sec:perturbed-existence}
In this section we return to the study of the perturbed functional $E_{\ep, p, f, \tau}$. The main result,  Proposition~\ref{prop:perturbed-existence}, produces non-constant critical points and is the starting point of the proof of Theorem~\ref{thm:main-1} in Section~\ref{sec:proof}. In Section~\ref{subsec:crit-theorey} we introduce a special class of paths in $\cM_p$ (Definition~\ref{defi:P-a-C}) and develop the tools (mainly Proposition~\ref{prop:mountain-pass-upgraded}) needed to extract non-trivial critical points with index bounds out of sequences of such paths. These sequences of paths are obtained in Section~\ref{subsec:extract} based on Lemma~\ref{lemm:D-difference}(a) and Struwe's monotonicity trick. Proposition~\ref{prop:perturbed-existence} appears at the end of Section~\ref{subsec:extract}.

Most of the proofs in this section are omitted or merely sketched, since the details are similar to the free boundary case addressed in~\cite{Cheng22}. 
\subsection{Some critical point theory}\label{subsec:crit-theorey}
Let $\cC$ denote the collection of constant maps from $\bB$ to $\Sigma$. Each continuous path $\gamma: ([0, 1], \{0, 1\}) \to (\cM_p, \cC)$ induces a continuous map $h_{\gamma}: S^2 \to \Sigma$, which has a well-defined topological degree. With this in mind, we define
\[
\cP = \{\gamma \in C^0([0, 1], \{0, 1\}; \cM_p, \cC)\ |\ \deg(h_\gamma) = 1\}.
\]
Since given $\gamma \in \cP$ we have $\gamma|_{[0, t]} \in \cE(\gamma(t))$ for all $t \in [0, 1]$, it makes sense to define
\[
\omega_{\ep, p, f, \tau} = \inf_{\gamma \in \cP} \Big[ \sup_{t \in [0, 1]} E_{\ep, p, f, \tau}(\gamma(t), \gamma|_{[0, t]}) \Big].
\]
\begin{defi}\label{defi:P-a-C}
For $\alpha, C_0 > 0$, we let $\cP_{\ep, p, f, \tau, \alpha, C_0}$ be the subset of $\cP$ consisting of paths $\gamma$ such that
\[
\sup_{t \in [0, 1]} E_{\ep, p, f, \tau}(\gamma(t), \gamma|_{[0, t]}) \leq \omega_{\ep, p, f, \tau} + \alpha,
\]
and that 
\[
D_{\ep, p}(\gamma(t)) \leq C_0, \text{ whenever $E_{\ep, p, f, \tau}(\gamma(t), \gamma|_{[0, t]}) \geq \omega_{\ep, p, f, \tau} - \alpha$.}
\]
\end{defi}
\begin{defi}\label{defi:K-C}
For $C_0 > 0$, we let $\cK_{\ep, p, f, \tau, C_0}$ denote the set of critical points $u \in \cM_p$ of $E_{\ep, p, f, \tau}$ such that $D_{\ep, p}(u) \leq C_0$ and that 
\[
E_{\ep, p, f, \tau}(u, \gamma ) = \omega_{\ep, p, f, \tau} \text{, for some $\gamma \in \cE(u)$.}
\]
By Proposition~\ref{prop:Palais-Smale} and Remark~\ref{rmk:volume-l}, the set $\cK_{\ep, p, f, \tau, C_0}$ is compact in $\cM_p$.
\end{defi}
The next lemma prevents critical points of $E_{\ep, p, f, \tau}$ at the level $\omega_{\ep, p, f, \tau}$ from being constant. Recall that $H_1$ denotes an upper bound for the $C^1$ norms of $f$ and $\nabla X$ on $\RR^3$.
\begin{lemm}\label{lemm:lower-bound}
There exist $\eta_2, \eta_3 \in (0, 1)$ depending only on $\ep, p, H_1, \tau$ such that for all $\gamma \in \cP$, if $t_0 \in [0, 1]$ is such that 
\[
D_{\ep, p}(\gamma(t_0)) < \eta_2,
\]
then 
\[
E_{\ep, p, f, \tau}(\gamma(t_0), \gamma|_{[0, t_0]}) < \sup_{t \in [0, 1]} E_{\ep, p, f, \tau}(\gamma(t), \gamma|_{[0, t]}) - \eta_3.
\]
\end{lemm}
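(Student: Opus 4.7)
The strategy is to combine the $C^0$-closeness of $\gamma(t_0)$ to a constant (coming from $p>2$ and small $D_{\ep,p}$) with a homotopy decomposition of $\gamma$ through a constant capping, and a topological minimax argument for higher-degree partial paths.

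First, I would extract a constant $c := \gamma(t_0)(x_*) \in \Sigma$ for an arbitrary $x_* \in \partial\bB$. Since $p>2$ and $\|\nabla\gamma(t_0)\|_p^p\leq p\,\ep^{2-p}D_{\ep,p}(\gamma(t_0))<p\,\ep^{2-p}\eta_2$, Sobolev embedding gives $\osc_{\bB}\gamma(t_0)\leq C_{p,\ep}\eta_2^{1/p}$; provided $\eta_2$ is small enough in terms of $\ep,p$, this brings us within the $\delta_0$-threshold of Remark~\ref{rmk:h-geodesics} and lets us form the $\overline{g}$-geodesic path $l := l_{c,\gamma(t_0)}$. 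Writing $h := f+\tau\Div X$, Remark~\ref{rmk:volume-l} yields $|V_h(l)|\leq CH_1\eta_2^{1+1/p}$, while the pointwise bound~\eqref{eq:gradient-pointwise} gives $D_{\ep,p,\tau}(l(s))\leq \ep^{p-2}\pi/p + C\eta_2$ uniformly in $s\in[0,1]$.

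Next I would decompose $\gamma\simeq\gamma_1\star\gamma_2$ with $\gamma_1:=\gamma|_{[0,t_0]}\star l^{-1}$ and $\gamma_2:=l\star\gamma|_{[t_0,1]}$, both constant-to-constant paths. Applying Lemma~\ref{lemm:volume-properties}(b) with references consisting of paths of constants in $\Sigma$ (on which $V_h$ vanishes) produces integers $d_1,d_2$ with $V_h(\gamma_j)=d_j\int_\Omega h$ and $d_1+d_2=\deg h_\gamma=1$. Setting $\phi(t):=E_{\ep,p,f,\tau}(\gamma(t),\gamma|_{[0,t]})$ and using $D_{\ep,p,\tau}(\gamma(t_0))\leq 4\eta_2$ from~\eqref{eq:D-p-estimates}, one obtains
\[
\phi(0)-\phi(t_0)\geq \tfrac{\ep^{p-2}\pi}{p}-4\eta_2-d_1\int_\Omega h - V_h(l),
\]
\[
\phi(1)-\phi(t_0)\geq \tfrac{\ep^{p-2}\pi}{p}-4\eta_2+(1-d_1)\int_\Omega h - V_h(l).
\]
When $d_1\in\{0,1\}$, one of these right-hand sides exceeds $\tfrac{\ep^{p-2}\pi}{2p}$ for $\eta_2$ small (depending on $\ep,p,H_1$), so that $\sup_t\phi\geq\max(\phi(0),\phi(1))>\phi(t_0)+\eta_3$ with $\eta_3:=\tfrac{\ep^{p-2}\pi}{4p}$.

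The hard part will be the remaining case $|d_1|\geq 2$, where neither endpoint of $\gamma$ alone need exceed $\phi(t_0)$ and one must produce an intermediate slice with large $\phi$. Here I would invoke the minimax level $\omega^*_{d_1}$ of $\sup E_{\ep,p,f,\tau}$ over constant-to-constant paths of degree $d_1$ from $\gamma(0)$ to $c$: a topological mountain-pass argument yields a uniform positive gap $\omega^*_{d_1}-\tfrac{\ep^{p-2}\pi}{p}-d_1\int_\Omega h\geq \delta(\ep,p,H_1,\tau)>0$, independent of $|d_1|\geq 2$. Since by the bounds in Step 1 the $E$-values along the $l^{-1}$-extension of $\gamma_1$ are at most $\tfrac{\ep^{p-2}\pi}{p}+d_1\int_\Omega h + O(\eta_2+\eta_2^{1+1/p})$, for $\eta_2$ sufficiently small they lie strictly below $\omega^*_{d_1}$, forcing the sup of $E$ along $\gamma_1$ to be attained on the $[0,t_0]$-portion. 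Combined with the upper bound $\phi(t_0)\leq 4\eta_2+d_1\int_\Omega h + CH_1\eta_2^{1+1/p}$, this gives $\sup_t\phi-\phi(t_0)\geq \delta-O(\eta_2)>\eta_3$. The technical crux is establishing the uniform gap $\delta$; this parallels the free-boundary argument in~\cite{Cheng22}, adapted to accommodate the extra term $\tau V_{\Div X}$ in the functional.
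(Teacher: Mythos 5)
Your overall strategy — use the $C^0$ smallness of $\gamma(t_0)$ to cap it off by an $l$-geodesic, then compare energies via the volume functional — is the right starting point, but the degree decomposition at $t_0$ creates a serious obstruction that you acknowledge and then try to patch over with the ``uniform positive gap'' for $\omega^*_{d_1}$. That is where the argument breaks. The claim that $\omega^*_{d_1} - \frac{\ep^{p-2}\pi}{p} - d_1\int_\Omega h \geq \delta > 0$ uniformly in $d_1$ is not at all obvious; in fact a naive iteration suggests $\omega^*_{d_1}$ could sit close to $\omega^*_1 + (d_1-1)\int_\Omega h$, which would push the gap down toward $\omega^*_1 - \frac{\ep^{p-2}\pi}{p} - \int_\Omega h$ and potentially toward zero. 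Moreover, even granting the gap, the ``topological mountain-pass argument'' you invoke to establish it is exactly the kind of statement the lemma itself is trying to prove (that nontrivial degree forces energy excess over the partial-path energy), so the proposal has a circular flavor: it would need a quantitative version of the very lower bound being asserted.

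The paper avoids this entirely by never tracking the global degree $d_1$ of the half-path. Instead, it localizes: since $\gamma \in \cP$ forces $\sup_t D_{\ep,p}(\gamma(t)) \geq \alpha_0$ for the universal threshold $\alpha_0$ below which paths are nullhomotopic (your step-1 oscillation bound is exactly what gives $\alpha_0$), and since $D_{\ep,p}(\gamma(t_0)) < \eta_2 < \alpha < \alpha_0$, there is a first exit time $t_1 > t_0$ (or $< t_0$, symmetrically) with $D_{\ep,p}(\gamma(t_1)) = \alpha$ and $D_{\ep,p}(\gamma(t)) \leq \alpha$ on $[t_0,t_1]$. The segment $\gamma|_{[t_0,t_1]}$ capped off by the two small geodesics $l_{\gamma(t_0)}$ and $l_{\gamma(t_1)}$ is then uniformly small in $D_{\ep,p}$ (after using the estimate~\eqref{eq:contract-expansion}), hence nullhomotopic, hence has zero $V_h$. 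Consequently $|V_h(\gamma|_{[t_0,t_1]})|$ is controlled by the $V_h$ of the two caps, which by your own Remark~\ref{rmk:volume-l} estimate is $O(\alpha^{1/p}(D_{\ep,p}(\gamma(t_0)) + D_{\ep,p}(\gamma(t_1))))$. Then
\[
E_{\ep,p,f,\tau}(\gamma(t_1),\gamma|_{[0,t_1]}) - E_{\ep,p,f,\tau}(\gamma(t_0),\gamma|_{[0,t_0]}) \geq [(1-|\tau|)^2 - C\alpha^{\frac{1}{p}}]\alpha - [4 + C\alpha^{\frac{1}{p}}]\eta_2,
\]
and choosing $\alpha$ then $\eta_2$ small gives a uniform $\eta_3$. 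The crucial point you're missing is that the intermediate-value trick lets you pick the comparison slice $t_1$ at a \emph{controlled} energy level $\alpha$, sidestepping both the unknown degree $d_1$ and any global minimax-level estimate.
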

\begin{proof}
By Sobolev embedding, we can find $K > 0$ depending only on $p, \ep$ such that 
\begin{equation}\label{eq:oscillation-e1}
\osc_{\bB}u \leq K \cdot (D_{\ep, p}(u))^{\frac{1}{p}}, \text{ for all }u \in \cM_p.
\end{equation}
Thus, if we choose $\alpha_0 = \alpha_0(\ep, p)$ so that 
\[
K\alpha_0^{\frac{1}{p}} < \frac{\delta_0}{2},
\] 
where $\delta_0$ is the constant introduced in Remark~\ref{rmk:h-geodesics}, then whenever $u \in \cM_p$ satisfies $D_{\ep, p}(u) < \alpha_0$, we may consider, in the notation of Remark~\ref{rmk:h-geodesics}, the path $l_u := l_{u(e_1), u}$, which satisfies, by~\eqref{eq:gradient-pointwise},
\begin{equation}\label{eq:contract-expansion}
D_{\ep, p}(l_u(t)) \leq C_{p}\cdot D_{\ep, p}(u) \text{ for all }t \in [0, 1],
\end{equation}
and, by Remark~\ref{rmk:volume-l},~\eqref{eq:oscillation-e1} and Sobolev embedding,
\begin{equation}\label{eq:weak-isoperimetric}
|V_{f}(l_u)| + |V_{\Div X}(l_u)| \leq C_{H_1, p, \ep}\cdot (D_{\ep, p}(u))^{1 + \frac{1}{p}}.
\end{equation}
Now, given $\gamma \in \cP$ we must have
\[
\sup_{t \in [0, 1]} D_{\ep, p}(\gamma(t)) \geq \alpha_0,
\]
for otherwise $\gamma$ is homotopic to a path through constant maps, violating the requirement that $\deg(h_\gamma) = 1$. Hence, with $\eta_2 < \alpha < \alpha_0$, where $\eta_2$ and $\alpha$ are to be determined, if $t_0$ is as in the statement of the lemma, then there exists $t_1 \neq t_0$ such that 
\[
D_{\ep, p}(\gamma(t_1)) = \alpha.
\]
We assume without loss of generality that $t_1 > t_0$, and that 
\begin{equation}\label{eq:small-D-path}
D_{\ep, p}(\gamma(t)) \leq \alpha \text{ for }t \in [t_0, t_1].
\end{equation}
Now consider the path $\widetilde{\gamma} = l_{\gamma(t_0)} + \gamma|_{[t_0, t_1]} + \big( -l_{\gamma(t_1)} \big)$. Then by~\eqref{eq:contract-expansion} there holds
\[
\sup_{t \in [0, 1]} D_{\ep, p}(\widetilde{\gamma}(t)) \leq (C_{p}+ 1) \cdot \alpha,
\]
so that if $(C_{p} + 1)\alpha < \alpha_0$, then $\widetilde{\gamma}$ is homotopic to a path through constant maps, implying that $V_{f}(\widetilde{\gamma}) = 0 = V_{\Div X}(\widetilde{\gamma})$. Consequently, by~\eqref{eq:weak-isoperimetric} and~\eqref{eq:small-D-path} we have
\[
\big| V_{f}(\gamma|_{[t_0, t_1]}) \big| + \big|  V_{\Div X}(\gamma|_{[t_0, t_1]}) \big| \leq C_{H_1, p, \ep}\alpha^{\frac{1}{p}} \cdot \big( D_{\ep, p}(\gamma(t_0)) + D_{\ep, p}(\gamma(t_1))  \big).
\]
Combining this with~\eqref{eq:D-p-estimates}, we have for some $C$ depending only on $\ep, p,  H_1$ that
\[
\begin{split}
&E_{\ep, p, f, \tau}(\gamma(t_1), \gamma|_{[0, t_1]}) - E_{\ep, p, f, \tau}(\gamma(t_0), \gamma|_{[0, t_0]})\\
\geq\ & [ (1 - |\tau|)^2 - C \alpha^{\frac{1}{p}}]\cdot D_{\ep, p}(\gamma(t_1)) - [4  + C \alpha^{\frac{1}{p}}]\cdot D_{\ep, p}(\gamma(t_0))\\
\geq\ &  [ (1 - |\tau|)^2 - C \alpha^{\frac{1}{p}}] \alpha - [ 4  + C \alpha^{\frac{1}{p}}]\eta_2,
\end{split}
\]
and we get the desired conclusion by first choosing $\alpha$ and then $\eta_2$ to be small enough.
\end{proof}
\begin{defi}\label{defi:target-reduction}
With $\eta_3$ as given by Lemma~\ref{lemm:lower-bound}, we define
\begin{equation}\label{eq:beta-0-defi}
\beta_0 = \left\{
\begin{array}{ll}
\min\big\{\frac{\eta_3}{4},  \frac{\big| \int_{\Omega} f + \tau\int_{\Omega}\Div X\big|}{4} \big\} & \text{ if } \int_{\Omega} f + \tau\int_{\Omega}\Div X \neq 0,\\
\\
\frac{\eta_3}{4} &\text{ otherwise. }
\end{array}
\right.
\end{equation}
Then we let $\cN$ be the set of maps $u \in \cM_p$ such that 
\[
|E_{\ep, p, f, \tau}(u, \gamma) -\omega_{\ep, p, f, \tau} | < \beta_0,\ \ \text{ for some }\gamma \in \cE(u).
\]
For $u \in \cN$, we choose $\gamma \in \cE(u)$ satisfying the above and define
\[
E^{\cN}_{\ep, p, f, \tau}(u) = E_{\ep, p, f, \tau}(u, \gamma).
\]
Note that, by Lemma~\ref{lemm:volume-properties}(b), for $\gamma, \widetilde{\gamma} \in \cE(u)$ we have
\[
E_{\ep, p, f, \tau}(u, \gamma) - E_{\ep, p, f, \tau}(u, \widetilde{\gamma}) = k \cdot\big( \int_{\Omega} f + \tau\int_{\Omega}\Div X \big), \text{ for some }k \in \ZZ.
\]
Hence $E^{\cN}_{\ep, p, f, \tau}$ is well-defined by our choice of $\beta_0$.
\end{defi}

\begin{lemm}\label{lemm:N-reduction}
The set $\cN$ is open in $\cM_p$. Moreover, $E^{\cN}_{\ep, p, f, \tau}$ is a $C^2$-functional on $\cN$.
\end{lemm}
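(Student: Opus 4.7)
The plan is to derive both conclusions simultaneously by comparing $E^{\cN}_{\ep, p, f, \tau}$ with a suitably chosen local reduction $E^{\cA}_{\ep, p, f, \tau}$, and then invoking the $C^2$-regularity of the latter already noted at the end of Section~\ref{subsec:perturbed}.

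Fix $u_0 \in \cN$ and select a witnessing path $\gamma_0 \in \cE(u_0)$, so that $|E_{\ep, p, f, \tau}(u_0, \gamma_0) - \omega_{\ep, p, f, \tau}| < \beta_0$. Take any simply-connected open neighborhood $\cA$ of $u_0$ in $\cM_p$ (for instance a chart $\Theta_{u_0}(\cB_s)$ from Section~\ref{subsec:function-spaces}), and let $E^{\cA}_{\ep, p, f, \tau}$ denote the local reduction induced by the reference pair $(u_0, \gamma_0)$. By construction, for every $u \in \cA$ and every path $l_u$ in $\cA$ from $u_0$ to $u$, we have
\[
E^{\cA}_{\ep, p, f, \tau}(u) = E_{\ep, p, f, \tau}(u, \gamma_0 + l_u), \quad \text{and in particular } E^{\cA}_{\ep, p, f, \tau}(u_0) = E_{\ep, p, f, \tau}(u_0, \gamma_0).
\]
Since $E^{\cA}_{\ep, p, f, \tau}$ is continuous on $\cA$ (in fact $C^2$), after shrinking $\cA$ if necessary we may arrange that
\[
|E^{\cA}_{\ep, p, f, \tau}(u) - \omega_{\ep, p, f, \tau}| < \beta_0 \quad \text{for all } u \in \cA.
\]
This already shows $\cA \subset \cN$, so $\cN$ is open.

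It remains to identify $E^{\cN}_{\ep, p, f, \tau}$ with $E^{\cA}_{\ep, p, f, \tau}$ on $\cA$. Fix $u \in \cA$ and let $\widetilde{\gamma} \in \cE(u)$ be any path used to define $E^{\cN}_{\ep, p, f, \tau}(u)$, so $|E_{\ep, p, f, \tau}(u, \widetilde{\gamma}) - \omega_{\ep, p, f, \tau}| < \beta_0$. Choosing $l_u$ inside $\cA$ leading from $u_0$ to $u$, we have $\gamma_0 + l_u \in \cE(u)$ and $E_{\ep, p, f, \tau}(u, \gamma_0 + l_u) = E^{\cA}_{\ep, p, f, \tau}(u)$, so the latter also lies within $\beta_0$ of $\omega_{\ep, p, f, \tau}$. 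By Lemma~\ref{lemm:volume-properties}(b),
\[
E_{\ep, p, f, \tau}(u, \gamma_0 + l_u) - E_{\ep, p, f, \tau}(u, \widetilde{\gamma}) = k \Big(\int_{\Omega} f + \tau \int_{\Omega} \Div X \Big)
\]
for some $k \in \ZZ$. If $\int_{\Omega} f + \tau \int_{\Omega} \Div X = 0$ then this difference vanishes automatically; otherwise the two values differing by less than $2\beta_0 \leq \tfrac{1}{2}|\int_{\Omega} f + \tau \int_{\Omega} \Div X|$ forces $k=0$. Either way, $E^{\cN}_{\ep, p, f, \tau}(u) = E^{\cA}_{\ep, p, f, \tau}(u)$ on $\cA$, and the $C^2$-regularity of $E^{\cN}_{\ep, p, f, \tau}$ near $u_0$ follows from that of $E^{\cA}_{\ep, p, f, \tau}$.

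The argument is essentially bookkeeping; the only substantive point is that the calibration $\beta_0 < \tfrac{1}{4}|\int_{\Omega} f + \tau \int_{\Omega} \Div X|$ from Definition~\ref{defi:target-reduction} is precisely what guarantees the integer $k$ above must vanish, so that $E^{\cN}_{\ep, p, f, \tau}$ is unambiguously defined and locally coincides with the $C^2$ reduction $E^{\cA}_{\ep, p, f, \tau}$. No further estimates on $D_{\ep, p, \tau}$ or $V_h$ are needed beyond those already recorded in Lemma~\ref{lemm:D-difference}(b) and Remark~\ref{rmk:volume-l}, which ensure continuity of the relevant objects.
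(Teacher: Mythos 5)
Your proof is correct and follows essentially the same approach as the paper's: fix a witnessing path $\gamma_0$ for $u_0$, use the continuity of the local reduction $E^{\cA}_{\ep, p, f, \tau}$ induced by $(u_0, \gamma_0)$ to shrink $\cA$ so that the value stays within $\beta_0$ of $\omega_{\ep, p, f, \tau}$, and conclude both openness and agreement of $E^{\cN}_{\ep, p, f, \tau}$ with $E^{\cA}_{\ep, p, f, \tau}$. The only difference is that you spell out explicitly the integer argument (via Lemma~\ref{lemm:volume-properties}(b) and the size constraint on $\beta_0$) showing why the two functionals agree on the shrunken neighborhood, whereas the paper leaves this implicit since it was already recorded in the remark concluding Definition~\ref{defi:target-reduction}.
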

\begin{proof}
Given $u_0 \in \cN$, choose $\gamma_0 \in \cE(u_0)$ so that 
\[
|E_{\ep, p, f, \tau}(u_0, \gamma_0) - \omega_{\ep, p, f, \tau}| < \beta_0,
\]
and let $\cA$ be a simply-connected neighborhood of $u_0$ in $\cM_p$. With $E^{\cA}_{\ep, p, f, \tau}$ being the local reduction induced by $(u_0, \gamma_0)$, we have
\[
E^{\cA}_{\ep, p, f, \tau}(u_0) = E_{\ep, p, f, \tau}(u_0, \gamma_0) = E^{\cN}_{\ep, p, f, \tau}(u_0).
\] 
Since $E^{\cA}_{\ep, p, f, \tau}$ is continuous, there exists a neighborhood $\cA'$ of $u_0$ in $\cA$ such that 
\[
|E^{\cA}_{\ep, p, f, \tau}(u) - \omega_{\ep, p, f, \tau}| < \beta_0, \text{ for all }u \in \cA'.
\]
This shows that $\cA'$ is contained in $\cN$ and that $E^{\cN}_{\ep, p, f, \tau}$ agrees with $E^{\cA}_{\ep, p, f, \tau}$ on $\cA'$, from which the assertions of the lemma follow.
\end{proof}
\begin{prop}\label{prop:mountain-pass}
Assume that for some sequence $(\alpha_k)$ converging to zero and some constant $C_0 > 0$, we have a sequence of paths $(\gamma_k)$ in $\cP$ such that 
\[
\gamma_k \in \cP_{\ep, p, f, \tau, \alpha_k, C_0}, \text{ for all } k.
\]
Then, passing to a subsequence of $(\gamma_k)$ if necessary, there exists a sequence $(t_k)$ in $[0, 1]$ such that 
\[
|E_{\ep, p, f, \tau}(\gamma_k(t_k), (\gamma_k)|_{[0, t_k]}) - \omega_{\ep, p, f, \tau}| \leq \alpha_k \text{ for all }k,
\]
and that $\gamma_{k}(t_k)$ converges strongly in $W^{1, p}$ to some $u \in \cK_{\ep, p, f, \tau, C_0}$ satisfying $D_{\ep, p}(u) \geq \eta_2$, where $\eta_2$ is given by Lemma~\ref{lemm:lower-bound}. 
\end{prop}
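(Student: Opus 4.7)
The plan is to combine a deformation argument on the local reduction $E^{\cN}_{\ep, p, f, \tau}$ of Definition~\ref{defi:target-reduction} with the Palais–Smale type condition in Proposition~\ref{prop:Palais-Smale} and the non-constancy bound in Lemma~\ref{lemm:lower-bound}. First I would argue by contradiction: suppose no such subsequence $(t_k)$ exists. Then for all large $k$, every slice $t \in [0, 1]$ with $|E_{\ep, p, f, \tau}(\gamma_k(t), \gamma_k|_{[0, t]}) - \omega_{\ep, p, f, \tau}| \le \alpha_k$ would satisfy $\|(\delta E_{\ep, p, f, \tau})_{\gamma_k(t)}\| \ge c_k$ for some $c_k > 0$ bounded away from zero along the sequence. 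By Definition~\ref{defi:P-a-C}, every such slice also satisfies $D_{\ep, p}(\gamma_k(t)) \le C_0$, so all the relevant derivative bounds hold on the $D_{\ep, p}$-bounded sublevel set
\[
S_k = \{u \in \cN \,|\, |E^{\cN}_{\ep, p, f, \tau}(u) - \omega_{\ep, p, f, \tau}| \le \alpha_k,\ D_{\ep, p}(u) \le C_0 + 1\}.
\]

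Next I would build a locally Lipschitz pseudo-gradient vector field for the $C^2$-functional $E^{\cN}_{\ep, p, f, \tau}$ on $S_k$, tangent to $\cM_p$, of bounded $W^{1,p}$-norm, and cutoff to be supported in $\{u \in \cN \,|\, |E^\cN - \omega| \le 2\alpha_k\}$. Integrating the corresponding negative gradient flow $\Phi_s$ for a time $s_k$ of order $\alpha_k / c_k^2$ and applying $\Phi_{s_k}$ to $\gamma_k$ pointwise in $t$ produces a new path $\widetilde{\gamma}_k$. Because the endpoints of $\gamma_k$ are constant maps which, by Lemma~\ref{lemm:lower-bound}, lie outside $S_k$ for large $k$, the deformation is supported in the interior and $\widetilde{\gamma}_k$ remains a path in $\cP$ (the induced map $h_{\widetilde{\gamma}_k}$ is homotopic to $h_{\gamma_k}$, preserving degree one). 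The standard pseudo-gradient estimate then forces $\sup_t E_{\ep, p, f, \tau}(\widetilde{\gamma}_k(t), \widetilde{\gamma}_k|_{[0, t]}) < \omega_{\ep, p, f, \tau} - \alpha_k/2$ for an appropriate choice of $s_k$, contradicting the infimum characterization of $\omega_{\ep, p, f, \tau}$. (The $W^{1, p}$ bound on the pseudo-gradient combined with Lemma~\ref{lemm:D-difference}(b) ensures that $D_{\ep, p}$ along the flow stays inside $S_k$ for the short time considered.)

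Having extracted such $(t_k)$ with $|E_{\ep, p, f, \tau}(\gamma_k(t_k), \gamma_k|_{[0, t_k]}) - \omega_{\ep, p, f, \tau}| \le \alpha_k$ and $\|(\delta E_{\ep, p, f, \tau})_{\gamma_k(t_k)}\| \to 0$, together with the automatic bound $D_{\ep, p}(\gamma_k(t_k)) \le C_0$ granted by Definition~\ref{defi:P-a-C}, I would invoke Proposition~\ref{prop:Palais-Smale} to pass to a subsequence converging strongly in $W^{1, p}$ to a critical point $u$ of $E_{\ep, p, f, \tau}$ with $D_{\ep, p}(u) \le C_0$. Strong $W^{1,p}$ convergence implies uniform convergence since $p > 2$, which via Remark~\ref{rmk:volume-l} and the path extension discussed in Section~\ref{subsec:perturbed} gives that $u \in \cK_{\ep, p, f, \tau, C_0}$ at level $\omega_{\ep, p, f, \tau}$. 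Finally, Lemma~\ref{lemm:lower-bound} applied to $\gamma_k$ shows that if $D_{\ep, p}(\gamma_k(t_k)) < \eta_2$ then the level at $\gamma_k(t_k)$ would lie below $\omega_{\ep, p, f, \tau} + \alpha_k - \eta_3$, which for large $k$ contradicts the convergence to $\omega_{\ep, p, f, \tau}$; hence $D_{\ep, p}(\gamma_k(t_k)) \ge \eta_2$, and the bound passes to $u$ by strong convergence.

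The main obstacle will be the deformation step, specifically ensuring that the pseudo-gradient flow on the local reduction $\cN$ is globally well-defined along a path that may traverse regions where different local reductions differ by an integer multiple of $\int_\Omega f + \tau \int_\Omega \Div X$, and that the flow does not push $D_{\ep, p}$ past $C_0$ in the portions of the path where the hypothesis of $\cP_{\ep, p, f, \tau, \alpha_k, C_0}$ does not control it. The first issue is handled by confining the deformation support (via an $E^\cN$-cutoff) to the strip $|E^\cN - \omega| \le 2\alpha_k$, on which $E^\cN$ is single-valued; the second, by the quantitative control~\eqref{eq:D-difference} of Lemma~\ref{lemm:D-difference}(b). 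These details parallel those in~\cite[Proposition 5.4]{Cheng22} and can be adapted with essentially no structural change.
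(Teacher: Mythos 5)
Your proposal is correct and follows essentially the same route as the paper, which (in the text the paper defers to, and in the commented-out version of the argument) runs a pseudo-gradient deformation on the local reduction $E^{\cN}_{\ep, p, f, \tau}$ restricted to the energy strip around $\omega_{\ep, p, f, \tau}$, uses the $D_{\ep, p}$-bound from the class $\cP_{\ep, p, f, \tau, \alpha_k, C_0}$ to control the flow, invokes Proposition~\ref{prop:Palais-Smale} on the resulting Palais–Smale sequence, and finishes with Lemma~\ref{lemm:lower-bound}. The only cosmetic difference is that the paper deforms for a flow time that is fixed independently of $k$ (so the energy drop is a fixed positive number, which eventually exceeds $2\alpha_k$), whereas you scale the flow time as $\alpha_k/c_k^2$; both produce the same contradiction, but the fixed-time variant sidesteps some constant-chasing in the short-time regime.
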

\begin{proof}
The proof is essentially the same as that of~\cite[Proposition 3.6]{cz-cgc}. We merely remark the following:
\begin{enumerate}
\item[(i)] Parts (a) and (b) of the conclusion of Proposition 3.3 in~\cite{cz-cgc}, which form part of the assumptions of Proposition 3.6 there, correspond to the two defining properties of the class $\cP_{\ep, p, f, \tau, \alpha_k, C_0}$ of this paper. 
\vskip 1mm
\item[(ii)] The role of the number $r$ appearing in condition $(\ast)$ in the proof of~\cite[Proposition 3.6]{cz-cgc} is now played by $\beta_0$ defined in~\eqref{eq:beta-0-defi} above.
\vskip 1mm
\item[(iii)] The set $\cK$ defined in the proof of~\cite[Propposition 3.6]{cz-cgc} corresponds to the set $\cK_{\ep, p, f, \tau, C_0}$ in Definition~\ref{defi:K-C} above.
\vskip 1mm
\item[(iv)] The Lemma 3.5, Proposition 2.13, Lemma 2.4(c), Lemma 2.1(a) and Lemma 3.4 invoked in the proof of~\cite[Proposition 3.6]{cz-cgc} correspond respectively to Lemma~\ref{lemm:N-reduction}, Proposition~\ref{prop:Palais-Smale}, Remark~\ref{rmk:volume-l}, Lemma~\ref{lemm:volume-properties}(b) and Lemma~\ref{lemm:lower-bound} of the present paper. Also, Lemma 2.4(a) of~\cite{cz-cgc} corresponds to the straightforward estimate $\|\nabla u\|_{2}^2 \leq 2D_{\ep, p}(u)$, and the estimate in Lemma 2.4(b), though not explicitly mentioned in the proof of~\cite[Proposition 3.6]{cz-cgc}, corresponds to~\eqref{eq:D-difference} above with $\tau = 0$.
\end{enumerate}
\end{proof}
The following proposition, basically an upgraded version of Proposition~\ref{prop:mountain-pass}, is the main result of this section.

\begin{prop}\label{prop:mountain-pass-upgraded}
Under the same assumptions as Proposition~\ref{prop:mountain-pass}, the perturbed functional $E_{\ep, p, f, \tau}$ has a critical point $u$ such that
\vskip 1mm
\begin{enumerate}
\item[(a)] $\eta_2 \leq D_{\ep, p}(u) \leq C_0 + 1$.
\vskip 1mm
\item[(b)] $E_{\ep, p, f, \tau}(u, \gamma) = \omega_{\ep, p, f, \tau}$ for some $\gamma \in \cE(u)$.
\vskip 1mm
\item[(c)] $\Ind_{\ep, p, f, \tau}(u) \leq 1$. (Recall Definition~\ref{defi:index}.)
\end{enumerate}
\end{prop}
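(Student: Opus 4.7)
The proof proceeds by contradiction and is modeled on the deformation procedure developed in~\cite{cz-cmc,Cheng22}. Assume that every critical point $u$ of $E_{\ep, p, f, \tau}$ satisfying (a) and (b) has $\Ind_{\ep, p, f, \tau}(u) \geq 2$. By Definition~\ref{defi:K-C}, the set
\[
\cK := \{ u \in \cK_{\ep, p, f, \tau, C_0+1} : D_{\ep, p}(u) \geq \eta_2 \}
\]
is compact, and under our contradictory assumption every $u \in \cK$ satisfies $\dim V_u^- \geq 2$. For each such $u$, smoothness follows from Propositions~\ref{prop:interior-regularity} and~\ref{prop:boundary-regularity}, so Lemma~\ref{lemm:morse-nbd} provides radii $r_u, a_u$ and $\theta_u \in (0, 1)$ together with a coordinate chart $\Theta_u$ on which the two descent estimates hold. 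Covering $\cK$ by finitely many such charts, these constants may be taken uniform on an open neighborhood $\cU$ of $\cK$ in $\cM_p$.

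The plan is to modify each path $\gamma_k \in \cP_{\ep, p, f, \tau, \alpha_k, C_0}$ by a two-stage deformation. First, on the portion of $\gamma_k$ lying in the near-maximal sublevel set but outside $\cU$, I apply the pseudo-gradient flow for the local reduction $E^{\cN}_{\ep, p, f, \tau}$ made available by Lemma~\ref{lemm:N-reduction}; this strictly decreases the functional on slices bounded away from all critical points, exactly as in the proof of Proposition~\ref{prop:mountain-pass}. Second, on the portion of $\gamma_k$ entering $\cU$, I exploit that $\dim V_u^- \geq 2$ for every $u \in \cK$: since the unit sphere in $V_u^-$ is connected, a push in a suitably chosen direction in $V_u^-$ moves the slice off the critical point without altering the homotopy class of the full path, so the resulting path still lies in $\cP$. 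Lemma~\ref{lemm:morse-nbd}(b) guarantees that each pushed slice has its functional value strictly reduced by $a_u t^2$, while Lemma~\ref{lemm:D-difference}(b) and Remark~\ref{rmk:volume-l} quantify how the $D_{\ep, p}$-value and the volume terms change. Gluing the two stages via a partition of unity subordinate to the cover of $\cK$ yields modified paths $\widetilde{\gamma}_k \in \cP_{\ep, p, f, \tau, \alpha_k', C_0+1}$ for some $\alpha_k' \to 0$, whose near-maximal slices are uniformly bounded away from $\cK$.

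Applying Proposition~\ref{prop:mountain-pass} to the sequence $(\widetilde{\gamma}_k)$ (with $C_0 + 1$ replacing $C_0$) produces a critical point $u_\infty$ satisfying (a) and (b), obtained as the strong $W^{1, p}$-limit of certain near-maximal slices of $\widetilde{\gamma}_k$. By construction, these slices lie at a positive distance from $\cK$, so $u_\infty \notin \cK$. But any critical point satisfying (a) and (b) belongs to $\cK$, the desired contradiction.

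The main obstacle will be executing stage two: the push-off near high-index critical points must depend continuously on the path parameter $t$, remain consistent across overlapping charts should a single slice lie in several of the $\Theta_u$-neighborhoods, and keep $D_{\ep, p}$ controlled by $C_0 + 1$ throughout. Continuity and consistency rely crucially on the $\dim V_u^- \geq 2$ assumption, which makes the push-off unobstructed and lets us select the direction continuously (e.g.\ by taking the unit vector in $V_u^-$ that is farthest in $L^2$-sense from the range of the $\gamma_k$-slice velocity). The quantitative control over $D_{\ep, p}$ and over $V_f(\widetilde{\gamma}_k), V_{\Div X}(\widetilde{\gamma}_k)$ comes from Lemma~\ref{lemm:D-difference}(b) and Remark~\ref{rmk:volume-l}, but assembling these into a deformation that is uniformly small in $W^{1, p}$ across the whole path, while still achieving the functional decrease predicted by Lemma~\ref{lemm:morse-nbd}(b), is the delicate point of the argument.
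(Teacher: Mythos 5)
Your proposal follows the same contradiction–deformation strategy the paper uses: assume every element of the compact set $\cK'$ of critical points satisfying (a)–(b) has index at least $2$, modify the near-maximal slices of $\gamma_k$ to avoid $\cK'$, then apply Proposition~\ref{prop:mountain-pass} to the modified sequence and derive a contradiction. The paper simply delegates the construction of $\widetilde{\gamma}_k$ (your two-stage deformation) to the proof of~\cite[Proposition~5.5]{Cheng22}, stating only the two resulting properties (s1)–(s2).

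Two minor remarks on the sketch, neither of which changes the verdict. First, your stage-one pseudo-gradient flow is redundant: Proposition~\ref{prop:mountain-pass} already performs a pseudo-gradient deformation internally, so the construction of $\widetilde{\gamma}_k$ only needs to achieve the push-off near $\cK'$ (property (s2)). Second, the particular recipe ``unit vector in $V_u^-$ farthest in $L^2$ from the range of the slice velocity'' is not well-defined in general and, more importantly, does not by itself guarantee the hypothesis $(\delta\widetilde{E})_{\varphi}(\xi)\le 0$ of Lemma~\ref{lemm:morse-nbd}(b); the standard choice is to push along the projection of $\Theta_u^{-1}(\gamma_k(t))$ onto $V_u^-$ when this projection is nonzero, and to use the connectedness of the unit sphere in $V_u^-$ (this is exactly where $\dim V_u^-\ge 2$ enters) to select a push direction continuously across the slices where the projection vanishes. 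You correctly flag this as the delicate point, so this is a refinement rather than a gap.
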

\begin{proof}[Proof]
Define 
\[
\cK' = \{v \in \cK_{\ep, p, f, \tau, C_0+1}\ |\  \eta_2 \leq D_{\ep, p}(v) \},
\]
which is a compact subset of $\cK_{\ep, p, f, \tau, C_0 + 1}$ and is non-empty by Proposition~\ref{prop:mountain-pass} applied to the sequence $(\gamma_k)$. To prove the proposition, it suffices to produce an element of $\cK'$ with index at most $1$. Assume by contradiction that 
\[
\Ind_{\ep, p, f, \tau}(v) \geq 2, \text{ for all }v \in \cK'.
\]
We then follow the proof of~\cite[Proposition 5.5]{Cheng22} to obtain a new sequence $(\widetilde{\gamma}_k) \subset \cP$ out of $(\gamma_k)$ such that 
\begin{enumerate}
\item[(s1)] $\widetilde{\gamma}_k \in \cP_{\ep, p, f, \tau, \alpha_k, C_0 + 1}$ for all $k$.
\vskip 1mm
\item[(s2)] For all $v \in \cK'$, there exist $d_0 > 0$ and $k_0 \in \NN$ so that 
\[
\| \widetilde{\gamma}_k(t) - v \|_{1, p} \geq d_0,
\]
whenever $k \geq k_0$ and $E_{\ep, p, f, \tau}(\widetilde{\gamma}_k(t), \widetilde{\gamma}_k|_{[0, t]}) \geq \omega_{\ep, p, f, \tau} - \alpha_k$. 
\end{enumerate}
We refer the reader to~\cite{Cheng22} for details on the construction of $(\widetilde{\gamma}_k)$, and only note here that 
\vskip 1mm
\begin{enumerate}
\item[(i)] The constants $\eta_2$ and $\delta$ in Step 3 of the proof of~\cite[Proposition 5.5]{Cheng22} correspond respectively to $\eta_3$ from Lemma~\ref{lemm:lower-bound} and $\beta_0$ from~\eqref{eq:beta-0-defi}.
\vskip 1mm
\item[(ii)] The Proposition 4.3, Definition 4.1, Lemma 5.2, Lemma 2.3 and Corollary 2.2 invoked in the proof of~\cite[Proposition 5.5]{Cheng22} should be replaced, respectively, by Lemma~\ref{lemm:morse-nbd}, Definition~\ref{defi:index}, Lemma~\ref{lemm:lower-bound}, Lemma~\ref{lemm:volume-properties}(b) and Lemma~\ref{lemm:volume-properties}(a) of the present paper. 
\end{enumerate}
To derive a contradiction and finish the proof of the proposition, note that Proposition~\ref{prop:mountain-pass} applied to $(\widetilde{\gamma}_k)$ gives $(t_k)$ in $[0, 1]$ and $u \in \cK'$ such that 
\[
|E_{\ep, p, f, \tau}(\widetilde{\gamma}_k(t_k), \widetilde{\gamma}_k|_{[0, t_k]}) - \omega_{\ep, p, f, \tau}| \leq \alpha_k,
\]
and that, up to taking a subsequence, $\widetilde{\gamma}_k(t_k)$ converges strongly in $W^{1, p}$ to $u$. This is in contradiction with property (s2) above. Hence $\cK'$ must contain an element with index at most $1$. 
\end{proof}
\subsection{Existence of critical points with index bounds}\label{subsec:extract}
The purpose of this section is to produce sequences of sweepouts to which the results of the previous section can be applied. The key observation is that for all $\tau \in (-1, 1)\setminus \{0\}$, $u \in \cM_p$ and $\gamma \in \cE(u)$, and for all $0 < r < s < 1$, we have by Lemma~\ref{lemm:D-difference}(a) that
\begin{equation}\label{eq:monotonicity}
\frac{1}{s - r} \cdot\Big( \frac{E_{\ep, p, r  f, r  \tau}(u, \gamma)}{r} - \frac{E_{\ep, p, s  f, s \tau}(u, \gamma)}{s}  \Big) 
\geq  b(|\tau|) \cdot D_{\ep, p}(u) \geq 0,
\end{equation}
whether or not $\tau$ is positive.
\begin{lemm}\label{lemm:derivative-to-sweepout}
Suppose $\tau \in (-1, 1) \setminus \{0\}$ and that for some $r_0 \in (0, 1)$ and $c_0 > 0$ the derivative $\frac{d}{dr}\Big( -\frac{\omega_{\ep, p, r f, r\tau}}{r} \Big)\Big|_{r = r_0}$ exists and satisfies
\begin{equation}\label{eq:derivative-bound}
0 \leq \frac{d}{dr}\Big( -\frac{\omega_{\ep, p, r f, r\tau}}{r} \Big)\Big|_{r = r_0} \leq c_0.
\end{equation}
Then, letting $\alpha_k = \frac{r_0}{k}$ and $C_0 = \frac{8 c_0}{b(|\tau|)}$, the collection $\cP_{\ep, p, r_0 f, r_0  \tau, \alpha_k, C_0}$ is non-empty for sufficiently large $k$.
\end{lemm}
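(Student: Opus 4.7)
The plan is to apply Struwe's monotonicity trick with parameters tuned to the derivative bound $c_0$. Set $\phi(r) := \omega_{\ep,p,rf,r\tau}/r$; by~\eqref{eq:monotonicity}, for each fixed sweepout $\gamma$ and each slice $t$, $E_{\ep,p,rf,r\tau}(\gamma(t),\gamma|_{[0,t]})/r$ is non-increasing in $r$, so taking successively the supremum over $t$ and the infimum over $\gamma$ shows $\phi$ itself is non-increasing. First I would fix $\delta_k := 1/(4kc_0)$, $\beta_k := 1/(4k)$, and set $r_k := r_0 - \delta_k$. Since $-\phi$ is differentiable at $r_0$ with derivative in $[0,c_0]$, for $k$ large we have
\[
\phi(r_k) - \phi(r_0) \leq (c_0 + \epsilon_k)\delta_k, \qquad \epsilon_k \to 0.
\]
I would then pick $\gamma_k \in \cP$ nearly optimal at parameters $(r_k f, r_k\tau)$, in the sense that $\sup_{t} E_{\ep,p,r_k f, r_k\tau}(\gamma_k(t),\gamma_k|_{[0,t]}) \leq \omega_{\ep,p,r_k f, r_k\tau} + r_k\beta_k$.

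Abbreviating $E(r,t) := E_{\ep,p,rf,r\tau}(\gamma_k(t),\gamma_k|_{[0,t]})$, the two conditions of Definition~\ref{defi:P-a-C} will follow from the two forms of monotonicity. For the supremum bound, the pointwise monotonicity in $r$ gives $\sup_t E(r_0,t)/r_0 \leq \sup_t E(r_k,t)/r_k \leq \phi(r_k) + \beta_k \leq \phi(r_0) + (c_0+\epsilon_k)\delta_k + \beta_k$; multiplying by $r_0$ and using $(c_0+\epsilon_k)\delta_k + \beta_k \leq 3/(4k)$ for $k$ large yields $\sup_t E(r_0,t) \leq \omega_{\ep,p,r_0 f,r_0\tau} + \alpha_k$. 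For the $D_{\ep,p}$-bound at near-max slices, at any $t$ with $E(r_0,t) \geq \omega_{\ep,p,r_0 f,r_0\tau} - \alpha_k$, the slice-level monotonicity~\eqref{eq:monotonicity} applied with the pair $(r_k, r_0)$ gives
\[
b(|\tau|)\delta_k D_{\ep,p}(\gamma_k(t)) \leq \frac{E(r_k,t)}{r_k} - \frac{E(r_0,t)}{r_0} \leq \big[\phi(r_k)+\beta_k\big] - \big[\phi(r_0)-\alpha_k/r_0\big] \leq (c_0+\epsilon_k)\delta_k + \beta_k + \alpha_k/r_0.
\]
Dividing by $b(|\tau|)\delta_k$ and substituting $\beta_k/\delta_k = c_0$ and $\alpha_k/(r_0\delta_k) = 4c_0$ yields $D_{\ep,p}(\gamma_k(t)) \leq (6c_0+\epsilon_k)/b(|\tau|) \leq 7c_0/b(|\tau|) < C_0$ for $k$ large.

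The main subtlety is the simultaneous tuning of $\delta_k, \beta_k, \alpha_k$: the sup-level condition demands $r_0\big((c_0+\epsilon_k)\delta_k + \beta_k\big) \leq \alpha_k$, while the slice-level condition demands $\delta_k$ be \emph{not too small} relative to $\alpha_k/r_0$ and $\beta_k$, so that the ratios $\alpha_k/(r_0\delta_k)$ and $\beta_k/\delta_k$ stay bounded by constant multiples of $c_0$. The choices $\delta_k = 1/(4kc_0)$, $\beta_k = 1/(4k)$ hit this balance, and the factor $8$ (rather than $6$ or $7$) in $C_0 = 8c_0/b(|\tau|)$ provides exactly the slack needed to absorb the $\epsilon_k$ error for $k$ large.
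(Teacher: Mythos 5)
Your proposal is correct and follows essentially the same argument as the paper: choose a nearly optimal sweepout at the slightly smaller parameter $r_k = r_0 - \frac{1}{4c_0 k}$, then combine the difference-quotient bound from differentiability with the pointwise slice monotonicity~\eqref{eq:monotonicity} to verify both conditions in Definition~\ref{defi:P-a-C}. The only deviation is that you take $\beta_k = \frac{1}{4k}$ rather than the paper's $\frac{1}{2k}$ and carry an explicit $\epsilon_k$ term, but the parameter balance and the resulting constant $C_0 = 8c_0/b(|\tau|)$ are the same.
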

\begin{proof}
Let $r_k = r_0 - \frac{1}{4c_0 \cdot k}$. Then for sufficiently large $k$ there holds
\begin{equation}\label{eq:difference-quotient}
\frac{1}{r_0 - r_k}\Big(\frac{\omega_{\ep, p, r_k f, r_k \tau}}{r_k} - \frac{\omega_{\ep, p, r_0 f, r_0 \tau}}{r_0}\Big) \leq 2c_0.
\end{equation}
For all such $k$, choose a path $\gamma_k \in \cP$ such that 
\[
\sup_{t \in [0, 1]} E_{\ep, p, r_k f, r_k \tau}(\gamma_k(t), \gamma_k|_{[0, t]}) < \omega_{\ep, p, r_k f, r_k \tau} + \frac{r_k}{2k}.
\]
Then by~\eqref{eq:difference-quotient} and~\eqref{eq:monotonicity} we have
\begin{equation}\label{eq:mono-condition-1}
\begin{split}
\frac{E_{\ep, p, r_0f, r_0\tau}(\gamma_k(t), \gamma_k|_{[0, t]})}{r_0} \leq\ & \frac{E_{\ep, p, r_k f, r_k  \tau}(\gamma_k(t), \gamma_k|_{[0, t]})}{r_k}\\
 \leq\ & \frac{\omega_{\ep, p, r_k f, r_k \tau}}{r_k} + \frac{1}{2k} \leq  \frac{\omega_{\ep, p, r_0 f, r_0 \tau}}{r_0} + \frac{1}{k},
\end{split}
\end{equation}
for all $t \in [0, 1]$. This shows that $\gamma_k$ satisfies the first requirement in Definition~\ref{defi:P-a-C}. Next, suppose $t \in [0, 1]$ is such that 
\[
E_{\ep, p, r_0 f, r_0  \tau}(\gamma_k(t), \gamma_k|_{[0, t]}) \geq \omega_{\ep, p, r_0 f, r_0  \tau} - \frac{r_0}{k}.
\]
Then by~\eqref{eq:monotonicity} and the part of~\eqref{eq:mono-condition-1} after the first inequality, we have
\[
\begin{split}
b(|\tau|) \cdot D_{\ep, p}(\gamma_k(t)) =\ &\frac{1}{r_0 - r_k} \cdot \Big( \frac{E_{\ep, p, r_k f, r_k \tau}(\gamma_k(t), \gamma_k|_{[0, t]})}{r_k} - \frac{E_{\ep, p, r_0 f, r_0  \tau}(\gamma_k(t), \gamma_k|_{[0, t]})}{r_0}  \Big) \\
\leq\ & 8c_0.
\end{split}
\]
Therefore $\gamma_k \in \cP_{\ep, p, r_0 f, r_0 \tau, \frac{r_0}{k}, \frac{8c_0}{b(|\tau|)}}$, and the proof is complete.
\end{proof}

\begin{lemm}\label{lemm:monotonicity}
Given $\tau \in (-1, 1) \setminus \{0\}$, for almost every $r_0 \in (0, 1)$ there exists a sequence $(\ep_j)$ converging to zero and a constant $c_0 > 0$ such that 
\[
0 \leq \frac{d}{dr}\Big( -\frac{\omega_{\ep_j, p, r f, r\tau}}{r} \Big)\Big|_{r = r_0} \leq c_0 \text{ for all }j.
\]
\end{lemm}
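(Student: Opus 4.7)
The approach is Struwe's monotonicity trick, tailored to the $\tau$-perturbed setting via Lemma~\ref{lemm:D-difference}(a). That lemma together with~\eqref{eq:monotonicity} says that for any $u \in \cM_p$ and $\gamma \in \cE(u)$, the quotient $r \mapsto E_{\ep, p, rf, r\tau}(u, \gamma)/r$ is non-increasing on $(0, 1)$; taking $\sup_t$ along any $\gamma \in \cP$ and then $\inf_{\gamma \in \cP}$ preserves this monotonicity, so for each fixed $\ep \in (0, 1]$ the function
\[
\phi_\ep(r) := -\frac{\omega_{\ep, p, rf, r\tau}}{r}
\]
is non-decreasing on $(0, 1)$. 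In particular $\phi_\ep$ is differentiable at almost every $r$ with $\phi_\ep'(r) \geq 0$, which immediately gives the lower bound in the conclusion. The remaining task is to extract, for a.e.\ $r_0$, a subsequence $(\ep_j)$ along which $\phi_{\ep_j}'(r_0)$ is bounded above.

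For this I first establish that on every compact subinterval $[a, b] \subset (0, 1)$ the family $\{\phi_\ep\}_{\ep \in (0, 1]}$ is uniformly bounded, so that a Fatou-type argument applies. For the uniform upper bound on $\phi_\ep$, I note that for any $\gamma \in \cP$, continuity of $t \mapsto E_{\ep, p, rf, r\tau}(\gamma(t), \gamma|_{[0, t]})$ together with the fact that $D_{\ep, p, r\tau}(\gamma(0)) = \frac{\ep^{p-2}\pi}{p} \geq 0$ for the constant map $\gamma(0)$ forces $\sup_t E_{\ep, p, rf, r\tau}(\gamma(t), \gamma|_{[0, t]}) \geq 0$, whence $\omega_{\ep, p, rf, r\tau} \geq 0$ and $\phi_\ep \leq 0$ uniformly in $\ep$ and $r$. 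For the uniform lower bound on $\phi_\ep(a)$, I fix an arbitrary reference sweepout $\gamma_0 \in \cP$ (e.g., the standard one by conformal rescalings of a sphere). Since $\ep^{p-2} \leq 1$ for $\ep \in (0, 1]$ and since $G_{r\tau}(y, \xi) \leq (1 + 2|\xi|^2)^{p/2}$ by~\eqref{eq:F-estimates}, the quantity $E_{\ep, p, af, a\tau}(\gamma_0(t), \gamma_0|_{[0, t]})$ is bounded in $\ep \in (0, 1]$ and $t \in [0, 1]$ by a constant $M = M(\gamma_0, a, f, \tau)$, yielding $\omega_{\ep, p, af, a\tau} \leq M$, so $\phi_\ep(a) \geq -M/a$, and monotonicity pins $\phi_\ep \in [-M/a, 0]$ on all of $[a, b]$.

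With this in place the conclusion follows by a standard Fatou/diagonal argument. Take any sequence $\ep_k \to 0^+$; for each $k$ the non-decreasing function $\phi_{\ep_k}$ satisfies
\[
\int_a^b \phi_{\ep_k}'(r)\, dr \leq \phi_{\ep_k}(b) - \phi_{\ep_k}(a) \leq M/a,
\]
so Fatou's lemma gives $\int_a^b \liminf_k \phi_{\ep_k}'(r)\, dr \leq M/a < \infty$, and hence $\liminf_k \phi_{\ep_k}'(r) < \infty$ for a.e.\ $r \in (a, b)$. Exhausting $(0, 1)$ by countably many such intervals and removing the union of the corresponding null sets (together with the null set on which some $\phi_{\ep_k}$ fails to be differentiable) yields a set of full measure in $(0, 1)$ at every point $r_0$ of which one may extract a subsequence $(\ep_{k_j})$ with $\phi_{\ep_{k_j}}'(r_0) \leq c_0(r_0)$ for some finite $c_0$, as required. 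The only genuinely nontrivial ingredient is the uniform upper bound on $\omega_{\ep, p, af, a\tau}$ via a test sweepout; this is a standard computation analogous to the one in the free boundary case of~\cite{Cheng22}, with the $\tau$-dependence absorbed entirely by Lemma~\ref{lemm:D-difference}(a).
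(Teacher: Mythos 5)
Your proposal is correct and follows essentially the same route as the paper: derive from Lemma~\ref{lemm:D-difference}(a) via~\eqref{eq:monotonicity} that $r \mapsto -\omega_{\ep, p, rf, r\tau}/r$ is non-decreasing, then apply a Fatou argument to the derivatives. The extra step you include — establishing a uniform (in $\ep$ and in $r$ on compacta) two-sided bound on $-\omega_{\ep, p, rf, r\tau}/r$ via a fixed test sweepout and the trivial lower bound $\omega \geq 0$ — is a detail the paper delegates to the cited reference, but it is indeed what makes the Fatou step produce a finite bound, and your treatment of it is correct.
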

\begin{proof}
Start with an arbitrary sequence $(\ep_j)$ converging to $0$. For all $j$, we have from~\eqref{eq:monotonicity} that the function
\[
r \mapsto  -\frac{\omega_{\ep_j, p, r f, r\tau}}{r}
\]
is non-decreasing on $(0, 1)$. The remainder of the proof is essentially the same as that of~\cite[Proposition 3.2(a)(c)]{cz-cgc}.
\end{proof}
Combining Lemma~\ref{lemm:monotonicity}, Lemma~\ref{lemm:derivative-to-sweepout} and Proposition~\ref{prop:mountain-pass-upgraded}, we immediately obtain the following existence result. The proof is straightforward and we omit it.
\begin{prop}\label{prop:perturbed-existence}
Given $\tau \in (-1, 1) \setminus \{0\}$, for almost every $r_0 \in (0, 1)$, there exist a sequence $(\ep_j)$ converging to zero and a constant $C_0 > 0$ so that for all $j$, the perturbed functional $E_{\ep_j, p, r_0 f, r_0  \tau}$ has a critical point $u_j$ with the following properties:
\begin{enumerate}
\item[(a)] $u_j$ is non-constant, and $D_{\ep_j, p}(u_j) \leq C_0$.
\vskip 1mm
\item[(b)] $\Ind_{\ep_j, p, r_0f, r_0\tau}(u_j) \leq 1$.
\end{enumerate}
\end{prop}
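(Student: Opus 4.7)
The plan is to chain together the three preceding results in a direct manner. First, I would apply Lemma~\ref{lemm:monotonicity} to the given $\tau \in (-1,1)\setminus\{0\}$ to obtain, for almost every $r_0 \in (0,1)$, a sequence $(\ep_j)$ converging to zero and a constant $c_0 = c_0(r_0) > 0$ for which
\begin{equation*}
0 \leq \frac{d}{dr}\Big(-\frac{\omega_{\ep_j, p, rf, r\tau}}{r}\Big)\Big|_{r=r_0} \leq c_0, \qquad \text{for every } j.
\end{equation*}
Such $r_0$ form a full-measure subset of $(0,1)$, and I would fix one throughout the rest of the argument.

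For each such $j$, the derivative bound is precisely what is required to invoke Lemma~\ref{lemm:derivative-to-sweepout} with $\ep = \ep_j$, yielding, for all sufficiently large $k$, a sweepout $\gamma_{j,k} \in \cP_{\ep_j, p, r_0 f, r_0\tau,\, r_0/k,\, C_0'}$, where $C_0' := 8c_0/b(|\tau|)$ is independent of both $j$ and $k$ and the slackness parameter $\alpha_k = r_0/k$ tends to zero. Feeding the sequence $(\gamma_{j,k})_k$ into Proposition~\ref{prop:mountain-pass-upgraded} for each fixed $j$ then produces a critical point $u_j$ of $E_{\ep_j, p, r_0 f, r_0\tau}$ satisfying $\eta_2 \leq D_{\ep_j, p}(u_j) \leq C_0' + 1$ and $\Ind_{\ep_j, p, r_0 f, r_0\tau}(u_j) \leq 1$. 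Setting $C_0 := C_0' + 1$ delivers both the energy upper bound in (a) and the index bound (b) directly.

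The only point requiring a brief comment is the non-constancy of $u_j$ in (a). A constant map $c \in \Sigma$ has $D_{\ep_j, p}(c) = \pi \ep_j^{p-2}/p$, which tends to zero, while the critical point satisfies $D_{\ep_j, p}(u_j) \geq \eta_2 > 0$; hence, after passing to a subsequence in $j$ such that $\pi \ep_j^{p-2}/p < \eta_2$, each $u_j$ must be non-constant. No genuinely new estimate is needed --- the proposition is essentially a mechanical assembly of the three inputs above, the substantive effort having already been invested in Lemma~\ref{lemm:D-difference}(a) (the monotonicity trick for the capillary functional, which is more delicate than in the free-boundary case because $D_{\ep, p, r\tau}$ itself depends on $r$) and in the pseudogradient deformation argument underlying Proposition~\ref{prop:mountain-pass-upgraded}.
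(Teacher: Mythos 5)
Your chaining of Lemma~\ref{lemm:monotonicity}, Lemma~\ref{lemm:derivative-to-sweepout}, and Proposition~\ref{prop:mountain-pass-upgraded} is exactly the intended argument (the paper omits the proof as "straightforward"), and the bookkeeping with $C_0' = 8c_0/b(|\tau|)$ and $C_0 = C_0' + 1$ is correct.

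The one step that is not quite right is the justification of non-constancy. You write that $D_{\ep_j, p}(u_j) \geq \eta_2 > 0$ while $D_{\ep_j, p}(c) = \pi\ep_j^{p-2}/p \to 0$, and then pass to a subsequence "such that $\pi\ep_j^{p-2}/p < \eta_2$." This treats $\eta_2$ as a constant independent of $j$, but Lemma~\ref{lemm:lower-bound} states explicitly that $\eta_2$ depends on $\ep$ (along with $p, H_1, \tau$). Tracing through that proof: the Sobolev constant $K$ there scales like $\ep^{-(p-2)/p}$, so $\alpha_0 \lesssim \ep^{p-2}$ and hence $\eta_2(\ep) \lesssim \ep^{p-2}$ as well. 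Since $D_{\ep, p}(\text{constant}) = \pi\ep^{p-2}/p$ decays at \emph{the same} rate, letting $j\to\infty$ along a subsequence does not by itself settle the comparison $\pi\ep_j^{p-2}/p < \eta_2(\ep_j)$. What actually makes the argument work is that the threshold $\eta_2(\ep)$ from Lemma~\ref{lemm:lower-bound} is constructed to exceed $D_{\ep, p}(\text{const}) = \pi\ep^{p-2}/p$ for each fixed $\ep$ — this is needed so that the hypothesis "$D_{\ep, p}(\gamma(t_0)) < \eta_2$" is non-vacuous at the constant-map endpoints of a sweepout $\gamma\in\cP$ (otherwise the degree argument in the proof never gets off the ground). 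Once one knows $\eta_2(\ep_j) > D_{\ep_j, p}(\text{const})$ for every $j$, non-constancy of $u_j$ follows immediately for each $j$, and no passage to a subsequence is needed at all. Your conclusion is therefore correct, but the mechanism you invoke — making $\pi\ep_j^{p-2}/p$ small by taking $j$ large — does not actually close the gap, because the comparison quantity shrinks along with it.
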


\section{Proof of the main theorem}\label{sec:proof}
From this point on in the paper we fix $p \in (2, \min\{p_0, p_1, p_2, p_3\}]$, with the $p_i$ coming respectively from Propositions~\ref{prop:interior-regularity},~\ref{prop:boundary-regularity},~\ref{prop:eta-regularity} and~\ref{prop:lower-bound}. Note that the threshold for $p$ depends only on $\tau$. 

We first reiterate Theorem~\ref{thm:main-1} with the meaning of the Morse index bound clarified in terms of the notation in Section~\ref{sec:index-comparison}. 
\begin{thm*}
Given $H \in [0, \underline{H'})$ and $\tau \in (-1, 1)$, for almost every $r \in (0, 1)$ there exists a smooth, non-constant, weakly conformal branched immersion $u: (\overline{\bB}, \partial \bB) \to (\overline{\Omega'}, \Sigma)$ solving~\eqref{eq:H-cmc-bvp} with $rH$ and $r\tau$ instead of $H$ and $\tau$. Moreover, in the notation of Section~\ref{subsec:index-notation}, we have $\Ind^I_{rH, r\tau}(u) \leq 1$.
\end{thm*}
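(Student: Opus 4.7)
The plan is to carry out the outline in Section~\ref{subsec:outline}. Without loss of generality assume $\tau \neq 0$, and fix $f$ so that it equals $H$ on a neighborhood of $\overline{\Omega'}$ and satisfies the hypotheses of Proposition~\ref{prop:maximum-principle}. First I apply Proposition~\ref{prop:perturbed-existence} to produce, for almost every $r \in (0,1)$, a sequence $\ep_j \to 0$, a constant $C_0$ independent of $j$, and non-constant critical points $u_j \in \cM_p$ of $E_{\ep_j, p, rf, r\tau}$ satisfying $D_{\ep_j, p}(u_j) \leq C_0$ and $\Ind_{\ep_j, p, rf, r\tau}(u_j) \leq 1$. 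Proposition~\ref{prop:maximum-principle}(a) forces $u_j(\overline{\bB}) \subset \overline{\Omega'_{t_0}}$, while Proposition~\ref{prop:lower-bound} provides a lower bound $\|\nabla u_j\|_2 \geq \eta_1$.

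Next I pass to the limit as $j \to \infty$. Define the energy concentration set $\cS$ as the set of points $x \in \overline{\bB}$ such that $\liminf_{j}\int_{\bB_s(x)\cap \bB}|\nabla u_j|^2 \geq \eta_0$ for every $s > 0$, with $\eta_0$ as in Proposition~\ref{prop:eta-regularity}. Since the $u_j$ have uniformly bounded Dirichlet energy, $\cS$ is finite. Away from $\cS$, Proposition~\ref{prop:eta-regularity} combined with the quantitative $W^{2,4}$ bound from Remark~\ref{rmk:quantitative-W22} gives $\ep_j$-independent $W^{2,4}$-estimates, so after extracting a subsequence the $u_j$ converge in $C^{1,\alpha}_{\loc}(\overline{\bB}\setminus \cS)$ to a limit $u_\infty \in W^{1,q}(\bB;\RR^3)\cap C^0(\overline{\bB}, \partial \bB; \RR^3, \Sigma)$ for some $q > 2$, which satisfies the weak form~\eqref{eq:weak-EL} (with $rf, r\tau$). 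If $\cS = \emptyset$ or (more generally) if $\|\nabla u_\infty\|_2 \geq \eta_1$, then by Remark~\ref{rmk:C0-regularity} $u_\infty$ is smooth on $\overline{\bB}$ and non-constant; Proposition~\ref{prop:maximum-principle}(b) yields weak conformality and $u_\infty(\overline{\bB}) \subset \overline{\Omega'}$, and I take $\widetilde{v} := u_\infty$.

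If instead $u_\infty$ fails to be non-constant (or has $\|\nabla u_\infty\|_2 < \eta_1$), there must be a concentration point $x_0 \in \cS$. Pick centers $x_j \to x_0$ and scales $\rho_j \to 0$ so that $\int_{\bB_{\rho_j}(x_j)\cap \bB}|\nabla u_j|^2 = \frac{\eta_0}{2}$, and rescale: $v_j(y) := u_j(x_j + \rho_j y)$ (reparametrising the domain by a conformal map flattening $\partial\bB$ when $x_0 \in \partial \bB$). The rescaling argument (following~\cite{Cheng22}) produces a non-constant smooth limit $v: \RR^2_+ \to \RR^3$ (or $\RR^2$ in the interior case) of finite Dirichlet energy. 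The main obstacle here, as flagged in Section~\ref{subsec:outline}, is to verify, via a scale comparison in the spirit of Proposition~\ref{prop:scale-comparison} of~\cite{Cheng22}, that $\ep_j/\rho_j \to 0$, so that the perturbation term in~\eqref{eq:EL} vanishes in the limit, and that the bubbling center stays within bounded distance of $\partial \bB$ in the rescaled coordinates when $x_0 \in \partial \bB$, giving a limit on $\RR^2_+$ (so one obtains a disk and not a sphere). The interior bubble case must be ruled out using the presence of $r\tau \cdot V_{\Div X}$ and the boundary geometry exactly as in~\cite{Cheng22}. Granting this, $v$ solves~\eqref{eq:half-space-bvp} with constant $f \equiv rH$ and $\tau$ replaced by $r\tau$; by Corollary~\ref{coro:removal-singularity}, pulling $v$ back by a conformal map from $(\bB, \partial\bB\setminus\{x_0\})$ onto $(\RR^2_+, \partial \RR^2_+)$ and applying Remark~\ref{rmk:C0-regularity} and Proposition~\ref{prop:maximum-principle}(b) produces a smooth, non-constant, weakly conformal solution $\widetilde{v}:(\overline{\bB},\partial \bB) \to (\overline{\Omega'}, \Sigma)$ of~\eqref{eq:H-cmc-bvp} with $(rH, r\tau)$. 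Corollary~\ref{coro:branch} then upgrades $\widetilde{v}$ to a branched immersion.

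Finally I transfer the index bound. By the logarithmic cutoff trick applied near the finitely many points where $(v_j)$ or $(u_j)$ concentrate, for any finite-dimensional subspace $\cL \subset \cV$ on which $(\delta^2 E_{rf, r\tau})_{\widetilde{v}}$ is negative definite I can produce, for $j$ large, a subspace of $T_{u_j}\cM_p$ of the same dimension on which $(\delta^2 E_{\ep_j, p, rf, r\tau})_{u_j}$ is negative definite, using that the perturbation part of the second variation vanishes as $\ep_j \to 0$ (see~\eqref{eq:second-variation}) and that the cutoff error becomes negligible in the limit. Hence $\Ind^E_{rf, r\tau}(\widetilde{v}) \leq \liminf_j \Ind_{\ep_j, p, rf, r\tau}(u_j) \leq 1$. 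Since $f \equiv H$ on $\overline{\Omega'}$ and $\widetilde{v}$ takes values there, $(\delta^2 E_{rf, r\tau})_{\widetilde{v}} = (\delta^2 E_{rH, r\tau})_{\widetilde{v}}$ as given by~\eqref{eq:second-variation-0}, which is~\eqref{eq:E-second-variation-H} with $H, \tau$ replaced by $rH, r\tau$. Combining Proposition~\ref{prop:A-E-index} and Proposition~\ref{prop:index-comparison-2} yields $\Ind^I_{rH, r\tau}(\widetilde{v}) = \Ind^A_{rH, r\tau}(\widetilde{v}) = \Ind^E_{rH, r\tau}(\widetilde{v}) \leq 1$, completing the proof.
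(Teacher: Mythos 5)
Your proposal follows essentially the same architecture as the paper's proof: apply Proposition~\ref{prop:perturbed-existence}, pass to the $\ep_j \to 0$ limit with a finite concentration set $\cS$, handle $\cS = \emptyset$ directly and otherwise bubble, then remove the singularity, invoke the maximum principle, and transfer the index bound via the two comparison propositions. That said, there are a few imprecisions worth flagging.

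The substantive misstatement is your assertion that ``the interior bubble case must be ruled out using the presence of $r\tau\cdot V_{\Div X}$ and the boundary geometry.'' That is not the mechanism. If concentration happens in the interior, or if the boundary concentration point escapes to infinity after rescaling, the bubble is a non-constant finite-energy solution of $\Delta v = r_0 f(v)\, v_{x^1}\times v_{x^2}$ on all of $\RR^2$; when $H=0$ Liouville's theorem kills it, and when $H\neq 0$ the Brezis--Coron classification forces $v$ to parametrize a round sphere of mean curvature $r_0 H$, which the standing assumption $H < \underline{H'}$ shows cannot fit inside $\overline{\Omega'_{t_0}}$. This is exactly the content of Proposition~\ref{prop:scale-comparison}(b); the term $\tau V_{\Div X}$ is compactly supported near $\Sigma$ and plays no part in the contradiction.

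Two smaller points. First, your case split ``take $\widetilde v = u_\infty$ whenever $\|\nabla u_\infty\|_2 \geq \eta_1$, otherwise bubble'' silently assumes that when $\cS \neq \emptyset$ but $u_\infty$ retains enough energy, $u_\infty$ extends smoothly across $\cS$ and lies in $W^{1,q}$ for some $q>2$ so that Remark~\ref{rmk:C0-regularity} applies. That extension needs a separate removable singularity argument for $u_\infty$ itself, which you did not supply. It is cleaner --- and is what the paper does --- to split on $\cS = \emptyset$ versus $\cS \neq \emptyset$ and always bubble in the latter case. Second, when choosing the centers $x_j$ and scales $\rho_j$ it is not enough to normalize $\int_{\bB_{\rho_j}(x_j)\cap\bB}|\nabla u_j|^2$; one must also take $x_j$ to be (approximate) local maximizers of the concentration function as in~\eqref{eq:x-t-choice}, so that the rescaled sequence satisfies the small-energy hypothesis~\eqref{eq:small-energy-eta} uniformly and one can extract a non-constant $C^1_{\loc}$ limit.
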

This section is devoted to proving Theorem~\ref{thm:main-1}. As mentioned in Remark~\ref{rmk:tau}, we need only consider the case $\tau \neq 0$. We begin by fixing our choice of $X$ and $f$. With $H \in [0, \underline{H'})$ given, we choose $t_0$ to satisfy~\eqref{eq:t0-choice} and let $f: \RR^3 \to [0, H]$ be a compactly supported smooth function so that 
\begin{equation}\label{eq:f-choice}
f(y) = \left\{
\begin{array}{l}
H, \text{ if }y \in \overline{\Omega'_{\frac{t_0}{4}}},\\
0, \text{ if }y \not\in \Omega'_{\frac{3t_0}{4}}.
\end{array}
\right.
\end{equation}
Next, choose $d <\frac{t_0}{2}$ such that 
\[
\cV_{2d} := \{y \in \RR^3\ |\ \dist(y, \Sigma) < 2d\}
\]
is contained in the tubular neighborhood $\cW$ of $\Sigma$ fixed at the very beginning of Section~\ref{subsec:function-spaces}. With $\zeta$ being a cutoff function that equals $1$ on $\overline{\cV_{d}}$ and vanishes outside of $\cV_{\frac{3d}{2}}$, define $X: \RR^3 \to \RR^3$ by
\begin{equation}\label{eq:X-extension}
X(y) = \zeta(y)\bN(\Pi(y)).
\end{equation}
Note that $f$ and $X$ satisfy the requirements in Definition~\ref{defi:perturbed-definition}. Moreover, as both are supported in $\Omega'_{\frac{3t_0}{4}}$ and the range of $f$ is contained in $[0, H]$, Proposition~\ref{prop:maximum-principle} applies. 
 
Now let $r_0 \in (0, 1)$ be one of the almost every values given by Proposition~\ref{prop:perturbed-existence}. Then we get $C_0 > 0$ and sequences $(\ep_j)$ and $(u_j)$ satisfying conclusions (a) and (b) there. Propositions~\ref{prop:interior-regularity} and~\ref{prop:boundary-regularity} implies that each $u_j$ is smooth on $\overline{\bB}$. Furthermore, since $u_j$ is a non-constant critical point, we have by Proposition~\ref{prop:lower-bound} that
\begin{equation}\label{eq:uniform-lowerbound}
\int_{\bB} |\nabla u_j|^2 \geq \eta_1 \text{ for all }j,
\end{equation}
where we emphasize that $\eta_1$ is independent of $j$. Also, Proposition~\ref{prop:maximum-principle}(a) gives
\begin{equation}\label{eq:L-infinity-bound}
u_j(\overline{\bB})\subset \overline{\Omega'_{t_0}} \text{ for all }j.
\end{equation}
Next, by the energy bound in Proposition~\ref{prop:perturbed-existence}(a), the gradient estimates in Proposition~\ref{prop:eta-regularity}, and the $L^{\infty}$ bound provided by~\eqref{eq:L-infinity-bound}, we see via a standard argument that there exist a finite set $\cS \subset \overline{\bB}$ and a $C^1$-map $u: \overline{\bB}\setminus \cS \to \overline{\Omega'_{t_0}}$ so that, up to taking a subsequence of $(u_j)$, the following hold.
\begin{enumerate}
\item[(i)] $u_j$ converges in $C^1_{\loc}(\overline{\bB}\setminus \cS)$ to $u$.
\vskip 1mm
\item[(ii)] Letting $d = \frac{1}{2}\min_{x, x' \in \cS, x \neq x'} |x - x'|$, to each $x_0 \in \cS$ we may associate a sequence $(x_j)$ in $\overline{\bB}$ converging to $x_0$, and a sequence $(t_j)$ converging to zero, such that 
\begin{equation}\label{eq:x-t-choice}
\int_{\bB_{t_j}(x) \cap \bB}|\nabla u_j|^2 \leq \int_{\bB_{t_j}(x_j)\cap \bB} |\nabla u_j|^2 = \frac{\eta_0}{3}, \text{ for all $x \in \overline{\bB_d(x_0)} \cap \overline{\bB}$,}
\end{equation}
where $\eta_0$ is given by Proposition~\ref{prop:eta-regularity}.
\end{enumerate}
\begin{prop}\label{prop:scale-comparison}
Suppose $\cS \neq \emptyset$. For all $x_0 \in \cS$, with $x_j, t_j$ as in (ii) above, we have
\begin{enumerate}
\item[(a)] $\lim_{j \to \infty} \frac{\ep_j}{t_j} = 0$.
\vskip 1mm
\item[(b)] $\limsup_{j \to \infty} \frac{1 - |x_j|}{t_j} < \infty$. In particular, $\cS \subset \partial \bB$.
\end{enumerate}
\end{prop}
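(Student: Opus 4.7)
The proof proceeds by a concentration-compactness argument at the bubble scale. Define the rescaled maps $v_j(y) := u_j(x_j + t_j y)$ on the expanding domain $\Omega_j := t_j^{-1}(\overline{\bB} - x_j)$. By the choice of $(x_j, t_j)$ recorded in~\eqref{eq:x-t-choice},
\[
\int_{\bB_1(y) \cap \Omega_j} |\nabla v_j|^2\,dy \;\leq\; \int_{\bB_1(0) \cap \Omega_j} |\nabla v_j|^2\,dy \;=\; \frac{\eta_0}{3}
\]
for every $y$ in an expanding neighborhood of the origin, and $v_j$ is a critical point of an Euler-Lagrange system of the form~\eqref{eq:E-L-fermi-2}--\eqref{eq:E-L-fermi-bc}, with scale parameter $r = t_j \to 0$ and effective perturbation $\bep_j := \ep_j / t_j$ playing the role of $\bep$.

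To prove (a), I would argue by contradiction, assuming $\bep_j \geq c > 0$ along a subsequence. The uniform bound $D_{\ep_j, p}(u_j) \leq C_0$ rescales to $\bep_j^{p-2} \int_{\Omega_j} |\nabla v_j|^p\,dy \leq p C_0$, giving a uniform $W^{1,p}$-bound on $v_j$ over the whole rescaled domain. Combined with the a priori estimates of Propositions~\ref{prop:W14-W2q} and~\ref{prop:eta-regularity} (applicable on $\bB_{t_j}(x_j) \cap \bB$ since the energy there is below $\eta_0$), a standard extraction would produce a smooth subsequential limit $v$ on $\RR^2$ or $\RR^2_+$, itself a critical point of a perturbed CMC problem with parameter $\bep_\infty := \lim_{j \to \infty}\bep_j \geq c$, and $C^1_{\loc}$-convergence would ensure $\int_{\bB_1(0)} |\nabla v|^2 = \eta_0/3$, so $v$ is non-constant. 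The contradiction I would aim for comes from Proposition~\ref{prop:lower-bound}: conformally reparameterize $\RR^2$ (resp.\ $\RR^2_+$) by $\bB$ --- preserving the $L^2$-Dirichlet energy --- and rescale appropriately so as to absorb the non-conformally invariant $L^p$-contribution into a new perturbation parameter, producing a critical point of some $E_{\ep', p, \cdot, \cdot}$ on $\bB$ whose $L^2$-energy lies below the threshold $\eta_1^2$, hence forced to be constant.

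To prove (b), note that (a) gives $\bep_j \to 0$. Suppose for contradiction that $\ell_j := (1-|x_j|)/t_j \to \infty$ along a subsequence; this includes the case $x_0 \in \bB$ interior. Then for each fixed $R > 0$, $\bB_R(0) \subset \Omega_j$ eventually, so the domains exhaust $\RR^2$. Since both $\bep_j$ and $t_j$ tend to zero, the perturbation and boundary terms drop out in the limit, and $v$ solves $\Delta v = r_0 H \cdot v_{x^1} \times v_{x^2}$ on $\RR^2$ with $v(\RR^2) \subset \overline{\Omega'_{t_0}}$ and finite Dirichlet energy. An interior version of the removable-singularity theory of Section~\ref{subsec:remove} (classical Sacks-Uhlenbeck at infinity) extends $v$ to a smooth weakly conformal CMC map $\bar v : S^2 \to \RR^3$. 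If $H = 0$, then $\bar v$ is harmonic on a closed surface and hence constant by integration by parts, contradicting $\int_{\bB_1}|\nabla v|^2 = \eta_0/3 > 0$. If $H > 0$, the maximum-principle argument from Proposition~\ref{prop:maximum-principle}(b) applied to $F \circ \bar v$ on the closed $S^2$, together with $r_0 H < H < \underline{H'}$, forces $\bar v(S^2) \subset \overline{\Omega'}$; by Hopf's theorem, $\bar v(S^2)$ is then a round sphere of radius $2/(r_0 H)$, which upon translation within the bounded convex $\overline{\Omega'}$ until it touches $\Sigma'$ from inside yields at the contact point $r_0 H \geq H_{\Sigma'} \geq \underline{H'}$ by tangent-sphere comparison, contradicting $r_0 H < \underline{H'}$.

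The main obstacle lies in the contradiction step of part (a): the $L^p$-term of the perturbed functional is not conformally invariant when $p > 2$, so the reduction of the unbounded-domain limit $v$ to a setup where Proposition~\ref{prop:lower-bound} applies directly requires tracking carefully how the perturbation parameter transforms under conformal reparameterization, making essential use of the fact that the rigidity threshold $\eta_1$ in Proposition~\ref{prop:lower-bound} depends only on $\tau$ and $H_1$ and not on $\ep$.
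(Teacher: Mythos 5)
Your proposal for part (b) follows essentially the paper's route (rescale at scale $t_j$, pass to a finite-energy limit on $\RR^2$, extend to $S^2$, rule it out by maximum principle plus a sphere classification), differing only in the technical closing step: you invoke Hopf's theorem and a tangent-sphere comparison, whereas the paper cites the Brezis--Coron classification and outsources details to~\cite{Cheng22}. Since the limit is a branched immersion rather than an immersion, Hopf's theorem does not directly apply; either use an extension of Hopf to branched CMC spheres or cite Brezis--Coron, which handles possible branching and multiple covering directly. This is a minor repair.

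For part (a), however, your plan has a genuine gap, and it is not the one you flag. You rescale at the bubble scale $t_j$, get a limit $v$ on $\RR^2$ or $\RR^2_+$, and propose to reparameterize conformally back to $\bB$ and apply the small-energy rigidity of Proposition~\ref{prop:lower-bound}. But Proposition~\ref{prop:lower-bound} requires $\int_\bB |\nabla u|^2 < \eta_1^2$, a smallness on the \emph{total} $L^2$-energy, and nothing in your construction delivers this. The bound from~\eqref{eq:x-t-choice} gives $\int_{\bB_1(y)\cap\Omega_j}|\nabla v_j|^2 \le \eta_0/3$ on \emph{unit balls} near the origin; the total energy is controlled only by $\int_{\Omega_j}|\nabla v_j|^2 \le 2 D_{\ep_j,p}(u_j) \le 2C_0$, which passes to the limit (and is conformally invariant), so $\int|\nabla v|^2$ may be as large as $2C_0$. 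There is no mechanism by which conformal reparameterization would shrink this below $\eta_1^2$. In fact, the obstacle you name --- the non-conformal-invariance of the $L^p$-term --- is not a defect to be worked around but exactly the feature the paper exploits. The paper rescales at scale $\ep_j$ (not $t_j$), so the perturbation weight stays at unit size, and then derives a Pohozaev-type identity for the limit $w$ by testing the Euler--Lagrange system against $\zeta^2 x^k w_{x^k}$. Because $p > 2$ breaks the scale invariance, this identity collapses to
\[
\frac{p-2}{p}\int_{\RR^2_+}(2F_\tau(w,\nabla w))^{p/2} = 0,
\]
forcing $w$ constant via~\eqref{eq:F-estimates} --- with no smallness of the total energy needed anywhere. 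That Pohozaev mechanism is the missing idea in your argument; small-energy rigidity in the style of Proposition~\ref{prop:lower-bound} cannot substitute for it here.
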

\begin{proof}[Proof of Proposition~\ref{prop:scale-comparison}(a)]
We argue by contradiction. Negating part (a) yields some $\alpha > 0$ and a subsequence of $j$'s, which we do not relabel, so that 
\[
\frac{\ep_j}{t_j} \geq \alpha \text{ for all }j.
\]
Next, we let
\[
v_j(y) = u_j(x_j + \ep_j y), \text{ for all }y \in \bB_j' := \bB_{\frac{1}{\ep_j}}(-\frac{x_j}{\ep_j}),
\]
and assume that 
\begin{equation}\label{eq:not-blown-away}
\limsup_{j \to \infty} \frac{1 - |x_j|}{\ep_j} < \infty,
\end{equation}
since the case where $\limsup_{j \to \infty} \frac{1 - |x_j|}{\ep_j}  = \infty$ can be treated similarly, and is in fact simpler. Then, as in the proof of~\cite[Proposition 6.3]{Cheng22}, for each $j$ we can find:
\begin{enumerate}
\item[(i)] an isometry $I_j$ of $\RR^2$ that takes $\frac{1}{\ep_j}\big( \frac{x_j}{|x_j|} - x_j \big)$ to the origin and maps $\bB_j'$ to $\bB_j'' := \bB_{\frac{1}{\ep_j}}(\frac{1}{\ep_j}e_2)$;
\vskip 1mm
\item[(ii)] a conformal map $F_j: (\RR^2_+, \partial \RR^2_+) \to (\bB_j'', \partial\bB_j''\setminus \{\frac{2}{\ep_j}e_2\})$ that converges to the identity map in $C^2$ on compact subsets of $\overline{\RR^2_+}$.
\end{enumerate}
By~\eqref{eq:not-blown-away}, the sequence of points $(I_j(0))$ is bounded. Also, letting $\lambda_j = |(F_j)_z|$, we have by (ii) that $(\lambda_j)$ converges to $1$ in $C^{1}_{\loc}(\overline{\RR^2_+})$. We then consider
\[
w_j := v_j \circ I_j^{-1} \circ F_j.
\]
To obtain estimates to pass to the limit as $j \to \infty$, we observe first that Proposition~\ref{prop:perturbed-existence}(a) implies~\eqref{eq:W1p-scale-invariant-bound} with $r = \ep_j$ and $L = L(p, C_0)$. Combining this with Remark~\ref{rmk:quantitative-W22} and Proposition~\ref{prop:W14-W2q} (with $q = 4$), as well as the fact that each $w_j$ maps into $\overline{\Omega'_{t_0}}$, we see that a subsequence of $(w_j)$ converges in $C^{1}_{\loc}(\overline{\RR^2_+})$ to a limit $w$. Since $(I_j(0))$ is bounded and $(F_j)$ converges locally to the identity map, we see with the help of the equality in~\eqref{eq:x-t-choice} and the lower bound $\frac{\ep_j}{\alpha} \geq t_j$ that $w$ is non-constant. Also, by Proposition~\ref{prop:perturbed-existence}(a) we have
\begin{equation}\label{eq:w-p-finite}
\int_{\RR^2_+} |\nabla w|^2 + |\nabla w|^p < \infty.
\end{equation}
Following the proof of~\cite[Proposition 6.3]{Cheng22}, which is ultimately inspired by the work of Lamm~\cite{Lamm2006}, we next derive a Pohozaev type identity for $w$ that forces it to be constant, thereby getting a contradiction. 

The computation is performed on $w_j$ and then passed to the limit. To save space, let us fix $j$ and write $w$, $\lambda$, $\ep$ for $w_j$, $\lambda_j$, $\ep_j$, respectively, and also omit the parameter $r_0$. Then we have $w(\partial\RR^2_+) \subset \Sigma$ and, from~\eqref{eq:first-variation}, that
\begin{equation}\label{eq:w-j-PDE}
\begin{split}
0=\ & \int_{\RR^2_+}[1 + (\ep^2 + 2\lambda^{-2}F_\tau(w, \nabla w))^{\frac{p}{2} - 1}] \big( \bangle{\nabla w, \nabla \psi} - \tau X(w)\cdot (\psi_{x^1} \times w_{x^2} + w_{x^1} \times \psi_{x^2})  \big)\\
& - \tau\int_{\RR^2_+} [1 + (\ep^2 + 2\lambda^{-2}F_\tau(w, \nabla w))^{\frac{p}{2} - 1}] (\nabla X)_{w}(\psi) \cdot w_{x^1} \times w_{x^2} \\
&+ \int_{\RR^2_+}  h(w) \psi \cdot w_{x^1} \times w_{x^2},
\end{split}
\end{equation}
whenever $\psi \in C^{1}_{c}(\overline{\RR^2_+})$ is such that $\psi(x) \in T_{w(x)}\Sigma$ for all $x \in \partial \RR^2_+$. Next, given $R > 0$, we substitute 
\[
\psi(x) =\zeta^2 x^k w_{x^k}
\]
into~\eqref{eq:w-j-PDE}, where $\zeta$ is a cutoff function that equals $1$ on $\bB_R$, vanishes outside of $\bB_{2R}$, and satisfies that $|\nabla \zeta| \leq CR^{-1}$ with $C$ independent of $R$. Note that this choice of $\psi$ is admissible since $w$ is smooth and $w_{x^1} \in T_{w}\Sigma$ on $\partial\RR^2_+$. Using the following identities:
\vskip 1mm
\begin{enumerate}
\item $(x^k w_{x^k}) \cdot w_{x^1} \times w_{x^2} = 0$,
\vskip 1mm
\item $\bangle{\nabla w, \nabla (x^k w_{x^k})} = |\nabla w|^2 + x^k\partial_k\big( \frac{|\nabla w|^2}{2} \big)$,
\vskip 1mm
\item $ (x^kw_{x^k})_{x^1} \times w_{x^2} + w_{x^1} \times (x^k w_{x^k})_{x^2} =  2 w_{x^1} \times w_{x^2} + x^k \partial_k(w_{x^1} \times w_{x^2})$,
\vskip 1mm
\item $(\nabla X)_w (x^k  w_{x^k}) = x^k\partial_k (X(w))$,
\vskip 1mm
\item $(\ep^2 + 2\lambda^{-2}F_\tau)^{\frac{p}{2} - 1} \partial_k F_\tau = \lambda^{2}\partial_k \big[ \frac{1}{p}(\ep^2 + 2\lambda^{-2}F_\tau)^{\frac{p}{2}} \big] + 2 \frac{\partial_k \lambda}{\lambda} (\ep^2 + 2\lambda^{-2}F_\tau)^{\frac{p}{2} - 1}  F_\tau$,
\end{enumerate}
we obtain
\begin{equation}\label{eq:w-j-pohozaev}
\begin{split}
0 =\ & \int_{\RR^2_+} [1 + (\ep^2 + 2\lambda^{-2}F_{\tau})^{\frac{p}{2} - 1}] (2\zeta^2 F_{\tau}) + \int_{\RR^2_+}\zeta^2 x^k \partial_k F_{\tau} +  \zeta^2 \lambda^2 x^k \partial_k \big[ \frac{1}{p}(\ep^2 + 2\lambda^{-2}F_{\tau})^{\frac{p}{2}} \big]\\
& + \int_{\RR^2_+} [1 + (\ep^2 + 2\lambda^{-2}F_{\tau})^{\frac{p}{2} - 1}] \bangle{\nabla (\zeta^2), \nabla w} \cdot (x^k  w_{x^k}) \\
& - \tau \int_{\RR^2_+}[1 + (\ep^2 + 2\lambda^{-2}F_{\tau})^{\frac{p}{2} - 1}] x^k(\zeta^2)_{x^k}   X(w) \cdot w_{x^1} \times w_{x^2}\\
& + 2\int_{\RR^2_+} \frac{\zeta^2 x^k\partial_k\lambda}{\lambda} \cdot (\ep^2 + 2\lambda^{-2}F_{\tau})^{\frac{p}{2}- 1}F_\tau,
\end{split}
\end{equation}
Integrating by parts in the second term on the right hand side gives
\[
\begin{split}
& \int_{\RR^2_+} \zeta^2 x^k \partial_k F_{\tau} +  \zeta^2 \lambda^2 x^k \partial_k \big[ \frac{1}{p}(\ep^2 + 2\lambda^{-2}F_{\tau})^{\frac{p}{2}} \big]\\
 =\ &-2 \int_{\RR^2_+} \zeta^2 \big(  F_{\tau} +  \frac{\lambda^2}{p}(\ep^2 + 2\lambda^{-2}F_\tau)^{\frac{p}{2}} \big)\\
 & - \int_{\RR^2_+} x^k \partial_k (\zeta^2) F_\tau + x^k \partial_k (\zeta^2 \lambda^2)\frac{1}{p}(\ep^2 + 2\lambda^{-2}F_\tau)^{\frac{p}{2}}
 \end{split}
\]
Substituting the above into~\eqref{eq:w-j-pohozaev} and letting $j$ tend to infinity, while recalling that $\lambda_j$ converges to the constant function $1$ in $C^1_{\loc}(\overline{\RR^2_+})$, we get
\[
\begin{split}
\frac{p-2}{p}\int_{\bB^+_R}  (2F_\tau(w, \nabla w))^{\frac{p}{2}} \leq\ &  C\int_{\RR^2_+} |x||\nabla \zeta | \cdot \big[ (1 + (2F_\tau)^{\frac{p}{2} - 1})|\nabla w|^2   +  (F_\tau + (2F_\tau)^{\frac{p}{2}})\big]\\
\leq\  & C\int_{\bB^+_{2R} \setminus \bB^+_{R}} \big[ (1 + (2F_\tau)^{\frac{p}{2} - 1})|\nabla w|^2   +  (F_\tau + (2F_\tau)^{\frac{p}{2}})\big],
\end{split}
\]
with $C$ independent of $R$. Letting $R$ tend to infinity and using~\eqref{eq:w-p-finite}, we get
\[
\frac{p-2}{p}\int_{\RR^2_+}  (2F_\tau(w, \nabla w))^{\frac{p}{2}} = 0,
\]
which forces $w$ to be constant by~\eqref{eq:F-estimates}, a contradiction. 
\end{proof}
\begin{proof}[Proof of Proposition~\ref{prop:scale-comparison}(b)]
We again argue by contradiction. Suppose that, after passing to a subsequence, we have 
\begin{equation}\label{eq:scale-b-negation}
\lim_{j \to \infty} \frac{1 - |x_j|}{t_j} = \infty.
\end{equation}
Define 
\[
v_j(y) = u_j(x_j + t_j y).
\]
Then by~\eqref{eq:scale-b-negation},~\eqref{eq:x-t-choice}, Proposition~\ref{prop:eta-regularity}, and the fact that each $v_j$ maps into $\overline{\Omega'_{t_0}}$, we see that up to taking a subsequence, $(v_j)$ converges in $C^{1}_{\loc}(\RR^2)$ to a limit $v$ which is non-constant by the equality in~\eqref{eq:x-t-choice}. Since $\frac{\ep_j}{t_j} \to 0$ by part (a), we see by rescaling in~\eqref{eq:first-variation} and using the $C^1$ convergence of $(v_j)$ that 
\[
\begin{split}
0 =\ & \int_{\RR^2}  \bangle{\nabla v, \nabla \varphi} - r_0\tau  X(v) \cdot (\varphi_{x^1} \times v_{x^2} + v_{x^1} \times \varphi_{x^2}) \\
& - r_0\tau \int_{\RR^2} (\nabla X)_v(\varphi) \cdot v_{x^1} \times v_{x^2}  + \int_{\RR^2} r_0 h(u)\varphi \cdot v_{x^1} \times v_{x^2},
\end{split}
\]
for all $\varphi \in C^1_c(\RR^2; \RR^3)$. Remark~\ref{rmk:C0-regularity} gives that $v$ is smooth and satisfies
\[
\Delta v = r_0 f(v) v_{x^1} \times v_{x^2} \text{ on }\RR^2.
\]
Also, by Proposition~\ref{prop:perturbed-existence}(a) we have
\[
\int_{\RR^2} |\nabla v|^2 < \infty.
\]
When $H = 0$, in which case $f \equiv 0$, Liouville's theorem forces $v$ to be constant, a contradiction. When $H \neq 0$, we pull $v$ back to $S^2$ via the stereographic projection and obtain a contradiction using a classification result of Brezis-Coron~\cite[Lemma A.1]{Brezis-Coron1984} and the assumption that $H < \underline{H'}$, as done in the proof of~\cite[Proposition 6.4]{Cheng22}.
\end{proof}
\noindent\textbf{Conclusion of the proof of Theorem~\ref{thm:main-1}:}
\vskip 2mm
\noindent\textit{Case 1: $\cS = \emptyset$.}
\vskip 1mm
In this case the limiting map $u$ satisfies the condition in Remark~\ref{rmk:C0-regularity}. Indeed, since~\eqref{eq:EL-bc} holds with $u_j$ in place of $u$, the $C^1$-convergence of $(u_j)$ gives
\begin{equation}\label{eq:limit-perp}
u_r + r_0\tau X(u) \times u_\theta \perp T_u \Sigma \text{ on }\partial \bB.
\end{equation}
Now, given a test function $\varphi$ as in Remark~\ref{rmk:C0-regularity}, we recall that~\eqref{eq:EL} also holds with $u_j$ in place of $u$, and hence by an integration by parts we get
\[
\begin{split}
&\text{(Right hand side of~\eqref{eq:first-variation} with $u_j$ in place of $u$) }\\
= \ & \int_{\partial \bB} [1 + \ep_j^{p-2}(1 + 2F_{r_0\tau}(u_j, \nabla u_j))^{\frac{p}{2} - 1}] (u_{j, r} 
 + r_0\tau X(u_j) \times u_{j, \theta}) \cdot \varphi d\theta,
\end{split}
\]
and~\eqref{eq:weak-EL} follows upon passing to the limit and using~\eqref{eq:limit-perp}. We then conclude from Remark~\ref{rmk:C0-regularity} that $u$ is a smooth solution to
\[
\left\{
\begin{array}{l}
\Delta u = r_0f(u)u_{x^1} \times u_{x^2} \text{ in }\bB,\\
u_r + r_0\tau X(u) \times u_{\theta} \perp T_u\Sigma \text{ on }\partial \bB.
\end{array}
\right.
\]
Lemma~\ref{lemm:weakly-conformal} implies that $u$ is weakly conformal, while the lower bound~\eqref{eq:uniform-lowerbound} passes to the limit to give that $u$ is non-constant. We then get from Corollary~\ref{coro:branch} that $u$ is a branched immersion. On the other hand, Proposition~\ref{prop:maximum-principle}(b) shows that $u$ maps into $\overline{\Omega'}$, so that, by our choice of $f$, the map $u$ in fact is a solution of~\eqref{eq:H-cmc-bvp} with $r_0 H$ and $r_0 \tau$ in place of $H$ and $\tau$, respectively. 

To obtain the index bound, note that since $u_j \to u$ in $C^1(\overline{\bB})$ and since $\ep_j \to 0$, it is straightforward to show, with the help of~\eqref{eq:second-variation}, that the index bound in Proposition~\ref{prop:perturbed-existence}(b) passes to the limit to give $\Ind^E_{r_0f, r_0\tau}(u) \leq 1$, where $\Ind^E_{r_0 f, r_0 \tau}(u)$ is defined just before Lemma~\ref{lemm:second-variation-0}. (We refer to Case 1 in the proof of~\cite[Theorem 1.1]{Cheng22}, found in Section 6 of that paper, for details on this passage.) Since the image of $u$ is contained in $\overline{\Omega'}$ and since $f = H$ on a neighborhood of the latter, we infer upon recalling Lemma~\ref{lemm:second-variation-0} that $(\delta^2 E_{r_0f, r_0\tau})_u$ coincides with $(\delta^2 E_{r_0H, r_0\tau})_u$, the latter given by~\eqref{eq:E-second-variation-H}. We then use Proposition~\ref{prop:A-E-index} and Proposition~\ref{prop:index-comparison-2} to transfer the index bound to $(I_{r_0H, r_0\tau})_u$, thereby finishing the proof in the case $\cS = \emptyset$.
\vskip 1em
\noindent\textit{Case 2: $\cS \neq \emptyset$.}
\vskip 1mm
In this case, we take $x_0 \in \cS$ and let $(x_j), (t_j)$ be as in item (ii) above Proposition~\ref{prop:scale-comparison}. Also, we define
\[
v_j(y) = u_j(x_j + t_j y), \text{ for all }y \in \bB_j' : = \bB_{\frac{1}{t_j}}(-\frac{x_j}{t_j}),
\]
and let $I_j, F_j, \lambda_j$ and $w_j$ be as in the proof of Proposition~\ref{prop:scale-comparison}(a), with $t_j$ in place of $\ep_j$. Then we can argue as in Case 2 in the proof of~\cite[Theorem 1.1]{Cheng22}, replacing the Proposition 3.6, Proposition 6.3 and Proposition 6.4 there by Proposition~\ref{prop:eta-regularity}, Proposition~\ref{prop:scale-comparison}(a) and Proposition~\ref{prop:scale-comparison}(b) from this paper, to obtain a subsequence of $(w_j)$ that converges in $C^{1}_{\loc}(\overline{\RR^2_+})$ to a non-constant limiting map $w:(\overline{\RR^2_+}, \partial\RR^2_{+}) \to (\overline{\Omega'_{t_0}}, \Sigma)$ which by Remark~\ref{rmk:C0-regularity} is smooth and satisfies
\[
\left\{
\begin{array}{l}
\int_{\RR^2_+}|\nabla w|^2 < \infty,\\
\Delta w = r_0 f(w) w_{x^1} \times w_{x^2} \text{ on }\RR^2_+,\\
w_{x^2} - r_0\tau X(w) \times w_{x^1} \perp T_{w}\Sigma \text{ on }\partial \RR^2_{+}.
\end{array}
\right.
\]
Letting $G$ be a conformal map from $(\bB, \partial\bB\setminus \{e_1\})$ to $(\RR^2_+, \partial \RR^2_+)$, by Corollary~\ref{coro:removal-singularity} we see that $\widetilde{w} : = w \circ G$ extends smoothly to $\overline{\bB}$. Lemma~\ref{lemm:weakly-conformal} and Corollary~\ref{coro:branch} then apply to show that $u$ is a weakly conformal branched immersion. Also, by Proposition~\ref{prop:maximum-principle}(b), $\widetilde{w}$ maps into $\overline{\Omega'}$, and hence is a solution of~\eqref{eq:H-cmc-bvp} with $r_0H$ and $r_0\tau$ replacing $H$ and $\tau$. Finally, following the proof of Case 2 of~\cite[Theorem 1.1]{Cheng22}, and again using Lemma~\ref{lemm:second-variation-0} along with the fact that $f = H$ on a neighborhood of $\widetilde{w}(\overline{\bB})$, we obtain the index bound $\Ind^E_{r_0H, r_0\tau}(\widetilde{w}) \leq 1$ from Proposition~\ref{prop:perturbed-existence}(b). We then conclude the proof using Proposition~\ref{prop:A-E-index} and Proposition~\ref{prop:index-comparison-2} as in the previous case.
\appendix
\section{Properties of the perturbed functional}\label{sec:differentiability}
\begin{lemm}\label{lemm:F-tau-derivative}
There exists a constant $b = b(\tau_0) > 0$ depending only on $\tau_0 \in (0, 1)$ such that for all $\tau \in [-\tau_0, \tau_0] \setminus \{0\}$, we have
\[
- \paop{\tau} \Big( \frac{F_{\tau}(y, \xi) + \frac{\ep^{p-2}}{p}G_\tau(y, \xi)}{\tau}\Big) \geq \frac{b(\tau_0)}{\tau^2} \cdot \big[ \frac{|\xi|^2}{2} + \frac{\ep^{p-2}}{p}(1 + |\xi|^2)^{\frac{p}{2}} \big].
\]
\end{lemm}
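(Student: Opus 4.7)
The plan is to differentiate in $\tau$ termwise and show that each of $-\partial_\tau(F_\tau/\tau)$ and $-\partial_\tau(G_\tau/\tau)$ controls the corresponding piece on the right-hand side. Writing $Q(y,\xi) := \mu_{ijk}X^k(y)\,\xi^i_\alpha\xi^j_\beta\epsilon_{\alpha\beta}$ so that $F_\tau = \tfrac{1}{2}|\xi|^2 - \tfrac{\tau}{2}Q$, the $F$-contribution is immediate: $F_\tau/\tau = \tfrac{|\xi|^2}{2\tau} - \tfrac{Q}{2}$ gives $-\partial_\tau(F_\tau/\tau) = \tfrac{|\xi|^2}{2\tau^2}$, which already supplies the first term in the desired lower bound with constant $1$.

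For the $G$-contribution I will introduce $u(\tau) := 1 + 2F_\tau = 1 + |\xi|^2 - \tau Q$, so that $G_\tau = u^{p/2}$ and $u'(\tau) = -Q$. A direct calculation, using the identity $\tfrac{pQ\tau}{2} = \tfrac{p}{2}(1+|\xi|^2 - u)$ to eliminate $Q$, will yield
\[
-\partial_\tau\!\left(\frac{G_\tau}{\tau}\right) = \frac{u^{p/2-1}}{2\tau^2}\Bigl[\,p(1+|\xi|^2) - (p-2)u\,\Bigr].
\]
The key quantitative step is to bound $u$ above and below in terms of $1+|\xi|^2$. Because $|X| \leq 1$, an elementary cross-product estimate gives $|Q| = 2\bigl|X\cdot(\xi_1\times\xi_2)\bigr| \leq |\xi|^2$, where $\xi_\alpha = (\xi^i_\alpha)_i$; hence for $|\tau|\leq \tau_0$,
\[
(1-\tau_0)(1+|\xi|^2) \;\leq\; u \;\leq\; (1+\tau_0)(1+|\xi|^2).
\]

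Combining the upper bound with $p-2 \leq 1$ (since $p \in (2,3]$) gives $p(1+|\xi|^2) - (p-2)u \geq [\,2 - \tau_0(p-2)\,](1+|\xi|^2) \geq (2-\tau_0)(1+|\xi|^2)$; the lower bound, together with $p/2-1 \in (0, \tfrac{1}{2}]$, yields $u^{p/2-1} \geq (1-\tau_0)^{1/2}(1+|\xi|^2)^{p/2-1}$. Multiplying produces $-\partial_\tau(G_\tau/\tau) \geq \tfrac{c(\tau_0)}{\tau^2}(1+|\xi|^2)^{p/2}$ with $c(\tau_0) = \tfrac{1}{2}(1-\tau_0)^{1/2}(2-\tau_0) > 0$, and setting $b(\tau_0) := \min\{1, c(\tau_0)\}$ completes the estimate.

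I do not anticipate any essential obstacle: the whole argument reduces to a single direct differentiation together with the elementary bound $|Q| \leq |\xi|^2$. The only point demanding care is keeping $p(1+|\xi|^2) - (p-2)u$ positive and of the right order, which is exactly where the ambient restriction $p \in (2,3]$ enters, via $p-2 \leq 1$, to guarantee $2 - \tau_0(p-2) \geq 2 - \tau_0 > 1$ uniformly in $p$.
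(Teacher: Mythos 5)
Your proof is correct and follows essentially the same route as the paper: differentiate in $\tau$, observe that the $F_\tau$-piece contributes $|\xi|^2/(2\tau^2)$ exactly, and then control the $G_\tau$-piece using the elementary cross-product bound $|Q| = 2|X\cdot(\xi_1\times\xi_2)| \leq |\xi|^2$ (from $|X|\leq 1$) together with $p-2 \leq 1$. The only cosmetic difference is that you eliminate $Q$ in favor of $u = 1+2F_\tau$ via $\tau Q = 1 + |\xi|^2 - u$ before estimating, whereas the paper keeps the factor in the form $1 + |\xi|^2 + \tau(\tfrac{p}{2}-1)Q$ and bounds $\tau Q$ directly; both yield the identical bracketed quantity (yours carries an extra factor $2$ which cancels against the $1/(2\tau^2)$). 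The resulting constant $b(\tau_0)$ differs slightly (the paper takes $b(\tau_0)=\tfrac{1-\tau_0}{2}$), but both are positive and depend only on $\tau_0$, which is all the statement requires.
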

\begin{proof}
Differentiating with respect to $\tau$, we have by direct computation that 
\[
\begin{split}
&-\tau^2 \cdot \paop{\tau} \Big( \frac{F_{\tau}(y, \xi) + \frac{\ep^{p-2}}{p}G_\tau(y, \xi)}{\tau}\Big)\\
 = \ & \frac{|\xi|^2}{2} + \frac{\ep^{p-2}}{p}(1 + 2F_\tau(y, \xi))^{\frac{p}{2} - 1} \Big( 1 + |\xi|^2 + \tau \cdot \big( \frac{p}{2} - 1 \big) \mu_{ijk}X^k(y) \xi^i_\alpha \xi^j_\beta\Big)\\
\geq\ &  \frac{|\xi|^2}{2} + \frac{\ep^{p-2}}{p}(1 + 2F_\tau(y, \xi))^{\frac{p}{2} - 1} \Big( 1 + \big( 1 - \big|\frac{p}{2} - 1\big| \big) |\xi|^2 \Big).
\end{split}
\]
We then use the lower bound in~\eqref{eq:F-estimates}, recall that $p \in (2, 3]$, and divide both sides by the positive number $\tau^2$ to obtain the desired inequality with $b(\tau_0) = \frac{1 - \tau_0}{2}$.
\end{proof}
\subsection{Proof of Lemma~\ref{lemm:D-difference}(a)}
By Lemma~\ref{lemm:F-tau-derivative} we have
\[
- \paop{\tau}\Big(\frac{D_{\ep, p, \tau}(u)}{\tau}\Big) \geq \frac{b(\tau_0)}{\tau^2}\cdot D_{\ep, p}(u),
\]
for all $\tau \in [-\tau_0, \tau_0]\setminus \{0\}$. Suppose $\tau_1, \tau_2 \in [-\tau_0,\tau_0]$ are such that $\tau_1 < \tau_2$  and $\tau_1 \cdot \tau_2 > 0$, and integrating the above inequality from $\tau = \tau_1$ to $\tau = \tau_2$, we obtain
\[
\frac{1}{\tau_2 - \tau_1}\Big(\frac{D_{\ep, p, \tau_1}(u)}{\tau_1} - \frac{D_{\ep, p, \tau_2}(u)}{\tau_2}\Big) \geq \frac{b(\tau_0)}{\tau_1 \cdot \tau_2} \cdot D_{\ep, p}(u).
\]
Noting that $\tau_1 \cdot \tau_2 \leq \tau_0^2$, we obtain~\eqref{eq:D-tau-difference}.
\subsection{Proof of Lemma~\ref{lemm:D-difference}(b)}
Using the relation
\[
d\Big( F + \frac{\ep^{p-2}}{p}G \Big) = [1 + \ep^{p-2}(1 + 2F)^{\frac{p}{2} - 1}]d F
\]
along with~\eqref{eq:F-estimates}, we see that 
\[
\begin{split}
&\big| (F + \frac{\ep^{p-2}}{p}G)(z, \xi) - (F + \frac{\ep^{p-2}}{p}G)(w, \eta) \big| \\
\leq\ & C[(1 + |\xi|^2 + |\eta|^2)^{\frac{1}{2}} + \ep^{p-2}(1 + |\xi|^2 + |\eta|^2)^{\frac{p}{2} - \frac{1}{2}}] |\xi - \eta|\\
& + C[(1 + |\xi|^2 + |\eta|^2) + \ep^{p-2}(1 + |\xi|^2 + |\eta|^2)^{\frac{p}{2}}] |z - w|,
\end{split}
\]
where both constants $C$ depend only on the $C^1$ norm of $X$. H\"older's inequality then gives
\[
\begin{split}
\big| D_{\ep, p, \tau}(u) - D_{\ep, p, \tau}(v) \big| \leq\  & C\Big( \int_{\bB} 1 + |\nabla u|^2 + |\nabla v|^2 \Big)^{\frac{1}{2}} \|\nabla u - \nabla v\|_{2; \bB} \\
& + C\ep^{p-2} \cdot \Big( \int_{\bB} (1 + |\nabla u|^2 + |\nabla v|^2)^{\frac{p}{2}} \Big)^{\frac{p-1}{p}} \|\nabla u - \nabla v\|_{p; \bB}\\
& + C \Big( \int_{\bB} 1 + |\nabla u|^2 + |\nabla v|^2 \Big) \|u - v\|_{\infty; \bB}\\
& + C\ep^{p-2} \cdot \Big(\int_{\bB} (1 + |\nabla u|^2 + |\nabla v|^2 )^{\frac{p}{2}} \Big) \|u - v\|_{\infty; \bB},
\end{split}
\]
from which we deduce~\eqref{eq:D-difference} with the help of Sobolev embedding.
\section{Structure of the Euler-Lagrange equation}\label{sec:structure}
\subsection{Proof of Lemma~\ref{lemm:first-variation-simplified}}
Given $\psi \in W^{1, p}(\bB_{\sigma_1}^+;\RR^3)$ such that $\psi = 0$ on $\partial \bB_{\sigma_1} \cap \RR^2_+$ and $\psi^3 = 0$ on $\bT$, we first define 
\[
\widehat{\varphi}(x) = \left\{
\begin{array}{ll}
\psi(\frac{1}{r}F^{-1}(x)), & \text{ if }x \in F(\bB^+_{\sigma_1r}),\\
0, & \text{ if }x \in \bB \setminus F(\bB^+_{\sigma_1r}),
\end{array}
\right.
\]
and let 
\[
\varphi(x) = \left\{
\begin{array}{ll}
(d\Psi)_{\widehat{u}(x)}(\widehat{\varphi}(x)), & \text{ if }x \in F(\bB^+_{\sigma_1r}),\\
0, & \text{ if }x \in \bB \setminus F(\bB^+_{\sigma_1r}),
\end{array}
\right.
\]
where we recall that $\widehat{u} = \Phi \circ u$. Then $\varphi \in T_u\cM_p$ and, by a straightforward calculation, we see that the various terms in~\eqref{eq:first-variation} can be expressed using $\widehat{u}$ and $\widehat{\varphi}$ as
\begin{equation}\label{eq:hat-expressions}
\begin{split}
2F_{\tau}(u, \nabla u) =\ & g_{ij}(\widehat{u}) \bangle{\nabla \widehat{u}^i, \nabla \widehat{u}^j} - \tau  P_{ijk}(\widehat{u})\widehat{X}^k(\widehat{u}) \widehat{u}^i_\alpha \widehat{u}^j_\beta \epsilon_{\alpha\beta},\\
\bangle{\nabla u, \nabla \varphi} =\ & g_{ij}(\widehat{u}) \bangle{\nabla \widehat{u}^j, \nabla \widehat{\varphi}^i} + \frac{1}{2}g_{jk,i}(\widehat{u}) \widehat{\varphi}^i \bangle{\nabla \widehat{u}^j, \nabla \widehat{u}^k},\\
X(u) \cdot (\varphi_{x^1} \times u_{x^2} + u_{x^1} \times \varphi_{x^2}) =\ & P_{ijk}(\widehat{u}) \widehat{X}^k(\widehat{u}) \widehat{\varphi}^i_\alpha\widehat{u}^j_\beta\epsilon_{\alpha\beta}\\
& + P_{mjk}(\widehat{u})\widehat{X}^k(\widehat{u}) C^m_{il}(\widehat{u}) \widehat{u}^l_\alpha \widehat{u}^j_\beta \widehat{\varphi}^i \epsilon_{\alpha\beta},\\
(\nabla X)_u(\varphi) \cdot u_{x^1} \times u_{x^2} =\ & \frac{1}{2}P_{ljk}(\widehat{u}) B_{li}(\widehat{u}) \widehat{u}^j_\alpha \widehat{u}^k_\beta \widehat{\varphi}^i \epsilon_{\alpha\beta},\\
h(u) \varphi \cdot u_{x^1} \times u_{x^2} =\ & \frac{1}{2}\widehat{h}(\widehat{u}) P_{ijk}(\widehat{u}) \widehat{u}^j_\alpha \widehat{u}^k_{\beta}\widehat{\varphi}^i\epsilon_{\alpha\beta}.
\end{split}
\end{equation}
Setting 
\begin{equation}\label{eq:hat-coefficients}
\begin{split}
\widehat{N}(z, \xi) =\ & \big(g_{ij}(z)\delta_{\alpha\beta} - \tau P_{ijk}(z) \widehat{X}^k(z) \epsilon_{\alpha\beta}\big)\xi^i_{\alpha} \xi^j_{\beta},\\
\widehat{A}_{i\alpha}(z, \xi) =\ & [1 + \ep^{p-2}(1 + \widehat{N})^{\frac{p}{2} - 1}]\big(g_{ij}(z)\xi^j_\alpha - \tau  P_{ijk}(z) \widehat{X}^k(z) \xi^j_\beta \epsilon_{\alpha\beta}\big),\\
\widehat{A}_i(z, \xi) =\ & \frac{1}{2}\widehat{h}(z)P_{ijk}(z)\xi^j_\alpha\xi^k_\beta \epsilon_{\alpha\beta} + [1 + \ep^{p-2}(1 + \widehat{N})^{\frac{p}{2} - 1}] \frac{1}{2}g_{jk, i}(z) \xi^j_{\alpha} \xi^k_{\beta}\delta_{\alpha\beta}\\
& - \tau  [1 + \ep^{p-2}(1 + \widehat{N})^{\frac{p}{2} - 1}] \big( P_{mjk}(z) \widehat{X}^k(z)C^m_{il}(z) \xi^l_\alpha \xi^j_\beta \epsilon_{\alpha\beta} + \frac{1}{2}P_{ljk}(z) B_{il}(z) \xi^j_\alpha \xi^k_\beta \epsilon_{\alpha\beta} \big),
\end{split}
\end{equation}
we see upon substituting the relations in~\eqref{eq:hat-expressions} into~\eqref{eq:first-variation} that 
\[
\begin{split}
(\delta E_{\ep, p, f, \tau})_u(\varphi) = \int_{\bB} \widehat{A}_{i\alpha}(\widehat{u}, \nabla\widehat{u}) \widehat{\varphi}^i_\alpha + \widehat{A}_i(\widehat{u}, \nabla\widehat{u}) \widehat{\varphi}^i.
\end{split}
\]
Pulling everything back by the conformal map $F(r\ \cdot)$ and noting that
\[
v(x) = \widehat{u}(F(rx))\ \  \text{ and }\ \  \psi(x) = \widehat{\varphi}(F(rx)),
\]
we obtain~\eqref{eq:E-L-fermi-2} as asserted. Noting that $v$ is smooth in $\bB^+_{\sigma_1}$ by Proposition~\ref{prop:interior-regularity} and our choice of $p$, we get~\eqref{eq:E-L-fermi-non-div} by using test functions supported away from $\bT$ in~\eqref{eq:E-L-fermi-2} and observing that $\epsilon_{\alpha\beta} \cdot \partial_{\alpha\beta}v = 0$.

\subsection{Proof of Lemma~\ref{lemm:coefficient-bounds}}
Part (a) is an immediate consequence of~\eqref{eq:norm-close},~\eqref{eq:lambda-bound} and~\eqref{eq:X-hat-length}. For part (b), we first observe that, by~\eqref{eq:N-norm} and the fact that $0 < \frac{p}{2} - 1<1$ since $p \in (2, 3]$,
\begin{equation}\label{eq:N+1-norm}
\frac{1 - |\tau|}{4}(r^2 + |\xi|^2)^{\frac{p}{2} - 1} \leq (r^2 + N)^{\frac{p}{2} - 1} \leq 4(r^2 + |\xi|^2)^{\frac{p}{2} - 1}.
\end{equation}
Combining this with~\eqref{eq:norm-close} gives
\[
\begin{split}
A_{i\alpha}(x, z, \xi) \xi^i_\alpha \geq\ & \frac{1 - |\tau|}{4}[1 + \bep^{p-2}(r^2 + |\xi|^2)^{\frac{p}{2} - 1}] \cdot \frac{1 - |\tau|}{2}|\xi|^2,
\end{split}
\]
which gives the first estimate in part (b). Next, noting that 
\[
A_{i \alpha}(x, z, \xi) =  [1 + \bep^{p-2}(r^2 + N)^{\frac{p}{2} - 1}]  \frac{(\blambda(x))^2}{2}\pa{N}{\xi^i_\alpha}(x, z, \xi),
\]
we differentiate $A^i_\alpha$ with respect $\xi^j_\beta$ to get
\[
\begin{split}
\pa{A^i_\alpha}{\xi^j_\beta} =\ & [1 + \bep^{p-2}(r^2 + N)^{\frac{p}{2} - 1}] (g_{ij}\delta_{\alpha\beta} - \tau P_{ijk} \widehat{X}^k \epsilon_{\alpha\beta})\\
& + \frac{p-2}{2}\cdot  \bep^{p-2}(r^2 + N)^{\frac{p}{2} - 2}\frac{\blambda^2}{2}\pa{N}{\xi^j_\beta} \cdot  \pa{N}{\xi^i_\alpha}.
\end{split}
\]
The second line contributes a non-negative term in $\pa{A^i_\alpha}{\xi^j_\beta}\eta^i_\alpha \eta^j_\beta$. Thus, again combining~\eqref{eq:N+1-norm} with~\eqref{eq:norm-close}, we obtain the lower bound in part (b). Recalling that $H_1$ is chosen so that
\[
|\nabla X| + |\nabla^2 X| + |f| + |\nabla f| \leq H_1 \text{ on }\RR^3,
\]
one can verify part (c) and part (d) in a straightforward manner with the help of estimates on $N$ such as~\eqref{eq:N-norm} and~\eqref{eq:N+1-norm}. We omit the details.
\section{Some standard calculations involving the capillary functional}\label{sec:second-variation}
In this appendix we prove Lemma~\ref{lemm:area-kernel} and Lemma~\ref{lemm:A-second-variation-function}. As mentioned in the introduction and in Section~\ref{sec:index-comparison}, these are not new results of ours, but we include their proofs in the notation of this paper for the reader's convenience. We first introduce some notation to put $(\delta^2A_{H, \tau})_{u}$ into a more familiar form. Since the ambient space is just $\RR^3$, we can bypass the language of pullback bundles and pullback connections. 

Recalling that $\lambda^2 = |u_{x^1}|^2 = |u_{x^2}|^2$, on $\overline{\bB}\setminus \cS$ we set
\[
v_j = \lambda^{-1}u_{x^j}.
\]
Then $\nu = v_1 \times v_2$. Also, we mention for later use that, with $J$ being the complex structure on $E$,
\[
Jv_1 = v_2,\ \ Jv_2 = -v_1.
\]
At each point $x \in \overline{\bB} \setminus \cS$, we identify $E_x$ with $\RR^2$ via the isomorphism $du_x: \RR^2 \to E_x$, which allows us to speak of the derivative of functions defined on $\overline{\bB}$ with respect to directions in $E_x$. More explicitly, given $\psi \in C^{\infty}(\overline{\bB}; \RR^3)$ and $\varphi \in C^{\infty}_{c}(\overline{\bB}\setminus \cS;\RR^3)$ such that $\varphi(x) \in E_x$ for all $x$, we write $\nabla_{\varphi}\psi$ to mean
\[
\nabla_{\varphi}\psi = \sum_{i = 1}^2 (\varphi\cdot v_i) \lambda^{-1}\psi_{x^i}.
\]
Given two functions $\varphi, \xi$ in $C^{\infty}_{c}(\overline{\bB}\setminus \cS;\RR^3)$ such that $\varphi(x), \xi(x) \in E_x$ for all $x$, we define
\[
[\varphi, \xi] = \nabla_{\varphi}\xi - \nabla_{\xi}\varphi.
\]
Then we have $[\varphi, \xi](x) \in E_x$ for all $x$, and that $\nabla_{[\varphi, \xi]} = [\nabla_{\varphi}, \nabla_{\xi}]$. Finally, we let
\[
\Div_{E}\psi = \sum_{i = 1}^2 \nabla_{v_i}\psi \cdot v_i,
\]
for all $\psi \in C^{\infty}(\overline{\bB}; \RR^3)$, and write 
\[
\vol_{E} = \lambda^2 dx.
\]
Then~\eqref{eq:A-second-variation} can be rewritten as
\begin{equation}\label{eq:A-second-var-re}
\begin{split}
(\delta^2 A_{H, \tau})_{u}(\varphi, \varphi) =\ & \int_{\bB} \Big[ \sum_{i = 1}^2 |(\nabla_{v_i}\varphi)^{\perp} |^2 + (\Div_E \varphi)^2 - \sum_{i, j = 1}^2 (\nabla_{v_i}\varphi \cdot v_j)(\nabla_{v_j}\varphi \cdot v_i) \Big] \vol_{E}\\
& + H \int_{\bB} \varphi \cdot \big( \nabla_{v_1}\varphi \times v_2 + v_1 \times \nabla_{v_2}\varphi \big) \vol_{E}\\
& + \int_{\partial \bB}(u_r + \tau X_u \times u_\theta) \cdot A^{\Sigma}_u(\varphi, \varphi) d\theta + \tau \int_{\partial \bB} \varphi \cdot X_u \times \varphi_{\theta} d\theta.
\end{split}
\end{equation}
Likewise, when $f \in C^{\infty}(\overline{\bB} \setminus \cS)$, we may rewrite $(I_{H, \tau})_u(f, f)$ as
\begin{equation}\label{eq:I-second-var-re}
\begin{split}
(I_{H, \tau})_u(f, f)=\ & \int_{\bB} |\nabla_{v_i}f|^2 - f^2|\nabla_{v_i}\nu|^2 \vol_E\\
& + \int_{\partial \bB} \frac{\sigma}{|\sigma|} \big[ \frac{\tau}{\sqrt{1 - \tau^2}}(\nabla_\bn \nu \cdot \bn) + \frac{1}{\sqrt{1 - \tau^2}}h^\Sigma(X_u \times \bt, X_u \times \bt) \big] f^2 \lambda d\theta,
\end{split}
\end{equation}
which is perhaps closer to how this formula usually appears in the literature. 

We next derive a number of identities that are used multiple times in the proof of Lemma~\ref{lemm:area-kernel} and Lemma~\ref{lemm:A-second-variation-function}.
\begin{lemm}\label{lemm:area-preparation}
In the notation introduced in Section~\ref{sec:index-comparison} and in the above, given $\varphi, \psi \in C^{\infty}_c(\overline{\bB}\setminus \cS; \RR^3)$ with $\varphi(x) \in E_x$ for all $x$, we have the following identities: 
\begin{equation}\label{eq:area-prep-1}
\nabla_{\varphi}\Div_{E}\psi - \Div_{E}(\nabla_{\varphi}\psi) = -\big(\nabla_{v_j}\varphi \cdot v_i\big)\big(\nabla_{v_i}\psi \cdot v_j\big) + \nabla_{v_i}\psi \cdot \nabla_{v_i}^\perp\varphi,
\end{equation}
\begin{equation}\label{eq:area-prep-2}
\Div_{E}J(\varphi \times \psi)^T = \psi \cdot (\nabla_{v_1} \varphi \times v_2  + v_1 \times \nabla_{v_2}\varphi) - \varphi \cdot (\nabla_{v_1}\psi \times v_2 + v_1 \times \nabla_{v_2}\psi),
\end{equation}
\begin{equation}\label{eq:area-prep-3}
\varphi \cdot (\nabla_{v_1}\psi \times v_2 + v_1 \times \nabla_{v_2}\psi) = -\nabla_{\varphi}\psi \cdot \nu,
\end{equation}
\begin{equation}\label{eq:area-prep-4}
\nu \cdot \big( \nabla_{v_1}\varphi \times v_2 + v_1 \times \nabla_{v_2}\varphi \big) = \Div_{E}\varphi.
\end{equation}
\end{lemm}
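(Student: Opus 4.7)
The plan is to verify the four identities by direct pointwise computation in the local orthonormal frame $\{v_1,v_2,\nu\}$, using the cross-product relations $v_1\times v_2=\nu$, $\nu\times v_1=v_2$, $v_1\times\nu=-v_2$, together with $Jv_1=v_2$, $Jv_2=-v_1$ (so $J^2=-\Id$ on $E$). I would begin with \eqref{eq:area-prep-3} and \eqref{eq:area-prep-4}, which come from a single calculation. Decomposing each $\nabla_{v_i}\psi$ in the frame $\{v_1,v_2,\nu\}$ and expanding, one finds
\[
\nabla_{v_1}\psi\times v_2 + v_1\times\nabla_{v_2}\psi = (\Div_E\psi)\,\nu - (\nabla_{v_1}\psi\cdot\nu)\,v_1 - (\nabla_{v_2}\psi\cdot\nu)\,v_2.
\]
Taking the $\nu$-component (with $\varphi$ in place of $\psi$) gives \eqref{eq:area-prep-4}, and taking the $\RR^3$-inner product with $\varphi\in E$ kills the $\nu$-term and produces $-\sum_i(\varphi\cdot v_i)(\nabla_{v_i}\psi\cdot\nu)=-\nabla_\varphi\psi\cdot\nu$, which is \eqref{eq:area-prep-3}.

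For \eqref{eq:area-prep-2}, I would first compute the tangential projection of $\varphi\times\psi$. Decomposing $\psi=\psi^T+(\psi\cdot\nu)\nu$ and noting that $\varphi\times\psi^T$ is parallel to $\nu$ (since $\varphi,\psi^T\in E$), while $\varphi\times\nu=-J\varphi$, one obtains $(\varphi\times\psi)^T=-(\psi\cdot\nu)J\varphi$. Applying $J$ and using $J^2=-\Id$ on $E$ then yields $J(\varphi\times\psi)^T=(\psi\cdot\nu)\varphi$, so by Leibniz
\[
\Div_E J(\varphi\times\psi)^T = \nabla_\varphi(\psi\cdot\nu) + (\psi\cdot\nu)\Div_E\varphi.
\]
On the right-hand side of \eqref{eq:area-prep-2}, the first summand expands via the identity above (with $\varphi$ in place of $\psi$) to $(\psi\cdot\nu)\Div_E\varphi - \sum_i(\psi\cdot v_i)(\nabla_{v_i}\varphi\cdot\nu)$, and the second summand equals $\nabla_\varphi\psi\cdot\nu$ by \eqref{eq:area-prep-3}. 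Matching the two sides then reduces to showing $\psi\cdot\nabla_\varphi\nu=-\sum_i(\nabla_{v_i}\varphi\cdot\nu)(\psi\cdot v_i)$, which follows from the Weingarten relation $\nabla_{v_i}\nu=-\sum_k h_{ik}v_k$ combined with the symmetry $h_{ij}=h_{ji}$ of the second fundamental form $h_{ij}=\nabla_{v_i}v_j\cdot\nu$.

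Finally, \eqref{eq:area-prep-1} would be verified by a direct expansion. Writing $\varphi=\varphi^j v_j$ with $\varphi^j=\varphi\cdot v_j$ so that $\nabla_\varphi=\varphi^j\nabla_{v_j}$, I would compute
\[
\sum_i(\nabla_\varphi\nabla_{v_i}\psi-\nabla_{v_i}\nabla_\varphi\psi)\cdot v_i = \sum_i\bigl[\varphi^j[\nabla_{v_j},\nabla_{v_i}]\psi - v_i(\varphi^j)\,\nabla_{v_j}\psi\bigr]\cdot v_i,
\]
then add the Leibniz remainder $\sum_i\nabla_{v_i}\psi\cdot\nabla_\varphi v_i$ and regroup after splitting each $\nabla_{v_i}\varphi$ into its tangential and normal components. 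The flatness of the ambient $\RR^3$-connection reduces $[\nabla_{v_j},\nabla_{v_i}]\psi$ to $\nabla_{[v_j,v_i]}\psi$, and the resulting tangential contributions combine with $v_i(\varphi^j)\,\nabla_{v_j}\psi$ to produce $-(\nabla_{v_j}\varphi\cdot v_i)(\nabla_{v_i}\psi\cdot v_j)$, while the normal components give $\nabla_{v_i}\psi\cdot\nabla_{v_i}^\perp\varphi$. The main obstacle is the bookkeeping in \eqref{eq:area-prep-1}: each individual manipulation is elementary, but matching the two sides cleanly requires careful accounting of the frame derivatives together with repeated use of $v_i\cdot v_j=\delta_{ij}$, $\varphi\cdot\nu=0$, and the symmetry of the second fundamental form.
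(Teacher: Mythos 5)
Your proposal is correct, and on two of the four identities it takes a genuinely different route from the paper, so a brief comparison is in order. For~\eqref{eq:area-prep-3} and~\eqref{eq:area-prep-4} the paper computes each cross product by hand; your packaging via the single frame decomposition $\nabla_{v_1}\psi\times v_2 + v_1\times\nabla_{v_2}\psi = (\Div_E\psi)\nu - \sum_i(\nabla_{v_i}\psi\cdot\nu)v_i$ is the same calculation, stated once and read off twice, which is tidier. For~\eqref{eq:area-prep-1} the paper argues abstractly with the bracket $[\varphi,v_i]$ and antisymmetry of $\nabla_\varphi v_i\cdot v_j$, whereas you expand in the coefficient functions $\varphi^j$ and reduce to $[v_j,v_i]$; the underlying ingredients (flatness of $\nabla$, Leibniz, tangential/normal split, antisymmetry of $\nabla_{v_i}v_j\cdot v_k$ and symmetry of $h_{ij}$) are identical, and your bookkeeping warning is accurate but the cancellation you describe does close. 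The real divergence is~\eqref{eq:area-prep-2}: the paper uses the commutativity $J\nabla^T=\nabla^T J$ together with the antisymmetry of $J$ and of the frame connection coefficients, keeping the identity self-contained; you instead compute $J(\varphi\times\psi)^T=(\psi\cdot\nu)\varphi$ explicitly via $\varphi\times\nu=-J\varphi$ and $J^2=-\Id$, apply Leibniz, and match against the right-hand side using~\eqref{eq:area-prep-3}--\eqref{eq:area-prep-4} and the Weingarten relation with symmetry of $h^\Sigma$. Your route is more elementary and avoids the slightly delicate $J$--$\nabla^T$ commutation argument, at the cost of making~\eqref{eq:area-prep-2} logically dependent on~\eqref{eq:area-prep-3}--\eqref{eq:area-prep-4}; since you propose to prove those first, that dependency is harmless.
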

\begin{proof}
For~\eqref{eq:area-prep-1}, a direct computation shows that
\begin{equation}\label{eq:area-prep-1-1}
\begin{split}
\nabla_{\varphi}(\Div_E \psi) - \Div_E(\nabla_{\varphi}\psi) =\ & \nabla_{\varphi}(\nabla_{v_i}\psi \cdot v_i) - \nabla_{v_i}(\nabla_{\varphi}\psi)\cdot v_i\\
=\ & [\nabla_{\varphi}, \nabla_{v_i}]\psi \cdot v_i + \nabla_{v_i}\psi \cdot \nabla_{\varphi}v_i\\
=\ & \nabla_{[\varphi, v_i]}\psi \cdot v_i + \nabla_{v_i}\psi \cdot [\varphi, v_i] + \nabla_{v_i}\psi \cdot \nabla_{v_i}\varphi.
\end{split}
\end{equation}
Noting that $[\varphi, v_i] = \big((\nabla_{\varphi} v_i - \nabla_{v_i}\varphi) \cdot v_j\big) v_j$, we obtain
\[
\begin{split}
 \nabla_{[\varphi, v_i]}\psi \cdot v_i + \nabla_{v_i}\psi \cdot [\varphi, v_i] =\ & \big(\big(\nabla_{\varphi} v_i - \nabla_{v_i}\varphi\big) \cdot v_j\big) \big( \nabla_{v_j}\psi \cdot v_i + \nabla_{v_i}\psi \cdot v_j \big)\\
 =\ & - (\nabla_{v_i}\varphi \cdot v_j) \big( \nabla_{v_j}\psi \cdot v_i + \nabla_{v_i}\psi \cdot v_j \big)
\end{split}
\]
where in getting the second equality we used the fact that $\nabla_{\varphi}v_i \cdot v_j$ is anti-symmetric in $i, j$. Substituting the above into~\eqref{eq:area-prep-1-1} gives
\[
\begin{split}
\nabla_{\varphi}\Div_E \psi - \Div_E(\nabla_{\varphi}\psi) =\ & -(\nabla_{v_i}\varphi \cdot v_j)(\nabla_{v_j}\psi \cdot v_i) + \big( \nabla_{v_i}\psi \cdot \nabla_{v_i}\varphi - (\nabla_{v_i}\psi \cdot v_j)(\nabla_{v_i}\varphi \cdot v_j) \big),
\end{split}
\]
which immediately gives~\eqref{eq:area-prep-1}. For~\eqref{eq:area-prep-2}, recall that $J$ commutes with $\nabla^T$, and thus 
\[
\begin{split}
\nabla_{v_i}(J(\varphi \times \psi)^T) \cdot v_i =\ & -\nabla_{v_i}(\varphi \times \psi)^T \cdot Jv_i\\
=\ & -\nabla_{v_i}\big( (\varphi \times \psi \cdot v_j)v_j \big) \cdot Jv_i\\
=\ & -\nabla_{v_i}(\varphi \times \psi) \cdot Jv_i \\
&- (\nabla_{v_i}v_j\cdot v_k)\big[ (\varphi \times \psi\cdot v_k)(v_j \cdot Jv_i) + (\varphi \times \psi\cdot v_j)(v_k \cdot Jv_i) \big].
\end{split}
\]
The last line vanishes since $\nabla_{v_i}v_j \cdot v_k$ is anti-symmetric in $j, k$ while the sum in square brackets is symmetric in $j, k$. Thus, we deduce that
\[
\begin{split}
\Div_E J(\varphi \times \psi)^T =\ & -\nabla_{v_i}(\varphi \times \psi) \cdot Jv_i \\
=\ & -\nabla_{v_1}(\varphi \times \psi) \cdot v_2 + \nabla_{v_2}(\varphi \times \psi) \cdot v_{1},
\end{split}
\]
from which we obtain~\eqref{eq:area-prep-2}. Next, since 
\[\varphi = (\varphi \cdot v_1)v_1 + (\varphi \cdot v_2)v_2,
\]
we have
\[
\varphi \cdot \nabla_{v_1}\psi \times v_2 = - (\varphi \cdot v_1) \nabla_{v_1}\psi \cdot v_1 \times v_2, \text{ and }
\]
\[\varphi \cdot v_1 \times \nabla_{v_2}\psi = - (\varphi \cdot v_2) \nabla_{v_2}\psi \cdot v_1 \times v_2,
\]
which gives~\eqref{eq:area-prep-3}. To finish, we observe that $\nu \times v_2= -v_1$ and $\nu \times v_1 = v_2$, so that
\[
\nu \cdot \nabla_{v_1}\varphi \times v_2 = (\nabla_{v_1}\varphi \cdot v_1),\ \ \nu \cdot v_1 \times \nabla_{v_2}\varphi= (\nabla_{v_2}\varphi \cdot v_2),
\]
and hence we get~\eqref{eq:area-prep-4}.
\end{proof}
\subsection{Proof of Lemma~\ref{lemm:area-kernel}}
First note that by~\eqref{eq:H-cmc-bvp} we have
\begin{equation}\label{eq:cmc-eq-nu}
\Delta u = H \lambda^2 \nu.
\end{equation}
Also, since $\varphi(x) \in E_x$ for all $x$, on $\partial \bB \setminus \cS$ there holds
\begin{equation}\label{eq:V-E-boundary}
\varphi = \lambda^{-2}(\varphi \cdot u_{r})u_{r} + \lambda^{-2}(\varphi \cdot u_\theta)u_\theta = (\varphi \cdot \bn)\bn +  (\varphi \cdot \bt )\bt.
\end{equation}
Now, polarizing~\eqref{eq:A-second-var-re} gives
\begin{equation}\label{eq:A-polar}
\begin{split}
(\delta^2 A_{H, \tau})_u(\varphi, \psi) =\ & \int_{\bB}\big[ (\Div_{E} \varphi)(\Div_{E}\psi) + \sum_{i=1}^2\bangle{\nabla_{v_i}^\perp \varphi, \nabla_{v_i}^\perp \psi} - \sum_{i, j = 1}^2 \big(\nabla_{v_i}\varphi \cdot v_j\big)\big(\nabla_{v_j}\psi \cdot v_i\big)\big] \vol_E \\
& + \frac{H}{2}\int_{\bB} \big[\varphi \cdot \big( \nabla_{v_1}\psi \times v_2 + v_1 \times \nabla_{v_2}\psi \big) + \psi \cdot \big( \nabla_{v_1}\varphi \times v_2 + v_1 \times \nabla_{v_2}\varphi \big) \big] \vol_E\\
& + \frac{\tau}{2}\int_{\partial\bB} X_u \cdot \big( \varphi_\theta \times \psi + \psi_\theta \times \varphi  \big)\ d\theta\\
& + \int_{\partial\bB} (u_r + \tau\cdot X_u \times u_\theta) \cdot A^\Sigma_u(\varphi, \psi)\ d\theta\\
=:\ & (I1) + (I2) + (B1) + (B2).
\end{split}
\end{equation}
We begin with the two interior integrals. Recalling the definition of $\Div_E$ and integrating by parts yields
\begin{equation}\label{eq:tangential-div-term}
\begin{split}
\int_{\bB}(\Div_E \varphi)(\Div_E \psi) \vol_E =\ & \int_{\bB}u_{x^i} \cdot \varphi_{x^i} (\Div_E \psi) dx\\
=\ & \int_{\partial \bB} (\varphi \cdot u_r) (\Div_E \psi) d\theta - \int_{\bB} (\varphi \cdot \Delta u) (\Div_E \psi) dx\\
& - \int_{\bB} (\varphi \cdot u_{x^i})\paop{x^i}(\Div_E \psi) dx\\
=\ & \int_{\partial \bB} (\varphi \cdot u_r) (\Div_E \psi) d\theta - \int_{\bB}\nabla_{\varphi}\Div_E \psi \vol_E,
\end{split}
\end{equation}
where to get the last line we used~\eqref{eq:cmc-eq-nu} and the assumption that $\varphi(x) \in E_x$ for all $x$. By~\eqref{eq:area-prep-1} and another integration by parts, we have
\begin{equation}\label{eq:tangential-div-term-1}
\begin{split}
-\int_{\bB} \nabla_{\varphi}\Div_E \psi\vol_E =\ & -\int_{\bB} \Div_E \nabla_{\varphi}\psi \vol_E + \int_{\bB} (\nabla_{v_i}\varphi \cdot v_j)(\nabla_{v_j}\psi \cdot v_i) - \nabla_{v_i}\varphi \cdot \nabla^\perp_{v_i}\psi \vol_E\\
=\ & -\int_{\partial \bB} u_r \cdot \nabla_{\varphi} \psi d\theta + \int_{\bB}\Delta u \cdot \nabla_{\varphi} \psi dx\\
& + \int_{\bB} (\nabla_{v_i}\varphi \cdot v_j)(\nabla_{v_j}\psi \cdot v_i) - \nabla_{v_i}\varphi \cdot \nabla^\perp_{v_i}\psi \vol_E\\
=\ & -\int_{\partial \bB} u_r \cdot \nabla_{\varphi} \psi d\theta + H\int_{\bB}\nu \cdot \nabla_{\varphi} \psi \vol_E\\
& + \int_{\bB} (\nabla_{v_i}\varphi \cdot v_j)(\nabla_{v_j}\psi \cdot v_i) - \nabla_{v_i}\varphi \cdot \nabla^\perp_{v_i}\psi \vol_E.
\end{split}
\end{equation}
Using also~\eqref{eq:area-prep-3} and combining the result with~\eqref{eq:tangential-div-term}, we see that 
\begin{equation}\label{eq:tangential-div-term-2}
\begin{split}
(I1) + (I2) =\ & \int_{\partial \bB} (\varphi \cdot u_r)(\Div_E \psi) d\theta - \int_{\partial \bB}(u_r \cdot \nabla_{\varphi}\psi) d\theta\\
& + \frac{H}{2}\int_{\bB} \psi \cdot (\nabla_{v_1}\varphi \times v_2 + v_1 \times \nabla_{v_2}\varphi) - \varphi \cdot (\nabla_{v_1}\psi \times v_2 + v_1 \times \nabla_{v_2}\psi) \vol_E.
\end{split}
\end{equation}
By~\eqref{eq:area-prep-2}, the second line above can be transformed into
\[
\begin{split}
\frac{H}{2}\int_{\bB}u_{x^i}\cdot\paop{x^i}(J(\varphi \times \psi)^T) dx=\ & \frac{H}{2}\int_{\partial \bB} u_r \cdot J(\varphi \times \psi)^T d\theta - \frac{H}{2}\int_{\bB} \Delta u \cdot J(\varphi \times \psi)^T dx\\
=\ & -\frac{H}{2} \int_{\partial \bB} \varphi \times \psi \cdot u_\theta d\theta,
\end{split}
\]
where to get the last line we used~\eqref{eq:cmc-eq-nu} again, and also observed that $Ju_r = u_\theta$. Substituting this back into~\eqref{eq:tangential-div-term-2} gives
\[
\begin{split}
(I1) + (I2) =\ & \int_{\partial \bB} (\varphi \cdot u_r)(\Div_E \psi) d\theta - \int_{\partial \bB}(u_r \cdot \nabla_{\varphi}\psi) d\theta - \frac{H}{2}\int_{\partial \bB} \varphi \times \psi \cdot u_\theta d\theta.
\end{split}
\]
We next turn to the two boundary integrals. Since $u_r + \tau X_u \times u_\theta \perp T_u\Sigma$, we have
\[
(u_r + \tau X_u \times u_\theta) \cdot A^\Sigma_u(\varphi, \psi) = (u_r + \tau X_u \times u_\theta) \cdot \nabla_{\varphi}\psi = u_r \cdot \nabla_{\varphi}\psi + \tau X_u \times u_\theta \cdot \nabla_{\varphi}\psi.
\]
For the last term, we expand $\varphi$ using~\eqref{eq:V-E-boundary} and compute
\[
\begin{split}
\tau X_u \times u_\theta \cdot \nabla_{\varphi}\psi - \tau X_u \times \varphi \cdot \psi_{\theta} =\ & \tau (\varphi \cdot \bn)(X_u \times \bt \cdot \psi_r - X_u \times \bn \cdot \psi_\theta).
\end{split}
\]
Therefore
\begin{equation}\label{eq:tangent-boundary-terms}
\begin{split}
(B1) + (B2) =\ & \int_{\partial \bB} u_r \cdot \nabla_{\varphi}\psi + \tau(\varphi \cdot \bn)(X_u \times \bt \cdot \psi_r - X_u \times \bn \cdot \psi_\theta) d\theta\\
& + \frac{\tau}{2}\int_{\partial \bB} X_u \cdot (\varphi_\theta \times \psi + \varphi \times \psi_\theta) d\theta\\
=\ & \int_{\partial \bB} u_r \cdot \nabla_{\varphi}\psi + \tau(\varphi \cdot \bn)(X_u \times \bt \cdot \psi_r - X_u \times \bn \cdot \psi_\theta) d\theta\\
& -\frac{\tau}{2}\int_{\partial \bB}(\nabla X)_u(u_\theta) \cdot \varphi \times \psi d\theta.
\end{split}
\end{equation}
where in getting the last line we integrated by parts. We obtain~\eqref{eq:area-kernel-formula} upon adding~\eqref{eq:tangent-boundary-terms} and~\eqref{eq:tangential-div-term-2} and observing a cancellation.

When $\varphi \in \cE_{\cS}$ and $\psi \in \cV_{\cS}$, the term in~\eqref{eq:area-kernel-formula} involving $\nabla X$ vanishes because on $\partial \bB$, the vectors $\varphi, \psi$ and $(\nabla X)_u(u_\theta)$ all lie in $T_u \Sigma$. The remaining terms in~\eqref{eq:area-kernel-formula} vanish because $E \cap T_u\Sigma$ is spanned by $\bt$, of which $\varphi$ must be a multiple, so that $\varphi \cdot u_r = 0$ and $\varphi \times u_\theta = 0$ on $\partial \bB$. 
\subsection{Proof of Lemma~\ref{lemm:A-second-variation-function}}
Letting 
\[
\varphi = f\nu + \xi,
\]
we have that $\xi \in C^{\infty}(\overline{\bB}\setminus \cS;\RR^3)$ and that $\xi(x) \in E_x$ for all $x$, which allows us to make use of the calculations in the previous proof. Specifically, note that in the two integrals over $\bB$ in~\eqref{eq:A-second-var-re}, each term is quadratic in $\varphi$ and can thus be split in the following manner (taking the divergence term as an example):
\[
(\Div_E \varphi)^2 = (\Div_E \xi)(\Div_E \varphi) + (\Div_E(f\nu))(\Div_E \xi) + (\Div_E(f\nu))^2
\]
Below we treat separately each of the three types of terms represented respectively by the three terms on the right hand side above:
\begin{enumerate}
\item[(1)] Applying~\eqref{eq:tangential-div-term-2} with $\xi$ and $\varphi$ in place of $\varphi$ and $\psi$, we get after rearranging that
\begin{equation}\label{eq:interior-xi-phi}
\begin{split}
&\int_{\bB}(\Div_E \xi)(\Div_E \varphi) + \nabla_{v_i}^\perp\xi \cdot \nabla_{v_i}^\perp \varphi - (\nabla_{v_i}\xi \cdot v_j)(\nabla_{v_j}\varphi \cdot v_i) \vol_E\\
& + H\int_{\bB} \xi \cdot (\nabla_{v_1}\varphi \times v_2 + v_1 \times \nabla_{v_2}\varphi) \vol_E\\
=\ & \int_{\partial \bB} (\xi \cdot u_r) (\Div_E \varphi) d\theta  -\int_{\partial \bB}u_r \cdot \nabla_{\xi}\varphi d\theta\\
=\ & \int_{\partial \bB} (\varphi \cdot u_r) (\Div_E \varphi) d\theta  -\int_{\partial \bB}u_r \cdot \nabla_{\xi}\varphi d\theta.
\end{split}
\end{equation}
The last equality follows because $\varphi \cdot u_r = \xi \cdot u_r$. 
\vskip 1mm
\item[(2)] Applying~\eqref{eq:tangential-div-term-2} again, but this time with $\xi$ in place of $\varphi$ and $f\nu$ in place of $\psi$, we obtain
\begin{equation}\label{eq:interior-xi-nu}
\begin{split}
&\int_{\bB} \Div_E(f\nu) \Div_E \xi + \nabla_{v_i}^\perp(f\nu) \cdot \nabla^\perp_{v_i}\xi - (\nabla_{v_i}(f\nu) \cdot v_j)(\nabla_{v_j}\xi \cdot v_i) \vol_E\\
& + H \int_{\bB} f\nu \cdot (\nabla_{v_1}\xi \times v_2 + v_1 \times \nabla_{v_2}\xi)\vol_E\\
=\ & \int_{\partial\bB}(\xi \cdot u_r)\Div_E(f\nu)d\theta - \int_{\partial \bB} u_r \cdot \nabla_{\xi}(f\nu) d\theta\\
& + H \int_{\bB} f\nu \cdot (\nabla_{v_1}\xi \times v_2 + v_1 \times \nabla_{v_2}\xi) - \xi\cdot (\nabla_{v_1}(f\nu) \times v_2 + v_1 \times \nabla_{v_2}(f\nu)) \vol_E\\
=\ & - \int_{\partial \bB} u_r\cdot \nabla_{\xi}(f\nu) d\theta.
\end{split}
\end{equation}
The cancellation leading to the last equality follows from the fact that 
\[
\Div_E(f\nu) = \lambda^{-2}u_{x^i} \cdot \paop{x^i}(f\nu) = -\lambda^{-2 }\Delta u \cdot f\nu = -Hf,
\]
and that by~\eqref{eq:area-prep-3} and~\eqref{eq:area-prep-4}, the second to last line in~\eqref{eq:interior-xi-nu} is equal to
\[
\begin{split}
H \int_{\bB} f\Div_E \xi + \nabla_{\xi}(f\nu) \cdot \nu \vol_E =\ & H \int_{\bB} f\Div_E \xi + \nabla_{\xi}f \vol_E\\
=\ & H \int_{\partial \bB} f(\xi \cdot u_r)d\theta - H \int_{\bB} f \xi \cdot \Delta u dx\\
=\ &H \int_{\partial \bB} f(\xi \cdot u_r)d\theta.
\end{split}
\]
To further simplify the last line of~\eqref{eq:interior-xi-nu}, note that since $\xi(x) \in E_x$ for all $x$ and $\xi$ is supported away from $\cS$, we have
\[
\xi = \lambda^{-2}(\xi \cdot u_{x^i})u_{x^i}.
\]
Thus we may compute
\begin{equation}\label{eq:interior-xi-nu-last}
\begin{split}
u_r \cdot \nabla_{\xi}(f\nu) = \ & f \nabla_{\xi}\nu \cdot u_r\\
=\ & f\lambda^{-2}(\xi \cdot u_{x^i})\nu_{x^i} \cdot u_r\\
=\ & f\lambda^{-2}(\xi \cdot u_{x^i})\nu_{r} \cdot u_{x^i} = f \nu_r \cdot \xi.
\end{split}
\end{equation}
\vskip 1mm
\item[(3)] By a more or less standard calculation, the details of which we omit, there holds
\begin{equation}\label{eq:interior-nu-nu}
\begin{split}
&\int_{\bB}(\Div_E (f\nu))^2 + |\nabla_{v_i}^\perp (f\nu)|^2 - (\nabla_{v_i}(f\nu) \cdot v_j)(\nabla_{v_j}(f\nu) \cdot v_i) \vol_E\\
& + H \int_{\bB} f\nu \cdot \big( \nabla_{v_1}(f\nu) \times v_2 + v_1 \times \nabla_{v_2}(f\nu) \big) \vol_{E}\\
=\ & \int_{\bB}|\nabla_{v_i}f|^2  - |\nabla_{v_i}\nu|^2 f^2 \vol_E.
\end{split}
\end{equation}
\end{enumerate}
Combining~\eqref{eq:interior-nu-nu},~\eqref{eq:interior-xi-nu-last},~\eqref{eq:interior-xi-nu} and~\eqref{eq:interior-xi-phi}, we obtain from~\eqref{eq:A-second-var-re} that
\begin{equation}\label{eq:A-second-variation-boundary}
\begin{split}
(\delta^2 A_{H, \tau})_u(\varphi, \varphi)  =\ & \int_{\bB}|\nabla_{v_i}f|^2  - |\nabla_{v_i}\nu|^2 f^2 \vol_E\\
& - \int_{\partial\bB} f ( \nu_r \cdot \xi)\ d\theta + \int_{\partial \bB} \big[(\varphi \cdot u_r)\Div_{E}\varphi - (\nabla_{\xi}\varphi \cdot u_r)\big]\ d\theta\\
& - \tau \int_{\partial \bB} \varphi_{\theta} \cdot X_u \times \varphi\  d\theta + \int_{\partial \bB} A^\Sigma_u(\varphi, \varphi)  \cdot (u_r + \tau X_u \times u_\theta)d\theta\\
=:\ & \int_{\bB}|\nabla_{v_i}f|^2  - |\nabla_{v_i}\nu|^2 f^2 \vol_E  - (B1) + (B2) - (B3) + (B4).
\end{split}
\end{equation}
Before simplifying the boundary terms, we note that
\[
\int_{\bB}|\nabla_{v_i}f|^2  - f^2|\nabla_{v_i}\nu|^2 \vol_E = \int_{\bB} |\nabla f|^2 - f^2 |\nabla \nu|^2  dx.
\]
Turning now to the boundary terms in~\eqref{eq:A-second-variation-boundary}, in terms of the notation from Section~\ref{subsec:index-notation}, on $\partial \bB \setminus \cS$ we have
\begin{equation}\label{eq:xi-t-n-exp}
\begin{split}
\xi =\ & (\varphi \cdot \bt)\bt + (\varphi \cdot \bn) \bn = \lambda^{-2}(\varphi \cdot u_\theta)u_\theta + \lambda^{-2}(\varphi \cdot u_r)u_r,
\end{split}
\end{equation}
and
\begin{equation}\label{eq:phi-t-n-exp}
\begin{split}
\varphi =\ & (\varphi \cdot \bt) \bt + (\varphi \cdot X_u \times \bt)X_u \times \bt\\
=\ & \lambda^{-2}(\varphi \cdot u_\theta)u_\theta + \lambda^{-2}(\varphi \cdot X_u \times u_\theta) X_u \times u_\theta.    
\end{split}
\end{equation}
Also, recalling that $f = \varphi \cdot \nu$, from~\eqref{eq:nu-t-n} and the fact that $\varphi \cdot X_u = 0$ on $\partial \bB$ we obtain
\begin{equation}\label{eq:f-t-n}
f = \sigma \varphi \cdot X_u \times \bt,
\end{equation}
and thus using~\eqref{eq:capillary-t-n} we deduce that
\begin{equation}\label{eq:f-t-n-2}
\varphi \cdot \bn = -\tau \varphi \cdot X_u \times \bt = -\frac{\tau}{\sigma}f.
\end{equation}
In these notation, we have from~\eqref{eq:xi-t-n-exp} and~\eqref{eq:f-t-n-2} that
\begin{equation}\label{eq:I-boundary-term-1}
\begin{split}
(B1) = \ & \int_{\partial \bB}f[(\varphi \cdot \bt)(\nu_r \cdot \bt) + (\varphi \cdot \bn)(\nu_r \cdot \bn)] d\theta\\
=\ & \int_{\partial \bB} -\frac{\tau}{\sigma}f^2(\nu_r \cdot \bn) + f(\varphi \cdot \bt)(\nu_r \cdot \bt) d\theta.
\end{split}
\end{equation}
Leaving (B1) for now and continuing to (B2), we have by~\eqref{eq:xi-t-n-exp} that
\begin{equation}\label{eq:I-boundary-term-2-pt}
\begin{split}
(\varphi \cdot u_r)\Div_E \varphi - (\nabla_{\xi}\varphi \cdot u_r)=\ & (\varphi \cdot u_r)[\lambda^{-2}\varphi_r \cdot u_r + \lambda^{-2}\varphi_\theta \cdot u_\theta]\\
& - \lambda^{-2}(\varphi \cdot u_\theta)(\varphi_\theta\cdot u_r) - \lambda^{-2}(\varphi \cdot u_r)(\varphi_r \cdot u_r)\\
=\ & \lambda^{-2}(\varphi \cdot u_r)(\varphi_\theta \cdot u_\theta) - \lambda^{-2}(\varphi \cdot u_\theta)(\varphi_\theta \cdot u_r).
\end{split}
\end{equation}
Recalling the formula for the triple cross product and using~\eqref{eq:nu-t-n}, the last line is equal to 
\[
\begin{split}
\lambda^{-2}\varphi_\theta \cdot \varphi \times (u_\theta \times u_r) =\ & -\varphi_{\theta} \cdot \varphi \times \nu = \varphi_\theta \cdot \nu \times \varphi\\
=\ & \tau\varphi_\theta \cdot X_u \times \varphi + \sigma\varphi_{\theta} \cdot (X_u \times \bt) \times \varphi\\
=\ & \tau\varphi_\theta \cdot X_u \times \varphi - \sigma(\varphi_\theta \cdot X_u)(\varphi \cdot \bt).
\end{split}
\]
Integrating and taking (B3) into account gives
\begin{equation}\label{eq:I-boundary-term-2-3}
\begin{split}
(B2) - (B3) =\ & -\int_{\partial \bB} \sigma(\varphi_\theta \cdot X_u)(\varphi \cdot \bt) d\theta = -\int_{\partial \bB}\sigma \lambda h^\Sigma((\varphi \cdot \bt)\bt, \varphi) d\theta.
\end{split}
\end{equation}
Next, by~\eqref{eq:capillary-t-n} we have
\[
\begin{split}
(B4) =\ & \int_{\partial \bB} \lambda \sigma h^\Sigma(\varphi, \varphi) d\theta.
\end{split}
\]
Adding this to~\eqref{eq:I-boundary-term-2-3} and recalling~\eqref{eq:phi-t-n-exp} and~\eqref{eq:f-t-n} yields
\[
\begin{split}
(B2) - (B3) + (B4) =\ & \int_{\partial\bB} \lambda \sigma h^\Sigma((\varphi \cdot X_u \times \bt)X_u \times \bt, \varphi) d\theta\\
=\ & \int_{\partial \bB} \frac{\lambda}{\sigma}f^2 h^\Sigma(X_u \times \bt, X_u \times \bt) + \lambda f h^{\Sigma}(X_u \times \bt, (\varphi \cdot \bt)\bt ) d\theta.
\end{split}
\]
From this and~\eqref{eq:I-boundary-term-1} we finally get
\begin{equation}\label{eq:I-boundary-total}
\begin{split}
-(B1) + (B2) - (B3) + (B4) =\ & \int_{\partial \bB} \frac{\tau}{\sigma}f^2(\nu_r \cdot \bn) +  \frac{\lambda}{\sigma}f^2 h^\Sigma(X_u \times \bt, X_u \times \bt) d\theta\\
& + \int_{\partial \bB} f(\varphi \cdot \bt)[-(\nu_r \cdot \bt) + \lambda  h^{\Sigma}(X_u \times \bt, \bt )] d\theta\\
=\ & \int_{\partial \bB} \frac{\tau}{\sigma}f^2(\nu_r \cdot \bn) +  \frac{\lambda}{\sigma}f^2 h^\Sigma(X_u \times \bt, X_u \times \bt) d\theta.
\end{split}
\end{equation}
The last equality is obtained as follows: From~\eqref{eq:capillary-t-n} and~\eqref{eq:nu-t-n} we deduce that
\[
\begin{split}
\nu_r \cdot \bt =\ & \lambda^{-1}\nu_r \cdot u_\theta = \lambda^{-1} \nu_\theta \cdot u_r\\
=\ &  (\tau (X_u)_\theta + \sigma(X_u \times \bt)_\theta) \cdot (-\tau X_u \times \bt + \sigma X_u)\\
=\ & -\tau^2 (X_u)_{\theta} \cdot X_u \times \bt + \sigma^2 (X_u \times \bt)_\theta \cdot X_u\\
=\ & \lambda h^\Sigma(X_u \times \bt, \bt),
\end{split}
\]
where to get the third line we used that fact that $X_u$ and $X_u \times \bt$ are  both unit vector fields on $\partial \bB \setminus \cS$. In getting the last line, we used the orthogonality of $X_u \times \bt$ and $X_u$, and the relation $\tau^2 + \sigma^2 = 1$.

To finish, note that from~\eqref{eq:capillary-t-n} we have
\[
\frac{\lambda}{\sigma} = \frac{\lambda \sigma}{\sigma^2} = \frac{\lambda \bn \cdot X_u}{1 - \tau^2} = \frac{u_r \cdot X_u}{1 - \tau^2}.
\]
Combining this with~\eqref{eq:tilde-t-n} gives
\begin{equation}\label{eq:last-boundary-1}
\frac{\lambda}{\sigma}h^\Sigma(X_u \times \bt, X_u \times \bt) = \frac{u_r \cdot X_u}{1 - \tau^2}h^\Sigma(\widetilde{\nu}, \widetilde{\nu}).
\end{equation}
On the other hand, again by~\eqref{eq:tilde-t-n} we see that
\[
\frac{1}{\sigma} (\nu_r \cdot \bn) = \frac{1}{|\sigma|} (\nu_r \cdot \widetilde{X}) = \frac{1}{\sqrt{1 - \tau^2}}(\nu_r \cdot \widetilde{X}).
\]
Substituting this and~\eqref{eq:last-boundary-1} into~\eqref{eq:I-boundary-total} and recalling~\eqref{eq:A-second-variation-boundary}, we get 
\[
(\delta^2 A_{H, \tau})_u(\varphi, \varphi)  = (I_{H, \tau})_u(f, f)
\]
as asserted.

\section{Standard estimates for a class of oblique derivative problems}\label{sec:linear}
For the sake of completeness, here we give an account of the linear estimates for oblique derivative problems that are used in Section~\ref{subsec:smoothness} and Section~\ref{subsec:a-priori}, taking for granted the $W^{2, p}$ theory for Dirichlet and Neumann problems. The material in this appendix is mostly taken from~\cite{Krylov2008}. Below, whenever $L^{p}$ spaces or Sobolev spaces appear, it is understood that $1 < p < \infty$. Given $u \in W^{1, p}(\RR^2_+)$, we write $u|_{\partial\RR^2_+}$ to mean its trace on $\partial \RR^2_+$. Also, for $u, v \in W^{1, p}(\RR^2_+)$, we sometimes write ``$u = v$ on $\partial\RR^2_+$'' to mean $u|_{\partial\RR^2_+} = v|_{\partial\RR^2_+}$.
We first introduce the boundary operators of interest. 
\begin{defi}\label{defi:B0-defi}
Fixing $a \in (-1, 1)$, for $u \in W^{2, p}(\RR^2_+; \RR^2)$ we define
\[
B_a^{\mp}u = \pa{u}{y} \mp \left( \begin{array}{cc} 0 & a \\ -a & 0 \end{array} \right)\pa{u}{x}.
\]
Notice that 
\begin{equation}\label{eq:B0B1}
B_a^{-}B_a^{+}u = B_a^{+}B_a^{-} u = \frac{\partial^2 u}{\partial y^2} + a^2\frac{\partial^2 u}{\partial x^2}.
\end{equation}
\end{defi}
The contents of Lemma~\ref{lemm:model-bc} through Lemma~\ref{lemm:inhomogeneous} are as follows. Lemma~\ref{lemm:model-bc} gives $W^{2,p}$-estimates when $B_a^{-}u$ vanishes on the boundary. Then, the construction in Lemma~\ref{lemm:correction} allows us to obtain a priori estimates for general boundary data in Lemma~\ref{lemm:inhomogeneous-estimate} by reducing to the homogeneous case. In Lemma~\ref{lemm:inhomogeneous} we use the estimates in Lemma~\ref{lemm:inhomogeneous-estimate} and the method of continuity to prove the solvability in $W^{2, p}(\RR^2_+; \RR^2)$ of the boundary value problem
\[
\Delta u - u = f \text{ on }\RR^2_+,\ \ B_a^{-}u = g \text{ on }\partial \RR^2_+,
\]
for $f \in L^{p}(\RR^2_+; \RR^2)$ and $g \in W^{1,p}(\RR^2_+; \RR^2)$.
\begin{lemm}\label{lemm:model-bc}
Given $a_0 \in (0, 1)$, there exists $C > 0$ depending only on $p$ and $a_0$ such that if $u \in W^{2, p}(\RR^2_+; \RR^2)$ and $B_a^{-} u = 0$ on $\partial \RR^2_+$, with $|a| \leq a_0$, then 
\[
\|u\|_{2, p; \RR^2_+} \leq C\|\Delta u - u\|_{p; \RR^2_+}.
\]
\end{lemm}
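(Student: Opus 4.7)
The plan is to reduce the system to a single complex-valued scalar equation and then derive the estimate by Fourier analysis in the tangential direction. Identifying $\RR^2 \cong \CC$, the matrix $\bigl(\begin{smallmatrix}0 & a\\ -a & 0\end{smallmatrix}\bigr)$ acts as multiplication by $-ia$, so setting $w := u^1 + iu^2$ converts the hypothesis into the complex scalar problem
\begin{equation*}
\Delta w - w = f \ \text{ in }\RR^2_+,\qquad w_y + iaw_x = 0 \ \text{ on }\partial \RR^2_+,
\end{equation*}
where $f := f^1 + if^2$ with $(f^1, f^2) = \Delta u - u$. Since $|w|^2 = |u|^2$ pointwise and derivative norms transform analogously, it suffices to prove $\|w\|_{2, p; \RR^2_+} \leq C(p, a_0)\, \|f\|_{p; \RR^2_+}$.

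First I would decompose $w = w_0 + w_1$, where $w_0 \in W^{2, p}(\RR^2)$ is the unique solution of $\Delta w_0 - w_0 = Ef$ on $\RR^2$, with $Ef$ the zero-extension of $f$. Standard Bessel-potential theory (the operator $1 - \Delta$ is an isomorphism $W^{2, p}(\RR^2) \to L^p(\RR^2)$) gives $\|w_0\|_{2, p; \RR^2} \leq C(p)\, \|f\|_{p; \RR^2_+}$. The remainder $w_1 := w - w_0$ then satisfies the homogeneous equation $\Delta w_1 - w_1 = 0$ in $\RR^2_+$ with inhomogeneous oblique data $g := -(w_{0, y} + ia w_{0, x})|_{\partial\RR^2_+}$, and the trace theorem yields $\|g\|_{W^{1 - 1/p, p}(\partial\RR^2_+)} \leq C(p)\, \|f\|_{p; \RR^2_+}$.

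To handle $w_1$, I would Fourier-transform in $x$. Writing $\lambda(\xi) := \sqrt{1 + \xi^2}$ and requiring decay as $y \to \infty$, the unique solution is
\begin{equation*}
\hat w_1(\xi, y) = -\frac{\hat g(\xi)}{\lambda(\xi) + a\xi}\, e^{-\lambda(\xi) y} \;=\; m_a(\xi)\, \hat h(\xi, y),
\end{equation*}
where $m_a(\xi) := \lambda(\xi)/(\lambda(\xi) + a\xi)$ and $\hat h(\xi, y) := -(\hat g(\xi)/\lambda(\xi))\, e^{-\lambda(\xi) y}$ is the Fourier transform of the solution of the Neumann-type problem $\Delta h - h = 0$ in $\RR^2_+$ with $h_y|_{\partial\RR^2_+} = g$. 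The standard $W^{2,p}$-theory for the latter gives $\|h\|_{2, p; \RR^2_+} \leq C(p)\, \|g\|_{W^{1 - 1/p, p}(\partial \RR^2_+)}$, so the required estimate on $w_1$ reduces to showing that the tangential Fourier multiplier operator with symbol $m_a(\xi)$ is bounded on $L^p(\RR^2_+)$, commuting with $\partial_y$, with constant depending only on $p$ and $a_0$.

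The main obstacle, and the only place where $a_0$ enters the final constant, is verifying the Mihlin--H\"ormander condition for $m_a$ uniformly in $|a| \leq a_0 < 1$. The restriction $|a| \leq a_0$ gives $|a\xi| \leq a_0 \lambda$, hence $\lambda + a\xi \geq (1 - a_0)\lambda$ and $|m_a(\xi)| \leq (1 - a_0)^{-1}$. A direct computation yields
\begin{equation*}
m_a'(\xi) = \frac{-a}{\lambda(\xi)\, (\lambda(\xi) + a\xi)^2},
\end{equation*}
from which $|\xi\, m_a'(\xi)| \leq a_0/(1 - a_0)^2$; the analogous bounds $|\xi^k m_a^{(k)}(\xi)| \leq C_k(a_0)$ follow inductively by the same cancellation $\xi \lambda'(\xi) - \lambda(\xi) = -1/\lambda(\xi)$. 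Mihlin's theorem (applied slicewise in $y$ and integrated) then delivers the required $L^p$-bound on the multiplier operator, and combining the estimates for $w_0$ and $w_1$ yields the stated inequality.
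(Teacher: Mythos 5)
Your proof is correct in its main ideas and takes a genuinely different route from the paper's. The paper reduces to a scalar \emph{Dirichlet} problem for $v := B_a^{-}u$ via the integration-by-parts identity $\int \langle \nabla v, \nabla\varphi\rangle + v\cdot\varphi = \int f \cdot B_a^{+}\varphi$, then recovers $\nabla^2 u$ from the factorization $B_a^{+}B_a^{-} = \partial_y^2 + a^2\partial_x^2$ together with slicewise one-dimensional $L^p$-estimates for $(1-a^2)\partial_x^2 - 1$. You instead exploit that the skew matrix acts as complex multiplication by $-ia$, peel off a whole-space piece, and reduce the remainder to the Neumann solution composed with the tangential Fourier multiplier $m_a(\xi) = \lambda/(\lambda + a\xi)$. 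Your Mihlin bounds $|m_a| \leq (1-a_0)^{-1}$ and $|\xi\, m_a'| \leq a_0(1-a_0)^{-2}$ are verified correctly and isolate the $a$-dependence in one transparent step, which is a conceptual advantage over the paper's route.

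There is, however, a gap you should close. The lemma is an a priori estimate for an \emph{arbitrary given} $u \in W^{2,p}(\RR^2_+;\RR^2)$, not a solvability statement. Your argument writes down the Fourier representation of $w_1$ by ``requiring decay as $y\to\infty$'' and calling it ``the unique solution,'' then bounds that. To conclude $\|w_1\|_{2,p} \leq C\|h\|_{2,p}$ for the \emph{given} $w_1 := w - w_0$, you must additionally show that $w_1$ actually coincides with the Fourier-constructed solution; equivalently, that the homogeneous problem $\Delta\delta - \delta = 0$ in $\RR^2_+$, $B_a^{-}\delta = 0$ on $\partial\RR^2_+$ admits no non-trivial $W^{2,p}$ solution. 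That uniqueness is essentially part of what Lemma~\ref{lemm:model-bc} is asserting, so as written the argument borders on circular. The paper's Dirichlet-reduction sidesteps this entirely because it never presupposes a form of the solution. Your route can be repaired — for instance by first showing that any $W^{2,p}$ solution of $\Delta\delta = \delta$ decays fast enough (interior elliptic estimates give exponential decay) to lie in $W^{1,2}(\RR^2_+)$, and then killing it through the energy identity $\|\nabla\delta\|_2^2 + \|\delta\|_2^2 = -a\int_\RR \xi\,|\hat\delta(\xi,0)|^2\,d\xi$, whose right side is at most $a_0(\|\nabla\delta\|_2^2 + \|\delta\|_2^2)$ by the Fourier-side trace inequality — but this step must be made explicit.
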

\begin{proof}
Define $v = B_a^{-} u$ and $f = \Delta u - u$. A direct computation shows that
\[
\int_{\RR^2_+} \bangle{\nabla v, \nabla \varphi} + v \cdot \varphi = \int_{\RR^2_+} f \cdot B_a^{+} \varphi,
\]
for all $\varphi \in C^{\infty}_c(\overline{\RR^2_+}; \RR^2)$ such that $\varphi = 0$ on $\partial \RR^2_+$. Since $v$ lies in $W^{1, p}(\RR^2_+; \RR^2)$ and satisfies a homogeneous Dirichlet condition on $\partial \RR^2_+$, we deduce that
\[
\|v\|_{1, p; \RR^2_+} \leq C\|f\|_{p; \RR^2_+},
\]
where $C$ depends only on $p$. Applying $B_a^{+}$ to $v$ and using the identity~\eqref{eq:B0B1}, we get 
\begin{equation}\label{eq:B0B1-rearranged}
\frac{\partial^2 u}{\partial y^2} = B_a^{+} v  - a^2\frac{\partial^2 u}{\partial x^2},
\end{equation}
so that 
\[
(1 - a^2)\frac{\partial^2 u}{\partial x^2} - u = \Delta u - u - B_a^{+} v =  f  - B_a^{+} v.
\]
A scaling argument together with $L^p$-estimates for $(\paop{x})^2 - 1$ on $\RR$ shows that
\[
\|u(\cdot, y)\|_{p; \RR} + (1 - a^2)\Big\| \frac{\partial^2 u}{\partial x^2}(\cdot, y) \Big\|_{p; \RR} \leq C\big(\|f(\cdot, y)\|_{p; \RR} + \|B_a^{+} v(\cdot, y)\|_{p; \RR}\big),
\]
for almost every $y > 0$. Raising both sides to the $p$-th power and integrating with respect to $y$ on $(0, \infty)$, we get  
\[
\|u\|_{p; \RR^2_+} + (1 - a^2)\Big\| \frac{\partial^2 u}{\partial x^2} \Big\|_{p; \RR^2_+} \leq C\big( \|f\|_{p; \RR^2_+} + \|v\|_{1, p; \RR^2_+} \big) \leq C\|f\|_{p; \RR^2_+},
\]
where $C$ depends only on $p$. Going back to~\eqref{eq:B0B1-rearranged}, we see that 
\[
\Big\| \frac{\partial^2 u}{\partial y^2} \Big\|_{p; \RR^2_+} \leq C\big( \|v\|_{1, p; \RR^2_+} + \big\| \frac{\partial^2 u}{\partial x^2} \big\|_{p; \RR^2_+} \big) \leq C\Big( 1 + \frac{1}{1 - a^2} \Big)\|f\|_{p; \RR^2_+}.
\]
Similarly, differentiating the relation $v = B_a^{-}u$ with respect to $x$ allows us to deduce that 
\[
\Big\| \frac{\partial^2 u}{\partial x \partial y} \Big\|_{p; \RR^2_+} \leq C\big( \|v\|_{1, p; \RR^2_+} + \big\| \frac{\partial^2 u}{\partial x^2} \big\|_{p; \RR^2_+}  \big) \leq C\Big( 1 + \frac{1}{1 - a^2} \Big)\|f\|_{p; \RR^2_+}.
\]
\end{proof}
\begin{lemm}
\label{lemm:correction}
Let $g \in W^{1, p}(\RR^2_+; \RR^2)$. There exists $v \in W^{2, p}(\RR^2_+; \RR^2)$ such that $B_a^{-} v = g$ on $\partial \RR^2_+$ and that 
\begin{equation}\label{eq:correction-estimate}
\|v\|_{2, p; \RR^2_+} \leq C\|g\|_{1, p; \RR^2_+},
\end{equation}
where $C$ depends only on $p$.
\end{lemm}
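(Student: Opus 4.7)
The plan is to reduce the problem to a Neumann-type extension by exploiting the structure of $B_a^-$. The key observation is that for any $v \in W^{2,p}(\RR^2_+; \RR^2)$, we have on $\partial\RR^2_+$
\[
B_a^- v = \partial_y v + \begin{pmatrix} 0 & a \\ -a & 0 \end{pmatrix} \partial_x v,
\]
and the second term involves only the tangential derivative of $v|_{\partial\RR^2_+}$. So if I insist that $v$ additionally satisfies $v|_{\partial\RR^2_+} = 0$, then $\partial_x v|_{\partial\RR^2_+} = \partial_x (v|_{\partial\RR^2_+}) = 0$, and the $a$-dependent term drops out entirely, leaving $B_a^- v|_{\partial\RR^2_+} = \partial_y v|_{\partial\RR^2_+}$. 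This explains why the constant $C$ in the asserted estimate should not depend on $a$: the construction of $v$ need not interact with $a$ at all.

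With this reduction, the task becomes finding $v \in W^{2,p}(\RR^2_+; \RR^2)$ with $v|_{\partial\RR^2_+} = 0$, $\partial_y v|_{\partial\RR^2_+} = g|_{\partial\RR^2_+}$, and $\|v\|_{2,p;\RR^2_+} \leq C\|g\|_{1,p;\RR^2_+}$. Setting $g_0 := g|_{\partial\RR^2_+}$, the trace theorem yields $g_0 \in W^{1-1/p, p}(\partial\RR^2_+; \RR^2)$ with $\|g_0\|_{W^{1-1/p, p}} \leq C_p \|g\|_{1, p; \RR^2_+}$. I would then invoke the standard right inverse of the trace map for $W^{2,p}(\RR^2_+)$, which assigns to any pair $(u_0, u_1) \in W^{2-1/p, p}(\partial\RR^2_+) \times W^{1-1/p, p}(\partial\RR^2_+)$ a function $v \in W^{2, p}(\RR^2_+)$ with $v|_{\partial\RR^2_+} = u_0$, $\partial_y v|_{\partial\RR^2_+} = u_1$, and corresponding norm bound. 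Applying this with $(u_0, u_1) = (0, g_0)$ produces the required $v$.

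If a self-contained construction is preferred over an appeal to the general extension theorem, one can instead write down $v$ explicitly: choose $\psi \in C_c^\infty([0,\infty))$ with $\psi(0) = 0$ and $\psi'(0) = 1$, choose $\rho \in C_c^\infty(\RR)$ with $\int_\RR \rho = 1$, set $\rho_y(x) = y^{-1}\rho(x/y)$, and define
\[
v(x, y) := \psi(y) \, (\rho_y \ast g_0)(x).
\]
A direct differentiation shows $v|_{y=0} = 0$ and $\partial_y v|_{y=0} = \psi'(0) g_0 = g_0$, and the standard estimates for Poisson-type extensions give $\|v\|_{2, p; \RR^2_+} \leq C_p \|g_0\|_{W^{1-1/p, p}(\partial \RR^2_+)}$.

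There is no real obstacle here; the result is a routine consequence of the trace theory. The only point where some care is required is to ensure the extension lands in $W^{2,p}$ (not merely $W^{1,p}$), so that $B_a^- v$ has well-defined traces in $W^{1-1/p, p}(\partial\RR^2_+)$ matching $g|_{\partial\RR^2_+}$ in the correct regularity class.
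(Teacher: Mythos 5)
Your proof is correct, but it takes a genuinely different route from the paper's. You reduce the problem to constructing a $W^{2,p}$ extension with zero trace and prescribed normal derivative (noting that vanishing trace kills the tangential term in $B_a^-$, so the constant cannot depend on $a$), and then appeal to the surjectivity-with-estimates of the second-order trace map onto $W^{2-1/p,p}\times W^{1-1/p,p}$, or construct the extension directly via a convolution-type formula. The paper instead solves the Dirichlet problem $\Delta w - w = g$, $w|_{\partial\RR^2_+}=0$, notes by elliptic regularity that $w\in W^{3,p}$ with $\|w\|_{3,p}\le C_p\|g\|_{1,p}$ and that $\partial_x^2 w$ vanishes on the boundary, and then sets $v = B_a^+ w$; the algebraic identity $B_a^- B_a^+ = \partial_y^2 + a^2\partial_x^2 = \Delta - (1-a^2)\partial_x^2$ from~\eqref{eq:B0B1} together with the vanishing of $w$ and $\partial_x^2 w$ on the boundary gives $B_a^- v = g$ there. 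What your approach buys is conceptual transparency: the boundary condition is manifestly a normal-derivative condition once the trace vanishes, and the construction is "the obvious one." What the paper's approach buys is that it stays entirely within the $W^{2,p}$/$W^{3,p}$ elliptic regularity theory for the Dirichlet problem that the rest of the appendix already relies on, rather than importing the fractional-order trace extension theorem (whose proof is essentially equivalent in content to the explicit construction you sketch, but is a separate piece of machinery). Both arguments are valid and both correctly yield a constant depending only on $p$.
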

\begin{proof}
Let $w$ be the solution in $W^{2, p}(\RR^2_+; \RR^2)$ of the Dirichlet problem below:
\[
\left\{
\begin{array}{ll}
\Delta w - w = g, &\text{ in }\RR^2_+,\\
w = 0, & \text{ on }\partial \RR^2_+.
\end{array}
\right.
\]
Then $w$ in fact lies in $W^{3, p}(\RR^2_+; \RR^2)$ and satisfies the estimate
\[
\|w\|_{3, p; \RR^2_+} \leq C_p\|g\|_{1, p; \RR^2_+}.
\]
Also, $\frac{\partial^2 w}{\partial x^2} = 0$ on $\partial \RR^2_+$. Next we define $v = B_a^{+}w$ and observe that by~\eqref{eq:B0B1} we have
\[
B_a^{-} v  = g + w - (1 - a^2)\frac{\partial^2 w}{\partial x^2},
\] 
so that $B_a^{-} v - g = 0$ on $\partial \RR^2_+$. Moreover, it is straightforward from its definition that $v \in W^{2, p}(\RR^2_+; \RR^2)$ and 
\[
\|v\|_{2, p; \RR^2_+} \leq C\|w\|_{3, p; \RR^2_+} \leq C\|g\|_{1, p; \RR^2_+}.
\]
\end{proof}

\begin{lemm}\label{lemm:inhomogeneous-estimate}
Given $a_0 \in (0, 1)$, there exists $C > 0$ depending only on $p$ and $a_0$ such that for all $a \in [-a_0, a_0]$ and $u \in W^{2, p}(\RR^2_+; \RR^2)$ we have
\[
\|u\|_{2, p; \RR^2_+} \leq C\big( \| \Delta u - u \|_{p; \RR^2_+} + \|g\|_{1, p; \RR^2_+} \big),
\]
where $g$ is any function in $W^{1, p}(\RR^2_+; \RR^2)$ satisfying $g|_{\partial \RR^2_+} = (B_a^{-}u)|_{\partial\RR^2_+}$.
\end{lemm}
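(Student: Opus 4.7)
The plan is to reduce to the homogeneous boundary case handled by Lemma~\ref{lemm:model-bc} via a subtraction trick enabled by Lemma~\ref{lemm:correction}. Specifically, given $u$ and $g$ as in the statement, I would apply Lemma~\ref{lemm:correction} to produce $v \in W^{2,p}(\RR^2_+; \RR^2)$ with $B_a^{-}v = g$ on $\partial \RR^2_+$ and
\[
\|v\|_{2,p;\RR^2_+} \leq C_p \|g\|_{1,p;\RR^2_+}.
\]
Then $w := u - v$ lies in $W^{2,p}(\RR^2_+; \RR^2)$ and satisfies $B_a^{-}w = 0$ on $\partial \RR^2_+$ (as traces), since $g|_{\partial \RR^2_+} = (B_a^{-}u)|_{\partial \RR^2_+}$ by assumption.

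Next I would invoke Lemma~\ref{lemm:model-bc} (whose constant depends only on $p$ and $a_0$, uniformly over $|a| \leq a_0$) to bound
\[
\|w\|_{2,p;\RR^2_+} \leq C(p, a_0)\, \|\Delta w - w\|_{p;\RR^2_+} \leq C(p, a_0)\,\big(\|\Delta u - u\|_{p;\RR^2_+} + \|\Delta v - v\|_{p;\RR^2_+}\big),
\]
and absorb $\|\Delta v - v\|_{p;\RR^2_+} \leq 2\|v\|_{2,p;\RR^2_+} \leq C\|g\|_{1,p;\RR^2_+}$ using Lemma~\ref{lemm:correction}. A triangle inequality $\|u\|_{2,p} \leq \|w\|_{2,p} + \|v\|_{2,p}$ then yields the desired estimate with a constant depending only on $p$ and $a_0$.

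There is no real obstacle here: the lemma is a standard decomposition argument, and both building blocks (the homogeneous estimate and the lifting of boundary data) have already been established in the appendix. The only point worth checking carefully is that the trace identification $g|_{\partial \RR^2_+} = (B_a^{-} u)|_{\partial \RR^2_+}$ indeed implies $B_a^{-}(u - v) = 0$ in the trace sense on $\partial \RR^2_+$, which is immediate since the trace operator is linear and $B_a^{-}v|_{\partial \RR^2_+} = g|_{\partial\RR^2_+}$ by construction.
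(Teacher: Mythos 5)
Your proposal is correct and follows essentially the same decomposition argument as the paper: construct $v$ from Lemma~\ref{lemm:correction}, reduce to the homogeneous case for $w = u - v$ via Lemma~\ref{lemm:model-bc}, and conclude by the triangle inequality. The only difference is cosmetic — you spell out the intermediate bound $\|\Delta v - v\|_{p} \leq 2\|v\|_{2,p}$, which the paper leaves implicit.
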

\begin{proof}
Let $g$ be as in the statement and $v$ as given by Lemma~\ref{lemm:correction}. Then $w:= u - v$ lies in $W^{2, p}(\RR^2_+; \RR^2)$ and
\[
\Delta w - w = \Delta u - u - \Delta v + v;\ \  (B_a^{-}w)|_{\partial \RR^2_+} = 0.
\]
Thus, according to Lemma~\ref{lemm:model-bc},
\[
\|w\|_{2, p; \RR^2_+}  \leq C( \|\Delta u - u\|_{p; \RR^2_+} + \|v\|_{2, p; \RR^2_+} ),
\]
with $C$ depending only on $p$ and $a_0$. Since $\|u\|_{2, p; \RR^2_+} \leq \|w\|_{2, p; \RR^2_+} + \|v\|_{2, p; \RR^2_+}$, combining the above with~\eqref{eq:correction-estimate} gives the desired estimate on $u$.
\end{proof}

\begin{lemm}\label{lemm:inhomogeneous}
For any $a \in (-1, 1)$, given $f \in L^p(\RR^2_+; \RR^2)$ and $g \in W^{1, p}(\RR^2_+; \RR^2)$, there exists a unique $u \in W^{2, p}(\RR^2_+; \RR^2)$ such that 
\begin{equation}\label{eq:inhomogeneous-bvp}
\Delta u - u = f \text{ in }\RR^2_+;\ (B_a^{-} u)|_{\partial\RR^2_+} = g|_{\partial\RR^2_+}.
\end{equation}
\end{lemm}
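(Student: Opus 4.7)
The plan is to apply the method of continuity, treating $a$ as a parameter that is deformed from $0$ (the Neumann problem, whose solvability in $W^{2,p}$ is classical) to the given value, while using Lemma~\ref{lemm:inhomogeneous-estimate} to supply a uniform a priori estimate along the deformation. Set $a_0 = |a|$ if $a \neq 0$ (the case $a = 0$ being classical), and for $t \in [0, 1]$ consider the bounded linear operator
\[
L_t: W^{2, p}(\RR^2_+; \RR^2) \to L^p(\RR^2_+; \RR^2) \times W^{1 - \frac{1}{p}, p}(\partial \RR^2_+; \RR^2), \quad L_t u = \big(\Delta u - u, (B_{ta}^{-} u)|_{\partial \RR^2_+}\big),
\]
with the target normed as in~\eqref{eq:Zp-norm}. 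By Lemma~\ref{lemm:inhomogeneous-estimate} applied with the parameter $ta \in [-a_0, a_0]$, there is a constant $C = C(p, a_0)$ independent of $t \in [0, 1]$ such that
\[
\|u\|_{2, p; \RR^2_+} \leq C\big(\|L_t u\|\big), \quad \text{for all } u \in W^{2, p}(\RR^2_+; \RR^2),
\]
where on the right the trace part of $L_t u$ is lifted to an arbitrary $W^{1, p}$-extension and the infimum over such extensions is taken.

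First I would dispose of uniqueness and the case $t = 0$. Uniqueness is immediate from the a priori estimate: if $u \in W^{2, p}$ satisfies $L_1 u = 0$, then $\|u\|_{2, p; \RR^2_+} = 0$. For the endpoint $t = 0$, $L_0$ is the operator for the inhomogeneous Neumann problem $\Delta u - u = f$, $\partial_y u = g|_{\partial \RR^2_+}$, whose surjectivity onto the target space is standard: given any $W^{1, p}$-lift of the boundary datum, one subtracts it off and reduces to a Dirichlet-type problem, or one invokes the classical $W^{2, p}$-theory for the Neumann problem on the half-plane directly (for instance via reflection, which reduces it to $W^{2, p}$-estimates for $\Delta - 1$ on $\RR^2$).

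With these two ingredients in place, I would run the method of continuity in its standard form: the set
\[
T = \{t \in [0, 1] \mid L_t \text{ is surjective}\}
\]
is nonempty since $0 \in T$, and it is both open and closed in $[0, 1]$. Openness follows from a perturbation argument: if $t_0 \in T$, then $L_{t_0}$ is a Banach space isomorphism by uniqueness and the a priori estimate, and for $t$ close to $t_0$ the operator $L_t - L_{t_0}$ has small norm (it is a bounded multiplication by $(t - t_0)a$ acting on $\partial_x u$ on the boundary), so $L_t$ remains invertible. Closedness is likewise standard: given a sequence $t_k \to t_\infty$ in $T$ and a target $(f, g|_{\partial \RR^2_+})$, produce $u_k$ with $L_{t_k} u_k = (f, g|_{\partial \RR^2_+})$; the uniform a priori estimate bounds $\|u_k\|_{2, p; \RR^2_+}$, and after extracting a weak limit $u_\infty$ one verifies $L_{t_\infty} u_\infty = (f, g|_{\partial \RR^2_+})$. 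Hence $T = [0, 1]$, and in particular $L_1$ is surjective, which is the claim.

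The only mildly delicate point is verifying that the boundary perturbation in the openness step has norm controlled uniformly in $t$; this follows from the trace inequality together with the fact that $(B_t^- - B_{t_0}^-)u$ differs from $B_{t_0}^- u$ only by $(t_0 - t)a$ times a rotation of $\partial_x u$, whose trace on $\partial \RR^2_+$ is bounded in $W^{1 - \frac{1}{p}, p}$ by $\|u\|_{2, p; \RR^2_+}$. No further ideas are needed.
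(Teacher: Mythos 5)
Your proposal is correct and follows the same strategy as the paper: use Lemma~\ref{lemm:inhomogeneous-estimate} as the uniform a priori estimate, take the Neumann case $a = 0$ as the solvable endpoint, and invoke the method of continuity to obtain invertibility of $L_a$ for all $a$ in the range. The paper states the argument more tersely but uses identically these three ingredients; your spelled-out openness/closedness verification (and the observation that the boundary perturbation $L_t - L_{t_0}$ has operator norm $\lesssim |t - t_0|\,|a|$ via taking the obvious $W^{1,p}$-extension) is exactly the content the paper implicitly relies on.
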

\begin{proof}
Uniqueness follows from Lemma~\ref{lemm:model-bc}. To establish existence, we introduce the space
\[
\cZ_p = L^p(\RR^2_+; \RR^2) \times W^{1-\frac{1}{p}, p}(\partial \RR^2_+; \RR^2).
\]
Elements in the second factor have the form $g|_{\partial \RR^2_+}$ for $g \in W^{1, p}(\RR^2_+; \RR^2)$, and we equip $\cZ_p$ with the norm
\begin{equation}\label{eq:Zp-norm}
\|(f, g|_{\partial \RR^2_+})\|_{\cZ_p} = \|f\|_{p; \RR^2_+} + \inf \big\{ \| h \|_{1, p; \RR^2_+}\ \big|\ h \in W^{1, p}(\RR^2_+; \RR^2) \text{ and }h|_{\partial \RR^2_+} = g|_{\partial \RR^2_+}\big\},
\end{equation}
which makes it a Banach space. Next we define $L_a: W^{2, p}(\RR^2_+; \RR^2) \to \cZ_p$ by letting
\[
L_a u = (\Delta u - u, (B_a^{-} u)|_{\partial \RR^2_+}).
\]
This is a bounded linear operator. Moreover, fixing $a_0 \in (0, 1)$, Lemma~\ref{lemm:inhomogeneous-estimate} implies that for all $a \in [-a_0, a_0]$ we have 
\begin{equation}\label{eq:continuity-method}
\|u\|_{2, p; \RR^2_+} \leq C\|L_a u\|_{\cZ_p},
\end{equation}
with $C$ depending only on $p$ and $a_0$. Since~\eqref{eq:inhomogeneous-bvp} reduces to a Neumann problem when $a = 0$, standard theory gives that $L_0$ is invertible. We may then use~\eqref{eq:continuity-method} and the continuity method to deduce that $L_a$ is invertible for all $a \in [-a_0, a_0]$. Since $a_0 \in (0,1)$ is arbitrary, we get the solvability of~\eqref{eq:inhomogeneous-bvp} for all $a \in (-1, 1)$.
\end{proof}
For the proof of Proposition~\ref{prop:boundary-regularity} we also need a version of Lemma~\ref{lemm:inhomogeneous} where~\eqref{eq:inhomogeneous-bvp} is solved in $W^{2, p} \cap W^{2, q}$. 
\begin{lemm}\label{lemm:intersection}
Suppose $1 < p, q < \infty$. For all $a \in (-1, 1)$, $f \in (L^p \cap L^q)(\RR^2_+; \RR^2)$ and $g \in (W^{1, p} \cap W^{1, q})(\RR^2_+; \RR^2)$, the unique solution to~\eqref{eq:inhomogeneous-bvp} in $W^{2, p}(\RR^2_+; \RR^2)$ given by Lemma~\ref{lemm:inhomogeneous} actually lies in $(W^{2, p} \cap W^{2,q})(\RR^2_+; \RR^2)$.
\end{lemm}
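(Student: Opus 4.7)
The plan is to reduce the statement to smooth, compactly supported data by approximation, and exploit the a priori estimate in Lemma~\ref{lemm:inhomogeneous-estimate} simultaneously in the two exponents $p$ and $q$. First I would choose sequences $(f_n), (g_n) \subset C^\infty_c(\overline{\RR^2_+}; \RR^2)$ such that $f_n \to f$ in $L^p(\RR^2_+; \RR^2) \cap L^q(\RR^2_+; \RR^2)$ and $g_n \to g$ in $W^{1,p}(\RR^2_+; \RR^2) \cap W^{1,q}(\RR^2_+; \RR^2)$. A standard cutoff-then-mollification procedure yields a single sequence that converges in all four norms at once, since both multiplication by smooth compactly supported cutoffs and mollification are continuous operations on every $L^r$ and $W^{1,r}$ with $r \in (1, \infty)$.

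Next, for each $n$, I would produce a solution $u_n$ of~\eqref{eq:inhomogeneous-bvp} with data $(f_n, g_n)$ belonging to $W^{2, r}(\RR^2_+; \RR^2)$ for \emph{every} $r \in (1, \infty)$. Concretely: apply Lemma~\ref{lemm:inhomogeneous} with exponent $2$ to get $u_n \in W^{2, 2}$; bootstrap via boundary regularity for oblique derivative problems with smooth coefficients to obtain $u_n \in C^\infty(\overline{\RR^2_+}; \RR^2)$; then establish enough decay of $u_n$ and its first two derivatives at infinity to place $u_n$ in every $W^{2, r}$. Once this is done, the uniqueness part of Lemma~\ref{lemm:inhomogeneous} identifies $u_n$ with both the $W^{2,p}$- and the $W^{2,q}$-solution of the BVP with data $(f_n, g_n)$, and Lemma~\ref{lemm:inhomogeneous-estimate} gives
\[
\|u_n - u_m\|_{2, r; \RR^2_+} \leq C_r\big( \|f_n - f_m\|_{r; \RR^2_+} + \|g_n - g_m\|_{1, r; \RR^2_+} \big)
\]
for both $r = p$ and $r = q$. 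Consequently $(u_n)$ is Cauchy in each of $W^{2, p}$ and $W^{2, q}$; extracting a subsequence that converges a.e., the common limit $u$ lies in $W^{2, p} \cap W^{2, q}$ and solves~\eqref{eq:inhomogeneous-bvp} with data $(f, g)$. By uniqueness in Lemma~\ref{lemm:inhomogeneous}, $u$ must coincide with the solution provided by that lemma.

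The main obstacle is the decay step: showing that the smooth $W^{2,2}$-solution $u_n$ associated to smooth, compactly supported data in fact belongs to $W^{2, r}(\RR^2_+; \RR^2)$ for every $r \in (1, \infty)$. The driving mechanism is the zeroth-order term $-u_n$ in the equation $\Delta u_n - u_n = 0$, which holds on $\RR^2_+ \setminus \overline{\bB_{R_n}}$ for $R_n$ large, together with the homogeneous oblique boundary condition $B_a^{-} u_n = 0$ on $\partial \RR^2_+ \setminus \overline{\bB_{R_n}}$; these force exponential decay of $u_n$ and all of its derivatives at infinity. This decay can be established either by an Agmon-type weighted energy argument (multiplying the equation by $e^{c|z|} u_n$ for some small $c \in (0, 1)$ and controlling the boundary contribution via the explicit form of $B_a^{-}$ together with $|a| < 1$), or by comparison with an explicit barrier of the form $M e^{-c\sqrt{1 + |z|^2}}$ with $c \in (0, 1)$. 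The oblique, rather than Neumann or Dirichlet, nature of the boundary condition requires a little bookkeeping but does not cause fundamental difficulty, again thanks to the strict inequality $|a| < 1$.
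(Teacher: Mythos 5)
Your proposal is correct but takes a genuinely different route from the paper. The paper runs a method-of-continuity argument: it defines $A \subset [-a_0, a_0]$ to be the set of parameters $a$ for which the $W^{2,p}$-solution is also in $W^{2,q}$, observes $0 \in A$ (Neumann case, for which intersection regularity is standard), and then uses the simultaneous $W^{2,p}$- and $W^{2,q}$-estimates from Lemma~\ref{lemm:inhomogeneous-estimate} together with the contraction mapping principle to show $A$ is open and closed, hence all of $[-a_0, a_0]$. This recycles the exact machinery already set up to prove Lemma~\ref{lemm:inhomogeneous} and requires no new qualitative input. Your approach is instead a density argument: approximate $(f, g)$ by smooth compactly supported data, show that the corresponding solutions lie in every $W^{2,r}$, use uniqueness to identify them with the $W^{2,p}$- and $W^{2,q}$-solutions simultaneously, and pass to the limit using the a priori estimates in both norms. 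Both approaches hinge on the simultaneous estimates; the trade-off is that you need to supply the additional qualitative fact that smooth, compactly supported data produce solutions with enough decay at infinity to lie in all $W^{2,r}$. You correctly identify this as the crux and sketch two viable strategies (weighted Agmon estimates, barriers); a third option that would make this step essentially explicit is to take the Fourier transform in the tangential variable, solve the resulting ODE in $y$, and read off that the solution is Schwartz on $\overline{\RR^2_+}$, using $|a| < 1$ to invert the matrix arising from the boundary condition. Your argument is correct once that decay step is carried out, but the paper's continuity-method proof avoids needing it at all and is the more economical choice given the toolkit already assembled in Lemmas~\ref{lemm:inhomogeneous-estimate} and~\ref{lemm:inhomogeneous}.
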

\begin{proof}[Sketch of proof]
Fix some $a_0 \in (0, 1)$ and let $A$ be the set of $a \in [-a_0, a_0]$ for which the conclusion of the lemma holds for all $f \in (L^p \cap L^q)(\RR^2_+; \RR^2)$ and $g \in (W^{1, p} \cap W^{1, q})(\RR^2_+; \RR^2)$. By the uniqueness part of Lemma~\ref{lemm:inhomogeneous}, we see that $a \in A$ if and only if~\eqref{eq:inhomogeneous-bvp} admits a solution in $(W^{2, p} \cap W^{2,q})(\RR^2_+; \RR^2)$. Standard theory for the Neumann problem then gives $0 \in A$, while Lemma~\ref{lemm:inhomogeneous-estimate} provides $W^{2, p}$ and $W^{2, q}$ estimates that allows us to show that $A$ is both open and closed, using the contraction mapping principle for openness. Consequently $A = [-a_0, a_0]$, and we are done since $a_0$ is arbitrary.
\end{proof}
We end this appendix with a regularity result (Lemma~\ref{lemm:regularity-on-B}) and a global a priori estimate (Lemma~\ref{lemm:estimate-on-B}) on $\bB$.
\begin{lemm}\label{lemm:regularity-on-B}
Suppose $a \in (-1, 1)$ and $1 < p < q < \infty$. Given $f \in W^{2, q}(\bB; \RR^2)$ and $g \in W^{1, q}(\bB; \RR^2)$, if $u \in W^{2, p}(\bB; \RR^2)$ solves 
\begin{equation}\label{eq:bvp-on-B}
\left\{
\begin{array}{l}
\Delta u = f \text{ on }\bB, \\
\Big[\pa{u}{r} + \left( \begin{array}{cc} 0 & a \\ -a & 0 \end{array} \right)\pa{u}{\theta}\Big]\Big|_{\partial \bB} = g|_{\partial\bB},
\end{array}
\right.
\end{equation}
then $u \in W^{2, q}(\bB; \RR^2)$.
\end{lemm}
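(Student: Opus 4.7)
The plan is to localize the problem, reduce to the half-plane setting via a conformal flattening of $\partial\bB$, and then bootstrap via Sobolev embedding using Lemma~\ref{lemm:inhomogeneous} and Lemma~\ref{lemm:intersection}. Interior regularity is immediate: since $f$ has better than $L^q$ integrability and $u \in W^{2,p}_{\loc}(\bB)$, standard interior Calder\'on--Zygmund theory upgrades $u$ to $W^{2,q}_{\loc}(\bB)$. It therefore suffices to prove $W^{2,q}$ regularity on a neighborhood (relative to $\overline\bB$) of each boundary point $x_0 \in \partial\bB$.

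Without loss of generality take $x_0 = e_1$, and set $v = u \circ F$ on $\bB^+_{r_0}$ for some small $r_0$, with $F$ the conformal map introduced in Section~\ref{subsec:function-spaces}. A direct chain-rule computation, using that $F$ is orientation-preserving and that $dF_{(x^1,0)}$ sends $e_1$ to the positive tangent $\hat\theta$ and $e_2$ to the inward radial direction $-\hat r$, yields $u_r\circ F = -\lambda^{-1} v_{x^2}$ and $u_\theta\circ F = \lambda^{-1} v_{x^1}$ on $\bT_{r_0}$, where $\lambda = |F'|$ is smooth and bounded away from zero. Hence
\[
\Delta v = \lambda^2(f\circ F)\ \text{in }\bB^+_{r_0}, \qquad B_a^- v = -\lambda(g\circ F)\ \text{on }\bT_{r_0},
\]
placing $v$ into the class governed by Lemma~\ref{lemm:inhomogeneous}. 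Pick $\eta \in C^\infty_c(\bB^+_{r_0})$ with $\eta \equiv 1$ on $\bB^+_{r_0/2}$. Extending $\eta v$ by zero to $\RR^2_+$, a straightforward computation shows $\eta v \in W^{2,p}(\RR^2_+;\RR^2)$ solves
\[
\Delta(\eta v) - \eta v = F_1\ \text{on }\RR^2_+, \qquad B_a^-(\eta v) = G_1\ \text{on }\partial\RR^2_+,
\]
where $F_1$ and $G_1$ gather the principal inhomogeneities $\eta\lambda^2(f\circ F)$ and $-\eta\lambda(g\circ F)$ together with cutoff terms involving only $v$ and $\nabla v$; both are compactly supported in $\overline{\RR^2_+}$.

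The bootstrap then proceeds as follows. By Sobolev embedding, $v \in W^{2,p}$ puts $v$ and $\nabla v$ in $L^{p^\#}$, where $p^\# = 2p/(2-p)$ if $p<2$, any finite exponent if $p = 2$, and $L^\infty$ if $p > 2$. Consequently $F_1 \in L^{p_1}$ and $G_1 \in W^{1,p_1}$ with $p_1 := \min(q, p^\#) > p$. Lemma~\ref{lemm:inhomogeneous} produces $w \in W^{2,p_1}(\RR^2_+;\RR^2)$ solving the same boundary value problem as $\eta v$; since $F_1, G_1$ also lie in $L^p$ and $W^{1,p}$ (they are compactly supported), Lemma~\ref{lemm:intersection} places $w$ in $W^{2,p}\cap W^{2,p_1}$. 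The uniqueness assertion of Lemma~\ref{lemm:inhomogeneous} in $W^{2,p}$ (a consequence of Lemma~\ref{lemm:model-bc}) then forces $w = \eta v$, so $\eta v \in W^{2,p_1}$, i.e.\ $v \in W^{2,p_1}$ on $\bB^+_{r_0/2}$. Iterating with $p_{n+1} = \min(q, p_n^\#)$, while shrinking the neighborhood at each step, reaches $p_n = q$ in finitely many iterations. Pulling back through $F^{-1}$ and covering $\partial\bB$ by finitely many such neighborhoods gives the boundary regularity, which combines with the interior regularity to yield $u \in W^{2,q}(\bB;\RR^2)$. The main technical point is the uniqueness step: one must verify that the auxiliary solution $w$ furnished by Lemma~\ref{lemm:inhomogeneous} actually coincides with $\eta v$, and this is exactly the role of Lemma~\ref{lemm:intersection}, which guarantees $w \in W^{2,p}$ so that the uniqueness in the original regularity class applies.
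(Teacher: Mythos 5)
Your proposal follows the paper's own argument essentially verbatim: conformal flattening via $F$, cutoff to a compactly supported half-plane problem, Sobolev bootstrap using the $W^{2,p}\cap W^{2,q}$ well-posedness of Lemma~\ref{lemm:intersection}, then interior estimates and a finite covering of $\partial\bB$. The only cosmetic difference is that you unfold the uniqueness step by introducing an auxiliary solution $w$ and invoking Lemma~\ref{lemm:model-bc} to identify $w$ with $\eta v$, whereas the paper's Lemma~\ref{lemm:intersection} already packages that identification into its statement; both routes are correct.
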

\begin{proof}[Sketch of proof]
Below, we write $L^p_{\loc}(\RR^2_+)$ to mean the space of functions having finite $L^p$-norm on $\bB^+_R$ for all $R > 0$, and similarly for $W^{k, p}_{\loc}(\RR^2_+)$. To make use of the previous estimates, consider the conformal map $F: (\RR^2_+, \partial \RR^2_+) \to (\bB, \partial \bB \setminus \{-e_1\})$ introduced at the end of Section~\ref{subsec:function-spaces} and suppose $F^*g_{\RR^2} = \lambda^2 g_{\RR^2}$. If we define 
\[
v = u \circ F,
\]
then $v \in W^{2, p}_{\loc}(\RR^2_+)$, and we compute based on~\eqref{eq:bvp-on-B} that
\[
\left\{
\begin{array}{l}
\Delta v = \lambda^2 (f \circ F) \text{ on }\RR^2_+,\\
(B_a^- v)|_{\partial\RR^2_+} = -\lambda(g\circ F)|_{\partial\RR^2_+}.
\end{array}
\right.
\]
Thus, for any smooth function $\zeta$ with compact support in $\overline{\RR^2_+}$, we have
\begin{equation}\label{eq:cutoff-bvp}
\left\{
\begin{array}{l}
\Delta (\zeta v) - \zeta v   = -\zeta v + 2\nabla\zeta \cdot \nabla v + v \Delta \zeta + \zeta \lambda^2 (f \circ F) = : \widetilde{f},  \text{ on }\RR^2_+,\\
B_a^- (\zeta v)  = \pa{\zeta}{y} v - \pa{\zeta}{x}\left( \begin{array}{cc} 0 & a \\ -a & 0 \end{array} \right)v  -\zeta\lambda(g\circ F)  = : \widetilde{g}, \text{ on }\partial \RR^2_+.
\end{array}
\right.
\end{equation}
Next, with $p^*$ defined by
\[
p^* = \left\{
\begin{array}{l}
\min\{\frac{2p}{2-p}, q\}, \text{ if }1 < p < 2,\\
q, \text{ if }p \geq 2,
\end{array}
\right.
\]
we have by Sobolev embedding that $v \in (W_{\loc}^{1, p^*} \cap W_{\loc}^{1, p})(\RR^2_+)$. Combining this with our assumption on $f$ and $g$ gives
\[
\widetilde{f} \in (L^{p^*} \cap L^p)(\RR^2_+),\ \ \widetilde{g} \in (W^{1, p^*} \cap W^{1, p})(\RR_+^2).
\]
Thus we deduce from~\eqref{eq:cutoff-bvp} and Lemma~\ref{lemm:intersection} that $\zeta v \in (W^{2, p^*} \cap W^{2, p})(\RR^2_+)$, and consequently $v \in W^{2, p^*}_{\loc}(\RR^2_+)$ since $\zeta$ is arbitrary. Since $p > 1$, iterating the above argument finitely many times gives $v \in W_{\loc}^{2, q}(\RR^2_+)$, so that $u$ is $W^{2, q}$ locally near $e_1$. Repeating this at other boundary points and invoking standard interior regularity gives $u \in W^{2, q}(\bB; \RR^2)$. 
\end{proof}

\begin{lemm}\label{lemm:estimate-on-B}
Given $a_0 \in (0, 1)$, there exists $C = C(p, a_0) > 0$ such that for all $a \in [-a_0, a_0]$, $f \in L^p(\bB; \RR^2)$ and $g \in W^{1, p}(\bB;\RR^2)$, if $u \in W^{2, p}(\bB; \RR^2)$ solves the boundary value problem~\eqref{eq:bvp-on-B}, then we have
\begin{equation}\label{eq:estimate-on-B}
\|\nabla u\|_{1, p; \bB} \leq C(\|f\|_{p; \bB} + \|g\|_{1, p; \bB}).
\end{equation}
\end{lemm}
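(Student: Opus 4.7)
\smallskip
\noindent\textbf{Proof plan.} The plan is to reduce the global estimate to the half-plane estimate of Lemma~\ref{lemm:inhomogeneous-estimate} via a partition-of-unity argument, obtaining a preliminary bound with a lower-order $\|u\|_{1, p}$-term on the right, and then to remove this term through a Rellich compactness argument in which the kernel of the homogeneous problem is identified explicitly by complexification.

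First I would cover $\overline{\bB}$ by finitely many open balls with a subordinate partition of unity $\{\zeta_i\}_{i=1}^N$. For interior pieces $\zeta_i u$ has compact support in $\bB$, so extending by zero and applying standard Calder\'on--Zygmund estimates to $\Delta(\zeta_i u) = \zeta_i f + 2 \nabla \zeta_i \cdot \nabla u + u \, \Delta \zeta_i$ gives $\|\zeta_i u\|_{2, p; \bB} \leq C(\|f\|_{p; \bB} + \|u\|_{1, p; \bB})$. For boundary pieces centered at $x_i \in \partial \bB$, I would compose with a rotated copy $F_{x_i}$ of the conformal map $F$ from Section~\ref{subsec:function-spaces}; as in the proof of Lemma~\ref{lemm:regularity-on-B}, $v := u \circ F_{x_i}$ satisfies $\Delta v = \lambda_i^2 (f \circ F_{x_i})$ on $\RR^2_+$ and $(B_a^{-} v)|_{\partial \RR^2_+} = -\lambda_i (g \circ F_{x_i})|_{\partial \RR^2_+}$, with $\lambda_i$ two-sided bounded by~\eqref{eq:lambda-bound}. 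Choosing a cutoff $\tilde{\zeta}_i \in C^\infty_c(\overline{\RR^2_+})$ that equals $1$ on the support of $\zeta_i \circ F_{x_i}$, rewriting $\Delta(\tilde{\zeta}_i v) - \tilde{\zeta}_i v$ and $B_a^{-}(\tilde{\zeta}_i v)|_{\partial \RR^2_+}$ exactly as in~\eqref{eq:cutoff-bvp}, and applying Lemma~\ref{lemm:inhomogeneous-estimate} produces a local $W^{2, p}$-bound near $x_i$ in which the commutator terms with $\tilde{\zeta}_i$ are absorbed into the $\|u\|_{1,p}$-piece. Summing these local bounds and using~\eqref{eq:lambda-bound} yields, with $C$ depending only on $p$ and $a_0$,
\begin{equation}\label{eq:prelim-E}
\|u\|_{2, p; \bB} \leq C \bigl( \|f\|_{p; \bB} + \|g\|_{1, p; \bB} + \|u\|_{1, p; \bB} \bigr).
\end{equation}

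The main obstacle is passing from~\eqref{eq:prelim-E} to~\eqref{eq:estimate-on-B} by removing the $\|u\|_{1, p; \bB}$ term, which is non-trivial because any constant solves the homogeneous problem~\eqref{eq:bvp-on-B}. I would argue by contradiction: if no uniform constant $C = C(p, a_0)$ exists, one obtains $a_n \in [-a_0, a_0]$ and $u_n \in W^{2, p}(\bB; \RR^2)$ with $\fint_{\bB} u_n = 0$ (permissible since the problem is invariant under adding constants), $\|\nabla u_n\|_{1, p; \bB} = 1$, and data $f_n, g_n$ satisfying $\|f_n\|_{p; \bB} + \|g_n\|_{1, p; \bB} \to 0$. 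The Poincar\'e inequality gives $\|u_n\|_{p; \bB} \leq C \|\nabla u_n\|_{p; \bB} \leq C$, and then~\eqref{eq:prelim-E} bounds $\|u_n\|_{2, p; \bB}$ uniformly. Rellich's theorem together with compactness of $[-a_0, a_0]$ and of the boundary trace $W^{2, p}(\bB) \to W^{1, p}(\partial \bB)$ yields a subsequence along which $a_n \to a_\infty \in [-a_0, a_0]$, $u_n \to u_\infty$ in $W^{1, p}(\bB)$, and the traces of $\nabla u_n$ converge to those of $\nabla u_\infty$ in $L^p(\partial \bB)$. Passing to the limit, $u_\infty$ is harmonic on $\bB$, has zero mean, and satisfies $\partial_r u_\infty + M_{a_\infty} \partial_\theta u_\infty = 0$ on $\partial \bB$, where $M_a$ denotes the antisymmetric matrix in~\eqref{eq:bvp-on-B}. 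To force $u_\infty \equiv 0$, I combine components via $w := u_\infty^1 + \sqrt{-1}\, u_\infty^2$, which turns the boundary relation into $w_r = \sqrt{-1}\, a_\infty\, w_\theta$ on $\partial \bB$; expanding the harmonic $w$ as $w = \sum_{n \geq 0} \bigl( \alpha_n z^n + \beta_n \bar{z}^n \bigr)$ and matching Fourier coefficients forces $n \alpha_n (1 + a_\infty) = 0$ and $n \beta_n (1 - a_\infty) = 0$ for every $n \geq 1$, so since $|a_\infty| \leq a_0 < 1$ both factors are nonzero and $w$ is constant. The zero-mean condition then gives $u_\infty = 0$. Applying~\eqref{eq:prelim-E} to the differences $u_n - u_m$ shows $(u_n)$ is Cauchy in $W^{2, p}(\bB)$, so $u_n \to 0$ in $W^{2, p}(\bB)$, which contradicts $\|\nabla u_n\|_{1, p; \bB} = 1$ and establishes~\eqref{eq:estimate-on-B}.
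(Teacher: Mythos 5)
Your proof is correct in substance, and it follows the same overall strategy as the paper (local estimates via rotated conformal charts and Lemma~\ref{lemm:inhomogeneous-estimate}, then a compactness--contradiction argument to drop the lower-order term). The genuine difference lies in how you dispose of the kernel of the homogeneous problem. The paper upgrades the limit to $W^{2,q}$ for all $q<\infty$ via Lemma~\ref{lemm:regularity-on-B} and then tests the equation against $u$ itself, integrating by parts twice on $\partial\bB$ to obtain the coercivity inequality $0 \geq (1-a_0)\int_{\bB}|\nabla u|^2$, which kills the limit without any explicit representation of solutions. You instead complexify to $w = u_\infty^1 + \sqrt{-1}u_\infty^2$, turn the oblique boundary condition into $w_r = \sqrt{-1}\,a_\infty\,w_\theta$, expand $w$ in its harmonic Fourier series, and read off $\alpha_n(1+a_\infty) = \beta_n(1-a_\infty) = 0$ for $n \geq 1$. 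Both work; your version is more explicit (it literally diagonalizes the boundary operator in Fourier modes and shows directly where $|a_\infty|<1$ enters), while the paper's is more robust in the sense that the same integration-by-parts trick would survive variable-coefficient perturbations, whereas the Fourier matching is tied to the constant-coefficient circle geometry.

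Two small points worth tightening. First, the paper normalizes $\|\nabla u_j\|_{p;\bB} = 1$ rather than $\|\nabla u_j\|_{1,p;\bB}=1$; with the paper's choice the limit is non-constant immediately by strong $W^{1,p}$ convergence, and one never needs your Cauchy-sequence step at all. Second, as written your Cauchy step applies~\eqref{eq:prelim-E} to $u_n-u_m$, but these satisfy the oblique condition with different $a_n, a_m$; you need to rewrite the boundary operator for $u_n - u_m$ with, say, $a_n$, at the cost of an error $(M_{a_n}-M_{a_m})\partial_\theta u_m$ on the right, and observe this tends to zero in $W^{1,p}$ because $a_n \to a_\infty$ and $\|u_m\|_{2,p}$ is uniformly bounded. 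That is routine but should be said. With either fix (the simpler being to copy the paper's normalization) the argument is complete.
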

\begin{proof}[Sketch of proof]
Without loss of generality we assume that $\int_{\bB}u = 0$. Repeating the previous proof up to equation~\eqref{eq:cutoff-bvp} and using Lemma~\ref{lemm:inhomogeneous-estimate}, we obtain local boundary estimates for $u$ near $e_1$. Repeating this at other points on $\partial \bB$ and recalling interior $W^{2, p}$-estimates, we obtain a global estimate of the form
\begin{equation}\label{eq:estimate-with-w1p}
\|\nabla u\|_{1, p;\bB} \leq C_{p, a_0}(\|f\|_{p; \bB} + \|g\|_{1, p; \bB} + \|\nabla u \|_{p; \bB}).
\end{equation}
To finish, we claim that
\begin{equation}\label{eq:nabla-u-bound}
\|\nabla u\|_{p; \bB} \leq C (\|f\|_{p; \bB} + \|g\|_{1, p; \bB}),
\end{equation}
where $C$ again depends only on $p$ and $a_0$. Assume by contradiction that for each $j \in \NN$ there exist $a_j \in [-a_0, a_0]$, $u_j \in W^{2, p}(\bB;\RR^2)$, $f_j \in L^p(\bB;\RR^2)$ and $g_j \in W^{1, p}(\bB; \RR^2)$ such that~\eqref{eq:bvp-on-B} holds but that
\begin{equation}\label{eq:contradiction-assumption}
1 = \|\nabla u_j\|_{p; \bB} \geq j (\|f_j\|_{p; \bB} + \|g_j\|_{1, p; \bB}).
\end{equation}
Subtracting a suitable constant from $u_j$, we may also assume $\int_{\bB}u_j = 0$. Now, up to taking a subsequence, $(a_j)$ converges to some $a \in [-a_0, a_0]$. Moreover, by~\eqref{eq:contradiction-assumption} and~\eqref{eq:estimate-with-w1p}, passing to a further subsequence if necessary, we have that $(u_j)$ converges weakly in $W^{2, p}$ and strongly in $W^{1, p}$ to a limit $u$ which is non-constant and satsifies~\eqref{eq:bvp-on-B} with $f = 0$ and $g = 0$. Lemma~\ref{lemm:regularity-on-B} then implies that $u \in W^{2, q}$ for all $q < \infty$, and consequently we may use $u$ itself as a test function and integrate by parts on $\bB$ to get
\[
\begin{split}
0 =\ & -\int_{\bB} u \Delta u = -\int_{\partial \bB} u \cdot \pa{u}{r} + \int_{\bB}|\nabla u|^2\\
=\ &  \int_{\partial \bB} u \cdot \left( \begin{array}{cc} 0 & a \\ -a & 0 \end{array} \right)\pa{u}{\theta} + \int_{\bB} |\nabla u|^2\\
=\ & \int_{\bB} |\nabla u|^2 - 2\pa{u}{y} \cdot  \left( \begin{array}{cc} 0 & a \\ -a & 0 \end{array} \right)\pa{u}{x}\\
\geq\ & (1 - a_0)\int_{\bB}|\nabla u|^2.
\end{split}
\]
Since $a_0 < 1$, this forces $u$ to be constant, a contradiction. Hence~\eqref{eq:nabla-u-bound} holds for some $C$ depending only on $p$ and $a_0$, which together with~\eqref{eq:estimate-with-w1p} yields~\eqref{eq:estimate-on-B} as asserted.
\end{proof}
\bibliographystyle{amsplain}
\bibliography{cmc-capillary-arxiv}

\providecommand{\bysame}{\leavevmode\hbox to3em{\hrulefill}\thinspace}
\providecommand{\MR}{\relax\ifhmode\unskip\space\fi MR }
\providecommand{\MRhref}[2]{%
  \href{http://www.ams.org/mathscinet-getitem?mr=#1}{#2}
}
\providecommand{\href}[2]{#2}
\begin{thebibliography}{10}

\bibitem{Ainouz-Souam2016}
Abdelhamid Ainouz and Rabah Souam, \emph{Stable capillary hypersurfaces in a half-space or a slab}, Indiana Univ. Math. J. \textbf{65} (2016), no.~3, 813--831. \MR{3528820}

\bibitem{ACS2018}
Lucas Ambrozio, Alessandro Carlotto, and Ben Sharp, \emph{Compactness analysis for free boundary minimal hypersurfaces}, Calc. Var. Partial Differential Equations \textbf{57} (2018), no.~1, Paper No. 22, 39.

\bibitem{BrezisCoron1984}
Ha\"{\i}m Brezis and Jean-Michel Coron, \emph{Multiple solutions of {$H$}-systems and {R}ellich's conjecture}, Comm. Pure Appl. Math. \textbf{37} (1984), no.~2, 149--187.

\bibitem{Brezis-Coron1984}
\bysame, \emph{Multiple solutions of {$H$}-systems and {R}ellich's conjecture}, Comm. Pure Appl. Math. \textbf{37} (1984), no.~2, 149--187.

\bibitem{Cheng22}
D.~R. Cheng, \emph{Existence of free boundary disks with constant mean curvature in {$\RR^3$}}, arXiv:2203.16323, Preprint, 2022.

\bibitem{cz-cmc}
D.~R. Cheng and X.~Zhou, \emph{Existence of constant mean curvature 2-spheres in {R}iemannian 3-spheres}, Comm. Pure Appl. Math., to appear.

\bibitem{cz-cgc}
\bysame, \emph{Existence of curves with constant geodesic curvature in a {R}iemannian 2-sphere}, Trans. Amer. Math. Soc. \textbf{374} (2021), no.~12, 9007--9028.

\bibitem{ColdingMinicozzi2008}
Tobias~H. Colding and William~P. Minicozzi, II, \emph{Width and finite extinction time of {R}icci flow}, Geom. Topol. \textbf{12} (2008), no.~5, 2537--2586. \MR{2460871}

\bibitem{DMDP2021}
Luigi De~Masi and Guido De~Philippis, \emph{Min-max construction of minimal surfaces with a fixed angle at the boundary}, arXiv:2111.09913 (2021).

\bibitem{Ejiri-Micallef2008}
N.~Ejiri and M.~Micallef, \emph{Comparison between second variation of area and second variation of energy of a minimal surface}, Adv. Calc. Var. \textbf{1} (2008), no.~3, 223--239.

\bibitem{Fraser2000}
Ailana~M. Fraser, \emph{On the free boundary variational problem for minimal disks}, Comm. Pure Appl. Math. \textbf{53} (2000), no.~8, 931--971.

\bibitem{Heinz1954}
Erhard Heinz, \emph{\"uber die {E}xistenz einer {F}l\"ache konstanter mittlerer {K}r\"ummung bei vorgegebener {B}erandung}, Math. Ann. \textbf{127} (1954), 258--287.

\bibitem{Hildebrandt-Nitsche1979}
S.~Hildebrandt and J.~C.~C. Nitsche, \emph{Minimal surfaces with free boundaries}, Acta Math. \textbf{143} (1979), no.~3-4, 251--272.

\bibitem{Hildebrandt1970}
Stefan Hildebrandt, \emph{On the {P}lateau problem for surfaces of constant mean curvature}, Comm. Pure Appl. Math. \textbf{23} (1970), 97--114.

\bibitem{Krylov2008}
N.~V. Krylov, \emph{Lectures on elliptic and parabolic equations in {S}obolev spaces}, Graduate Studies in Mathematics, vol.~96, American Mathematical Society, Providence, RI, 2008.

\bibitem{LadyzhenskayaUraltseva1968}
Olga~A. Ladyzhenskaya and Nina~N. Ural'tseva, \emph{Linear and quasilinear elliptic equations}, Academic Press, New York-London, 1968, Translated from the Russian by Scripta Technica, Inc, Translation editor: Leon Ehrenpreis.

\bibitem{Lamm2006}
Tobias Lamm, \emph{Fourth order approximation of harmonic maps from surfaces}, Calc. Var. Partial Differential Equations \textbf{27} (2006), no.~2, 125--157.

\bibitem{Li-Zhou-Zhu2021}
Chao Li, Xin Zhou, and Jonathan Zhu, \emph{Min-max theory for capillary surfaces}, arXiv:2111.09924 (2021).

\bibitem{Li-Zhou2021}
Martin Man-Chun Li and Xin Zhou, \emph{Min-max theory for free boundary minimal hypersurfaces {I}---{R}egularity theory}, J. Differential Geom. \textbf{118} (2021), no.~3, 487--553.

\bibitem{Lima2022}
Vanderson Lima, \emph{Bounds for the {M}orse index of free boundary minimal surfaces}, Asian J. Math. \textbf{26} (2022), no.~2, 227--252.

\bibitem{Lin-Wang1998}
Fanghua Lin and Changyou Wang, \emph{Energy identity of harmonic map flows from surfaces at finite singular time}, Calc. Var. Partial Differential Equations \textbf{6} (1998), no.~4, 369--380.

\bibitem{Lin-Sun-Zhou2020}
Longzhi Lin, Ao~Sun, and Xin Zhou, \emph{Min-max minimal disks with free boundary in {R}iemannian manifolds}, Geom. Topol. \textbf{24} (2020), no.~1, 471--532.

\bibitem{Parker1996}
Thomas~H. Parker, \emph{Bubble tree convergence for harmonic maps}, J. Differential Geom. \textbf{44} (1996), no.~3, 595--633.

\bibitem{RosSouam1997}
Antonio Ros and Rabah Souam, \emph{On stability of capillary surfaces in a ball}, Pacific J. Math. \textbf{178} (1997), no.~2, 345--361.

\bibitem{Sacks-Uhlenbeck1981}
J.~Sacks and K.~Uhlenbeck, \emph{The existence of minimal immersions of $2$-spheres}, Ann. of Math. (2) \textbf{113} (1981), no.~1, 1--24.

\bibitem{Struwe1984}
Michael Struwe, \emph{On a free boundary problem for minimal surfaces}, Invent. Math. \textbf{75} (1984), no.~3, 547--560.

\bibitem{Struwe1985}
\bysame, \emph{Large {$H$}-surfaces via the mountain-pass-lemma}, Math. Ann. \textbf{270} (1985), no.~3, 441--459.

\bibitem{Struwe1985-heat}
\bysame, \emph{On the evolution of harmonic mappings of {R}iemannian surfaces}, Comment. Math. Helv. \textbf{60} (1985), no.~4, 558--581.

\bibitem{Struwe1986}
\bysame, \emph{Nonuniqueness in the {P}lateau problem for surfaces of constant mean curvature}, Arch. Rational Mech. Anal. \textbf{93} (1986), no.~2, 135--157.

\bibitem{Struwe1988}
\bysame, \emph{The existence of surfaces of constant mean curvature with free boundaries}, Acta Math. \textbf{160} (1988), no.~1-2, 19--64.

\bibitem{Struwe1988survey}
\bysame, \emph{Heat-flow methods for harmonic maps of surfaces and applications to free boundary problems}, Partial differential equations ({R}io de {J}aneiro, 1986), Lecture Notes in Math., vol. 1324, Springer, Berlin, 1988, pp.~293--319.

\bibitem{Wente1969}
Henry~C. Wente, \emph{An existence theorem for surfaces of constant mean curvature}, J. Math. Anal. Appl. \textbf{26} (1969), 318--344.

\bibitem{ZhouZhu-pmc}
Xin Zhou and Jonathan Zhu, \emph{Existence of hypersurfaces with prescribed mean curvature {I}---generic min-max}, Camb. J. Math. \textbf{8} (2020), no.~2, 311--362.

\bibitem{ZhouZhu-cmc}
Xin Zhou and Jonathan~J. Zhu, \emph{Min-max theory for constant mean curvature hypersurfaces}, Invent. Math. \textbf{218} (2019), no.~2, 441--490.

\end{thebibliography}
\end{document}